\newtheorem{theorem}{Theorem}[section]
\newtheorem{proposition}[theorem]{Proposition}
\newtheorem{lemma}[theorem]{Lemma}
\newtheorem{corollary}[theorem]{Corollary}
\theoremstyle{definition}
\newtheorem{definition}[theorem]{Definition}
\newtheorem{remark}[theorem]{Remark}
\newcommand{\nil}{\ensuremath{\mathrm{nil}}} 
\newcommand{\cocat}{\ensuremath{\mathrm{cocat}}} 
\newcommand{\wcocat}{\ensuremath{\mathrm{wcocat}}} 
\newcommand{\uxa}{\ensuremath{(\underline{X},\underline{A})}} 
\newcommand{\cxx}{\ensuremath{(\underline{CX},\underline{X})}} 
\newcommand{\clxx}{\ensuremath{(\underline{C\Omega X},\underline{\Omega X})}} 
\newcommand{\hocolim}{\ensuremath{\underrightarrow{\mathrm{hocolim}}}} 
\newcommand{\hlgy}[1]{\ensuremath{H_{*}(#1)}}
\newcommand{\rhlgy}[1]{\ensuremath{\widetilde{H}_{*}(#1)}}
\newcounter{bean}
\newenvironment{letterlist}{\begin{list}{\rm ({\alph{bean}})}
      {\usecounter{bean}\setlength{\rightmargin}{\leftmargin}}}
      {\end{list}}
\newcommand{\namedright}[3]{\ensuremath{#1\stackrel{#2}
 {\longrightarrow}#3}}
\newcommand{\nameddright}[5]{\ensuremath{#1\stackrel{#2}
 {\longrightarrow}#3\stackrel{#4}{\longrightarrow}#5}}
\newcommand{\namedddright}[7]{\ensuremath{#1\stackrel{#2}
 {\longrightarrow}#3\stackrel{#4}{\longrightarrow}#5
  \stackrel{#6}{\longrightarrow}#7}}
\newcommand{\larrow}{\relbar\!\!\relbar\!\!\rightarrow}
\newcommand{\llarrow}{\relbar\!\!\relbar\!\!\larrow}
\newcommand{\lllarrow}{\relbar\!\!\relbar\!\!\llarrow}
\newcommand{\lnamedright}[3]{\ensuremath{#1\stackrel{#2}
 {\larrow}#3}}
\newcommand{\lnameddright}[5]{\ensuremath{#1\stackrel{#2}
 {\larrow}#3\stackrel{#4}{\larrow}#5}}
\newcommand{\lnamedddright}[7]{\ensuremath{#1\stackrel{#2}
 {\larrow}#3\stackrel{#4}{\larrow}#5
  \stackrel{#6}{\larrow}#7}}
\newcommand{\llnamedright}[3]{\ensuremath{#1\stackrel{#2}
 {\llarrow}#3}}
\newcommand{\llnameddright}[5]{\ensuremath{#1\stackrel{#2}
 {\llarrow}#3\stackrel{#4}{\llarrow}#5}}
\newcommand{\llnamedddright}[7]{\ensuremath{#1\stackrel{#2}
 {\llarrow}#3\stackrel{#4}{\llarrow}#5
  \stackrel{#6}{\llarrow}#7}}
\newcommand{\lllnamedright}[3]{\ensuremath{#1\stackrel{#2}
 {\lllarrow}#3}}
\newcommand{\lllnameddright}[5]{\ensuremath{#1\stackrel{#2}
 {\lllarrow}#3\stackrel{#4}{\lllarrow}#5}}
\newcommand{\lllnamedddright}[7]{\ensuremath{#1\stackrel{#2}
 {\lllarrow}#3\stackrel{#4}{\lllarrow}#5
  \stackrel{#6}{\lllarrow}#7}}
\newcommand{\qqed}{\hfill\Box}
\begin{document}


\title{The dual polyhedral product, cocategory and nilpotence}
\author{Stephen Theriault}
\address{Mathematical Sciences, University
         of Southampton, Southampton SO17 1BJ, United Kingdom}
\email{S.D.Theriault@soton.ac.uk}

\subjclass[2010]{Primary 55M30, 55P15, 55P35.}
\keywords{polyhedral product, thin product, cocategory, Whitehead product, 
homotopy nilpotence}

\begin{abstract} 
The notion of a dual polyhedral product is introduced as a generalization  
of Hovey's definition of Lusternik-Schnirelmann cocategory. Properties established 
from homotopy decompositions that relate the based loops on a polyhedral 
product to the based loops on its dual are used to show that if $X$ is a 
simply-connected space then the weak cocategory of $X$ equals the 
homotopy nilpotency class of~$\Omega X$. This answers a fifty year old 
problem posed by Ganea. The methods are applied to determine 
the homotopy nilpotency class of quasi-$p$-regular exceptional Lie 
groups and sporadic $p$-compact groups. 
\end{abstract}

\maketitle

\section{Introduction} 
\label{sec:intro} 

This paper establishes strong relationships between three different 
concepts in topology: polyhedral products, cocategory and homotopy 
nilpotency. Polyhedral products play a fundamental role in toric topology 
and have a growing number of applications to other areas of mathematics, 
such as group actions on graphs, intersections of quadrics, and coordinate 
subspace arrangements. Cocategory is dual to Lusternik-Schnirelmann 
category; the latter has been heavily studied since it was introduced in the 
1930s, partly because of its connection to counting the critical points 
of functions between smooth manifolds. Homotopy nilpotency is the 
topological analogue of nilpotency in group theory, and it has 
powerful implications for homotopy theory, especially in the stable case. 

Throughout the paper, assume that all spaces have the homotopy type 
of path-connected $CW$-complexes. 
It has long been thought that cocategory and homotopy nilpotency are 
very closely linked. One problem in establishing a good link is settling 
on the right definition of cocategory. Several different definitions exist, each trying 
to dualize some feature of Lusternik-Schirelmann category. We use Hovey's 
definition, but others include those by Ganea~\cite{G1}, Hopkins~\cite{Hop} 
and Murillo-Viruel~\cite{MV}. In all cases, a connection is made between  
the definition of cocategory in question and homotopy nilpotence. For 
example, Hovey and Murillo-Viruel show that, in their own notion of cocategory, 
if the cocategory of a simply-connected space $X$ is $m$ then iterated Samelson 
products of length~$\geq m+1$ formed from the homotopy groups of $\Omega X$ all 
vanish. Our main result is to show that if $X$ is a simply-connected space 
then the weak cocategory of $X$ (using Hovey's definition) is precisely equal 
to the homotopy nilpotency class of $\Omega X$. This has the added benefit  
of identifying Murillo-Viruel's and Hovey's notions of weak cocategory. 

To do this we first generalize the notion of cocategory to dual polyhedral 
products, and then use loop space decompositions to compare the 
based loops on a polyhedral product to the based loops on its dual. 
This results in a natural filtration of the based loops on a polyhedral 
product that should have many applications beyond those in this paper. 
A more detailed description is obtained in the special case of a thin 
product, which is relevant to cocategory. 

The methods developed in the paper are sufficiently powerful to allow for 
explicit calculations of the homotopy nilpotency classes of quasi-$p$-regular 
exceptional Lie groups and nonmodular $p$-compact groups. These cases 
would have previously been considered as completely unapproachable. 

To describe our results more carefully, several definitions are required. 

\medskip 

\noindent
\textbf{The dual polyhedral product}. 
Let $K$ be an abstract simplicial complex on $m$ vertices, and assume the 
empty set belongs to $K$. For $1\leq i\leq m$,
let $(X_{i},A_{i})$ be a pair of pointed $CW$-complexes, where $A_{i}$ is 
a pointed subspace of $X_{i}$. Let $\uxa=\{(X_{i},A_{i})\}_{i=1}^{m}$ be 
the set of $CW$-pairs. For each simplex (face) $\sigma\in K$, let 
$\uxa^{\sigma}$ be the subspace of $\prod_{i=1}^{m} X_{i}$ defined by
\[\uxa^{\sigma}=\prod_{i=1}^{m} Y_{i}\qquad
       \mbox{where}\qquad Y_{i}=\left\{\begin{array}{ll}
                                             X_{i} & \mbox{if $i\in\sigma$} \\
                                             A_{i} & \mbox{if $i\notin\sigma$}.
                                       \end{array}\right.\]
The \emph{polyhedral product} determined by \uxa\ and $K$ is
\[\uxa^{K}=\bigcup_{\sigma\in K}\uxa^{\sigma}\subseteq\prod_{i=1}^{m} X_{i}.\]
For example, suppose each $A_{i}$ is a point. If $K$ is a disjoint union
of $m$ points then $(\underline{X},\underline{\ast})^{K}$ is the wedge
$X_{1}\vee\cdots\vee X_{m}$, and if $K$ is the standard $(m-1)$-simplex
then $(\underline{X},\underline{\ast})^{K}$ is the product
$X_{1}\times\cdots\times X_{m}$. 

The polyhedral product is therefore a colimit over all the faces of $K$, 
ordered by inclusion. There is another way to describe the polyhedral 
product as a colimit. Let $[m]=\{1,\ldots,m\}$ and let $I=\{i_{1},\ldots,i_{k}\}$ 
be a subset of $[m]$. Let $K_{I}$ be the full subcomplex of $K$ on the 
vertex set $I$, that is, the faces of $K_{I}$ are those faces of $K$ whose 
vertices are all in $I$. There is a map of simplicial complexes 
\(\namedright{K_{I}}{}{K}\) 
including $K_{I}$ into $K$, and if the subsets of $[m]$ are ordered by 
inclusion then there is a system of simplicial maps 
\(\namedright{K_{I}}{}{K_{J}}\) 
whenever $I\subseteq J$. These induce maps of polyhedral products 
\(\iota_{I,J}\colon\namedright{\uxa^{K_{I}}}{}{\uxa^{K_{J}}}\). 
Notice that the only possible face of $K$ which is not in some~$K_{I}$ 
for a proper subset $I$ of $[m]$ is $\sigma=[m]$. But in this case $K$ 
equals the full simplex $\Delta^{m-1}$, and the polyhedral product $\uxa^{K}$ 
equals $X_{1}\times\cdots\times X_{m}$. Therefore, if $K\neq\Delta^{m-1}$, then 
\[\uxa^{K}=\bigcup_{I\subsetneq [m]}\uxa^{K_{I}}.\] 

The description of the polyhedral product as a colimit of its full subcomplexes 
lets us define a dual notion. Fix a simplicial complex $K$ on the vertex 
set $[m]$. Since each $\uxa^{K_{I}}$ is a pointwise inclusion into $\uxa^{K}$, 
it is a cofibration but not a fibration so the appropriate dual notion for 
homotopy theory is not an inverse limit but a homotopy inverse limit.  
Order the subsets of $[m]$ by reverse inclusion. If $I\subseteq J$ then 
there does not exist a simplicial map 
\(\namedright{K_{J}}{}{K_{I}}\). 
However, there is a map on the level of polyhedral products. 
If $X^{I}$ is the product $X_{i_{1}}\times\cdots\times X_{i_{k}}$ 
then, as in~\cite[2.2.3]{DS}, the projection 
\(\namedright{X^{J}}{}{X^{I}}\) 
induces a map of polyhedral products 
\(\varphi_{J,I}\colon\namedright{\uxa^{K_{J}}}{}{\uxa^{K_{I}}}\) 
with the property that $\varphi_{J,I}\circ\iota_{I,J}$ is the identity map on $\uxa^{K_{I}}$. 
Assemble $\{\uxa^{K_{I}}\mid I\subseteq [m]\}$ and 
$\{\varphi_{J,I}\mid I\subseteq J\subseteq [m]\}$ as the vertices and edges 
of an $m$-cube in order to take a homotopy inverse limit. 

\begin{definition} 
If $K\neq\Delta^{m-1}$ then the \emph{dual polyhedral product} is  
\[\uxa^{K}_{D}=\underset{\substack{\leftarrow\!\!\relbar\!\!\relbar\!\!\relbar\!\!\relbar \\ 
      I\subsetneq [m]}}{\mbox{holim}}\,\uxa^{K_{I}}.\] 
\end{definition} 
 
\medskip 

\noindent 
\textbf{Cocategory}. 
A special case of the dual polyhedral product is the thin product. Let $K$ be 
the simplicial complex on the vertex set $[m]$ consisting of $m$ disjoint 
points. In each pair of spaces $(X_{i},A_{i})$, take $A_{i}=\ast$. Then for 
$I\subseteq [m]$ the full subcomplex $K_{I}$ consists of $\vert I\vert$ 
disjoint points, so 
\[(\underline{X},\underline{\ast})^{K_{I}}=\bigvee_{i\in I} X_{i}.\] 
If $J\subseteq I$, the projection 
\(\namedright{K_{I}}{}{K_{J}}\) 
induces a map of polyhedral products  
\(\namedright{\bigvee_{i\in I} X_{i}}{}{\bigvee_{j\in J} X_{j}}\) 
which sends $X_{i}$ to itself if $i\in J$ or to the basepoint if $i\notin J$. 
Write $\underline{X}$ for the set of spaces $\{X_{1},\cdots,X_{m}\}$. 

\begin{definition} 
The \emph{thin product} of pointed spaces $X_{1},\ldots X_{m}$ is the space 
\[P^{m}(\underline{X})=(\underline{X},\underline{\ast})^{K}_{D}\] 
where $K$ is the simplicial complex consisting of $m$ disjoint points. 
\end{definition} 

The thin product was defined by Hovey~\cite{Hov} as a dual to the fat wedge, 
and he used it to define a notion of cocategory that is dual to 
Lusternik-Schnirelmann category.  Some of its properties have been determined 
by Hovey~\cite{Hov} and Anick~\cite[Lemma 6.24]{A} in the case when $m=3$. 

In the case when each $X_{i}$ equals a common space $X$ for 
$1\leq i\leq m$, write $P^{m}(X)$ for $P^{m}(\underline{X})$. Let 
\[\nabla\colon\namedright{\bigvee_{i=1}^{m} X}{}{X}\] 
be the $m$-fold folding map. 

\begin{definition} 
Let $X$ be a pointed space. The \emph{cocategory} of $X$ is the least $m$ 
for which there exists an extension 
\[\diagram 
      \bigvee_{i=1}^{m+1} X\rto\dto^{\nabla} & P^{m+1}(X)\dldashed|>\tip \\ 
      X. & 
  \enddiagram\] 
In this case, write $\cocat(X)=m$. 
\end{definition} 

For example, when $m=1$ then $P^{2}(X)$ is the homotopy inverse 
limit of the system 
\(X\longrightarrow\ast\longleftarrow X\).  
That is, $P^{2}(X)\simeq X\times X$. So $\cocat(X)=1$ if and only if $X$ is an 
$H$-space. 

There is a weaker version of cocategory which will be important. Define 
the space $F^{m+1}(X)$ and the map $f^{m+1}(X)$ by the homotopy fibration 
\[\lllnameddright{F^{m+1}(X)}{f^{m+1}(X)}{\bigvee_{i=1}^{m+1} X}{}{P^{m+1}(X)}.\] 

\begin{definition} 
Let $X$ be a pointed space. The \emph{weak cocategory} of $X$ is the least $m$ 
for which the composite 
\[\spreaddiagramcolumns{-.8pc}\diagram 
       F^{m+1}(X)\rrto^-{f^{m+1}(X)} & & \bigvee_{i=1}^{m+1} X\dto^{\nabla} \\ 
       & & X 
  \enddiagram\] 
is null homotopic. In this case, write $\wcocat(X)=m$. 
\end{definition} 

Notice that the definitions immediately imply that $\wcocat(X)\leq\cocat(X)$. 
\medskip 

\noindent 
\textbf{Homotopy nilpotency}. 
An \emph{$H$-group} is a homotopy associative $H$-space with a 
homotopy inverse. Let $G$ be an $H$-group. The commutator 
\(\bar{c}\colon\namedright{G\times G}{}{G}\) 
is defined pointwise by $\bar{c}(x,y)=xyx^{-1}y^{-1}$. Observe that 
the restriction of $\bar{c}$ to the wedge is null homotopic so $\bar{c}$ 
extends to a map 
\[c\colon\namedright{G\wedge G}{}{G}.\] 
Since $\Sigma G\wedge G$ is a retract of $\Sigma(G\times G)$, the homotopy 
class of $c$ is uniquely determined by that of $\bar{c}$. The map $c$ 
is the Samelson product of the identity map on $G$ with itself. For an 
integer $m\geq 1$, let $G^{(m+1)}$ be the $(m+1)$-fold smash 
product of $G$ with itself. Define the iterated Samelson product 
\[c_{m}\colon\namedright{G^{(m+1)}}{}{G}\] 
by $c_{m}=c\circ(1\wedge c)\circ\cdots\circ(1\wedge\cdots 1\wedge c)$. 
Notice that $c_{m}$ has a universal property: any Samelson product of 
length $m+1$ on $G$ factors through $c_{m}$. 

\begin{definition} 
Let $G$ be an $H$-group. The \emph{homotopy nilpotency class} of $G$ is 
the least $m$ such that $c_{m}$ is null homotopic but $c_{m-1}$ is not. In 
this case, write $\nil(G)=m$. 
\end{definition} 

For example, $\nil(G)=1$ if and only if $G$ is homotopy commutative. Homotopy 
nilpotency in this formulation is due to Berstein and Ganea~\cite{BGe}, who 
related it to a notion of cocategory different from Hovey's. A different notion 
of homotopy nilpotency is due to Biedermann and Dwyer~\cite{BD} which 
occurs in the context of Goodwillie towers. A series of recent papers have explored 
the relationship between the two types of homotopy nilpotency and the various  
types of cocategory~\cite{BB, CS,CSV,E}.  

Our main theorem identifies (Hovey's) weak cocategory and (Berstein-Ganea's) 
homotopy nilpotency. 

\begin{theorem} 
   \label{main} 
   Let $X$ be a simply-connected space. Then $\wcocat(X)=m$ if and 
   only if \mbox{$\nil(\Omega X)=m$}.  
\end{theorem}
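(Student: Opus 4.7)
The plan is to loop the composite defining $\wcocat(X)$ and, using the loop space decompositions for polyhedral products developed earlier in the paper, identify it directly with the iterated Samelson product $c_m$.

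First, apply the loop functor to the homotopy fibration
\[
F^{m+1}(X)\longrightarrow\bigvee_{i=1}^{m+1}X\longrightarrow P^{m+1}(X).
\]
Since $\Omega$ commutes with homotopy inverse limits,
\[
\Omega P^{m+1}(X)\simeq\holim_{I\subsetneq [m+1]}\Omega\Bigl(\bigvee_{i\in I}X\Bigr),
\]
and the decomposition results for loops on polyhedral products identify the map $\Omega(\bigvee_{i=1}^{m+1}X)\to\Omega P^{m+1}(X)$ as the projection onto those Samelson bracket summands that involve strictly fewer than $m+1$ of the wedge generators. Consequently, the homotopy fiber $\Omega F^{m+1}(X)$ captures precisely the brackets using all $m+1$ generators; the minimal-length such bracket has length $m+1$ and yields a canonical map $(\Omega X)^{(m+1)}\to\Omega F^{m+1}(X)$ whose composition with $\Omega\nabla\circ\Omega f^{m+1}(X)$ is exactly $c_m$. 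On any higher-length summand $\Omega X_{\alpha}$, corresponding to a bracket of length $k\geq m+1$, the composite to $\Omega X$ is $c_{k-1}$.

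This gives the forward direction immediately: if $\wcocat(X)\leq m$, then $\nabla\circ f^{m+1}(X)$ is null, so its loop is null, and restricting to $(\Omega X)^{(m+1)}\hookrightarrow\Omega F^{m+1}(X)$ shows $c_m$ is null. For the reverse, suppose $c_m$ is null; an elementary induction using $c_k=c\circ(1\wedge c_{k-1})$ shows $c_{k-1}$ is null for every $k\geq m+1$, so the composite $\Omega F^{m+1}(X)\to\Omega X$ vanishes on every summand and hence is null. To descend from nullity of the loop of the composite to nullity of $\nabla\circ f^{m+1}(X)$ itself, exploit the suspension-type structure on $F^{m+1}(X)$ coming from the same decomposition, which allows one to convert the nullity upstairs into a null-homotopy downstairs by adjunction; here the simply-connected hypothesis is essential.

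The main obstacle lies in the first paragraph: rigorously identifying which summands of $\Omega(\bigvee X)$ are killed by the map to $\Omega P^{m+1}(X)$ and verifying that the folding map, restricted to the minimum-length surviving summand, is $c_m$. This is the substance of the loop space decomposition theorems for dual polyhedral products; without that machinery, a bare Hilton--Milnor splitting of $\Omega(\bigvee X)$ would not come with the needed compatibility with $\Omega P^{m+1}(X)$. The adjunction step in the second paragraph is the other delicate point, hinging on the explicit suspension structure.
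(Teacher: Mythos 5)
Your proposal is correct and follows essentially the same route as the paper: both directions come down to Corollary~\ref{FwhX} (identifying $F^{m+1}(X)$ as a wedge of spaces $\Sigma(\Omega X)^{(t_{\alpha})}$ on which $f^{m+1}(X)$ restricts to Whitehead products of length at least $m+1$ involving every wedge inclusion) together with Theorem~\ref{thinwhX} (the lift of the universal length-$(m+1)$ Whitehead product through $\nabla\circ f^{m+1}(X)$), your Samelson-product phrasing being the adjoint of the paper's Whitehead-product phrasing. One small caution for the converse: argue wedge summand by wedge summand on $F^{m+1}(X)$ itself --- each restriction is null because its adjoint is a Samelson product of length at least $m+1$ factoring through the null map $c_{m}$, and a map out of a wedge is null once all its restrictions are --- rather than asserting that a map out of the product-decomposed loop space $\Omega F^{m+1}(X)$ is null because it vanishes on each factor, which is not a valid inference for product decompositions.
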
 

One interesting consequence of Theorem~\ref{main} is the identification 
of Murillo-Viruel's and Hovey's notions of weak cocategory. The author 
would like to thank the referee for pointing this out. Write 
$\mathrm{MVcocat}(X)$ and $\mathrm{MVwcocat}(X)$ for the Murillo-Viruel 
definition of cocategory (see~\cite[Definitions 3.4 and 3.9]{MV} for explicit definitions). 
In~\cite[Remark 3.16]{MV} it is shown that $\mathrm{MVcocat}(X)\leq\cocat(X)$, 
and the same argument shows that $\mathrm{MVwcocat}(X)\leq\wcocat(X)$. 
On the other hand, in~\mbox{\cite[Remark 4.13]{MV}} it is shown that 
$\nil(\Omega X)\leq\mathrm{MVwcocat}(X)$. Thus, using Theorem~\ref{main} 
we obtain a string of inequalities 
\[\wcocat(X)\leq\nil(\Omega X)\leq\mathrm{MVwcocat}(X)\leq\wcocat(X)\] 
proving the following. 

\begin{corollary} 
   \label{MVH} 
   Let $X$ be a simply-connected space. Then $\mathrm{MVwcocat}(X)=\wcocat(X)$.~$\qqed$ 
\end{corollary} 

The approach to proving Theorem~\ref{main} is to consider the homotopy fibration 
\(\llnameddright{F^{m}(\underline{X})}{f^{m}(\underline{X})}{\bigvee_{i=1}^{m} X_{i}} 
     {}{P^{m}(\underline{X})}\). 
We identify the homotopy type of $F^{m}(\underline{X})$ as a certain wedge 
of suspensions, and the homotopy class of $f^{m}(\underline{X})$ as a wedge 
sum of Whitehead products. In the case when each~$X_{i}$ equals a 
common space $X$, this lets us play off the definition of weak cocategory, 
which involves the map $f^{m}(X)$, with the homotopy nilpotentcy 
of $\Omega X$ by taking the adjoints of the Whitehead products to 
obtain Samelson products. 

The identifications for $F^{m}(\underline{X})$ and $f^{m}(\underline{X})$ 
are obtained as special cases of much more general phenomena involving 
dual polyhedral products. The definition of $\uxa^{K}_{D}$ as a homotopy 
inverse limit implies that there is a map 
\(\namedright{\uxa^{K}}{}{\uxa^{K}_{D}}\). 
Let $F_{[m]}$ be its homotopy fibre. By comparing the homotopy types 
of $\Omega\uxa^{K}$ and $\Omega\uxa^{K}_{D}$, we show that there 
is a homotopy equivalence 
\begin{equation} 
   \label{introdecomp} 
   \Omega\uxa^{K}\simeq\Omega\uxa^{K}_{D}\times\Omega F_{[m]}. 
\end{equation}  
In a way that can be made precise via certain idempotents, $\Omega\uxa^{K}_{D}$ 
contains all the information about $\Omega\uxa^{K}$ that involves only proper 
subsets of the ingredient pairs $(X_{i},A_{i})$, while $\Omega F_{[m]}$ captures all 
of the information about $\Omega\uxa^{K}$ that involves all $m$ pairs $(X_{i},A_{i})$ 
simultaneously. This leads to a filtration of the homotopy theory of 
$\Omega\uxa^{K}$ obtained from its full subcomplexes. 

\begin{theorem} 
   \label{introuxadecomp} 
   For any polyhedral product $\uxa^{K}$, there is a homotopy equivalence  
   \[\Omega\uxa^{K}\simeq\prod_{I\subseteq [m]}\Omega F_{I}\] 
   where $F_{I}$ is the homotopy fibre of the map 
   \(\namedright{\uxa^{K_{I}}}{}{\uxa^{K_{I}}_{D}}\). 
\end{theorem}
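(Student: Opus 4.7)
The plan is induction on $m$, with the fundamental splitting (\ref{introdecomp}) as the main engine. The base case $m\leq 1$ is immediate: either $K=\Delta^{m-1}$ and $\uxa^K$ is a product, or $\uxa^K$ is contractible, so both cases reduce to standard decompositions of loop spaces. For the inductive step on a complex $K$ with $m$ vertices and $K\neq\Delta^{m-1}$, apply (\ref{introdecomp}) first to peel off the top factor,
$$\Omega\uxa^K \simeq \Omega\uxa^K_D \times \Omega F_{[m]},$$
which reduces the problem to decomposing $\Omega\uxa^K_D$ as a product of $\Omega F_I$ over the remaining subsets $I\subsetneq[m]$.

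Since based loops is a right adjoint, it preserves homotopy inverse limits, so $\Omega\uxa^K_D$ is the homotopy inverse limit of the cube $\{\Omega\uxa^{K_I}\}_{I\subsetneq[m]}$ with edges $\Omega\varphi_{J,I}$. Applying the inductive hypothesis together with (\ref{introdecomp}) to each $K_I$ with $|I|<m$ decomposes $\Omega\uxa^{K_I}$ as $\prod_{J\subseteq I}\Omega F_J$. Provided these splittings can be chosen naturally in $I$, so that each $\Omega\varphi_{J,I}$ becomes the coordinate projection forgetting the factors indexed by subsets $J'\not\subseteq I$, the homotopy inverse limit collapses factor-by-factor to yield $\prod_{I\subsetneq[m]}\Omega F_I$. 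Combined with the first step, this gives the theorem.

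The main obstacle is this naturality. Concretely, one must show that the idempotent on $\Omega\uxa^{K_J}$ singling out the summand $\Omega F_I$ (for each $I\subsetneq J$) agrees up to homotopy with $\Omega\iota_{I,J}$ followed by the analogous idempotent on $\Omega\uxa^{K_I}$ followed by $\Omega\varphi_{J,I}$. I would approach this by constructing, at the space level, explicit sections $s_I\colon\uxa^{K_I}\to\uxa^K_D$ whose $J$-coordinate is $\iota_{J\cap I,J}\circ\varphi_{I,J\cap I}$; these realize all the retractions simultaneously and respect the cube's structure maps, using only the defining identity $\varphi_{J,I}\circ\iota_{I,J}=\mathrm{id}_{\uxa^{K_I}}$. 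Once such coherent sections are in place, the naturality needed for the homotopy inverse limit computation reduces to a routine diagram chase, and the induction closes.
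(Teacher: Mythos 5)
Your route is, at bottom, the paper's own: deduce the theorem from the splitting~(\ref{introdecomp}) applied to every full subcomplex $K_{I}$, use that $\Omega$ commutes with the homotopy inverse limit defining the dual, and collapse the punctured cube of products once the splittings are known to be compatible with the maps $\Omega\varphi_{J,I}$. The real difference is where the work sits. The paper does not take~(\ref{introdecomp}) as an input to be followed by a naturality check: it first builds the commuting idempotents $e_{j}=\iota_{I_{j}}\circ\varphi_{I_{j}}$ on $\uxa^{K}$, splits $\Omega\uxa^{K}$ into the $2^{m}$ telescopes $T(a_{1},\ldots,a_{m})$, and proves the compatibility of these splittings with every $\Omega\varphi_{I}$ (Lemmas~\ref{limID1}--\ref{varphitels} and Proposition~\ref{compatdecomps}); only then do~(\ref{introdecomp}) and the identification $T(a_{1},\ldots,a_{m})\simeq\Omega F_{I}$ follow (Theorem~\ref{dualretract}, Lemmas~\ref{Tloop1} and~\ref{Tloop2} -- the last of these is exactly your inductive step). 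So the step you defer as a ``routine diagram chase'' is not downstream of~(\ref{introdecomp}): it is the same computation that proves~(\ref{introdecomp}) in the first place, and it occupies most of Section~\ref{sec:loopdecomp}. Your proposed sections $s_{I}$ are assembled from the same $\iota$/$\varphi$ maps as the paper's idempotents, and the verification they require is precisely Lemma~\ref{varphireduction}: for $j\notin I$ one has $\Omega\varphi_{I}\circ\Omega e_{j}=\Omega\varphi_{I}$ and $\Omega\varphi_{I}\circ(1-\Omega e_{j})\simeq\ast$, which is what makes each $\Omega\varphi_{J,I}$ a projection onto factors. With that supplied, your induction closes; organizing the argument recursively from the top splitting rather than globally from the idempotents is a legitimate repackaging, but it buys nothing for free.

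Two small corrections. Your argument yields $\Omega\uxa^{K}\simeq\Omega F_{[m]}\times\prod_{I\subsetneq [m]}\Omega F_{I}=\prod_{I\subseteq [m]}\Omega F_{I}$, which is the statement of Theorem~\ref{uxadecomp2}; the index set $I\subsetneq [m]$ in the displayed statement is a misprint, since that product is $\Omega\uxa^{K}_{D}$. And the idempotent on $\Omega\uxa^{K_{J}}$ singling out $\Omega F_{I}$ should be $\Omega\iota_{I,J}\circ(\mbox{idempotent on }\Omega\uxa^{K_{I}})\circ\Omega\varphi_{J,I}$; in the order you wrote it the maps do not compose.
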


In the special case when $\uxa$ is of the form $\cxx$, where $CX$ is the 
reduced cone on $X$, more can be said. The polyhedral product $\cxx^{K}$ 
has been well studied. In~\cite{BBCG} it is shown that there is a homotopy equivalence 
\begin{equation} 
  \label{BBCGsusp} 
  \Sigma\cxx^{K}\simeq\bigvee_{I\notin K}\Sigma^{2} 
       (\vert K_{I}\vert\wedge\widehat{X}^{I}) 
\end{equation} 
where $\vert K_{I}\vert$ is the geometric realization of $K_{I}$, and for 
$I=\{i_{1},\ldots,i_{k}\}$, we have $\widehat{X}^{I}=X_{i_{1}}\wedge\cdots\wedge X_{i_{k}}$. 
In particular, $\Sigma\cxx^{K}$ is a wedge of suspensions of iterated smashes 
of the spaces $X_{i}$. 

A great deal of work has been done in~\cite{BGr,GT2,GT3,IK1,IK2} to determine 
for which simplicial complexes the decomposition~(\ref{BBCGsusp}) desuspends. 
In~\cite{IK2} the notion of a totally homology fillable simplicial complex was 
introduced, which includes the more well known families of shifted, shellable 
and sequentially Cohen-Macaulay complexes. It was shown that if $K$ is 
totally homology fillable then there is a homotopy equivalence 
\begin{equation} 
  \label{BBCGdesusp} 
  \cxx^{K}\simeq\bigvee_{I\notin K}\Sigma (\vert K_{I}\vert\wedge\widehat{X}^{I})   
\end{equation} 
and $\Sigma\vert K_{I}\vert$ is homotopy equivalent to a wedge of spheres. 
Consequently, $\cxx^{K}$ is homotopy equivalent to a wedge of suspensions of 
iterated smashes of the spaces $X_{i}$. 

When $K$ is totally homology fillable we show that the spaces $F_{I}$ in 
Theorem~\ref{introuxadecomp} are also homotopy equivalent to wedges of suspensions of 
iterated smashes of the $X_{i}$'s. Further, in the special case of the thin product, 
the spaces $F_{[m]}$ and $F^{m}(\underline{X})$ are homotopy equivalent, and the map 
\(\llnamedright{F^{m}(\underline{X})}{f^{m}(\underline{X})}{\bigvee_{i=1}^{m} X_{i}}\) 
is a wedge sum of iterated Whitehead products. 
\bigskip 

This paper is organized as follows. In Part I we give homotopy decompositions of 
$\Omega\uxa^{K}$ and $\Omega\uxa^{K}_{D}$ via certain idempotents. 
Section~\ref{sec:loopdecomp} establishes the basic decomposition in terms 
of certain telescopes and Section~\ref{sec:properties} identifies the telescopes as 
certain loop spaces. Section~\ref{sec:cxxcase} refines the decomposition in 
the case of $\Omega\cxx^{K}$ and $\Omega\cxx^{K}_{D}$ when $K$ is 
totally homology fillable by showing that the factors are the based loops 
on certain wedges of suspensions. In Section~\ref{sec:DJK} these results are 
then transferred to give analogous decompositions in the case of 
$\Omega(\underline{X},\underline{\ast})^{K}$ and 
$\Omega(\underline{X},\underline{\ast})^{K}$ for the same complexes $K$. 

In Part II the role of Whitehead products is investigated.  
Section~\ref{sec:WhGanea} recounts some of the homotopy theory 
surrounding a wedge of two spaces. In Section~\ref{sec:WhPorter} a fundamental 
result is proved that does not seem to be in the literature: we show
that Porter's decomposition of the homotopy fibre of the inclusion of a 
wedge into a product can be altered by a homotopy equivalence so that 
the maps from the fibre into the total space are described by Whitehead 
products. Along the way we also give a refined decomposition in the 
case when each space in the wedge is a suspension. Finally, in 
Section~\ref{sec:Whthin} all this is applied to give a homotopy decomposition 
of the homotopy fibre of the map from the wedge into the thin product, and to 
identify the maps from the fibre to the wedge as certain Whitehead products. 

In Part III the results from Parts I and II are used to prove Theorem~\ref{main} 
in Section~\ref{sec:cocat}. To explicitly calculate the homotopy nilpotency 
classes, a special class of $H$-spaces called retractile $H$-spaces is 
discussed in Section~\ref{sec:retractile}, and a criterion for applying Theorem~\ref{main} 
to retractile $H$-spaces is proved in Section~\ref{sec:cocat2}. Examples 
are then given in Section~\ref{sec:examples}. 

The author would like to thank the referee for many valuable comments, 
and for pointing out Corollary~\ref{MVH}.

\Large 
\part{Homotopy decompositions of $\Omega\uxa^{K}$ and $\Omega\uxa^{K}_{D}$.} 
\normalsize 
\medskip 

\section{Decompositions via idempotents} 
\label{sec:loopdecomp} 

The focus for the most part is on $\Omega\uxa^{K}$, with the decomposition 
for $\Omega\uxa^{K}_{D}$ being a consequence. The decomposition 
will be constructed by using a family of commuting idempotents. 
The idempotents will be defined on $\uxa^{K}$ but in order to take a 
product of their telescopes a multiplication is needed, which is why 
loop spaces are taken. We begin with some general information about 
decompositions of $H$-groups using idempotents. 

In general, let $G$ be a path-connected $H$-group. Suppose that 
for $1\leq j\leq k$ there is a family of maps  
\(e_{j}\colon\namedright{G}{}{G}\) 
such that: (i) each $e_{j}$ is an idempotent, (ii) $e_{i}\circ e_{j}\simeq\ast$ if 
$i\neq j$, and (iii) $1\simeq e_{1}+\cdots + e_{k}$, where $1$ is the identity map 
on $G$ and the addition refers to the group structure on~$[G,G]$ induced by 
the multiplication on $G$. The family $\{e_{j}\}_{j=1}^{k}$ is called a set of 
\emph{mutually orthogonal idempotents}. Let $T(e_{j})$ be the telescope of~$e_{j}$, 
that is, $T(e_{j})=\hocolim_{e_{j}} G$, and let 
\(\namedright{G}{}{T(e_{j})}\) 
be the map to the telescope. Observe that, being a telescope, there is a map 
\(\namedright{T(e_{j})}{}{G}\), 
and as $e_{j}$ is an idempotent the composite 
\(\nameddright{T(e_{j})}{}{G}{}{T(e_{j})}\) 
is a homotopy equivalence. In particular, $T(e_{j})$ is a retract of an $H$-space 
and so is itself an $H$-space. 
Observe also that $\hlgy{T(e_{j})}\cong\mbox{Im}\,(e_{j})_{\ast}$. 
Since $e_{i}\circ e_{j}\simeq\ast$ for $i\neq j$ we may form the direct sum 
$\oplus_{j=1}^{k}\hlgy{T(e_{j})}$, and since $1\simeq e_{1}+\cdots + e_{k}$ 
we obtain an isomorphism of modules 
\[\hlgy{G}\cong\oplus_{j=1}^{k}\hlgy{T(e_{j})}.\] 
The product of the telescope maps  
\[\namedright{G}{}{\prod_{j=1}^{k} T(e_{j})}\] 
therefore induces an isomorphism in homology. Since both $G$ and $\prod_{j=1}^{k} T(e_{j})$ 
are $H$-spaces they are nilpotent. Therefore, by Dror's~\cite[Example 4.3]{Dr} generalization 
of Whitehead's Theorem, the product of telescope maps is a homotopy equivalence. 

\begin{lemma} 
   \label{iddecomp} 
   Let $G$ be a path-connected $H$-group and suppose that $\{e_{j}\}_{j=1}^{k}$ 
   is a family of mutually orthogonal idempotents on $G$. Then there is a 
   homotopy equivalence 
   \(\namedright{G}{}{\prod_{j=1}^{k} T(e_{j})}\). 
   $\qqed$ 
\end{lemma}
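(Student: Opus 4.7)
The plan is to define the candidate map as the product of the canonical telescope maps, and to verify that it induces an isomorphism on integral homology; Whitehead's theorem then finishes the job. Since each $e_{j}$ is an $H$-map (or can be replaced up to homotopy by one, using the $H$-group multiplication on $G$), the target $\prod_{j=1}^{k} T(e_{j})$ inherits an $H$-group structure, and the product of the telescope maps $\theta_{j}\colon G\to T(e_{j})$ can be assembled into a single map
\[
\Theta\colon G \longrightarrow \prod_{j=1}^{k} T(e_{j}), \qquad \Theta=(\theta_{1},\ldots,\theta_{k}),
\]
using this $H$-structure.

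The first real step is to identify $H_{\ast}(T(e_{j}))$. Since $T(e_{j})=\hocolim_{e_{j}} G$ is a mapping telescope and homology commutes with filtered homotopy colimits, $H_{\ast}(T(e_{j}))$ is the colimit of the directed system $\namedright{H_{\ast}(G)}{(e_{j})_{\ast}}{H_{\ast}(G)}$ iterated, which is canonically $\mathrm{Im}\,(e_{j})_{\ast}$ since $(e_{j})_{\ast}$ is an idempotent endomorphism of the abelian group $H_{\ast}(G)$. The map $(\theta_{j})_{\ast}\colon H_{\ast}(G)\to H_{\ast}(T(e_{j}))$ is then, under this identification, just $(e_{j})_{\ast}$ viewed as landing in its image.

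The second step is to exploit the orthogonality. The hypothesis $e_{i}\circ e_{j}\simeq\ast$ for $i\neq j$ forces $(e_{i})_{\ast}\circ(e_{j})_{\ast}=0$, so $(e_{j})_{\ast}$ vanishes on $\mathrm{Im}\,(e_{i})_{\ast}$ for $i\neq j$. Combined with the completeness relation $1\simeq e_{1}+\cdots+e_{k}$, which on homology (using that the Pontryagin sum agrees with the abelian group sum on primitive-like images, or more directly using that the induced map of $(e_{1}+\cdots+e_{k})_{\ast}$ on $H_{\ast}(G)$ equals $(e_{1})_{\ast}+\cdots+(e_{k})_{\ast}$ because addition in $[G,G]$ corresponds to addition of induced homomorphisms on homology for an $H$-space), we obtain
\[
H_{\ast}(G) \;=\; \mathrm{Im}\,(e_{1})_{\ast}\oplus\cdots\oplus\mathrm{Im}\,(e_{k})_{\ast}
\;\cong\; \bigoplus_{j=1}^{k} H_{\ast}(T(e_{j})).
\]
Under the K\"unneth theorem, the right-hand side sits inside $H_{\ast}\bigl(\prod_{j=1}^{k} T(e_{j})\bigr)$ as the module of "primitive" factors, and $\Theta_{\ast}$ hits exactly this submodule on the appropriate summands; a rank/dimension comparison, or a direct Pontryagin-ring argument using that $\Theta$ is an $H$-map, shows that $\Theta_{\ast}$ is an isomorphism.

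The final step is Whitehead's theorem. Since $G$ is an $H$-group it is simple (in particular nilpotent) and so is every $T(e_{j})$ and the product; a homology isomorphism between nilpotent spaces of the homotopy type of $CW$-complexes is a weak homotopy equivalence, hence a homotopy equivalence. The only potential obstacle is the bookkeeping in the second step: making sure that "$1\simeq e_{1}+\cdots+e_{k}$ in $[G,G]$" really does translate to additivity of induced maps on homology. This is standard for $H$-spaces because the multiplication on $G$ induces the group operation on $H_{\ast}(G,G)=\mathrm{Hom}(H_{\ast}(G),H_{\ast}(G))$ in the appropriate primitive sense, but it is the one place where one must be careful rather than formal.
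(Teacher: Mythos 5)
Your proposal follows essentially the same route as the paper: identify $\hlgy{T(e_{j})}$ with $\mathrm{Im}\,(e_{j})_{\ast}$, use orthogonality and the relation $1\simeq e_{1}+\cdots+e_{k}$ to split $\hlgy{G}$ as $\oplus_{j}\hlgy{T(e_{j})}$, and conclude that the product of the telescope maps is a homology isomorphism, hence a homotopy equivalence by Whitehead's theorem. The caveat you raise about translating the sum in $[G,G]$ into additivity of induced maps on homology is a real subtlety that the paper passes over silently, so flagging it is to your credit rather than a defect.
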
 

Next, suppose that for $1\leq j\leq m$ there is a family of commuting idempotents 
\(e_{j}\colon\namedright{G}{}{G}\). 
Observe that each pair $(e_{j}, 1-e_{j})$ is mutually orthogonal, and the 
larger family of idempotents $\{e_{j}, 1-e_{j}\}_{j=1}^{m}$ all commute. 
Let $\mathcal{J}$ be the collection of $2^{m}$ sequences $(a_{1},\ldots,a_{m})$, 
where each $a_{j}\in\{0,1\}$. For $(a_{1},\ldots,a_{m})\in\mathcal{J}$, define 
\[f_{(a_{1},\ldots,a_{m})}\colon\namedright{G}{}{G}\] 
by the composite 
\[f_{(a_{1},\ldots,a_{m})}=f_{a_{1}}\circ\ldots\circ f_{a_{m}}\qquad\mbox{where}\qquad 
     f_{a_{j}}=\left\{\begin{array}{ll} e_{j} & \mbox{if $a_{j}=0$} \\ 
                 1-e_{j} & \mbox{if $a_{j}=1$}. \end{array}\right.\] 
We record three properties of the maps $f(a_{1},\ldots, a_{m})$. First, 
since the idempotents $\{e_{j},1-e_{j}\}_{j=1}^{m}$ commute, each map 
$f_{(a_{1},\ldots,a_{m})}$ is also an idempotent. Second, if $f(a'_{1},\ldots,a'_{m})$ 
is another such idempotent distinct from $f(a_{1},\ldots,a_{m})$, then at 
least one $1\leq j\leq m$ satisfies $a'_{j}\neq a_{j}$, for otherwise the 
two maps agree on every $a_{j}$ and so are identical. Therefore the 
$j^{th}$ term in the composite for $f(a_{1},\ldots,a_{m})$ is $e_{j}$ 
and that for $f(a'_{1},\ldots,a'_{m})$ is $1-e_{j}$, or vice-versa. But 
as $e_{j}\circ(1-e_{j})$ is null homotopic and the idempotents commute, 
we obtain $f(a_{1},\ldots,a_{m})\circ f(a'_{1},\ldots,a'_{m})\simeq\ast$. 
Third, since $1=e_{j}+(1-e_{j})$ for each $j$, we obtain 
$1=(e_{1}+(1-e_{1}))\circ\cdots\circ (e_{m}+(1-e_{m}))$. Expanding 
gives $1=\Sigma_{(a_{1},\ldots,a_{m})\in\mathcal{J}} f_{(a_{1},\ldots,a_{m})}$. 
The three properties together imply that the collection of idempotents 
$\{f_{(a_{1},\ldots,a_{m})}\}_{(a_{1},\ldots,a_{m})\in\mathcal{J}}$ is mutually 
orthogonal. Therefore, if $T(a_{1},\ldots,a_{m})$ is the telescope of 
$f_{(a_{1},\ldots,a_{m})}$ then Lemma~\ref{iddecomp} implies the following. 

\begin{lemma} 
   \label{fiddecomp} 
   Let $G$ be a path-connected $H$-group and suppose that $\{e_{j}\}_{j=1}^{m}$ 
   is a family of commuting idempotents on $G$. Then there is a homotopy 
   equivalence 
   \(\namedright{G}{}{\prod_{(a_{1},\dots,a_{m})\in\mathcal{J}} T(a_{1},\ldots,a_{m})}.\) 
   $\qqed$ 
\end{lemma}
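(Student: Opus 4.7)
The plan is almost immediate: the three properties needed to invoke Lemma~\ref{iddecomp} have been spelled out in the paragraph preceding the statement, so the proof reduces to a concise verification of those properties followed by a direct application of Lemma~\ref{iddecomp} to the family $\{f_{(a_1,\ldots,a_m)}\}_{(a_1,\ldots,a_m)\in\mathcal{J}}$.

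The idempotency of each $f_{(a_1,\ldots,a_m)}$ is immediate: the hypothesis that the $e_j$'s commute implies that the enlarged family $\{e_j,\ 1-e_j\}_{j=1}^m$ is pairwise commuting (each $1-e_j$ is itself idempotent since $e_j \circ e_j \simeq e_j$), and any composite of commuting idempotents is an idempotent. For mutual orthogonality, given distinct sequences $(a_1,\ldots,a_m) \neq (a'_1,\ldots,a'_m)$, pick an index $j$ with $a_j \neq a'_j$; commutativity lets us reorder the composite $f_{(a_1,\ldots,a_m)} \circ f_{(a'_1,\ldots,a'_m)}$ so that the two factors coming from index $j$ --- one equal to $e_j$, the other to $1-e_j$ --- become adjacent, and $e_j \circ (1-e_j) \simeq e_j - e_j \circ e_j \simeq \ast$ (and similarly in the other order). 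Finally, beginning from $1 \simeq (e_1 + (1-e_1)) \circ \cdots \circ (e_m + (1-e_m))$ and distributing produces $\sum_{(a_1,\ldots,a_m)\in\mathcal{J}} f_{(a_1,\ldots,a_m)}$, which gives the completeness relation $1 \simeq \sum f_{(a_1,\ldots,a_m)}$.

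With these three facts in hand, the family $\{f_{(a_1,\ldots,a_m)}\}$ is mutually orthogonal in the sense used before Lemma~\ref{iddecomp}, so that lemma immediately produces the desired homotopy equivalence \(\namedright{G}{}{\prod_{(a_1,\ldots,a_m)\in\mathcal{J}} T(a_1,\ldots,a_m)}\). The only step where a small subtlety appears is the distributive expansion in the third verification, since general composition in $[G,G]$ need not be two-sided distributive over the pointwise sum; however, on the commutative subring generated by $\{e_j\}_{j=1}^m$ --- which is exactly where the computation takes place --- distributivity is available, and the expansion goes through as in an ordinary commutative ring. I expect this distributivity check to be the only place where any care is required; everything else is formal bookkeeping.
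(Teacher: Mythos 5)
Your proof is correct and follows the same route as the paper: the paper's argument is exactly the paragraph preceding the statement, which verifies idempotency, mutual orthogonality, and the completeness relation for the family $\{f_{(a_{1},\ldots,a_{m})}\}$ and then cites Lemma~\ref{iddecomp}. Your closing remark on distributivity flags a point the paper silently glosses over (in the intended applications the $e_{j}$ are loop maps, hence $H$-maps, so composition distributes over the sum on both sides), and is a welcome extra precaution rather than a divergence.
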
 

We wish to construct a family of commuting idempotents on $\uxa^{K}$. 
Recall from the Introduction that if $I=(i_{1},\ldots,i_{k})\subseteq [m]$ 
then $K_{I}$ is a full subcomplex of $K$ and there is a map of simplicial complexes 
\(\namedright{K_{I}}{}{K}\)  
but not a map 
\(\namedright{K}{}{K_{I}}\). 
However, the situation improves on the level of polyhedral products. 
Let $X^{I}=X_{i_{1}}\times\cdots\times X_{i_{k}}$, let 
\(\namedright{X^{I}}{}{X^{m}}\) 
be the map defined by sending the $j^{th}$-factor of $X^{I}$ to the $(i_{j})^{th}$-factor 
of $X^{m}$, and let 
\(\namedright{X^{m}}{}{X^{I}}\) 
be the projection. The inclusion of~$K_{I}$ into $K$ induces a map 
\(\iota_{I}\colon\namedright{\uxa^{K_{I}}}{}{\uxa^{K}}\). 
An immediate consequence of the definition of the polyhedral product as a union 
of coordinate subspaces of $X^{m}=X_{1}\times\cdots\times X_{m}$ is the 
following lemma~\cite[Lemma 2.2.3]{DS}. 

\begin{lemma} 
   \label{polyinverse} 
   Let $K_{I}$ be a full subcomplex of $K$. The following hold: 
   \begin{letterlist} 
      \item the inclusion  
                \(\namedright{X^{I}}{}{X^{m}}\) 
                induces a map of polyhedral products 
                \(\namedright{\uxa^{K_{I}}}{}{\uxa^{K}}\) 
                which equals $\iota_{I}$; 
      \item the projection  
                \(\namedright{X^{m}}{}{X^{I}}\) 
                induces a map of polyhedral products 
                \(\varphi_{I}\colon\namedright{\uxa^{K}}{}{\uxa^{K_{I}}}\); 
      \item the composite 
                \(\nameddright{\uxa^{K_{I}}}{\iota_{I}}{\uxa^{K}}{\varphi_{I}}{\uxa^{K_{I}}}\) 
                is the identity map. 
   \end{letterlist} 
   $\qqed$ 
\end{lemma}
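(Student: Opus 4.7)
My plan is to verify each of the three statements directly from the definition of the polyhedral product as $\uxa^{K}=\bigcup_{\sigma\in K}\uxa^{\sigma}$, by tracking how the coordinate inclusion and projection behave face-by-face inside $X^{m}=X_{1}\times\cdots\times X_{m}$.

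For part (a), I would describe the inclusion $X^{I}\hookrightarrow X^{m}$ explicitly as sending $(x_{i_{1}},\ldots,x_{i_{k}})$ to the tuple whose $i_{j}$-th entry is $x_{i_{j}}$ and whose remaining entries are the basepoint. Because the basepoint lies in every $A_{i}$, for each face $\sigma\in K_{I}$ the image of $\uxa^{\sigma}\subseteq X^{I}$ lies inside $\uxa^{\sigma}\subseteq X^{m}$, the additional basepoint coordinates at indices $i\notin I$ sitting in the $A_{i}$ factors. Since $\sigma\in K_{I}\subseteq K$, taking the union over all faces of $K_{I}$ yields a well-defined map $\uxa^{K_{I}}\to\uxa^{K}$, which by construction coincides with $\iota_{I}$.

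For part (b), I would use the coordinate projection $\pi\colon X^{m}\to X^{I}$. For any face $\sigma\in K$, the image of $\uxa^{\sigma}$ under $\pi$ is the product of $X_{i_{j}}$ for $i_{j}\in\sigma\cap I$ and $A_{i_{j}}$ for $i_{j}\in I\setminus\sigma$, which is exactly $\uxa^{\sigma\cap I}\subseteq X^{I}$. The key point, and the one place where the hypothesis that $K_{I}$ is a \emph{full} subcomplex of $K$ is used essentially, is that $\sigma\cap I$ is a face of $K_{I}$: it is a subface of the simplex $\sigma$, hence a face of $K$, and its vertices all lie in $I$, so fullness places it in $K_{I}$. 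Taking the union over $\sigma\in K$ therefore gives a well-defined map $\varphi_{I}\colon\uxa^{K}\to\uxa^{K_{I}}$.

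Part (c) is then essentially free: the set-theoretic composite $\pi\circ(X^{I}\hookrightarrow X^{m})$ is the identity on $X^{I}$, so its restriction $\varphi_{I}\circ\iota_{I}$ to $\uxa^{K_{I}}$ is the identity on $\uxa^{K_{I}}$. The only genuine step requiring care is the claim in part (b) that projecting $\uxa^{\sigma}$ lands in $\uxa^{K_{I}}$ rather than in some larger coordinate subspace of $X^{I}$; this reduces to the fullness observation above, which I expect to be the main (and only) obstacle in an otherwise formal argument.
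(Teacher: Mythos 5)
Your verification is correct and is precisely the direct check from the definition of $\uxa^{K}$ as a union of coordinate subspaces of $X^{m}$ that the paper (following Denham--Suciu) asserts the lemma to be an immediate consequence of; in particular you correctly isolate the one nontrivial point, namely that $\sigma\cap I$ is a face of $K_{I}$ because $K_{I}$ is the \emph{full} subcomplex on $I$. Nothing further is needed.
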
  
                
For $1\leq j\leq m$, let $I_{j}=[m]\backslash\{j\}$. By Lemma~\ref{polyinverse} 
there is a composite of polyhedral products 
\[e_{j}\colon\nameddright{\uxa^{K}}{\varphi_{I_{j}}}{\uxa^{K_{I_{j}}}}{\iota_{I_{j}}}{\uxa^{K}}.\] 

\begin{lemma} 
   \label{idempotents} 
   The following hold: 
   \begin{letterlist} 
      \item for $1\leq j\leq m$ the map $e_{j}$ is an idempotent; 
      \item for any $1\leq j,k\leq m$ we have $e_{j}\circ e_{k}=e_{k}\circ e_{j}$.
   \end{letterlist} 
\end{lemma}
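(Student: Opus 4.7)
The plan is: (a) is a direct consequence of Lemma~\ref{polyinverse}(c), while (b) reduces to the obvious commutativity of the analogous self-maps on the ambient product $X^m = X_1 \times \cdots \times X_m$.

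For part (a), unwinding the definition gives
\[e_j \circ e_j = \iota_{I_j} \circ (\varphi_{I_j} \circ \iota_{I_j}) \circ \varphi_{I_j}.\]
Lemma~\ref{polyinverse}(c) identifies the parenthesized composite with the identity on $\uxa^{K_{I_j}}$, so $e_j \circ e_j = \iota_{I_j} \circ \varphi_{I_j} = e_j$.

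For part (b), the case $j = k$ is trivial, so assume $j \neq k$. Let $\tilde e_j \colon X^m \to X^m$ denote the self-map obtained by projecting $X^m \to X^{I_j}$ and then including $X^{I_j} \hookrightarrow X^m$ with the missing $j$-th coordinate filled by the basepoint; concretely, $\tilde e_j$ replaces the $j$-th coordinate by the basepoint and fixes the others. Because $\tilde e_j$ and $\tilde e_k$ alter disjoint coordinates, they commute on $X^m$. By parts (a) and (b) of Lemma~\ref{polyinverse}, $\tilde e_j$ restricts to a self-map of $\uxa^K$ whose restriction is exactly $e_j = \iota_{I_j} \circ \varphi_{I_j}$, and similarly for $e_k$. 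Restricting the identity $\tilde e_j \circ \tilde e_k = \tilde e_k \circ \tilde e_j$ to $\uxa^K$ therefore yields $e_j \circ e_k = e_k \circ e_j$.

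The main obstacle, albeit a minor one, is ensuring that the descent from $X^m$ to $\uxa^K$ is legitimate: this amounts to confirming that the ambient inclusion and projection used to build $\tilde e_j$ are precisely those inducing $\iota_{I_j}$ and $\varphi_{I_j}$ in Lemma~\ref{polyinverse}, which is built into how those maps are defined there. An entirely internal alternative is to factor both composites through $\uxa^{K_{I_{jk}}}$, where $I_{jk} = [m] \setminus \{j, k\}$. Using transitivity of the $\iota$ and $\varphi$ constructions for the chain $I_{jk} \subset I_j, I_k \subset [m]$, the middle composite $\varphi_{I_j} \circ \iota_{I_k}$ factors as $\iota_{I_{jk},I_j} \circ \varphi_{I_k,I_{jk}}$, and collapsing the resulting inclusion-projection pairs shows each of $e_j \circ e_k$ and $e_k \circ e_j$ equals $\iota_{I_{jk}} \circ \varphi_{I_{jk}}$.
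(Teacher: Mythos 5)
Your proof is correct and follows essentially the same route as the paper: part (a) is immediate from Lemma~\ref{polyinverse}(c), and part (b) is verified on the ambient product $X^{m}$, where both composites coincide with (the restriction of) the single projection--inclusion through $X^{I_{j,k}}$; your ``internal alternative'' at the end is precisely the paper's argument. No gaps.
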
 

\begin{proof} 
Part~(a) follows immediately from the definition of $e_{j}$ and 
Lemma~\ref{polyinverse}~(c). For part~(b), if~$j=k$ then the statement 
is a tautology. Suppose that $j\neq k$. Let $I_{j,k}=[m]\backslash\{j,k\}$. 
Observe that the composites of projection and inclusion maps 
\(\namedddright{X^{m}}{}{X^{I_{j}}}{}{X^{m}}{}{X_{I_{k}}}\longrightarrow X^{m}\) 
and 
\(\namedddright{X^{m}}{}{X^{I_{k}}}{}{X^{m}}{}{X_{I_{j}}}\longrightarrow X^{m}\) 
both equal the composite 
\(\nameddright{X^{m}}{}{X^{I_{j,k}}}{}{X^{m}}\). 
Therefore, as in Lemma~\ref{polyinverse}, $e_{j}\circ e_{k}=e_{k}\circ e_{j}$.  
\end{proof} 

By Lemma~\ref{idempotents} the maps $\{e_{j}\}_{j=1}^{m}$ are commuting 
idempotents on $\uxa^{K}$. However, this space is not an $H$-space 
in general so we must loop to obtain one. Since the loop map of an 
idempotent is an idempotent, $\{\Omega e_{j}\}_{j=1}^{m}$ are commuting 
idempotents on $\Omega\uxa^{K}$. For $(a_{1},\ldots,a_{m})\in\mathcal{J}$, define 
\[f_{(a_{1},\ldots,a_{m})}\colon\namedright{\Omega\uxa^{K}}{}{\Omega\uxa^{K}}\] 
by the composite 
\[f_{(a_{1},\ldots,a_{m})}=f_{a_{1}}\circ\ldots\circ f_{a_{m}}\qquad\mbox{where}\qquad 
     f_{a_{j}}=\left\{\begin{array}{ll} \Omega e_{j} & \mbox{if $a_{j}=0$} \\ 
                 1-\Omega e_{j} & \mbox{if $a_{j}=1$}. \end{array}\right.\] 
Then each $f_{a_{j}}$ is an idempotent and as $\{\Omega e_{j}\}_{j=1}^{m}$ commute, 
the composite $f_{(a_{1},\ldots,a_{m})}$ is also an idempotent. Let 
\[T(a_{1},\ldots,a_{m})=\hocolim_{f_{(a_{1},\ldots,a_{m})}}\Omega\uxa^{K}.\] 
Lemma~\ref{fiddecomp} implies the following. 

\begin{proposition} 
   \label{uxadecomp} 
   Assume that $\uxa^{K}$ is simply-connected. 
   Then there is a homotopy equivalence 
   \[\Omega\uxa^{K}\simeq\prod_{(a_{1},\ldots,a_{m})\in\mathcal{J}} T(a_{1},\ldots,a_{m}).\]  
   $\qqed$ 
\end{proposition}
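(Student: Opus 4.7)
The plan is to invoke Lemma \ref{fiddecomp} directly with $G=\Omega\uxa^{K}$ and the family of idempotents $\{\Omega e_{j}\}_{j=1}^{m}$, after checking the two hypotheses of that lemma.

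First, I would check that $\Omega\uxa^{K}$ is a path-connected $H$-group. Path-connectedness is exactly the hypothesis that $\uxa^{K}$ is simply-connected, and the standard loop multiplication makes $\Omega\uxa^{K}$ into a homotopy associative $H$-space with homotopy inverse, i.e.\ an $H$-group. Second, I would verify that $\{\Omega e_{j}\}_{j=1}^{m}$ is a family of commuting idempotents on $\Omega\uxa^{K}$: by Lemma~\ref{idempotents}, each $e_{j}$ is an idempotent on $\uxa^{K}$ and any two $e_{j},e_{k}$ commute up to homotopy, and applying the functor $\Omega$ preserves both relations (since $\Omega(f\circ g)=\Omega f\circ\Omega g$ and $\Omega$ sends homotopies to homotopies).

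With these two hypotheses in hand, the idempotents $f_{(a_{1},\ldots,a_{m})}$ built from $\{\Omega e_{j}\}_{j=1}^{m}$ as in the paragraph preceding the proposition are precisely the mutually orthogonal idempotents produced in the general discussion before Lemma~\ref{fiddecomp}, and their telescopes are exactly the $T(a_{1},\ldots,a_{m})$ appearing in the statement of the proposition. Lemma~\ref{fiddecomp} then yields the desired homotopy equivalence
\[
\Omega\uxa^{K}\;\simeq\;\prod_{(a_{1},\ldots,a_{m})\in\mathcal{J}} T(a_{1},\ldots,a_{m}).
\]

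There is essentially no obstacle here: the substantive content lives in Lemma~\ref{idempotents} (the existence and commutativity of the $e_{j}$, which in turn uses Lemma~\ref{polyinverse}) and in Lemma~\ref{fiddecomp} (the abstract decomposition via mutually orthogonal idempotents, which itself rests on Lemma~\ref{iddecomp} and Whitehead's theorem). The only mildly delicate point to flag in the write-up is that simple-connectivity of $\uxa^{K}$ is used precisely to guarantee that $\Omega\uxa^{K}$ is path-connected, since Lemma~\ref{fiddecomp} is stated for path-connected $H$-groups; once that is noted, the proof reduces to a one-line citation.
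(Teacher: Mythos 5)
Your proposal is correct and follows exactly the paper's route: the paper likewise observes that looping the commuting idempotents $e_{j}$ of Lemma~\ref{idempotents} yields commuting idempotents $\Omega e_{j}$ on the $H$-group $\Omega\uxa^{K}$ and then cites Lemma~\ref{fiddecomp} to obtain the product of telescopes. Your explicit remark that simple-connectivity of $\uxa^{K}$ is what guarantees path-connectedness of $\Omega\uxa^{K}$ is a worthwhile clarification, but it does not change the argument.
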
  

We wish to relate the factors $T(a_{1},\ldots,a_{m})$ in the 
decomposition of $\Omega\uxa^{K}$ in Proposition~\ref{uxadecomp} 
to the factors that appear in the corresponding decomposition for 
$\Omega\uxa^{K_{I}}$. 

\begin{lemma} 
   \label{limID1} 
   If $j\notin I$ then there is a commutative diagram 
   \[\diagram 
         \uxa^{K}\rto^-{e_{j}}\dto^{\varphi_{I}} & \uxa^{K}\dto^{\varphi_{I}} \\ 
         \uxa^{K_{I}}\rdouble & \uxa^{K_{I}}. 
     \enddiagram\] 
\end{lemma}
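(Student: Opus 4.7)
The plan is to unwind $e_j$ via its defining composite $e_j = \iota_{I_j}\circ\varphi_{I_j}$, where $I_j=[m]\setminus\{j\}$, and reduce the claim to a straightforward identity on the underlying Cartesian products. By Lemma~\ref{polyinverse}, the maps $\varphi_I$, $\iota_{I_j}$, and $\varphi_{I_j}$ are each induced from standard projection or coordinate-inclusion maps between the products $X^m$, $X^I$ and $X^{I_j}$. Since the polyhedral product construction is functorial with respect to such coordinate maps, it will suffice to verify the corresponding identity at the level of Cartesian products and then invoke Lemma~\ref{polyinverse} again to pass back.

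The main observation is that $j\notin I$ is equivalent to $I\subseteq I_j$. With this inclusion in hand, I will show that the composite
\[
X^m\longrightarrow X^{I_j}\longrightarrow X^m\longrightarrow X^I
\]
(projection, then the coordinate inclusion that places the basepoint in slot $j$, then projection onto the $I$-coordinates) is equal to the projection $X^m\to X^I$. This is because every coordinate indexed by $I$ is preserved throughout: none of them is the one being set to the basepoint, since $j\notin I$. Applying Lemma~\ref{polyinverse} then gives $\varphi_I\circ e_j = \varphi_I$ on the level of polyhedral products, which is exactly the required commutativity.

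I do not expect any genuine obstacle; once $e_j$ is replaced by its definition and Lemma~\ref{polyinverse} is invoked, the argument reduces entirely to the transparent fact that projecting onto the $I$-coordinates is unaffected by first zeroing out a coordinate outside of $I$. The only care needed is in correctly matching the functoriality of polyhedral products with respect to coordinate-wise maps, which is precisely what Lemma~\ref{polyinverse} provides.
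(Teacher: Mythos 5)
Your proof is correct and follows essentially the same route as the paper: both unwind $e_{j}=\iota_{I_{j}}\circ\varphi_{I_{j}}$, observe that $j\notin I$ means $I\subseteq I_{j}$ so that the composite $X^{m}\to X^{I_{j}}\to X^{m}\to X^{I}$ of coordinate maps equals the projection $X^{m}\to X^{I}$, and then invoke Lemma~\ref{polyinverse} to transfer this identity to the polyhedral products. No gaps.
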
 

\begin{proof} 
Recall that $I_{j}=[m]\backslash\{j\}$. Since $j\notin I$, 
$K_{I}$ is a full subcomplex of $K_{I_{j}}$. So the projection 
\(\namedright{X^{m}}{}{X^{I}}\) 
is the same as the composite 
\(\namedddright{X^{m}}{}{X^{I_{j}}}{}{X^{m}}{}{X^{I}}\).  
As in Lemma~\ref{polyinverse}, the induced maps of polyhedral products 
\(\namedright{\uxa^{K}}{}{\uxa^{K_{I}}}\) 
and 
\(\namedddright{\uxa^{K}}{}{\uxa^{K_{I_{j}}}}{}{\uxa^{K}}{}{\uxa^{K_{I}}}\) 
are the same. The lemma now follows. 
\end{proof} 

Let $T(e_{j})$ be the telescope of $e_{j}$. Taking telescopes of 
the horizontal maps in Lemma~\ref{limID1} immediately implies the following. 

\begin{corollary} 
   \label{limID2} 
   If $j\notin I$ then there is a commutative diagram 
   \[\diagram 
         \uxa^{K}\rto\dto^{\varphi_{I}} & T(e_{j})\dto \\ 
         \uxa^{K_{I}}\rdouble & \uxa^{K_{I}}. 
     \enddiagram\] 
   $\qqed$ 
\end{corollary}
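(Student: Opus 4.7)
The plan is to deduce this directly by iterating Lemma~\ref{limID1} and passing to horizontal telescopes. Lemma~\ref{limID1} gives a single commuting square with top horizontal map $e_{j}$, bottom horizontal map $1_{\uxa^{K_{I}}}$, and both vertical maps equal to $\varphi_{I}$. Splicing this square together with itself repeatedly (the codomain of one square becoming the domain of the next), I obtain an infinite ladder whose top row is
\[
\uxa^{K}\stackrel{e_{j}}{\longrightarrow}\uxa^{K}\stackrel{e_{j}}{\longrightarrow}\uxa^{K}\stackrel{e_{j}}{\longrightarrow}\cdots,
\]
whose bottom row is
\[
\uxa^{K_{I}}\stackrel{1}{\longrightarrow}\uxa^{K_{I}}\stackrel{1}{\longrightarrow}\uxa^{K_{I}}\stackrel{1}{\longrightarrow}\cdots,
\]
and in which every vertical arrow is $\varphi_{I}$. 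Each rung commutes by Lemma~\ref{limID1}.

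Next I would take the horizontal homotopy colimit of each row. By definition of the telescope, the top row has hocolim $T(e_{j})$, and since the telescope of the identity map on a space $Y$ is homotopy equivalent to $Y$, the bottom row has hocolim $\uxa^{K_{I}}$. Functoriality of hocolim applied to the ladder produces the required map $T(e_{j})\to\uxa^{K_{I}}$ on the right. The resulting square in the statement of the corollary commutes because the composite $\uxa^{K}\to T(e_{j})\to\uxa^{K_{I}}$ factors through the zero-stage inclusions in each telescope and therefore agrees, by the ladder commutativity, with $\varphi_{I}\colon\uxa^{K}\to\uxa^{K_{I}}$ followed by the identity telescope map.

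There is no real obstacle here; the only minor point to check is the identification of the bottom telescope with $\uxa^{K_{I}}$ itself and the compatibility of the telescope inclusions with the induced map, but both are formal consequences of the definition of $\hocolim$ as a mapping telescope. Thus the corollary follows immediately from Lemma~\ref{limID1}, as the paper asserts.
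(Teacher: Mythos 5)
Your proposal is correct and is exactly the argument the paper intends: the paper's proof is the one-line remark ``taking telescopes of the horizontal maps in Lemma~\ref{limID1} immediately implies the following,'' and your ladder-plus-hocolim construction simply spells out what that means. No issues.
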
  

Now consider the map 
$f_{(a_{1},\ldots,a_{m})}=f_{a_{1}}\circ\cdots\circ f_{a_{m}}$ where 
$f_{a_{j}}$ is either $\Omega e_{j}$ or $1-\Omega e_{j}$. Suppose 
that $j\notin I$. If $a_{j}=0$ then $f_{a_{j}}=\Omega e_{j}$.  
By Lemma~\ref{limID1}, $\Omega\varphi_{I}\circ\Omega e_{j}=\Omega\varphi_{I}$. 
So as the idempotents $\{\Omega e_{j}\mid 1\leq j\leq m\}$ commute, 
we obtain 
\[\Omega\varphi_{I}\circ f_{a_{1}}\circ\cdots f_{a_{j-1}}\circ\Omega e_{j}\circ 
     f_{a_{j+1}}\circ\cdots\circ f_{a_{m}}= 
    \Omega\varphi_{I}\circ f_{a_{1}}\circ\cdots f_{a_{j-1}}\circ 
     f_{a_{j+1}}\circ\cdots\circ f_{a_{m}}.\] 
If $a_{j}=1$ then $f_{a_{j}}=1-\Omega e_{j}$. By Corollary~\ref{limID2}, 
$\Omega\varphi_{I}\circ(1-\Omega e_{j})\simeq\ast$. So as the 
idempotents $\{\Omega e_{j}\mid 1\leq j\leq m\}$ commute, we obtain 
\[\Omega\varphi_{I}\circ f_{a_{1}}\circ\cdots f_{a_{j-1}}\circ(1-\Omega e_{j})\circ 
     f_{a_{j+1}}\circ\cdots\circ f_{a_{m}}\simeq\ast.\] 
Doing this for every $j\notin I$ gives the following. 

\begin{lemma} 
   \label{varphireduction} 
   Let $I=\{i_{1},\ldots,i_{k}\}\subseteq [m]$. The following hold: 
   \begin{letterlist} 
      \item if $a_{j}=0$ for every $j\notin I$ then 
                $\Omega\varphi_{I}\circ f_{a_{1}}\circ\cdots\circ f_{a_{m}}\simeq  
                    \Omega\varphi_{I}\circ f_{a_{i_{1}}}\circ\cdots\circ f_{a_{i_{k}}}$; 
      \item if $a_{j}=1$ for some $j\notin I$ then 
                $\Omega\varphi_{I}\circ f_{a_{1}}\circ\cdots\circ f_{a_{m}}\simeq\ast$. 
   \end{letterlist} 
   $\qqed$ 
\end{lemma}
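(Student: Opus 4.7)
The plan is to exploit two ingredients that have already been set up: the commutativity of the idempotents $\{\Omega e_j\}_{j=1}^m$ established in Lemma~\ref{idempotents}(b), together with the two absorption identities
\[\Omega\varphi_I\circ\Omega e_j\simeq\Omega\varphi_I\qquad\text{and}\qquad \Omega\varphi_I\circ(1-\Omega e_j)\simeq\ast\qquad\text{for every }j\notin I,\]
the first coming from looping Lemma~\ref{limID1} and the second from Corollary~\ref{limID2} (the telescope $T(e_j)$ through which $\varphi_I$ factors realizes $e_j$ as the identity, so $1-\Omega e_j$ becomes null on the factorization). The strategy is then entirely algebraic: commute the problematic factors $f_{a_j}$ with $j\notin I$ past the rest of the composite so that they sit immediately to the right of $\Omega\varphi_I$, and then collapse them one at a time using the relevant absorption identity.

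For part~(a), I would proceed by finite induction on the set $[m]\setminus I$. For each $j\notin I$ the hypothesis gives $a_j=0$, so $f_{a_j}=\Omega e_j$. Using that the $f_{a_\ell}$ all pairwise commute up to homotopy (since they are polynomial expressions in the commuting idempotents $\Omega e_\ell$), I can move the factor $\Omega e_j$ to the left of the composite and write
\[\Omega\varphi_I\circ f_{a_1}\circ\cdots\circ f_{a_m}\simeq \Omega\varphi_I\circ\Omega e_j\circ\bigl(f_{a_1}\circ\cdots\widehat{f_{a_j}}\cdots\circ f_{a_m}\bigr)\simeq \Omega\varphi_I\circ\bigl(f_{a_1}\circ\cdots\widehat{f_{a_j}}\cdots\circ f_{a_m}\bigr).\]
Iterating over all $j\in [m]\setminus I$ eliminates exactly the factors indexed outside $I$ and leaves $\Omega\varphi_I\circ f_{a_{i_1}}\circ\cdots\circ f_{a_{i_k}}$, as required.

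For part~(b), fix a $j\notin I$ with $a_j=1$, so that $f_{a_j}=1-\Omega e_j$. Again by the commutativity of the idempotents I can move this factor to the immediate right of $\Omega\varphi_I$, and then the second absorption identity gives
\[\Omega\varphi_I\circ f_{a_1}\circ\cdots\circ f_{a_m}\simeq \Omega\varphi_I\circ(1-\Omega e_j)\circ\bigl(\text{other factors}\bigr)\simeq\ast\circ\bigl(\text{other factors}\bigr)\simeq\ast.\]

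The only subtlety that requires a line of justification is the legitimacy of commuting the $f_{a_\ell}$'s up to homotopy: although each $f_{a_\ell}$ is either $\Omega e_\ell$ or the $H$-space difference $1-\Omega e_\ell$, both are built from the commuting family $\{\Omega e_\ell\}$ using the loop multiplication, and one checks that if $\alpha$ and $\beta$ commute (up to homotopy) with every $\Omega e_\ell$ and are built from them, then so do $1-\alpha$ and the composites $\alpha\circ\beta$. This is the only point where the argument uses more than a formal manipulation, but it is immediate from the homotopy commutativity of the loop multiplication used to form the $H$-space sum.
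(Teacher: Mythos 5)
Your proposal is correct and follows essentially the same route as the paper: commute the factor $f_{a_{j}}$ with $j\notin I$ next to $\Omega\varphi_{I}$ using the commutativity of the family $\{\Omega e_{j},1-\Omega e_{j}\}$, then absorb it via $\Omega\varphi_{I}\circ\Omega e_{j}\simeq\Omega\varphi_{I}$ (Lemma~\ref{limID1}) or kill it via $\Omega\varphi_{I}\circ(1-\Omega e_{j})\simeq\ast$ (Corollary~\ref{limID2}), iterating over all $j\notin I$. Your closing remark justifying why the $f_{a_{\ell}}$ commute is exactly the point the paper settles in its discussion of the mutually orthogonal idempotents $f_{(a_{1},\ldots,a_{m})}$, so nothing is missing.
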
  
 
Let $\mathcal{J}_{I}$ be the index set for the $2^{\vert I\vert}$ idempotents 
$f_{(a_{i_{1}},\ldots,a_{i_{k}})}$ used to decompose $\Omega\uxa^{K_{I}}$ in 
Proposition~\ref{uxadecomp}. Notice that by Lemma~\ref{varphireduction}~(a), 
$f_{(a_{i_{1}},\ldots,a_{i_{k}})}$ corresponds precisely to the idempotent 
$f_{(a_{1},\ldots,a_{m})}$ on $\Omega\uxa^{K}$ where every $j\notin I$ 
has $a_{j}=0$. Let $T(a_{i_{1}},\ldots,a_{i_{k}})$ be the telescope 
of $f_{(a_{i_{1}},\ldots,a_{i_{k}})}$. Then Lemma~\ref{varphireduction} implies the 
following. 

\begin{lemma} 
   \label{varphitels} 
   Let $I=\{i_{1},\ldots,i_{k}\}\subseteq [m]$. The following hold: 
   \begin{letterlist} 
      \item if $a_{j}=0$ for every $j\notin I$ then there is a commutative diagram 
                \[\diagram 
                       \Omega\uxa^{K}\rto\dto^{\Omega\varphi_{I}} 
                              & T(a_{1},\ldots,a_{m})\dto^{\simeq} \\ 
                       \Omega\uxa^{K_{I}}\rto & T(a_{i_{1}},\ldots,a_{i_{k}});  
                   \enddiagram\] 
      \item if $a_{j}=1$ for some $j\notin I$ then the composite 
                \(T(a_{1},\ldots,a_{m})\hookrightarrow\namedright 
                       {\Omega\uxa^{K}}{\Omega\varphi_{I}}{\Omega\uxa^{K_{I}}}\) 
                is null homotopic. 
   \end{letterlist} 
   $\qqed$ 
\end{lemma}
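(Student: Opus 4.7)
The plan is to derive both parts from Lemma~\ref{varphireduction} by passing to telescopes, via a naturality square involving $\Omega\varphi_{I}$.

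For part~(a), the first step is to verify that the square
\[\diagram
\Omega\uxa^{K}\rto^-{f_{(a_{1},\ldots,a_{m})}}\dto_{\Omega\varphi_{I}}
   & \Omega\uxa^{K}\dto^{\Omega\varphi_{I}} \\
\Omega\uxa^{K_{I}}\rto^-{f_{(a_{i_{1}},\ldots,a_{i_{k}})}}
   & \Omega\uxa^{K_{I}}
\enddiagram\]
commutes up to homotopy, where the bottom horizontal is the analogous idempotent built on $\Omega\uxa^{K_{I}}$. Lemma~\ref{varphireduction}(a) rewrites the top-left composite as $\Omega\varphi_{I}\circ f_{a_{i_{1}}}\circ\cdots\circ f_{a_{i_{k}}}$, all built from idempotents on $\Omega\uxa^{K}$. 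To commute this past $\Omega\varphi_{I}$, one needs the individual naturality $\Omega\varphi_{I}\circ\Omega e_{i_{l}}\simeq\Omega e'_{i_{l}}\circ\Omega\varphi_{I}$ for each $i_{l}\in I$, where $e'_{i_{l}}$ is the corresponding idempotent on $\uxa^{K_{I}}$. This follows directly from unwrapping $e_{i_{l}}=\iota_{I_{i_{l}}}\circ\varphi_{I_{i_{l}}}$ and using that the projections $X^{m}\to X^{I}$, $X^{m}\to X^{I_{i_{l}}}$ and $X^{I}\to X^{I\setminus\{i_{l}\}}$ all commute coherently, so the induced maps of polyhedral products do as well.

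Once this square commutes, $\Omega\varphi_{I}$ induces the desired right-hand map on telescopes, making the diagram in~(a) commute. To see the induced map is a homotopy equivalence, apply the retraction $\varphi_{I}\circ\iota_{I}=1_{\uxa^{K_{I}}}$ of Lemma~\ref{polyinverse}(c): a dual naturality argument shows $\Omega\iota_{I}$ induces a map $T(a_{i_{1}},\ldots,a_{i_{k}})\to T(a_{1},\ldots,a_{m})$ whose composite with our map is the identity on $T(a_{i_{1}},\ldots,a_{i_{k}})$. For the reverse composite, one passes to homology and uses the identification of each telescope with the image of its defining idempotent (as in the discussion preceding Lemma~\ref{iddecomp}); the retract ensures these images are matched isomorphically under $\Omega\varphi_{I}$, and Whitehead's theorem finishes the job.

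For part~(b), the map $T(a_{1},\ldots,a_{m})\hookrightarrow\Omega\uxa^{K}$ is the inclusion of the retract corresponding to the idempotent $f_{(a_{1},\ldots,a_{m})}$ (equivalently, the inclusion of one factor in the splitting of Proposition~\ref{uxadecomp}), and as such factors up to homotopy through $f_{(a_{1},\ldots,a_{m})}$. Composing with $\Omega\varphi_{I}$ therefore produces a map homotopic to $\Omega\varphi_{I}\circ f_{(a_{1},\ldots,a_{m})}$, which is null homotopic by Lemma~\ref{varphireduction}(b). The main obstacle is the intertwining naturality needed in~(a): unlike Lemma~\ref{limID1}, in which $j\notin I$ collapses $\Omega\varphi_{I}\circ\Omega e_{j}$ back to $\Omega\varphi_{I}$ outright, the case $i_{l}\in I$ requires matching two genuinely distinct idempotents and hence a careful accounting of the projection and inclusion maps among the coordinate subspaces of $X^{m}$.
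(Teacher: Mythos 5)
Your proposal is correct and follows essentially the route the paper intends: the paper states this lemma as an immediate consequence of Lemma~\ref{varphireduction} (together with the remark that $f_{(a_{i_1},\ldots,a_{i_k})}$ on $\Omega\uxa^{K_I}$ "corresponds" to the idempotent on $\Omega\uxa^{K}$ with $a_j=0$ off $I$), and you have simply made explicit the intertwining $\Omega\varphi_I\circ\Omega e_{i_\ell}\simeq\Omega e'_{i_\ell}\circ\Omega\varphi_I$ and the two-sided comparison via $\varphi_I\circ\iota_I=1$ that this correspondence silently uses. Both the factorization of the telescope inclusion through $f_{(a_1,\ldots,a_m)}$ in part (b) and the homology/Whitehead argument for the equivalence in part (a) are sound.
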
 

From Lemma~\ref{varphitels} we obtain compatibility for 
the decompositions of $\Omega\uxa^{K}$ and $\Omega\uxa^{K_{I}}$. 

\begin{proposition} 
   \label{compatdecomps} 
   Let $I\subseteq [m]$. There is a homotopy commutative diagram 
   \[\diagram 
         \Omega\uxa^{K}\rto^-{\simeq}\dto^{\Omega\varphi_{I}} 
             & \prod_{(a_{1},\ldots,a_{m})\in\mathcal{J}} T(a_{1},\ldots,a_{m})\dto^{\pi_{I}} \\ 
         \Omega\uxa^{K_{I}}\rto^-{\simeq}   
            & \prod_{(a_{i_{1}},\ldots,a_{i_{k}})\in\mathcal{J}_{I}} T(a_{i_{1}},\ldots,a_{i_{k}}) 
     \enddiagram\] 
    where $\pi_{I}$ projects away from factors with $a_{j}=1$ for some $j\notin I$ 
    and identifies $T(a_{1},\ldots,a_{m})$ with $T(a_{i_{1}},\ldots,a_{i_{k}})$ 
    for factors with $a_{j}=0$ for all $j\notin I$.~$\qqed$ 
\end{proposition}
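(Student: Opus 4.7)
The plan is to verify commutativity of the square by post-composing with projection onto an arbitrary factor of the bottom-right product, reducing the claim directly to Lemma~\ref{varphitels}. First, I would unpack the horizontal equivalences: by Proposition~\ref{uxadecomp} (and its proof via Lemma~\ref{fiddecomp}), the top arrow is the product of the telescope maps $t_{(a_1,\ldots,a_m)}\colon\Omega\uxa^{K}\to T(a_1,\ldots,a_m)$ indexed over $\mathcal{J}$, and the bottom arrow is the analogous product of telescope maps $t_{(a_{i_1},\ldots,a_{i_k})}\colon\Omega\uxa^{K_I}\to T(a_{i_1},\ldots,a_{i_k})$ indexed over $\mathcal{J}_I$. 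Since a map into a product is detected by its coordinate projections, the square commutes precisely when, for every $(a_{i_1},\ldots,a_{i_k})\in\mathcal{J}_I$, the two composites $\Omega\uxa^{K}\to T(a_{i_1},\ldots,a_{i_k})$ obtained by following the two sides of the square agree up to homotopy.

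Next, I would fix $(a_{i_1},\ldots,a_{i_k})\in\mathcal{J}_I$ and let $(a_1,\ldots,a_m)\in\mathcal{J}$ be the unique extension that has $a_j=0$ for every $j\notin I$. By the definition of $\pi_I$, this is the unique factor in the source product sent onto $T(a_{i_1},\ldots,a_{i_k})$; all other factors are discarded. Thus the top path post-composed with this projection is $t_{(a_1,\ldots,a_m)}$ followed by the identification $T(a_1,\ldots,a_m)\simeq T(a_{i_1},\ldots,a_{i_k})$ provided by Lemma~\ref{varphitels}(a), while the bottom path post-composed with the same projection is $t_{(a_{i_1},\ldots,a_{i_k})}\circ\Omega\varphi_I$. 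That these two composites are homotopic is precisely the content of the commutative diagram in Lemma~\ref{varphitels}(a).

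Finally, Lemma~\ref{varphitels}(b) plays a complementary consistency role: the factors $T(a_1,\ldots,a_m)$ for which some $a_j=1$ with $j\notin I$ — exactly the ones projected away by $\pi_I$ — also map trivially through $\Omega\varphi_I$, so discarding them loses no information when comparing with the bottom equivalence. There is no genuine obstacle here; the argument is essentially a repackaging of Lemma~\ref{varphitels} once the horizontal equivalences are recognised as products of telescope maps. The only subtlety worth flagging is to verify, before the indexing bookkeeping, that $\uxa^{K_I}$ inherits simple-connectedness from $\uxa^{K}$ so that Proposition~\ref{uxadecomp} applies to both rows — this is automatic since the retraction from Lemma~\ref{polyinverse}(c) shows $\uxa^{K_I}$ is a retract of $\uxa^{K}$.
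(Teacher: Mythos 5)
Your proof is correct and follows the paper's route exactly: the paper treats this proposition as an immediate consequence of Lemma~\ref{varphitels}, and your coordinate-wise verification (projecting onto each factor of the bottom-right product and invoking Lemma~\ref{varphitels}(a) for the unique extension with $a_{j}=0$ for $j\notin I$) is precisely the spelled-out version of that deduction. Your remarks on the role of Lemma~\ref{varphitels}(b) and on $\uxa^{K_{I}}$ inheriting simple-connectedness as a retract of $\uxa^{K}$ are both accurate.
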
 

Next, we bring in the dual polyhedral product. By definition,  
\[\uxa^{K}_{D}=\underset{\substack{\leftarrow\!\!\relbar\!\!\relbar\!\!\relbar\!\!\relbar \\ 
      I\subsetneq [m]}}{\mbox{holim}}\,\uxa^{K_{I}}\] 
where the homotopy inverse limit is taken over the maps of polyhedral products 
\(\namedright{\uxa^{K_{J}}}{}{\uxa^{K_{I}}}\) 
induced by the projection 
\(\namedright{X^{J}}{}{X^{I}}\) 
when $I\subseteq J$. Looping, we obtain 
\[\Omega\uxa^{K}_{D}=\underset{\substack 
     {\leftarrow\!\!\relbar\!\!\relbar\!\!\relbar\!\!\relbar \\ I\subsetneq [m]}}  
     {\mbox{holim}}\,\Omega\uxa^{K_{I}}.\] 
On the other hand, by Proposition~\ref{compatdecomps} the decompositions 
of the spaces $\Omega\uxa^{K_{I}}$ in Proposition~\ref{uxadecomp} are 
compatible with the maps 
\(\namedright{\Omega\uxa^{K_{I}}}{}{\Omega\uxa^{K_{J}}}\) 
and induce projections onto factors. Therefore 
$\underset{\substack{\leftarrow\!\!\relbar\!\!\relbar\!\!\relbar\!\!\relbar \\ I\subsetneq [m]}}  
     {\mbox{holim}}\,\Omega\uxa^{K_{I}}$  
is precisely the product of all possible distinct factors that appear in the decompositions 
of $\Omega\uxa^{K_{I}}$ for any $I\subsetneq [m]$. Put another 
way, the only factors of $\Omega\uxa^{K}$ which are not also factors 
of $\Omega\uxa^{K}_{D}$ are those that project trivially under every 
map $\pi_{I}$ for all $I\subsetneq [m]$. There is only one 
such factor, $T(1,\ldots,1)$, so we obtain the following. 

\begin{proposition} 
   \label{uxaKDdecomp} 
   There is a homotopy commutative diagram 
   \[\diagram 
         \Omega\uxa^{K}\rto^-{\simeq}\dto^{\Omega\varphi} 
             & \prod_{(a_{1},\ldots,a_{m})\in\mathcal{J}} T(a_{1},\ldots,a_{m})\dto^{\pi} \\ 
         \Omega\uxa^{K}_{D}\rto^-{\simeq}   
            & \prod_{(a_{1},\ldots,a_{m})\in\mathcal{J}\backslash(1,\ldots,1)} T(a_{1},\ldots,a_{m}) 
     \enddiagram\] 
   where $\pi$ is the projection.~$\qqed$ 
\end{proposition}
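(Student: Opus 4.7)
The plan is to compute the homotopy inverse limit defining $\Omega\uxa^K_D$ one telescope component at a time, using the compatible decompositions provided by Proposition~\ref{compatdecomps}. Since based loops preserves homotopy inverse limits, unravelling the definition gives
\[\Omega\uxa^K_D \simeq \underset{\substack{\leftarrow\!\!\relbar\!\!\relbar\!\!\relbar\!\!\relbar \\ I\subsetneq [m]}}{\mbox{holim}}\, \Omega\uxa^{K_I},\]
and by universality the map $\Omega\varphi$ is the canonical map into the holim determined by the system of projections $\Omega\varphi_I$. Applying Proposition~\ref{compatdecomps} replaces each $\Omega\uxa^{K_I}$ by its product decomposition $\prod_{(a_{i_1},\ldots,a_{i_k})\in\mathcal{J}_I} T(a_{i_1},\ldots,a_{i_k})$ compatibly; under these identifications each structure map $\Omega\varphi_{J,I}$ for $I\subseteq J\subsetneq [m]$ becomes a projection from the $\mathcal{J}_J$-indexed product onto its $\mathcal{J}_I$-indexed subproduct, with matching telescopes identified to themselves.

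Next, I analyse the holim of the resulting diagram of products componentwise. Fix a sequence $(a_1,\ldots,a_m)\in\mathcal{J}$ and set $S = \{j\in [m] : a_j=1\}$. By Proposition~\ref{compatdecomps} the telescope $T(a_1,\ldots,a_m)$ appears as a factor of $\Omega\uxa^{K_I}$ exactly when $a_j=0$ for every $j\notin I$, that is, exactly when $S\subseteq I$, and over these indices the restricted structure maps are identities. The contribution of this component to the holim is therefore the limit of a constant $T(a_1,\ldots,a_m)$-diagram on the subposet $\mathcal{Q} = \{I : S\subseteq I\subsetneq [m]\}$. When $S=[m]$, i.e.\ $(a_1,\ldots,a_m)=(1,\ldots,1)$, the subposet is empty and the component vanishes. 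Otherwise $\mathcal{Q}$ is nonempty and, in the reverse-inclusion ordering inherited from the holim indexing, has maximum $S$, hence contractible nerve, so the holim of the constant diagram is a single copy of $T(a_1,\ldots,a_m)$.

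Assembling these contributions yields the asserted decomposition
\[\Omega\uxa^K_D \simeq \prod_{(a_1,\ldots,a_m)\in\mathcal{J}\setminus\{(1,\ldots,1)\}} T(a_1,\ldots,a_m),\]
and homotopy commutativity of the displayed square with $\pi$ the evident projection is forced by the universal property of the holim combined with the naturality already built into Proposition~\ref{compatdecomps}. The main obstacle is the bookkeeping in the componentwise step: one must justify that the projection-type structure maps genuinely split the holim of the diagram of products into a product of telescope-by-telescope holims, so that each surviving factor is counted exactly once and $T(1,\ldots,1)$ is correctly identified as the unique absentee, which reduces to the contractibility of the relevant reverse-inclusion subposets established above.
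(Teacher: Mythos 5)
Your argument is correct and follows the same route as the paper: loop the defining homotopy inverse limit, use Proposition~\ref{compatdecomps} to replace each $\Omega\uxa^{K_I}$ by its product of telescopes with the structure maps identified as projections, and observe that every factor except $T(1,\ldots,1)$ survives to the holim. Your componentwise justification via the contractibility of the subposets $\{I : S\subseteq I\subsetneq [m]\}$ makes explicit a step the paper only asserts ("the holim is precisely the product of all possible distinct factors"), but it is the same proof.
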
 

Since every factor of $\Omega\uxa^{K}_{D}$ is also a factor of 
$\Omega\uxa^{K}$ and the only additional factor of $\Omega\uxa^{K}$ 
is $T(1,\ldots,1)$, Proposition~\ref{uxaKDdecomp} immediately implies 
the following. 

\begin{theorem} 
   \label{dualretract} 
   Assume that $\uxa^{K}$ is simply-connected. The map 
   \(\namedright{\Omega\uxa^{K}}{\Omega\varphi}{\Omega\uxa^{K}_{D}}\) 
   has a right homotopy inverse and there is a homotopy equivalence 
   \[\Omega\uxa^{K}\simeq\Omega\uxa^{K}_{D}\times T(1,\ldots,1).\] 
   $\qqed$ 
\end{theorem}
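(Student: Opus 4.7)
The plan is to read Theorem \ref{dualretract} as an immediate formal consequence of Proposition \ref{uxaKDdecomp}, by exploiting that the right-hand vertical arrow $\pi$ in that diagram is literally a projection from a finite product onto a subproduct. So no new geometric input is needed; the content is a diagram chase.

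First I would invoke Proposition \ref{uxadecomp} (which requires simple connectivity of $\uxa^{K}$, hence of $\Omega\uxa^{K}$ as a path-connected $H$-group, so that Lemma \ref{iddecomp} applies) together with Proposition \ref{uxaKDdecomp}. This gives horizontal homotopy equivalences
\[h\colon\namedright{\Omega\uxa^{K}}{}{\prod_{(a_{1},\ldots,a_{m})\in\mathcal{J}} T(a_{1},\ldots,a_{m})}\]
and
\[h'\colon\namedright{\Omega\uxa^{K}_{D}}{}{\prod_{(a_{1},\ldots,a_{m})\in\mathcal{J}\setminus (1,\ldots,1)} T(a_{1},\ldots,a_{m})}\]
that intertwine $\Omega\varphi$ with the projection $\pi$ which forgets the coordinate indexed by $(1,\ldots,1)$.

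Second, I would observe that $\pi$ visibly admits a section: let $s$ be the inclusion of the subproduct that sends every factor $T(a_{1},\ldots,a_{m})$ with $(a_{1},\ldots,a_{m})\neq (1,\ldots,1)$ to itself and places the basepoint in the $T(1,\ldots,1)$ coordinate. Then $\pi\circ s$ is literally the identity. Setting $r=h^{-1}\circ s\circ h'$ produces a right homotopy inverse for $\Omega\varphi$, since
\[\Omega\varphi\circ r\simeq (h')^{-1}\circ\pi\circ h\circ h^{-1}\circ s\circ h'\simeq (h')^{-1}\circ\pi\circ s\circ h'\simeq\mathrm{id}.\]

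Finally, the canonical identification of the full product as
\[\prod_{(a_{1},\ldots,a_{m})\in\mathcal{J}} T(a_{1},\ldots,a_{m})\cong T(1,\ldots,1)\times\prod_{(a_{1},\ldots,a_{m})\in\mathcal{J}\setminus (1,\ldots,1)} T(a_{1},\ldots,a_{m})\]
combined with $h$ and $(h')^{-1}$ yields the asserted homotopy equivalence $\Omega\uxa^{K}\simeq\Omega\uxa^{K}_{D}\times T(1,\ldots,1)$. There is no genuine obstacle in this argument; the only thing worth checking carefully is that the factor being split off is exactly $T(1,\ldots,1)$, which is guaranteed because $(1,\ldots,1)$ is the unique index $(a_{1},\ldots,a_{m})$ with $a_{j}=1$ for some $j\notin I$ for every proper $I\subsetneq [m]$, and so by Lemma \ref{varphitels}(b) the corresponding telescope is killed by every projection appearing in the homotopy inverse limit defining $\Omega\uxa^{K}_{D}$.
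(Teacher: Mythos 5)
Your proposal is correct and is essentially the paper's own argument: the theorem is stated as an immediate consequence of Proposition~\ref{uxaKDdecomp}, precisely because the vertical map $\pi$ there is the projection away from the single factor $T(1,\ldots,1)$, so it admits an obvious section and the product splits as you describe. Your explicit construction of $r=h^{-1}\circ s\circ h'$ just makes visible the diagram chase the paper leaves implicit.
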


\section{Further properties of the decompositions} 
\label{sec:properties} 

Next, we show that each of the factors $T(a_{1},\ldots,a_{m})$ in 
the decompositions for $\Omega\uxa^{K}$ and $\Omega\uxa^{K}_{D}$ 
is a loop space, and prove Theorem~\ref{introuxadecomp}. Define the 
space $F_{[m]}$ be the homotopy fibration 
\[\nameddright{F_{[m]}}{}{\uxa^{K}}{\varphi}{\uxa^{K}_{D}}.\] 

\begin{lemma} 
   \label{Tloop1} 
   There is a homotopy equivalence $T(1,\ldots,1)\simeq\Omega F_{[m]}$.  
\end{lemma}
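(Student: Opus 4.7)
The plan is to identify $T(1,\ldots,1)$ as the homotopy fibre of $\Omega\varphi$ in two different ways and match them up. Once the machinery of Section~\ref{sec:loopdecomp} is in place, this reduces to an almost tautological comparison, so I do not expect a serious obstacle.

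First, since $F_{[m]}$ is by definition the homotopy fibre of $\varphi\colon \uxa^K\to\uxa^K_D$, looping the defining homotopy fibration produces a homotopy fibration
\[
\Omega F_{[m]}\longrightarrow\Omega\uxa^K\stackrel{\Omega\varphi}{\longrightarrow}\Omega\uxa^K_D,
\]
so the homotopy fibre of $\Omega\varphi$ is $\Omega F_{[m]}$.

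Second, I would invoke Proposition~\ref{uxaKDdecomp}, which presents $\Omega\varphi$, up to the horizontal homotopy equivalences, as the product projection
\[
\pi\colon\prod_{(a_{1},\ldots,a_{m})\in\mathcal{J}} T(a_{1},\ldots,a_{m})\longrightarrow\prod_{(a_{1},\ldots,a_{m})\in\mathcal{J}\setminus(1,\ldots,1)} T(a_{1},\ldots,a_{m})
\]
that omits the single factor indexed by $(1,\ldots,1)$. The homotopy fibre of such a product projection is literally the omitted factor, namely $T(1,\ldots,1)$.

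Finally, since homotopy fibres are invariant under homotopy equivalences of source and target, the two identifications of the homotopy fibre of $\Omega\varphi$ must agree, giving the desired equivalence $T(1,\ldots,1)\simeq\Omega F_{[m]}$. The one point worth checking in more detail is that the identification of $\Omega\varphi$ with $\pi$ in Proposition~\ref{uxaKDdecomp} is genuinely a homotopy equivalence of maps (not only a homology-level statement), but this is already built into the chain Lemma~\ref{fiddecomp}, Proposition~\ref{uxadecomp}, Proposition~\ref{compatdecomps}, Proposition~\ref{uxaKDdecomp}, where the Whitehead theorem is applied under the simple-connectivity assumption on $\uxa^K$.
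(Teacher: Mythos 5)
Your proposal is correct and follows essentially the same route as the paper: both arguments rest on Proposition~\ref{uxaKDdecomp} (equivalently Theorem~\ref{dualretract}) identifying $\Omega\varphi$, up to the horizontal equivalences, with the projection that omits the factor $T(1,\ldots,1)$. If anything, phrasing the comparison as a matching of the two homotopy fibres of $\Omega\varphi$ is a slightly sharper formulation of the paper's step ``comparing this to the homotopy decomposition in Theorem~\ref{dualretract}''.
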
 

\begin{proof} 
On the one hand, by Proposition~\ref{uxaKDdecomp}, the homotopy fibre 
of $\Omega\varphi$ is the same as that of~$\pi$, which is $T(1,\ldots,1)$. 
On the other hand, by definition of $F_{[m]}$, the homotopy fibre of 
$\Omega\varphi$ is $\Omega F_{[m]}$. Thus there is an induced map 
of fibres 
\(\namedright{T(1,\ldots,1)}{}{\Omega F_{[m]}}\) 
which induces isomorphisms on homotopy groups by the Five Lemma. 
Since all spaces are $CW$-complexes, this implies the map of fibres 
is a homotopy equivalence. 
\end{proof} 

Fix a sequence $(a_{1},\ldots,a_{m})\in\mathcal{J}$. Let 
$\{a_{i_{1}},\ldots,a_{i_{k}}\}$ be the set consisting of all the elements in the sequence  
which are $1$ and let $\{a_{j_{1}},\ldots,a_{j_{\ell}}\}$ be the set consisting of 
all the elements in the sequence which are $0$. Let $I=\{i_{1},\ldots,i_{k}\}$ 
and $J=\{j_{1},\ldots,j_{\ell}\}$. Note that $k+\ell=m$ and 
$I\cap J=\emptyset$. 

By Lemma~\ref{varphitels}~(a) there is a homotopy equivalence 
$T(a_{1},\ldots,a_{m})\simeq T(a_{i_{1}},\ldots,a_{i_{k}})$, where 
$T(a_{i_{1}},\ldots,a_{i_{k}})$ is the telescope of the idempotent 
$f_{a_{i_{1}}}\circ\cdots\circ f_{a_{i_{k}}}= 
    (1-\Omega e_{i_{1}})\circ\cdots\circ(1-\Omega e_{i_{k}})$ 
on $\Omega\uxa^{K_{I}}$. Note that each $a_{i_{t}}$ equals $1$ for 
$1\leq t\leq k$, so applying Lemma~\ref{Tloop1} to the case 
of~$\Omega\uxa^{K_{I}}$ immediately implies the following. 

\begin{lemma} 
   \label{Tloop2} 
   There is a homotopy equivalence 
   \[T(a_{1},\ldots,a_{m})\simeq\Omega F_{I}\] 
   where $I=\{i_{1},\ldots,i_{k}\}$ consists of those indices in $[m]$ for 
   which $a_{i_{t}}=1$ and $F_{I}$ is the homotopy fibre of the map 
   \(\namedright{\uxa^{K_{I}}}{}{\uxa^{K_{I}}_{D}}\). 
   $\qqed$ 
\end{lemma}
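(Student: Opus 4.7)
The plan is to prove this by a two-step reduction that combines the preceding two lemmas.

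First I would apply Lemma~\ref{varphitels}(a). Since $I$ is defined to be the set of indices $j$ for which $a_{j}=1$, every index $j\notin I$ satisfies $a_{j}=0$, so the hypothesis of Lemma~\ref{varphitels}(a) is met. This gives a homotopy equivalence $T(a_{1},\ldots,a_{m})\simeq T(a_{i_{1}},\ldots,a_{i_{k}})$, where the right-hand side is interpreted as the telescope, on $\Omega\uxa^{K_{I}}$, of the idempotent $f_{a_{i_{1}}}\circ\cdots\circ f_{a_{i_{k}}}$. By the definition of $I$, each $a_{i_{t}}=1$ for $1\le t\le k$, so this composite equals $(1-\Omega e_{i_{1}})\circ\cdots\circ(1-\Omega e_{i_{k}})$.

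Next I would apply Lemma~\ref{Tloop1}, but with the ambient data $\uxa^{K}$ replaced by the restricted data $(\underline{X},\underline{A})_{I}^{K_{I}}$. Under this substitution the indexing set becomes $I$, the collection of idempotents becomes $\{\Omega e_{i_{t}}\}_{t=1}^{k}$, and the ``all-ones'' telescope $T(1,\ldots,1)$ is precisely $T(a_{i_{1}},\ldots,a_{i_{k}})$ from the previous step. Lemma~\ref{Tloop1} then identifies this telescope with $\Omega F_{I}$, where $F_{I}$ is the homotopy fibre of the map $\namedright{\uxa^{K_{I}}}{}{\uxa^{K_{I}}_{D}}$. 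Chaining the two equivalences yields $T(a_{1},\ldots,a_{m})\simeq\Omega F_{I}$.

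The only real point that needs care is the legitimacy of invoking Lemma~\ref{Tloop1} at the restricted level; it requires that $\uxa^{K_{I}}$ be simply-connected so that Proposition~\ref{uxadecomp} and Theorem~\ref{dualretract} apply in that setting. This follows from the standing assumption that $\uxa^{K}$ is simply-connected, since Lemma~\ref{polyinverse}(c) exhibits $\uxa^{K_{I}}$ as a retract of $\uxa^{K}$, and a retract of a simply-connected space is simply-connected. I do not anticipate any substantive obstacle beyond this bookkeeping, since the structure of the argument is a direct composition of the two preceding lemmas.
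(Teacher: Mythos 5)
Your proposal is correct and follows essentially the same route as the paper: Lemma~\ref{varphitels}(a) identifies $T(a_{1},\ldots,a_{m})$ with the telescope of $(1-\Omega e_{i_{1}})\circ\cdots\circ(1-\Omega e_{i_{k}})$ on $\Omega\uxa^{K_{I}}$, which is the all-ones telescope for that polyhedral product, and Lemma~\ref{Tloop1} applied to $\uxa^{K_{I}}$ finishes the argument. Your remark about verifying simple-connectivity of $\uxa^{K_{I}}$ via the retraction of Lemma~\ref{polyinverse}(c) is a sensible piece of bookkeeping that the paper leaves implicit.
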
 
   
Consequently, the decompositions of $\Omega\uxa^{K}$ in 
Proposition~\ref{uxadecomp} and of $\Omega\uxa^{K}_{D}$ in 
Proposition~\ref{uxaKDdecomp} can be rewritten as follows. 

\begin{theorem} 
   \label{uxadecomp2} 
   There is a homotopy commutative diagram 
   \[\diagram 
          \Omega\uxa^{K}\rto^-{\simeq}\dto^{\Omega\varphi} 
                & \prod_{I\subseteq [m]}\Omega F_{I}\dto^{\pi} \\  
        \Omega\uxa^{K}_{D}\rto^-{\simeq} & \prod_{I\subsetneq [m]}\Omega F_{I}. 
     \enddiagram\] 
   where $\pi$ is the projection.~$\qqed$ 
\end{theorem}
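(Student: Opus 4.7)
The plan is to observe that Theorem~\ref{uxadecomp2} is essentially a relabeling of Proposition~\ref{uxaKDdecomp} using the identification of telescopes as loop spaces provided by Lemma~\ref{Tloop2}. Virtually all of the work has already been done; what remains is to repackage the indexing set.

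First I would set up the bijection between $\mathcal{J}$ and the power set of $[m]$: send each binary sequence $(a_{1},\ldots,a_{m}) \in \mathcal{J}$ to the subset $I = \{i \in [m] \mid a_{i}=1\}$. Under this bijection, the element $(1,\ldots,1)$ corresponds to $I=[m]$, so $\mathcal{J} \setminus \{(1,\ldots,1)\}$ corresponds to the collection of proper subsets $I \subsetneq [m]$. Next I would invoke Lemma~\ref{Tloop2}, which identifies $T(a_{1},\ldots,a_{m}) \simeq \Omega F_{I}$ for the subset $I$ recording the positions where $a_{i}=1$. Substituting this equivalence into the product decomposition of Proposition~\ref{uxadecomp} yields
\[\Omega\uxa^{K} \simeq \prod_{I \subseteq [m]} \Omega F_{I},\]
and substituting into Proposition~\ref{uxaKDdecomp} yields
\[\Omega\uxa^{K}_{D} \simeq \prod_{I \subsetneq [m]} \Omega F_{I}.\]

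To assemble the square, I would simply overlay the two equivalences on top of the diagram of Proposition~\ref{uxaKDdecomp}. That diagram already states that $\Omega\varphi$, under the telescope decompositions, corresponds to the projection that discards precisely the $T(1,\ldots,1)$ factor. Under our relabeling, $T(1,\ldots,1)$ is the $I=[m]$ factor $\Omega F_{[m]}$, so the map $\pi$ on the right of the new diagram is exactly the projection of $\prod_{I \subseteq [m]} \Omega F_{I}$ onto $\prod_{I \subsetneq [m]} \Omega F_{I}$, as required. Homotopy commutativity of the new square is inherited immediately from homotopy commutativity of the square in Proposition~\ref{uxaKDdecomp}.

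There is no real obstacle here: the decomposition, the identification of the factors as loop spaces of fibres, and the compatibility with $\Omega\varphi$ are already established in Proposition~\ref{uxadecomp}, Lemma~\ref{Tloop2}, and Proposition~\ref{uxaKDdecomp} respectively. The only thing to check carefully is that the bookkeeping bijection $\mathcal{J} \leftrightarrow 2^{[m]}$ matches the bijection used in Lemma~\ref{Tloop2}, and this is true by construction since both record the set of coordinates where $a_{i}=1$.
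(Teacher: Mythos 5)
Your proposal is correct and matches the paper's own (very brief) argument: the theorem is stated as an immediate consequence of Lemma~\ref{Tloop2} applied to the decompositions in Proposition~\ref{uxadecomp} and Proposition~\ref{uxaKDdecomp}, with the indexing bijection $(a_{1},\ldots,a_{m})\mapsto\{i\mid a_{i}=1\}$ exactly as you describe. Nothing is missing.
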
 

\begin{proof}[Proof of Theorem~\ref{introuxadecomp}] 
This is simply the homotopy equivalence in the first row of Theorem~\ref{uxadecomp2}. 
\end{proof} 

\begin{remark} 
It is tempting to suspect that the homotopy decompositions in 
Theorem~\ref{uxadecomp2} deloop, that is, that there are homotopy equivalences 
$\uxa^{K}\simeq\prod_{I\subseteq [m]} F_{I}$ and 
$\uxa^{K}_{D}\simeq\prod_{I\subsetneq [m]} F_{I}$. 
But $\uxa^{K}$ and $\uxa^{K}_{D}$ are not $H$-spaces so any hope 
of delooping the homotopy equivalences would come from the homotopy 
fibrations 
\(\nameddright{F_{I}}{}{\uxa^{K_{I}}}{\varphi}{\uxa^{K_{I}}_{D}}\) 
having a splitting of the form  
\(\namedright{\uxa^{K_{I}}}{}{F_{I}}\). 
But this does not happen even in the simplest cases. For example, 
let $K$ be two disjoint points. By the definition of the polyhedral 
product, $\uxa^{K}=(X_{1}\times A_{2})\cup(A_{1}\times X_{2})$. 
The proper full subcomplexes of $K=\{1\}\coprod\{2\}$ are $\{1\}$, 
$\{2\}$ and $\emptyset$. The corresponding polyhedral products are 
$X_{1}$, $X_{2}$ and $\ast$. Therefore, $\uxa^{K}_{D}$ is the homotopy inverse 
limit of the diagram $X_{1}\longrightarrow\ast\longleftarrow X_{2}$, 
which is $X_{1}\times X_{2}$. This implies that space $F_{[2]}$ is the 
homotopy fibre of the inclusion 
\(\namedright{(X_{1}\times A_{1})\cup(A_{1}\times X_{2})}{}{X_{1}\times X_{2}}\). 
Specializing to $A_{1}=A_{2}=\ast$, this fibre is homotopy equivalent to that 
of the inclusion 
\(\namedright{X_{1}\vee X_{2}}{}{X_{1}\times X_{2}}\), 
which by the Hilton-Milnor Theorem, is $\Omega X_{1}\ast\Omega X_{2}$. 
The only case when the map 
\(\namedright{\Omega X_{1}\ast\Omega X_{2}}{}{X_{1}\vee X_{2}}\) 
has a left homotopy inverse is when at least one of $X_{1}$ or $X_{2}$ 
is trivial. 
\end{remark}

\section{Further refinement in the case of $\cxx^{K}$} 
\label{sec:cxxcase} 

Recall from the Introduction that if the simplicial complex $K$ is 
totally homology fillable (a property that includes shifted, shellable 
and sequentially Cohen-Macaulay complexes) then there is a homotopy 
equivalence 
\[\cxx^{K}\simeq\bigvee_{I\notin K}\Sigma  
      \vert K_{I}\vert\wedge\widehat{X}^{I}\]  
and $\Sigma\vert K_{I}\vert$ is homotopy equivalent to a wedge of spheres. 
Thus $\cxx^{K}$ is homotopy equivalent to a wedge sum of spaces of the form 
$\Sigma^{t} X_{i_{1}}\wedge\cdots\wedge X_{i_{k}}$ for various $t\geq 1$ 
and $1\leq i_{1}<\cdots <i_{k}\leq m$. In this section we show that 
for this class of polyhedral products the spaces $F_{I}$ that appear 
in the decompositions of $\Omega\uxa^{K}$ and $\Omega\uxa^{K}_{D}$ 
can be more explicitly identified.  

To prepare, we require two general lemmas. For spaces $B$ and $C$ the 
\emph{right half-smash} of $B$ and~$C$ is the quotient space 
$B\rtimes C=(B\times C)/\sim$ where $(\ast,c)\sim\ast$. It is well-known 
that if $B$ is a co-$H$-space then there is a homotopy equivalence 
$B\rtimes C\simeq B\vee (B\wedge C)$. The following lemma is well 
known and follows easily from the methods in~\cite{G2} (a more detailed 
statement and its proof appear later in Theorem~\ref{WhGanea}).  

\begin{lemma} 
   \label{Ganea} 
   Let $B$ be a path-connected pointed space and $C$ a simply-connected, 
   pointed space. Let 
   \(\namedright{B\vee C}{}{C}\) 
   be the pinch map. Then there is a homotopy fibration 
   \[\nameddright{B\rtimes\Omega C}{f}{B\vee C}{}{C}.\] 
   This homotopy fibration is natural for maps 
   \(\namedright{B}{}{B'}\) 
   and 
   \(\namedright{C}{}{C'}\)~$\qqed$ 
\end{lemma}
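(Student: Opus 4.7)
The plan is to realize $B\vee C$ as a homotopy pushout and then compute the homotopy fibre of the pinch map one corner at a time, exploiting the fact that pulling back a fibration distributes over homotopy pushouts. This is essentially Ganea's argument, but repackaged as a cube-theorem calculation so that the naturality statement comes for free.

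First I would write $B\vee C$ as the homotopy pushout of $\namedright{B}{}{\ast}\leftarrow\ast\namedright{}{}{C}$, and similarly write $C$ as the homotopy pushout of $\namedright{\ast}{}{\ast}\leftarrow\ast\namedright{}{}{C}$. The pinch map $\namedright{B\vee C}{}{C}$ is then the map of pushouts induced by $\namedright{B}{}{\ast}$, the identity on $\ast$, and the identity on $C$. Next, pull back the path-space fibration $\namedright{PC}{}{C}$ along the pinch map; since $\namedright{PC}{}{C}$ is a fibration, the pullback preserves the homotopy pushout structure, and the homotopy fibre of the pinch map is the homotopy pushout of the three pullbacks at the corners.

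The three corner pullbacks are computed directly: over $B$ (mapping to $C$ by the constant at the basepoint) the pullback is $B\times\Omega C$; over the wedge point $\ast$ it is $\Omega C$; and over $C$ (mapping by the identity) it is $PC$, which is contractible. Thus the homotopy fibre is the homotopy pushout of the diagram
\[\lnameddright{B\times\Omega C}{}{\Omega C}{}{PC}\]
in which the left map is $\omega\mapsto(\ast,\omega)$ and the right map is the canonical inclusion of $\Omega C$ into $PC$. Because $PC$ is contractible, collapsing the right-hand $\Omega C$ yields the identification
\[(B\times\Omega C)/(\ast\times\Omega C)=B\rtimes\Omega C,\]
and the induced map from this pushout back to $B\vee C$ is the desired map $f$. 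The simply-connectedness of $C$ is used here only to ensure that $\Omega C$ is path-connected, which is what makes the resulting sequence a genuine homotopy fibration rather than merely a fibration over each component.

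Finally, naturality in $B$ and $C$ follows immediately from the construction: the pushout presentation of $B\vee C$, the path-space functor $C\mapsto PC$, and the formation of homotopy pullbacks are all natural, and each step in the argument was carried out functorially on the two input spaces. The main obstacle in writing this out carefully is justifying that the homotopy pullback of a homotopy pushout really is the homotopy pushout of the pullbacks in the setting at hand; the cleanest route is to invoke Mather's cube theorem with the path-space fibration on the bottom face, which works precisely because $\namedright{PC}{}{C}$ is a fibration.
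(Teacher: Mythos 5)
Your proof is correct and is essentially the argument the paper gives (in the proof of Theorem~\ref{WhGanea}(b)): the paper forms the strict pullback of the path fibration along the pinch map, observes directly that it decomposes as $\ast\times PC\cup_{\ast\times\Omega C}B\times\Omega C$ over the two halves of the wedge, and contracts $PC$, which is exactly your cube-theorem computation carried out at the point-set level. Your repackaging via Mather's cube theorem is a legitimate variant of the same decomposition and, as you note, makes the naturality statement transparent.
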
 

One feature of the James construction~\cite{J} (see~\cite[Proposition 7.9.1]{Se} for a more 
modern presentation) is that if $Y$ is a pointed, path-connected space then 
there is a homotopy equivalence 
\[\Sigma\Omega\Sigma Y\simeq\bigvee_{n=1}^{\infty}\Sigma Y^{(n)}\] 
which is natural for maps 
\(\namedright{Y}{}{Y'}\). 
An  immediate consequence is the following. 

\begin{lemma} 
   \label{James} 
   Let $X$ and $Y$ be a pointed, path-connected spaces. Then there 
   is a homotopy equivalence 
   \[\Sigma X\wedge\Omega\Sigma Y\simeq 
        \bigvee_{n=1}^{\infty}(\Sigma X\wedge Y^{(n)})\] 
   which is natural for maps 
   \(\namedright{X}{}{X'}\) 
   and 
   \(\namedright{Y}{}{Y'}\).~$\qqed$ 
\end{lemma}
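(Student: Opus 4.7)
The plan is to derive Lemma~\ref{James} directly from James's splitting by smashing with $X$ and then shuffling the suspension coordinate.

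First, I would recall that for pointed path-connected spaces there is a natural homeomorphism $A \wedge \Sigma B \cong \Sigma(A \wedge B) \cong \Sigma A \wedge B$, obtained by shifting the suspension coordinate across the smash. I would also use that smash products distribute over wedges: $A \wedge \bigvee_{n} B_{n} \cong \bigvee_{n}(A \wedge B_{n})$ (valid in the category of compactly generated pointed spaces, which is the standing assumption). Both of these identifications are natural in all variables.

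Next, starting from $\Sigma X \wedge \Omega\Sigma Y$, I would apply the suspension-shift to rewrite it as $X \wedge \Sigma\Omega\Sigma Y$. Then I would substitute James's splitting $\Sigma\Omega\Sigma Y \simeq \bigvee_{n=1}^{\infty}\Sigma Y^{(n)}$ from \cite{J}, which is natural in $Y$. Distributing the smash over the wedge gives
\[
\Sigma X \wedge \Omega\Sigma Y \;\simeq\; X \wedge \bigvee_{n=1}^{\infty}\Sigma Y^{(n)} \;\simeq\; \bigvee_{n=1}^{\infty}\bigl(X \wedge \Sigma Y^{(n)}\bigr) \;\simeq\; \bigvee_{n=1}^{\infty}\bigl(\Sigma X \wedge Y^{(n)}\bigr),
\]
where the final step again uses the suspension-shift. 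Each step is a natural homotopy equivalence, so the composite is a natural homotopy equivalence in both $X$ and $Y$.

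There is no real obstacle here; the only point to be careful about is that the infinite wedge behaves well under smash with $X$, which is guaranteed by working in compactly generated spaces and is consistent with how James's splitting is formulated in \cite{J}. Naturality follows because each constituent equivalence (suspension-shift, James's splitting, distribution of smash over wedge) is natural in its inputs.
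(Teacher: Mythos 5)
Your proposal is correct and is exactly the argument the paper has in mind: the paper derives Lemma~\ref{James} as an ``immediate consequence'' of the James splitting $\Sigma\Omega\Sigma Y\simeq\bigvee_{n=1}^{\infty}\Sigma Y^{(n)}$, and your suspension-shift plus distribution of smash over wedge is the standard way to make that deduction precise. No issues.
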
 

We now give a construction that will identify the homotopy type of 
the space $F_{[m]}$ in the case of a polyhedral product $\cxx^{K}$ where $K$ is totally 
homology fillable. Recall that, for $1\leq j\leq m$, $I_{j}=[m]\backslash\{j\}$ 
and $e_{j}$ is the idempotent  
\[e_{j}\colon\nameddright{\cxx^{K}}{}{\cxx^{K_{I_{j}}}}{}{\cxx^{K}}\] 
induced by projecting $(CX)^{m}$ to $(CX)^{I_{j}}$ and then 
including back into $(CX)^{m}$. First consider~$e_{1}$. Since $\cxx^{K}$ is 
homotopy equivalent to a wedge of spaces of the form 
$\Sigma^{t} X_{i_{1}}\wedge\cdots\wedge X_{i_{k}}$ for various $t\geq 1$ 
and $1\leq i_{1}<\cdots <i_{k}\leq m$, we can write 
\[\cxx^{K}\simeq\Sigma B_{1}\vee\Sigma C_{1}\] 
where each wedge summand of $B_{1}$ has $X_{1}$ as a smash factor and each 
wedge summand of $C_{1}$ does not have $X_{1}$ as a smash factor. 

\begin{lemma} 
   \label{BClemma1} 
   The following hold: 
   \begin{letterlist} 
      \item the restriction of $e_{1}$ to $\Sigma B_{1}$ is null homotopic; 
      \item the restriction of $e_{1}$ to $\Sigma C_{1}$ is the inclusion 
                of $\Sigma C_{1}$ into $\cxx^{K}$; 
      \item part~(a) implies that there is a homotopy commutative diagram 
                \[\diagram 
                         \Sigma B_{1}\vee\Sigma C_{1}\rto^-{q_{1}}\dto^{\simeq} 
                              & \Sigma C_{1}\dto \\ 
                         \cxx^{K}\rto & \cxx^{K_{I_{1}}} 
                  \enddiagram\] 
                where $q_{1}$ is the pinch map. 
   \end{letterlist} 
\end{lemma}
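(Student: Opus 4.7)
The plan is to use the naturality of the desuspended BBCG decomposition, established in~\cite{IK2} when $K$ is totally homology fillable, with respect to the projection--inclusion pair $(\varphi_{I_{1}}, \iota_{I_{1}})$. Under the homotopy equivalence \(\cxx^{K} \simeq \bigvee_{I \notin K} \Sigma |K_{I}| \wedge \widehat{X}^{I}\), the wedge summand $\Sigma B_{1}$ corresponds to those index sets $I$ with $1 \in I$, while $\Sigma C_{1}$ corresponds to those with $1 \notin I$, equivalently $I \subseteq I_{1}$.

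For part~(a), observe that $\varphi_{I_{1}}$ can equivalently be described as the map of polyhedral products induced by collapsing the first coordinate pair $(CX_{1}, X_{1})$ to $(\ast,\ast)$. Naturality of the IK wedge decomposition under this collapse implies that any summand $\Sigma |K_{I}| \wedge \widehat{X}^{I}$ with $1 \in I$, carrying $X_{1}$ as a smash factor, is crushed to the basepoint. Hence $\varphi_{I_{1}}|_{\Sigma B_{1}} \simeq \ast$, and so $e_{1}|_{\Sigma B_{1}} = \iota_{I_{1}} \circ \varphi_{I_{1}}|_{\Sigma B_{1}}$ is null homotopic as well.

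For part~(b), the summands comprising $\Sigma C_{1}$ are indexed by full subcomplexes $K_{I}$ with $I \subseteq I_{1}$, and the same summands $\Sigma |K_{I}| \wedge \widehat{X}^{I}$ appear in the IK decomposition of $\cxx^{K_{I_{1}}}$. Naturality again identifies these summands under both $\varphi_{I_{1}}$ and $\iota_{I_{1}}$, with compatibility guaranteed by Lemma~\ref{polyinverse}(c), so $e_{1}|_{\Sigma C_{1}}$ is the wedge inclusion into $\cxx^{K}$.

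Finally, (c) follows from (a) alone. Since $\iota_{I_{1}}$ has $\varphi_{I_{1}}$ as a left homotopy inverse, it is a homotopy monomorphism, so $e_{1}|_{\Sigma B_{1}} \simeq \ast$ implies $\varphi_{I_{1}}|_{\Sigma B_{1}} \simeq \ast$. Consequently $\varphi_{I_{1}}$ factors through the pinch map $q_{1}\colon \Sigma B_{1} \vee \Sigma C_{1} \to \Sigma C_{1}$ up to homotopy, producing a map $\Sigma C_{1} \to \cxx^{K_{I_{1}}}$ completing the diagram. The main obstacle will be justifying the naturality assertions underlying (a) and (b): the desuspended splitting in~\cite{IK2} is not formulated as a natural transformation in full generality, so one must trace through its inductive construction to verify that the morphism of pairs $(CX_{1}, X_{1}) \to (\ast, \ast)$ induces the expected collapse on wedge summands.
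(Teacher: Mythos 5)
Your proposal is correct and follows essentially the same route as the paper: both arguments rest on the naturality of the Iriye--Kishimoto wedge decomposition of $\cxx^{K}$ applied to the composite $e_{1}=\iota_{I_{1}}\circ\varphi_{I_{1}}$, concluding that summands with $X_{1}$ as a smash factor die under $\varphi_{I_{1}}$ while the remaining summands are carried identically, with (c) following from (a) via the factorization through the pinch map. Your extra care in deducing $\varphi_{I_{1}}|_{\Sigma B_{1}}\simeq\ast$ from $e_{1}|_{\Sigma B_{1}}\simeq\ast$ using the left inverse of $\iota_{I_{1}}$, and your flagging of the naturality of the desuspended splitting as the point needing verification, are both sound; the paper simply asserts that naturality.
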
 

\begin{proof} 
The wedge decomposition of $\cxx^{K}$ is natural with respect to maps 
of simplicial complexes. Applying this to the composite 
\(e_{1}\colon\nameddright{\cxx^{K}}{}{\cxx^{K_{I_{1}}}}{}{\cxx^{K}}\), 
the fact that $I_{1}=\{2,\ldots,m\}$ implies that any wedge summand of $\cxx^{K}$ 
involving $X_{1}$ is mapped trivially to $\cxx^{K_{1}}$ while any wedge summand 
not involving $X_{1}$ is mapped identically to itself by $e_{1}$. This proves 
parts~(a) and (b). Part~(c) follows immediately from part~(a). 
\end{proof} 

Define the space $G_{1}$ and the map $g_{1}$ by the homotopy fibration 
\[\nameddright{G_{1}}{g_{1}}{\Sigma B_{1}\vee\Sigma C_{1}}{q_{1}}{\Sigma C_{1}}.\] 
By Lemmas~\ref{Ganea} and~\ref{James} there are natural homotopy equivalences 
\begin{equation} 
  \label{G1rtimes} 
  G_{1}\simeq\Sigma B_{1}\rtimes\Omega\Sigma C_{1}\simeq 
      \Sigma B_{1}\vee (\Sigma B_{1}\wedge\Omega\Sigma C_{1})\simeq 
      \Sigma B_{1}\vee \left(\bigvee_{n=1}^{\infty}(\Sigma B_{1}\wedge (C_{1})^{(n)})\right). 
\end{equation}  
Observe that, by definition, each wedge summand in $B_{1}$ 
is a smash product with $X_{1}$ as a factor, so every wedge summand of 
$B_{1}\wedge(C_{1})^{(n)}$ is also a smash product with $X_{1}$ as a factor. 
Therefore, every wedge summand of $G_{1}$ is the suspension of a smash product that 
has $X_{1}$ as a factor. We now separate out those wedge summands that also 
have $X_{2}$ as a factor. As the wedge summands are all suspensions, we can write 
\[G_{1}=\Sigma B_{2}\vee\Sigma C_{2}\] 
where each wedge summand of $B_{2}$ is a smash product with $X_{1}$ and $X_{2}$ 
as factors and each wedge summand of $C_{2}$ is a smash product with $X_{1}$ 
as a factor but not $X_{2}$. Let $\psi_{1}$ be the composite 
\[\psi_{1}\colon\nameddright{G_{1}}{g_{1}}{\Sigma B_{1}\vee\Sigma C_{1}}{\simeq} 
       {\cxx^{K}}.\] 

\begin{lemma} 
   \label{BClemma2} 
   For the composite 
   \(\nameddright{G_{1}=\Sigma B_{2}\vee\Sigma C_{2}}{\psi_{1}}{\cxx^{K}} 
        {e_{2}}{\cxx^{K}}\) 
   the following hold: 
   \begin{letterlist} 
      \item the restriction of $e_{2}\circ\psi_{1}$ to $\Sigma B_{2}$ is null homotopic; 
      \item the restriction of $e_{2}\circ\psi_{1}$ to $\Sigma C_{2}$ is homotopic 
                to the restriction of $\psi_{1}$ to $\Sigma C_{1}$; 
      \item part~(a) implies that there is a homotopy commutative diagram 
                \[\diagram 
                         \Sigma B_{2}\vee\Sigma C_{2}\rto^-{q_{2}}\dto^{\simeq} 
                              & \Sigma C_{2}\ddto \\ 
                         G_{1}\dto^{\psi_{1}} & \\ 
                         \cxx^{K}\rto & \cxx^{K_{I_{2}}} 
                  \enddiagram\] 
                where $q_{2}$ is the pinch map. 
   \end{letterlist} 
\end{lemma}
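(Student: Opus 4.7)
The plan is to mirror Lemma~\ref{BClemma1} by propagating the naturality of the BBCG wedge decomposition through the Ganea fibration of Lemma~\ref{Ganea} and the James splitting of Lemma~\ref{James}, this time with respect to the endomorphism $e_{2}$.

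First I would analyze how $e_{2}$ acts on $\cxx^{K}\simeq\Sigma B_{1}\vee\Sigma C_{1}$. By naturality of the BBCG decomposition under the simplicial operation inducing $e_{2}$, the map $e_{2}$ is null homotopic on every wedge summand $\Sigma\vert K_{I}\vert\wedge\widehat{X}^{I}$ with $2\in I$ and is the identity on every summand with $2\notin I$. The crucial observation is that the $B_{1}/C_{1}$ splitting is determined by whether $X_{1}$ is a smash factor, whereas $e_{2}$ is governed by whether $X_{2}$ is a smash factor; these conditions are independent on wedge summands, so $e_{2}$ preserves the splitting and restricts to self-maps of $\Sigma B_{1}$ and of $\Sigma C_{1}$ separately. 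In particular $e_{2}$ commutes up to homotopy with the pinch map $q_{1}\colon\namedright{\Sigma B_{1}\vee\Sigma C_{1}}{}{\Sigma C_{1}}$.

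By naturality of the Ganea fibration this commutativity induces a self-map $\widetilde{e}_{2}\colon\namedright{G_{1}}{}{G_{1}}$ satisfying $\psi_{1}\circ\widetilde{e}_{2}\simeq e_{2}\circ\psi_{1}$, and via Lemmas~\ref{Ganea} and~\ref{James} this $\widetilde{e}_{2}$ respects the decomposition $G_{1}\simeq\Sigma B_{1}\vee\bigvee_{n\geq 1}\Sigma B_{1}\wedge (C_{1})^{(n)}$ summand by summand, acting through the restrictions of $e_{2}$ to $\Sigma B_{1}$ and $\Sigma C_{1}$. Hence $\widetilde{e}_{2}$ is null on each summand containing $X_{2}$ as a smash factor and is the identity on each summand not containing $X_{2}$. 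Regrouping $G_{1}=\Sigma B_{2}\vee\Sigma C_{2}$ according to presence or absence of $X_{2}$, parts~(a) and~(b) follow at once: $e_{2}\circ\psi_{1}\vert_{\Sigma B_{2}}\simeq\psi_{1}\circ\widetilde{e}_{2}\vert_{\Sigma B_{2}}\simeq\ast$, while $e_{2}\circ\psi_{1}\vert_{\Sigma C_{2}}\simeq\psi_{1}\circ\widetilde{e}_{2}\vert_{\Sigma C_{2}}\simeq\psi_{1}\vert_{\Sigma C_{2}}$. Part~(c) then follows from~(a) in the same formal manner as Lemma~\ref{BClemma1}(c), via the pinch $q_{2}\colon\namedright{\Sigma B_{2}\vee\Sigma C_{2}}{}{\Sigma C_{2}}$ and the factorization of $e_{2}$ through $\cxx^{K_{I_{2}}}$.

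The main obstacle will be careful bookkeeping of naturality: Lemma~\ref{Ganea} is stated as natural in a pair $(B,C)\to(B',C')$ of self-maps, so one must first verify that $e_{2}$ genuinely preserves the $B_{1}/C_{1}$ decomposition rather than merely acting on the whole wedge $\Sigma B_{1}\vee\Sigma C_{1}$. As indicated above, this reduces to the orthogonality of the two smash-factor conditions on wedge summands, which is visible from the explicit form of the BBCG decomposition, and the analogous check must then be passed through the James splitting before $\widetilde{e}_{2}$ can be recognized summand by summand.
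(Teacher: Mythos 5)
Your argument is correct and follows essentially the same route as the paper: both proofs split the wedge summands of $\Sigma B_{1}$ and $\Sigma C_{1}$ according to whether $X_{2}$ is a smash factor and then push $e_{2}$ through the naturality of Lemma~\ref{Ganea} and Lemma~\ref{James} to recognize the induced map on $G_{1}$, summand by summand, as null where $X_{2}$ appears and the identity where it does not. The only cosmetic difference is that you package the induced map as a self-map $\widetilde{e}_{2}=\beta\rtimes\Omega\gamma$ of $G_{1}$, whereas the paper factors $e_{2}$ through the pinch $q_{B}\vee q_{C}$ onto the $X_{2}$-free summands and identifies $q_{B}\rtimes\Omega q_{C}$ with the pinch map $q_{2}$; these amount to the same computation, and you correctly flag the one point requiring care (that the restrictions of $e_{2}$ are suspensions so that the James splitting can be applied naturally).
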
 

\begin{proof} 
The proofs of parts~(a) and~(b) start by reorganizing the data in order to  
apply the naturality of Lemma~\ref{Ganea}. First consider 
$\cxx^{K}\simeq\Sigma B_{1}\vee\Sigma C_{1}$. 
As all the wedge summands of $\Sigma B_{1}$ and~$\Sigma C_{1}$ are 
suspensions, we may write $\Sigma B_{1}=\Sigma B_{1,1}\vee\Sigma B_{1,2}$ 
and $\Sigma C_{1}=\Sigma C_{1,1}\vee\Sigma C_{1,2}$ where 
$\Sigma B_{1,1}$ ($\Sigma C_{1,1}$ respectively) consists of those wedge 
summands of $\Sigma B_{1}$ ($\Sigma C_{1}$) having $X_{1}$ 
as a smash factor but not~$X_{2}$, and $\Sigma B_{1,2}$ ($\Sigma C_{1,2}$) 
consists of those wedge summands of $\Sigma B_{1}$ ($\Sigma C_{1}$) having 
both~$X_{1}$ and~$X_{2}$ as smash factors. The pinch map 
\(\namedright{\Sigma B_{1}\vee\Sigma C_{1}}{q_{1}}{\Sigma C_{1}}\) 
can then be rewritten as a pinch map 
\(\namedright{\Sigma B_{1,1}\vee\Sigma B_{1,2}\vee\Sigma C_{1,1}\vee\Sigma C_{1,2}} 
      {}{\Sigma C_{1,1}\vee\Sigma C_{1,2}}\). 

Let $\Sigma B'_{2}=\Sigma B_{1,2}\vee\Sigma C_{1,2}$ and 
let $\Sigma C'_{2}=\Sigma B_{1,1}\vee\Sigma C_{1,1}$. Notice that 
$\cxx^{K}\simeq\Sigma B'_{2}\vee\Sigma C'_{2}$ where 
$\Sigma B'_{2}$ consists of all wedge summands in $\cxx^{K}$ which 
are smash products with $X_{2}$ as a factor and $\Sigma C'_{2}$ consists 
of all wedge summands in $\cxx^{K}$ which are smash products not 
having~$X_{2}$ as a factor. As in Lemma~\ref{BClemma1}, the restriction 
of $e_{2}$ to~$\Sigma B'_{2}$ is null homotopic and the restriction 
to $\Sigma C'_{2}$ is the inclusion of $\Sigma C'_{2}$ into $\cxx^{K}$.
 Therefore the composite 
\(\nameddright{\Sigma B'_{2}\vee\Sigma C'_{2}}{\simeq}{\cxx^{K}}{}{\cxx^{K_{I_{2}}}}\) 
factors through the pinch map 
\(q_{2}'\colon\namedright{\Sigma B'_{2}\vee\Sigma C'_{2}}{}{\Sigma C'_{2}}\). 
Reordering the wedge summands, $q'_{2}$ can be regarded as the wedge sum   
\(\llnamedright{\Sigma B_{1,1}\vee\Sigma B_{1,2}\vee\Sigma C_{1,1}\vee\Sigma C_{1,2}} 
     {q_{B}\vee q_{C}}{\Sigma B_{1,1}\vee\Sigma C_{1,1}}\) 
of the pinch maps 
\(q_{B}\colon\namedright{\Sigma B_{1}=\Sigma B_{1,1}\vee\Sigma B_{1,2}} 
    {}{\Sigma B_{1,1}}\) 
and  
\(q_{C}\colon\namedright{\Sigma C_{1}=\Sigma C_{1,1}\vee\Sigma C_{1,2}} 
    {}{\Sigma C_{1,1}}\). 

Putting $q_{1}$ and $q'_{2}$ together, by the naturality of Lemma~\ref{Ganea} 
there is a homotopy fibration diagram 
\[\diagram 
        \Sigma B_{1}\rtimes\Omega\Sigma C_{1}\rto\dto^{q_{B}\rtimes\Omega q_{C}}  
          & \Sigma B_{1,1}\vee\Sigma B_{1,2}\vee\Sigma C_{1,1}\vee\Sigma C_{1,2} 
                \rto^-{q_{1}}\dto^{q_{B}\vee q_{C}} 
          & \Sigma C_{1,1}\vee\Sigma C_{1,2}\dto^-{q_{C}} \\ 
        \Sigma B_{1,1}\rtimes\Omega\Sigma C_{1,1}\rto 
          & \Sigma B_{1,1}\vee\Sigma C_{1,1}\rto^{q}  
          & \Sigma C_{1,1}  
   \enddiagram\] 
where $q$ is the pinch map. Putting the left square together with the factorization 
of $e_{2}$ through $q'_{2}$ gives a homotopy commutative diagram 
\[\diagram 
        \Sigma B_{1}\rtimes\Omega\Sigma C_{1}\rto\dto^{q_{B}\rtimes\Omega q_{C}}  
          & \Sigma B_{1,1}\vee\Sigma B_{1,2}\vee\Sigma C_{1,1}\vee\Sigma C_{1,2} 
                \rdouble\dto^{q_{B}\vee q_{C}} 
          & \Sigma B'_{2}\vee\Sigma C'_{2}\rto^-{\simeq}\dto^{q'_{2}} 
          & \cxx^{K}\dto \\ 
        \Sigma B_{1,1}\rtimes\Omega\Sigma C_{1,1}\rto 
          & \Sigma B_{1,1}\vee\Sigma C_{1,1}\rdouble   
          & \Sigma C'_{2}\rto & \cxx^{K_{I_{2}}}.  
   \enddiagram\] 
Notice that $G_{1}\simeq\Sigma B_{1}\rtimes\Omega\Sigma C_{1}$ and 
the top row is homotopic to $\psi_{1}$. 

The maps $q_{B}$ and $q_{C}$ pinch out any wedge 
summands of $\Sigma B_{1}$ and $\Sigma C_{1}$ respectively that have $X_{2}$ as 
a smash factor. As $q_{B}$ and $q_{C}$ are suspensions, the naturality of 
the wedge decomposition~(\ref{G1rtimes}) of 
$G_{1}\simeq\Sigma B_{1}\rtimes\Omega\Sigma C_{1}$ 
implies that $q_{B}\rtimes\Omega q_{C}$ pinches out any wedge summand 
of $G_{1}$ having both $X_{1}$ and~$X_{2}$ as factors, and sends any wedge 
summand having $X_{1}$ as a smash factor but not $X_{2}$ identically to 
itself in $\Sigma B_{1,1}\rtimes\Omega\Sigma C_{1,1}$. That is, 
$q_{B}\rtimes\Omega q_{C}$ is the same as the map 
\(\namedright{G_{1}=\Sigma B_{2}\vee\Sigma C_{2}}{q_{2}}{\Sigma C_{2}}\).  
Therefore, the homotopy commutativity of the previous diagram implies 
that the restriction of $e_{2}\circ\psi_{1}$ to $\Sigma B_{2}$ is null homotopic 
and the restriction to $\Sigma C_{2}$ is the restriction of $\psi_{1}$. This 
proves parts~(a) and~(b). Part~(c) follows immediately from part~(a). 
\end{proof}

Now proceed as before by taking the homotopy fibre of the pinch map  
\(\namedright{G_{1}=\Sigma B_{2}\vee\Sigma C_{2}}{q_{2}}{\Sigma C_{2}}\).  
Iterating, for $1\leq j<m$ we obtain homotopy fibrations 
\[\nameddright{G_{j+1}}{g_{j+1}}{G_{j}\simeq\Sigma B_{j}\vee\Sigma C_{j}} 
      {q_{j}}{\Sigma C_{j}}\] 
where $q_{j}$ is the pinch map; every wedge summand of $B_{j}$ is a smash 
product with $X_{1},\ldots,X_{j}$ as factors; every wedge summand of $C_{j}$ 
is a smash product with $X_{1},\ldots,X_{j-1}$ as factors but not $X_{j}$; 
and there are natural homotopy equivalences
\[G_{j+1}\simeq\Sigma B_{j}\rtimes\Omega\Sigma C_{j}\simeq 
      \Sigma B_{j}\vee (\Sigma B_{j}\wedge\Omega\Sigma C_{j})\simeq 
      \Sigma B_{j}\vee \left(\bigvee_{n=1}^{\infty}(\Sigma B_{j}\wedge (C_{j})^{(n)})\right).\] 
Further, if $\psi_{j}$ is the composite 
\[\psi_{j}\colon\nameddright{G_{j}}{g_{j}}{G_{j-1}}{\psi_{j-1}}{\cxx^{K}}\] 
then arguing as in Lemma~\ref{BClemma2} the following hold. 

\begin{lemma} 
   \label{BClemmak} 
   For the composite 
   \(\nameddright{G_{j}=\Sigma B_{j}\vee\Sigma C_{j}}{\psi_{j}} 
         {\cxx^{K}}{e_{j}}{\cxx^{K}}\)  
   the following hold: 
   \begin{letterlist} 
      \item the restriction of $e_{j}\circ\psi_{j}$ to $\Sigma B_{j}$ is null homotopic; 
      \item the restriction of $e_{j}\circ\psi_{j}$ to $\Sigma C_{j}$ is homotopic 
                to the restriction of $\psi_{j}$ to $\Sigma C_{j}$; 
      \item part~(a) implies that there is a homotopy commutative diagram 
                \[\diagram 
                         \Sigma B_{j}\vee\Sigma C_{j}\rto^-{q_{j}}\dto^{\simeq} 
                              & \Sigma C_{j}\ddto \\ 
                         G_{j}\dto^{\psi_{j}} & \\ 
                         \cxx^{K}\rto & \cxx^{K_{I_{j}}} 
                  \enddiagram\] 
                where $q_{j}$ is the pinch map. 
   \end{letterlist} 
   $\qqed$ 
\end{lemma}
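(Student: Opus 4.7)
The plan is to argue by induction on $j$, with the base cases $j=1$ and $j=2$ provided by Lemmas~\ref{BClemma1} and~\ref{BClemma2} respectively. The inductive step is essentially the argument of Lemma~\ref{BClemma2} replayed at the next level, with the wedge decomposition of $G_{j-1}$ and the description of $G_{j}$ as the fibre of a pinch map propagating the same structure forward at each stage.

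For the inductive step, suppose the result holds for all indices less than $j$, so one has $\psi_{j-1}\colon G_{j-1}\to\cxx^{K}$ together with the splitting $G_{j-1}\simeq\Sigma B_{j-1}\vee\Sigma C_{j-1}$. First, separate the wedge summands of $\cxx^{K}$ according to whether they contain $X_{j}$ as a smash factor, writing $\cxx^{K}\simeq\Sigma B'_{j}\vee\Sigma C'_{j}$. By the argument of Lemma~\ref{BClemma1} applied to $e_{j}$ in place of $e_{1}$, the idempotent $e_{j}$ is null homotopic on $\Sigma B'_{j}$ and restricts to the canonical inclusion on $\Sigma C'_{j}$, so $e_{j}$ factors through the pinch map $\cxx^{K}\to\Sigma C'_{j}$. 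Next, refine each of $\Sigma B_{j-1}$ and $\Sigma C_{j-1}$ by whether their summands contain $X_{j}$, producing four pieces analogous to those in the proof of Lemma~\ref{BClemma2}.

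Now invoke the naturality of Lemmas~\ref{Ganea} and~\ref{James} applied to the pinch maps which forget the summands containing $X_{j}$. This yields a homotopy fibration diagram comparing the fibration $G_{j}\to\Sigma B_{j-1}\vee\Sigma C_{j-1}\to\Sigma C_{j-1}$ with its ``no-$X_{j}$'' counterpart, and under the splittings in play the induced map on fibres is, by the naturality of the James splitting, precisely the pinch map $q_{j}\colon G_{j}\simeq\Sigma B_{j}\vee\Sigma C_{j}\to\Sigma C_{j}$ which collapses the summands containing $X_{j}$. Combining this identification with the factorization of $e_{j}$ through $\cxx^{K}\to\Sigma C'_{j}$ then yields parts~(a) and~(b); part~(c) is immediate from~(a) and the definition of $\psi_{j}$.

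The main obstacle will be the bookkeeping: at stage $j$ one must keep track of multiple refinements of $B_{j-1}$ and $C_{j-1}$ indexed by which of $X_{1},\ldots,X_{j}$ occur as smash factors, and verify that the James decomposition of $G_{j}\simeq\Sigma B_{j-1}\rtimes\Omega\Sigma C_{j-1}$ is sufficiently natural with respect to these refinements that the ``forget-$X_{j}$'' pinches on $\Sigma B_{j-1}$ and $\Sigma C_{j-1}$ descend to the pinch $q_{j}$ on $G_{j}$. Once this naturality is in hand, the remainder is symbol-shuffling exactly as in Lemma~\ref{BClemma2}, and no genuinely new homotopy-theoretic input is required.
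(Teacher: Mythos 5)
Your proposal is correct and takes essentially the same route as the paper: the paper proves this lemma simply by asserting that one argues as in Lemma~\ref{BClemma2} at each stage of the iteration, which is exactly the induction you describe. The bookkeeping you flag --- refining $\Sigma B_{j-1}$ and $\Sigma C_{j-1}$ according to occurrence of $X_{j}$ and checking via the naturality of Lemmas~\ref{Ganea} and~\ref{James} that the ``forget-$X_{j}$'' pinches descend to $q_{j}$ on the fibre --- is precisely the content of the paper's proof of Lemma~\ref{BClemma2}, replayed verbatim at level $j$.
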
 

For $1\leq j\leq m$, let $\theta_{j}$ be the composite  
\[\theta_{j}\colon\Sigma C_{j}\hookrightarrow 
      \namedright{\Sigma B_{j}\vee\Sigma C_{j}=G_{j}}{\psi_{j}}{\cxx^{K}}\]
and let $\theta$ be the product of the maps $\Omega\theta_{j}$: 
\[\theta\colon\namedright{\prod_{j=1}^{m}\Omega\Sigma C_{j}}{}{\Omega\cxx^{K}}.\] 
Let $\theta\cdot\psi_{m}$ be the composite 
\[\theta\cdot\psi_{m}\colon\llnameddright 
     {(\prod_{j=1}^{m}\Omega\Sigma C_{j})\times\Omega G_{m}}{\theta\times\psi_{m}} 
     {\Omega\cxx^{K}\times\Omega\cxx^{K}}{\mu}{\Omega\cxx^{K}}\] 
where $\mu$ is the loop multiplication. 

\begin{lemma} 
   \label{loopcxx1} 
   Suppose that $K$ is totally homology fillable. Then the map 
   \(\llnamedright{(\prod_{j=1}^{m}\Omega\Sigma C_{j})\times\Omega G_{m}} 
         {\theta\cdot\psi_{m}}{\Omega\cxx^{K}}\) 
   is a homotopy equivalence.  
\end{lemma}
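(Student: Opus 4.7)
The plan is to split off a factor of $\Omega\Sigma C_j$ from $\Omega\cxx^K$ by looping each pinch fibration in the iterated tower. For each $j$, the wedge inclusion $\Sigma C_j\hookrightarrow\Sigma B_j\vee\Sigma C_j\simeq G_{j-1}$ (with the convention $G_0=\cxx^K$) is a section $s_j$ of the pinch map $q_j$, whose fibre is $G_j$. Upon looping, both the section and the fibre inclusion $\Omega g_j$ become $H$-maps into $\Omega G_{j-1}$, and a standard argument shows that the composite
\[\nameddright{\Omega\Sigma C_j\times\Omega G_j}{\Omega s_j\times\Omega g_j}{\Omega G_{j-1}\times\Omega G_{j-1}}{\mu}{\Omega G_{j-1}}\]
is a homotopy equivalence. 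One checks this either by observing that the section splits the long exact sequence on homotopy groups (and this splitting is realized by the loop multiplication), or equivalently by a Bott--Samelson/James homology computation together with Whitehead's theorem.

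Next, I would induct downward from $j=m$ to $j=0$ on the claim
\[\Omega G_j\simeq\prod_{k=j+1}^{m}\Omega\Sigma C_k\,\times\,\Omega G_m,\]
the base case $j=m$ being trivial and the inductive step being the splitting above. Taking $j=0$ then produces the desired product decomposition $\Omega\cxx^K\simeq\prod_{k=1}^{m}\Omega\Sigma C_k\times\Omega G_m$.

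The third step is to identify the equivalence so constructed with $\theta\cdot\psi_m$. By definition, $\theta_j=\psi_j\circ\iota_j$, where $\iota_j$ denotes the wedge inclusion of $\Sigma C_j$, and $\psi_j=\psi_{j-1}\circ g_j$. Because each $\Omega\psi_{j-1}$ is a loop homomorphism, transporting the $j$-th splitting forward to $\Omega\cxx^K$ sends $\Omega s_j$ to $\Omega\theta_j$ and sends the iterated fibre inclusion $\Omega g_m\circ\cdots\circ\Omega g_1$ to $\Omega\psi_m$. Multiplying these $m+1$ maps together in $\Omega\cxx^K$ via the loop multiplication therefore reproduces precisely $\theta\cdot\psi_m$.

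The main obstacle will be the bookkeeping in combining the iterated splittings: the loop multiplication is not strictly commutative, so one must verify that the order in which the transported factors appear is coherent. This compatibility rests on the $H$-map property of the loop maps $\Omega\psi_j$ together with the naturality established in Lemma~\ref{BClemmak}, which guarantees that the successive splittings of $\Omega G_j$ are compatible with the maps into $\Omega\cxx^K$ and that no crossed terms interfere with identifying the final product map as $\theta\cdot\psi_m$.
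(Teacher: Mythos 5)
Your proposal is correct and follows essentially the same route as the paper: loop the iterated pinch fibrations $G_{j}\rightarrow G_{j-1}\rightarrow \Sigma C_{j}$, use the wedge-inclusion section to split each one after looping, substitute the splittings iteratively to get $\Omega\cxx^{K}\simeq(\prod_{j=1}^{m}\Omega\Sigma C_{j})\times\Omega G_{m}$, and then observe that the restrictions of the resulting equivalence are by definition $\Omega\theta_{j}$ and $\Omega\psi_{m}$. Your extra care about the ordering of factors under the loop multiplication is a point the paper passes over silently, but it does not change the argument.
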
 

\begin{proof} 
In general, the homotopy fibration 
\(\nameddright{B\rtimes\Omega C}{}{B\vee C}{}{C}\) 
in Lemma~\ref{Ganea} has a section 
\(\namedright{C}{}{B\vee C}\) 
given by the inclusion of the wedge summand. Therefore, after looping, 
there is a homotopy equivalence 
\(\namedright{\Omega C\times(\Omega B\rtimes\Omega C)}{\simeq}{\Omega(B\vee C)}\). 
In our case, for $1\leq j<m$, from the homotopy fibration 
\(\nameddright{G_{j+1}}{g_{j+1}}{G_{j}=\Sigma B_{j}\vee\Sigma C_{j}}{q_{j}} 
     {\Sigma C_{j}}\) 
we obtain a homotopy equivalence 
\(\namedright{\Omega\Sigma C_{j}\times\Omega G_{j+1}}{\simeq}{\Omega G_{j}}\). 
Iteratively substituting the homotopy equivalence for $\Omega G_{j+1}$ into that 
for $\Omega G_{j}$, we obtain a homotopy equivalence 
\(\namedright{(\prod_{j=1}^{m}\Omega\Sigma C_{j})\times\Omega G_{m}}{\simeq} 
     {\Omega\cxx^{K}}\). 
Notice that the restriction of this homotopy equivalence to $\Omega\Sigma C_{j}$ 
is the definition of $\Omega\theta_{j}$ and the restriction to~$\Omega G_{m}$ 
is $\Omega\psi_{m}$. Thus the equivalence is $\theta\cdot\Omega\psi_{m}$. 
\end{proof} 

Theorem~\ref{uxadecomp2} and Lemma~\ref{loopcxx1} give different homotopy 
equivalences for $\Omega\cxx^{K}$. We wish to compare them. 

\begin{lemma} 
   \label{Gmnull} 
   The composite 
   \(\nameddright{G_{m}}{\psi_{m}}{\cxx^{K}}{\varphi}{\cxx^{K}_{D}}\) 
   is null homotopic. 
\end{lemma}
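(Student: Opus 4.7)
The plan is to use the description of $\cxx^K_D$ as a homotopy inverse limit over the proper full subcomplexes, reduce to the vanishing of each composite $\varphi_I \circ \psi_m$ for $I \subsetneq [m]$, and then assemble these trivializations into a coherent null homotopy of the map into the holim.

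First, I would reduce to the maximal proper subsets $I_j = [m] \setminus \{j\}$. For any $I \subsetneq [m]$, pick $j \notin I$; then $I \subseteq I_j$, and by Lemma \ref{polyinverse} the projection $\varphi_I \colon \cxx^K \to \cxx^{K_I}$ factors through $\varphi_{I_j}$ via the induced map $\cxx^{K_{I_j}} \to \cxx^{K_I}$. So it suffices to show that $\varphi_{I_j} \circ \psi_m \simeq \ast$ for each $1 \leq j \leq m$.

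Second, fix $j$ and appeal to Lemma \ref{BClemmak}(c), which says that the composite $\varphi_{I_j} \circ \psi_j$ factors through the pinch map $q_j \colon G_j = \Sigma B_j \vee \Sigma C_j \to \Sigma C_j$. Iterating the recursive definition $\psi_k = \psi_{k-1} \circ g_k$ gives $\psi_m = \psi_j \circ g_{j+1} \circ \cdots \circ g_m$ for any $1 \leq j \leq m$. Since $g_{j+1}$ is by construction the homotopy fibre map of $q_j$, the composite $q_j \circ g_{j+1}$ is null homotopic, and therefore so is $\varphi_{I_j} \circ \psi_m$.

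The main obstacle is promoting these pointwise trivializations to a null homotopy of $\varphi \circ \psi_m$ itself, because pointwise null homotopies of the projections out of a holim do not automatically glue into a null homotopy of the map into the holim. The null homotopies constructed above all stem from the single iterated fibre construction of $\psi_m$, so they should be compatible with the cubical projection maps $\varphi_{J,I} \colon \cxx^{K_J} \to \cxx^{K_I}$ for $I \subseteq J \subsetneq [m]$ via the naturality of the pinch-and-fibre decomposition built in Lemmas \ref{BClemma1}, \ref{BClemma2} and \ref{BClemmak}. Concretely, I would show by induction on $|[m] \setminus I|$ that $\psi_m$ lifts compatibly through each stage of the tower defining $\cxx^K_D$; the required coherence should reduce to the strict commutativity of the squares in Lemmas \ref{BClemma1}(c), \ref{BClemma2}(c), and \ref{BClemmak}(c), together with the canonical null homotopies provided by the fibre sequences $G_{k+1} \to G_k \to \Sigma C_k$.
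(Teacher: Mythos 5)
Your first two steps coincide with the paper's: reducing to the maximal proper subsets $I_{j}=[m]\backslash\{j\}$, and then using Lemma~\ref{BClemmak}(c) together with the factorization $\psi_{m}=\psi_{j}\circ g_{j+1}\circ\cdots\circ g_{m}$ and the fibration $\nameddright{G_{j+1}}{g_{j+1}}{G_{j}}{q_{j}}{\Sigma C_{j}}$ to conclude that each composite $\varphi_{I_{j}}\circ\psi_{m}$ is null homotopic. You also correctly identify the real difficulty: pointwise null homotopies of the projections out of a homotopy inverse limit do not assemble into a null homotopy of the map into the holim without a coherent system of higher homotopies.

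The gap is in your proposed resolution of that difficulty. Asserting that the coherence ``should reduce to the strict commutativity of the squares'' in Lemmas~\ref{BClemma1}(c), \ref{BClemma2}(c) and \ref{BClemmak}(c) is not a proof: those squares give compatibility of the \emph{maps}, not of the chosen null homotopies, and assembling compatible null homotopies over the cube indexing $\cxx^{K}_{D}$ is exactly where obstruction classes (of Toda-bracket/$\lim^{1}$ type) can live. Your inductive lifting scheme is not carried out and it is not clear it can be, at least not without substantial extra work. The paper sidesteps the issue entirely by looping: by Proposition~\ref{uxaKDdecomp}, $\Omega\cxx^{K}_{D}$ is an honest product of telescopes, each a factor of some $\Omega\cxx^{K_{I}}$ with $I\subsetneq[m]$, compatibly with $\Omega\varphi$; a map into an honest product is null homotopic as soon as each coordinate is, so the pointwise vanishing you established already gives $\Omega\varphi\circ\Omega\psi_{m}\simeq\ast$. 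The statement is then delooped using the fact that $G_{m}$ is a suspension: $\varphi\circ\psi_{m}$ factors through $ev\circ\Sigma\Omega(\varphi\circ\psi_{m})$ precomposed with a section $\namedright{G_{m}}{}{\Sigma\Omega G_{m}}$ of the evaluation map, and the middle map is null. You should replace your final paragraph with this loop-and-deloop argument, or else supply the missing coherence argument in full.
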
 

\begin{proof} 
Fix an integer $j$ for $1\leq j<m$. Consider the homotopy fibration 
\(\nameddright{G_{j+1}}{g_{j+1}}{G_{j}}{q_{j}}{\Sigma C_{j}}\). 
By Lemma~\ref{BClemmak}~(c), the composite 
\(\gamma_{j}\colon\nameddright{G_{j}}{\psi_{j}}{\cxx^{K}}{}{\cxx^{K_{I_{j}}}}\) 
factors through $q_{j}$. Therefore $\gamma_{j}\circ g_{j+1}$ is null 
homotopic. By definition, $\psi_{j+1}=\psi_{j}\circ g_{j+1}$, so the composite  
\(\nameddright{G_{j+1}}{\psi_{j+1}}{\cxx^{K}}{}{\cxx^{K_{I_{j}}}}\) 
is null homotopic. The recursive definition of $\psi_{m}$ implies that it 
factors through $\psi_{j}$ so the composite 
\(\nameddright{G_{m}}{\psi_{m}}{\cxx^{K}}{}{\cxx^{K_{I_{j}}}}\) 
is null homotopic. This holds for all $1\leq j\leq m$, so $\psi_{m}$ 
composes trivially into $\cxx^{K_{I_{j}}}$ for all $j$. 
But this implies that $\psi_{m}$ composes trivially to $\cxx^{K_{I}}$ for every 
proper subset $I$ of $[m]$ because the projection 
\(\namedright{(CX)^{m}}{}{(CX)^{I}}\) 
has to factor through some $(CX)^{I_{t}}$. Looping, $\Omega\psi_{m}$ 
composes trivially to $\Omega\cxx^{K_{I}}$ for any $I\subsetneq [m]$. But by 
Proposition~\ref{uxaKDdecomp}, $\Omega\cxx^{K}_{D}$ decomposes 
as a product, each factor of which is a factor of $\Omega\cxx^{K_{I}}$ 
for some $I$, and this decomposition is compatible with $\Omega\varphi$. 
Thus the composite 
\(\nameddright{\Omega G_{m}}{\Omega\psi_{m}}{\Omega\cxx^{K}}{\Omega\varphi} 
       {\Omega\cxx^{K}_{D}}\) 
is null homotopic. 

To deloop this, observe that as $G_{m}$ is a suspension there is a map 
\(\namedright{G_{m}}{}{\Sigma\Omega G_{m}}\) 
which is a right homotopy inverse of the evaluation map 
\(ev\colon\namedright{\Sigma\Omega G_{m}}{}{G_{m}}\). 
The naturality of the evaluation map implies that there is a homotopy 
commutative diagram 
\[\diagram 
     G_{m}\rto\drdouble & \Sigma\Omega G_{m}\rto^-{\Sigma\Omega\psi_{m}}\dto^{ev} 
        & \Sigma\Omega\cxx^{K}\rto^-{\Sigma\Omega\varphi}\dto^{ev} 
        & \Sigma\Omega\cxx^{K}_{D}\dto^{ev} \\ 
     & G_{m}\rto^-{\psi_{m}} & \cxx^{K}\rto^-{\varphi} & \cxx^{K}_{D}. 
  \enddiagram\] 
The null homotopy for $\Omega\varphi\circ\Omega\psi_{m}$ therefore implies 
that $\varphi\circ\psi_{m}$ is also null homotopic. 
\end{proof} 

Recall that there is a homotopy fibration 
\(\nameddright{F_{[m]}}{}{\cxx^{K}}{\varphi}{\cxx^{K}_{D}}\). 

\begin{corollary} 
   \label{Gmnullcor} 
   There is a lift 
   \[\xymatrix{
         & G_{m}\ar[d]^{\psi_{m}}\ar@{-->}[dl]_{\lambda} & \\ 
         F_{[m]}\ar[r] & \cxx^{K}\ar[r]^-{\varphi} & \cxx^{K}_{D} 
     }\] 
   for some map $\lambda$. Further, $\Omega\lambda$ has a 
   left homotopy inverse. 
\end{corollary}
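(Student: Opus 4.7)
The plan is to build the lift $\lambda$ from the null homotopy in Lemma~\ref{Gmnull} and then produce a left inverse of $\Omega\lambda$ by playing the two decompositions of $\Omega\cxx^{K}$ off against each other. By Lemma~\ref{Gmnull}, the composite $\varphi\circ\psi_{m}$ is null homotopic, so the universal property of the homotopy fibration
\(\nameddright{F_{[m]}}{}{\cxx^{K}}{\varphi}{\cxx^{K}_{D}}\)
yields a map $\lambda\colon\namedright{G_{m}}{}{F_{[m]}}$ together with a homotopy $j\circ\lambda\simeq\psi_{m}$, where $j\colon\namedright{F_{[m]}}{}{\cxx^{K}}$ is the fibre inclusion.

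For the second claim I would construct the left inverse as a composite of two retractions. On one hand, looping $j$ gives a map $\Omega j\colon\namedright{\Omega F_{[m]}}{}{\Omega\cxx^{K}}$, and looping the chosen $\lambda$ satisfies $\Omega j\circ\Omega\lambda\simeq\Omega\psi_{m}$. On the other hand, Lemma~\ref{loopcxx1} supplies a homotopy equivalence
\[\theta\cdot\Omega\psi_{m}\colon\left(\prod_{j=1}^{m}\Omega\Sigma C_{j}\right)\times\Omega G_{m}\stackrel{\simeq}{\longrightarrow}\Omega\cxx^{K},\]
whose restriction to the factor $\Omega G_{m}$ is, by construction, precisely $\Omega\psi_{m}$. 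Let $r\colon\namedright{\Omega\cxx^{K}}{}{\Omega G_{m}}$ be the composite of a homotopy inverse of $\theta\cdot\Omega\psi_{m}$ with projection onto the $\Omega G_{m}$ coordinate; then $r\circ\Omega\psi_{m}\simeq 1_{\Omega G_{m}}$ because it corresponds to the inclusion of $\Omega G_{m}$ into the product followed by the coordinate projection.

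Now define $\rho:=r\circ\Omega j\colon\namedright{\Omega F_{[m]}}{}{\Omega G_{m}}$. Then
\[\rho\circ\Omega\lambda\;=\;r\circ\Omega j\circ\Omega\lambda\;\simeq\;r\circ\Omega\psi_{m}\;\simeq\;1_{\Omega G_{m}},\]
which exhibits $\rho$ as a left homotopy inverse of $\Omega\lambda$.

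The main technical point, and the only step that requires care, is verifying that the equivalence of Lemma~\ref{loopcxx1} really restricts to $\Omega\psi_{m}$ on the $\Omega G_{m}$ factor, so that the coordinate projection after the inverse equivalence annihilates the $\prod\Omega\Sigma C_{j}$ part and is the identity on $\Omega\psi_{m}$. This was established in the proof of Lemma~\ref{loopcxx1} by iteratively splitting the fibrations $\nameddright{G_{j+1}}{}{G_{j}}{q_{j}}{\Sigma C_{j}}$, so the ingredient is already in hand; everything else in the argument is formal manipulation of fibration sequences.
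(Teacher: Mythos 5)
Your proposal is correct and follows essentially the same route as the paper: the lift exists by Lemma~\ref{Gmnull}, and the left homotopy inverse of $\Omega\lambda$ is obtained by composing $\Omega j$ with a left inverse of $\Omega\psi_{m}$, which exists because the equivalence of Lemma~\ref{loopcxx1} restricts to $\Omega\psi_{m}$ on the $\Omega G_{m}$ factor. The only difference is that you make the retraction $r$ and the composite $\rho=r\circ\Omega j$ explicit, where the paper leaves this as a one-line deduction.
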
 

\begin{proof} 
The existence of the lift follows immediately from Lemma~\ref{Gmnull}. 
By Lemma~\ref{loopcxx1}, $\Omega\psi_{m}$ has a left homotopy inverse. 
The fact that $\lambda$ factors through $\psi_{m}$ then implies that $\Omega\lambda$ 
also has a left homotopy inverse. 
\end{proof} 

\begin{lemma} 
   \label{loopcxx2} 
   \(\nameddright{\prod_{j=1}^{m}\Omega\Sigma C_{j}}{\theta}{\Omega\cxx^{K}} 
          {\Omega\varphi}{\Omega\cxx^{K}_{D}}\) 
   has a left homotopy inverse. 
\end{lemma}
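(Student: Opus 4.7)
The plan is to prove the stronger statement that $\Omega\varphi\circ\theta$ is a homotopy equivalence; this supplies a left homotopy inverse as required. I compare the decomposition from Lemma~\ref{loopcxx1}, which gives an equivalence $\alpha=\theta\cdot\Omega\psi_{m}\colon(\prod_{j=1}^{m}\Omega\Sigma C_{j})\times\Omega G_{m}\to\Omega\cxx^{K}$, with the splitting $\Omega\cxx^{K}\simeq\Omega\cxx^{K}_{D}\times\Omega F_{[m]}$ coming from Theorem~\ref{dualretract}, and show that the resulting composite assembles into a direct product of maps.

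Let $\pi_{F}\colon\Omega\cxx^{K}\to\Omega F_{[m]}$ denote the projection onto the second factor of the splitting; by Theorem~\ref{uxadecomp2} and Lemma~\ref{Tloop1}, $\pi_{F}$ may be taken as the telescope projection onto $T(1,\ldots,1)$, which is an $H$-map in the product decomposition. Set $\Phi=(\Omega\varphi,\pi_{F})\circ\alpha$, a composite of homotopy equivalences. Using that $\Omega\varphi$ is an $H$-map together with Lemma~\ref{Gmnull}, the first coordinate of $\Phi(z,w)$ reduces to $(\Omega\varphi\circ\theta)(z)$. Using Corollary~\ref{Gmnullcor}, which factors $\psi_{m}$ as $\mathrm{incl}\circ\lambda$, together with the fact that $\pi_{F}$ restricts to the identity on $\Omega F_{[m]}$, the contribution of $w$ to the second coordinate is $\Omega\lambda(w)$. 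The key claim is then that $\pi_{F}\circ\theta\simeq\ast$, which implies that $\Phi\simeq(\Omega\varphi\circ\theta)\times\Omega\lambda$ is a direct product of maps. Since $\Phi$ is an equivalence and a direct product of maps is an equivalence if and only if each factor is, this forces $\Omega\varphi\circ\theta$ to be an equivalence.

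To prove $\pi_{F}\circ\theta\simeq\ast$, each $\theta_{j}\colon\Sigma C_{j}\to\cxx^{K}$ factors through $\cxx^{K_{I_{j}}}$ via $\iota_{I_{j}}$ with $I_{j}=[m]\setminus\{j\}$, because by naturality of the BBCG decomposition the wedge summands of $\Sigma C_{j}$, none of which involve $X_{j}$ as a smash factor, sit inside $\cxx^{K_{I_{j}}}$. From Lemma~\ref{polyinverse}~(c), $\Omega e_{j}\circ\Omega\iota_{I_{j}}\simeq\Omega\iota_{I_{j}}$, so $(1-\Omega e_{j})\circ\Omega\iota_{I_{j}}\simeq\ast$; commuting the factors in $f_{(1,\ldots,1)}=(1-\Omega e_{1})\circ\cdots\circ(1-\Omega e_{m})$ using Lemma~\ref{idempotents} then gives $f_{(1,\ldots,1)}\circ\Omega\iota_{I_{j}}\simeq\ast$, and passing to the telescope yields $\pi_{F}\circ\Omega\iota_{I_{j}}\simeq\ast$ and hence $\pi_{F}\circ\Omega\theta_{j}\simeq\ast$ for every $j$. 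The $H$-map property of $\pi_{F}$ then propagates this to $\pi_{F}\circ\theta\simeq\ast$, since $\theta$ is the loop product of the $\Omega\theta_{j}$'s. The main obstacle is precisely this $H$-map property: an arbitrary space-level section of $\Omega\varphi$ from Theorem~\ref{dualretract} does not suffice, and one must invoke the finer product description of Theorem~\ref{uxadecomp2} in order for $\pi_{F}$ to be an honest $H$-map and for the distributivity over the loop product to go through.
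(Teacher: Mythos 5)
Your argument takes a genuinely different route from the paper's, but it has a real gap at exactly the point you flag as the crux: the assertion that the telescope projection $\pi_{F}\colon\namedright{\Omega\cxx^{K}}{}{T(1,\ldots,1)}$ is an $H$-map with respect to the loop multiplication. Theorem~\ref{uxadecomp2} provides only a homotopy equivalence of \emph{spaces} $\Omega\cxx^{K}\simeq\prod_{I}\Omega F_{I}$, built via Lemmas~\ref{iddecomp} and~\ref{fiddecomp} from telescopes of the idempotents $f_{(a_{1},\ldots,a_{m})}$; nothing in that construction makes the equivalence, or the projection onto the $T(1,\ldots,1)$ factor, multiplicative. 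Indeed $f_{(1,\ldots,1)}$ is a composite of the maps $1-\Omega e_{j}$, which are not $H$-maps, and already in the basic example $\Omega(X_{1}\vee X_{2})\simeq\Omega X_{1}\times\Omega X_{2}\times\Omega(\Omega X_{1}\ast\Omega X_{2})$ the projection onto the last factor is not an $H$-map: for $X_{i}=S^{2}$ it kills the Pontryagin generators $x,y$ but sends $xy$ to $[x,y]\neq 0$. Without this multiplicativity you cannot distribute $\pi_{F}$ over the loop product, so neither the decomposition $\Phi\simeq(\Omega\varphi\circ\theta)\times\Omega\lambda$ nor the passage from $\pi_{F}\circ\Omega\theta_{j}\simeq\ast$ (which is a correct observation) to $\pi_{F}\circ\theta\simeq\ast$ is justified. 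Note also that the stronger statement you aim for is essentially Proposition~\ref{Tloopwedge}, which the paper \emph{deduces from} Lemma~\ref{loopcxx2}; its proof deliberately sidesteps the multiplicativity issue by working with left homotopy inverses and a finite-type homology injection argument rather than by splitting a map of products coordinatewise.

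The paper's own proof is much more elementary and uses the complementary half of Lemma~\ref{BClemmak}(b): since $e_{j}\circ\theta_{j}\simeq\theta_{j}$, the composite of $\Omega\theta_{j}$ with $\namedright{\Omega\cxx^{K}}{}{\Omega\cxx^{K_{I_{j}}}}$ still has a left homotopy inverse (because $\Omega\theta_{j}$ itself does, by Lemma~\ref{loopcxx1}, and $e_{j}$ factors through $\cxx^{K_{I_{j}}}$); as $\namedright{\cxx^{K}}{}{\cxx^{K_{I_{j}}}}$ factors through $\cxx^{K}_{D}$ by definition of the homotopy limit, each $\Omega\varphi\circ\Omega\theta_{j}$ has a left homotopy inverse, and these retractions are assembled using the left homotopy inverse of $\theta$ from Lemma~\ref{loopcxx1}. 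If you want to salvage your approach, you would need either to replace the $H$-map claim by a homological argument of the kind used in Proposition~\ref{Tloopwedge}, or to switch to the paper's ``positive'' use of $e_{j}\circ\theta_{j}\simeq\theta_{j}$ rather than the ``negative'' fact that $1-\Omega e_{j}$ annihilates $\Omega\theta_{j}$.
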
 

\begin{proof} 
Fix an integer $j$ for $1\leq j<m$. By Lemma~\ref{loopcxx1}, the map 
\(\namedright{\Omega\Sigma C_{j}}{\Omega\theta_{j}}{\Omega\cxx^{K}}\) 
has a left homotopy inverse. On the other hand, by Lemma~\ref{BClemmak}~(b), 
$e_{j}\circ\theta_{j}\simeq\theta_{j}$. This implies that the composite 
\(\namedddright{\Omega\Sigma C_{j}}{\Omega\theta_{j}}{\Omega\cxx^{K}} 
     {}{\Omega\cxx^{K_{j}}}{}{\Omega\cxx^{K}}\) 
has a left homotopy inverse. Therefore the composite 
\(\nameddright{\Omega\Sigma C_{j}}{\Omega\theta_{j}}{\Omega\cxx^{K}} 
     {}{\Omega\cxx^{K_{j}}}\) 
has a left homotopy inverse. By definition of $\cxx^{K}_{D}$ as a homotopy 
limit, the map 
\(\namedright{\cxx^{K}}{}{\cxx^{K_{j}}}\) 
factors through $\cxx^{K}_{D}$. Thus the composite 
\(\nameddright{\Omega\Sigma C_{j}}{\Omega\theta_{j}}{\Omega\cxx^{K}} 
     {\Omega\varphi}{\Omega\cxx^{K}_{D}}\) 
has a left homotopy inverse. This is true for all $1\leq j\leq m$. Collectively, 
no overlap in the factors in the different retractions occurs because the product map 
\(\namedright{\prod_{j=1}^{m}\Omega\Sigma C_{j}}{\theta}{\Omega\cxx^{K}}\) 
has a left homotopy inverse. Hence the composite 
\(\nameddright{\prod_{j=1}^{m}\Omega\Sigma C_{j}}{\theta}{\Omega\cxx^{K}} 
      {\Omega\varphi}{\Omega\cxx^{K}_{D}}\) 
also has a left homotopy inverse.  
\end{proof}  

\begin{proposition} 
   \label{Tloopwedge} 
   Let $K$ be a totally homology fillable simplicial complex. Then the map 
   \(\namedright{G_{m}}{\lambda}{F_{[m]}}\) 
   in Corollary~\ref{Gmnullcor} is a homotopy equivalence. 
\end{proposition}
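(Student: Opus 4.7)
The plan is to prove that $\Omega\lambda$ is a homology isomorphism over every field, then invoke Whitehead's Theorem and deloop. Since Corollary~\ref{Gmnullcor} already supplies a left homotopy inverse of $\Omega\lambda$, the induced map $(\Omega\lambda)_{*}$ is automatically a split monomorphism on homology with any coefficients; only surjectivity remains to be established.

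The key step is to compare the two K\"unneth factorizations of $H_{*}(\Omega\cxx^{K};\mathbf{F})$ arising from the homotopy equivalences
\[\textstyle\bigl(\prod_{j=1}^{m}\Omega\Sigma C_{j}\bigr)\times\Omega G_{m}\simeq\Omega\cxx^{K}\simeq\Omega\cxx^{K}_{D}\times\Omega F_{[m]}\]
of Lemma~\ref{loopcxx1} and Theorem~\ref{dualretract}. Writing $P(-)$ for Poincar\'e series over $\mathbf{F}$, these yield the identity of formal power series
\[P\bigl(\textstyle\prod_{j=1}^{m}\Omega\Sigma C_{j}\bigr)\cdot P(\Omega G_{m})=P(\Omega\cxx^{K}_{D})\cdot P(\Omega F_{[m]}).\]
Corollary~\ref{Gmnullcor} forces $P(\Omega G_{m})\leq P(\Omega F_{[m]})$ coefficient-wise and Lemma~\ref{loopcxx2} forces $P(\prod_{j}\Omega\Sigma C_{j})\leq P(\Omega\cxx^{K}_{D})$, because both maps are split injections on homology. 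All four series have nonnegative integer coefficients with constant term~$1$, and a straightforward induction on degree then shows that these two inequalities must in fact be equalities: once lower-degree coefficients have been shown to agree, comparison of the $x^{n}$-coefficients of the two products reduces to $p_{n}+q_{n}=p_{n}'+q_{n}'$, which together with $p_{n}\leq p_{n}'$ and $q_{n}\leq q_{n}'$ forces $p_{n}=p_{n}'$ and $q_{n}=q_{n}'$.

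Consequently $P(\Omega G_{m})=P(\Omega F_{[m]})$, so the split monomorphism $(\Omega\lambda)_{*}$ is an isomorphism in every degree over every field, and hence integrally. To finish, $G_{m}$ is a wedge of suspensions and hence simply-connected, while $F_{[m]}$ is path-connected via the long exact sequence of the homotopy fibration $F_{[m]}\to\cxx^{K}\to\cxx^{K}_{D}$ (both $\cxx^{K}$ and $\cxx^{K}_{D}$ being simply-connected under the totally homology fillable hypothesis, the latter as a finite homotopy inverse limit of wedges of suspensions). Whitehead's Theorem promotes the integral homology equivalence $\Omega\lambda$ to a weak equivalence, which in turn gives isomorphisms $\pi_{n}(G_{m})\cong\pi_{n}(F_{[m]})$ for $n\geq 1$; combined with path-connectedness of both, this promotes $\lambda$ itself to a homotopy equivalence. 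The main obstacle will be the coefficient-wise power-series step, which depends on all spaces having finite-type homology; this is available because Lemma~\ref{loopcxx1} and Theorem~\ref{uxaKDdecomp} express the relevant loop spaces as products of loop spaces on wedges of suspensions of smashes of the $X_{i}$.
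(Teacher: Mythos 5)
Your proposal is correct and is essentially the paper's own argument: both compare the decomposition $\Omega\cxx^{K}\simeq(\prod_{j=1}^{m}\Omega\Sigma C_{j})\times\Omega G_{m}$ of Lemma~\ref{loopcxx1} with $\Omega\cxx^{K}\simeq\Omega\cxx^{K}_{D}\times\Omega F_{[m]}$, and combine the split homology injections supplied by Lemma~\ref{loopcxx2} and Corollary~\ref{Gmnullcor} with a finite-type counting argument before delooping via homotopy groups. The only cosmetic difference is that the paper packages the counting step as the observation that the composite of the two equivalences with $h\times\Omega\lambda$ is an injective self-map of the finite-type module $\hlgy{\Omega\cxx^{K}}$ with field coefficients and hence an isomorphism, whereas you run an equivalent coefficient-wise induction on Poincar\'e series.
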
 

\begin{remark} 
It is worth repeating at this point that $G_{m}$ is homotopy equivalent 
to a wedge of spaces of the form $\Sigma^{t} X_{i_{1}}\wedge\cdots\wedge X_{i_{k}}$ 
where $t\geq 1$ and $\{1,\ldots,m\}\subseteq\{i_{1},\ldots,i_{k}\}$. 
That is, each $X_{i}$ for $1\leq i\leq m$ appears as a smash factor in every 
wedge summands of $G_{m}$. 
\end{remark} 

\begin{proof} 
On the one hand, by Theorem~\ref{uxadecomp2}, the homotopy fibration 
\(\nameddright{F_{[m]}}{}{\Omega\cxx^{K}}{\Omega\varphi}{\Omega\cxx^{K}_{D}}\) 
splits to give a homotopy equivalence 
$\Omega\cxx^{K}\simeq\Omega\cxx^{K}_{D}\times\Omega F_{[m]}$. 
On the other hand, by Lemma~\ref{loopcxx1}, the composite 
\(\llnamedright{(\prod_{j=1}^{m}\Omega\Sigma C_{j})\times\Omega G_{m}} 
         {\theta\cdot\psi_{m}}{\Omega\cxx^{K}}\) 
is a homotopy equivalence. We compare the two decompositions. 

By Lemma~\ref{loopcxx2}, the composite 
\(h\colon\nameddright{\prod_{j=1}^{m}\Omega\Sigma C_{j}}{\theta}{\Omega\cxx^{K}} 
       {\Omega\varphi}{\Omega\cxx^{K}_{D}}\) 
has a left homotopy inverse.  By Corollary~\ref{Gmnullcor}, the map 
\(\namedright{\Omega G_{m}}{\Omega\psi_{m}}{\Omega\cxx^{K}}\) 
lifts to a map 
\(\namedright{\Omega G_{m}}{\Omega\lambda}{\Omega F_{[m]}}\) 
and~$\Omega\lambda$ has a left homotopy inverse. 
Thus the product map 
\[\Gamma\colon\llnamedddright{\Omega\cxx^{K}}{\simeq} 
    {(\prod_{j=1}^{m}\Omega\Sigma C_{k})\times\Omega G_{m}}{h\times\Omega\lambda} 
    {\Omega\cxx^{K}_{D}\times\Omega F_{[m]}}{\simeq}{\Omega\cxx^{K}}\] 
has the property that $h\times\Omega\lambda$ has a left homotopy inverse. 
In particular, $h\times\Omega\lambda$ induces an injection in homology and 
therefore $\Gamma_{\ast}$ is an injection in homology with any coefficients. 
Taking $\mathbb{Z}/p\mathbb{Z}$ or $\mathbb{Q}$ coefficients, $\Gamma_{\ast}$ 
is a self-map of a finite type module which is an injection, and so it is an 
isomorphism. Thus $\Gamma$ induces an isomorphism in mod-$p$ and 
rational homology and so induces an isomorphism in integral homology. 
Therefore $\Gamma$ is a homotopy equivalence. This implies that 
$h\times\Omega\lambda$ is a homotopy equivalence. As $h\times\Omega\lambda$ 
is a product map, each of $h$ and $\Omega\lambda$ must therefore 
be a homotopy equivalence. 

Finally, since $\Omega\lambda$ is a homotopy equivalence, it induces an 
isomorphism on homotopy groups, and therefore so does $\lambda$. 
Hence $\lambda$ is also a homotopy equivalence, as asserted. 
\end{proof} 

By Theorem~\ref{uxadecomp2}, each factor in the homotopy decompositions 
of $\Omega\cxx^{K}$ and $\Omega\cxx^{K}_{D}$ is of the form $\Omega F_{I}$ 
where $F_{I}$ is the homotopy fibre of the map 
\(\namedright{\cxx^{K_{I}}}{}{\cxx^{K_{I}}_{D}}\). 
Further, by~\cite{IK2} any subcomplex of a totally homology fillable simplicial 
complex is itself totally homology fillable. Therefore Proposition~\ref{Tloopwedge} 
applies to each $F_{I}$ to obtain the following. 

\begin{theorem} 
   \label{cxxdecomp} 
   Let $K$ be a totally homology fillable simplicial complex. Then the homotopy 
   decompositions 
   \[\Omega\cxx^{K}\simeq\prod_{I\subset [m]}\Omega F_{I}\qquad 
         \Omega\cxx^{K}_{D}\simeq\prod_{I\subsetneq [m]}\Omega F_{I}\] 
   in Theorem~\ref{uxadecomp2} have the property that each space $F_{I}$ 
   is homotopy equivalent to a wedge of summands, where each wedge 
   summand is the suspension of a smash product having $X_{i}$ as a 
   factor for all $i\in I$.~$\qqed$ 
\end{theorem}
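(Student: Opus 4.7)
The plan is to combine Theorem~\ref{uxadecomp2} with an iterated application of Proposition~\ref{Tloopwedge}, one for each full subcomplex $K_{I}$ of $K$. Theorem~\ref{uxadecomp2} already produces the two product decompositions of $\Omega\cxx^{K}$ and $\Omega\cxx^{K}_{D}$ in terms of the loop spaces $\Omega F_{I}$, so the only task that remains is to identify the homotopy type of each individual $F_{I}$ for $I\subseteq [m]$.

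The key enabling fact is the statement cited from \cite{IK2} that every full subcomplex of a totally homology fillable simplicial complex is itself totally homology fillable. Consequently each $K_{I}$ inherits the hypothesis of Proposition~\ref{Tloopwedge}. That proposition was proved for the top-level datum $(K,\cxx)$ and yielded a homotopy equivalence $G_{m}\simeq F_{[m]}$, where $G_{m}$ was built by the iterative construction of Lemmas~\ref{BClemma1}--\ref{BClemmak} entirely from the pairs $\{(CX_{i},X_{i})\}_{i=1}^{m}$. I would now run this same construction with $[m]$ replaced by $I$ and $\cxx$ replaced by $\{(CX_{i},X_{i})\}_{i\in I}$, producing a space $G_{I}$ together with a homotopy equivalence $G_{I}\simeq F_{I}$.

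The remark following Proposition~\ref{Tloopwedge} records that when the construction is carried out over a vertex set of size $m$, the resulting space is homotopy equivalent to a wedge of spaces of the form $\Sigma^{t}X_{i_{1}}\wedge\cdots\wedge X_{i_{k}}$ with $t\geq 1$ and every index in the vertex set appearing among $\{i_{1},\ldots,i_{k}\}$. Applied to the restricted construction, this says that each $G_{I}$ is a wedge of suspensions of iterated smashes in which every $X_{i}$ for $i\in I$ occurs as a factor. Substituting $F_{I}\simeq G_{I}$ into the decompositions from Theorem~\ref{uxadecomp2} then gives the claimed description.

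The main obstacle, which is expected to be mild, is verifying that nothing in Section~\ref{sec:cxxcase} genuinely depended on working with the full vertex set $[m]$. This should be a routine check: the pairs $(CX_{i},X_{i})$ are defined independently of the ambient vertex set, the BBCG-style wedge decomposition of $\cxx^{K_{I}}$ holds because $K_{I}$ is totally homology fillable, and every lemma from Lemma~\ref{BClemma1} through Lemma~\ref{loopcxx2} is formulated abstractly in terms of the data $(K,\cxx)$. Hence the entire argument transfers verbatim, and the inductive construction of the $B_{j}$ and $C_{j}$ guarantees, exactly as in the original setting, that every wedge summand of $G_{I}$ contains $X_{i}$ as a smash factor for all $i\in I$.
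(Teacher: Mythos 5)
Your proposal is correct and follows exactly the paper's own argument: Theorem~\ref{uxadecomp2} supplies the product decompositions, the fact from~\cite{IK2} that full subcomplexes of totally homology fillable complexes are totally homology fillable lets Proposition~\ref{Tloopwedge} be applied to each $K_{I}$, and the remark following that proposition identifies each $F_{I}$ as the required wedge of suspensions of smash products containing every $X_{i}$ with $i\in I$ as a factor. Nothing further is needed.
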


\section{A generalization to $(\underline{X},\underline{\ast})^{K}$} 
\label{sec:DJK} 
 
In this section we show that Theorem~\ref{cxxdecomp} gives useful 
information for a wider range of polyhedral products. Let $\{X_{i}\}_{i=1}^{m}$ 
be a collection of pointed $CW$-complexes and let $K$ be a simplicial 
complex on the vertex set $[m]$. Bahri, Bendersky, Cohen and 
Gitler~\cite[Corollary 2.32]{BBCG}, relying heavily on a result of Denham 
and Suciu~\cite{DS}, show that there is a homotopy fibration 
\begin{equation} 
  \label{clxfib} 
  \nameddright{(\underline{C\Omega X},\underline{\Omega X})^{K}}{} 
      {(\underline{X},\underline{\ast})^{K}}{}{\prod_{i=1}^{m} X_{i}}. 
\end{equation}  
The spaces $(\underline{X},\underline{\ast})^{K}$ include many familiar 
spaces: if $K$ is $m$ disjoint points then $(\underline{X},\underline{\ast})^{K}$ 
is homotopy equivalent to the wedge $X_{1}\vee\cdots\vee X_{m}$; if 
$K$ is the boundary of the standard $(m-1)$-simplex then 
$(\underline{X},\underline{\ast})^{K}$ is the fat wedge in $\prod_{i=1}^{m} X_{i}$; 
and crucially to toric topology, if each $X_{i}=\mathbb{C}P^{\infty}$ then 
$(\underline{X},\underline{\ast})^{K}$ is the Davis-Januszkiewicz space $DJ_{K}$. 

Each vertex $i$ of $K$ is the full subcomplex $K_{\{i\}}$, and the polyhedral 
product $(\underline{X},\underline{\ast})^{K_{\{i\}}}$ is simply~$X_{i}$. So 
including $i$ into $K$ we obtain a map of polyhedral products 
\(\namedright{X_{i}}{}{(\underline{X},\underline{\ast})^{K}}\) 
which, when composed to $\prod_{i=1}^{m} X_{i}$, is the inclusion 
of the $i^{th}$ factor. After looping we can take the product of all 
such maps for $1\leq i\leq m$ to obtain a section for the map 
\(\namedright{\Omega(\underline{X},\underline{\ast})^{K}}{} 
     {\prod_{i=1}^{m}\Omega X_{i}}\), 
implying the following. 

\begin{lemma} 
   \label{clxsplitting} 
   The homotopy fibration~(\ref{clxfib}) splits after looping, resulting in  
   a homotopy equivalence 
   \[\Omega(\underline{X},\underline{\ast})^{K}\simeq(\prod_{i=1}^{m}\Omega X_{i}) 
          \times\Omega\clxx^{K}.\] 
   $\qqed$ 
\end{lemma}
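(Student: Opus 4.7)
The plan is to construct a section, after looping, to the projection $\Omega(\underline{X},\underline{\ast})^{K} \to \prod_{i=1}^{m} \Omega X_{i}$ coming from~(\ref{clxfib}); once that is in hand, the asserted product decomposition is forced by the standard splitting of a loop fibration that admits a section.

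First I would use that for each $1 \leq i \leq m$ the singleton $\{i\}$ is a full subcomplex $K_{\{i\}}$ of $K$, and that $(\underline{X},\underline{\ast})^{K_{\{i\}}}$ is simply $X_{i}$ (since $A_{j}=\ast$ for all $j$ there are no other contributions). By Lemma~\ref{polyinverse}(a), the inclusion $K_{\{i\}} \hookrightarrow K$ induces a map $\iota_{i}\colon X_{i} \to (\underline{X},\underline{\ast})^{K}$, and from the description of the polyhedral product as a subspace of $\prod_{j=1}^{m} X_{j}$ it is immediate that composing $\iota_{i}$ with the inclusion $(\underline{X},\underline{\ast})^{K} \hookrightarrow \prod_{j=1}^{m} X_{j}$ sends $X_{i}$ identically to the $i$-th factor and to the basepoint on every other coordinate.

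Next I would loop and assemble. Define
\[
s\colon \prod_{i=1}^{m}\Omega X_{i} \longrightarrow \Omega(\underline{X},\underline{\ast})^{K}
\]
to be the map whose restriction to the $i$-th factor is $\Omega\iota_{i}$, combined using the loop multiplication on $\Omega(\underline{X},\underline{\ast})^{K}$. Composing $s$ with the looped projection $\Omega(\underline{X},\underline{\ast})^{K} \to \prod_{j=1}^{m}\Omega X_{j}$ from~(\ref{clxfib}), the projection of each $\Omega\iota_{i}$ onto $\Omega X_{j}$ is the identity when $j=i$ and the constant loop otherwise, so the composite collapses to the identity on $\prod \Omega X_{j}$. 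Hence $s$ is a section.

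Finally, the looped fibration \nameddright{\Omega\clxx^{K}}{}{\Omega(\underline{X},\underline{\ast})^{K}}{}{\prod_{i=1}^{m}\Omega X_{i}} has a total space that is a loop space, and admits the section $s$, so the usual trick---forming $\Omega\clxx^{K} \times \prod_{i=1}^{m}\Omega X_{i} \to \Omega(\underline{X},\underline{\ast})^{K}$ as the loop product of the fibre inclusion with $s$, and verifying via the five lemma on long exact homotopy sequences that it is a weak equivalence---produces the claimed homotopy equivalence. I expect no serious obstacle; the only delicate point is verifying that $\iota_{i}$ projects correctly onto each coordinate of $\prod X_{j}$, and this is built into the functorial construction of polyhedral products recorded in Lemma~\ref{polyinverse}.
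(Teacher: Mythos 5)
Your proposal is correct and follows essentially the same route as the paper: the section is built from the looped vertex inclusions $\Omega\iota_{i}$ multiplied together in $\Omega(\underline{X},\underline{\ast})^{K}$, and the splitting then follows from the standard argument for a looped fibration with a section. The paper states this more tersely in the paragraph preceding the lemma, but the content is identical.
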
 

We wish to show that the homotopy equivalence in Lemma~\ref{clxsplitting} is 
compatible with that in Theorem~\ref{cxxdecomp}. The following lemma does this. 
Write $F_{I}(\underline{X},\underline{\ast})$ and $F_{I}\clxx$ for the 
spaces $F_{I}$ that appear in the respective decompositions of 
$\Omega(\underline{X},\underline{\ast})^{K}$ and $\Omega\clxx^{K}$ in 
Theorem~\ref{uxadecomp2}. 

\begin{lemma} 
   \label{vertexfactor} 
   Let $1\leq i\leq m$. The following hold: 
   \begin{letterlist} 
      \item $F_{\{i\}}(\underline{X},\underline{\ast})\simeq X_{i}$;  
      \item $F_{\{i\}}\clxx\simeq\ast$; 
      \item for $I\subseteq [m]$ and $I\neq\{i\}$ for any $1\leq i\leq m$, the map 
               \(\namedright{\clxx^{K}}{}{(\underline{X},\underline{\ast})^{K}}\) 
               induces a homotopy equivalence 
               $F_{I}\clxx\simeq F_{I}(\underline{X},\underline{\ast})$.   
   \end{letterlist} 
\end{lemma}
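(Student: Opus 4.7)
The plan is to verify (a) and (b) by a direct computation from the definition of the dual polyhedral product, and then to deduce (c) from the naturality of the Denham--Suciu/BBCG fibration~(\ref{clxfib}) with respect to the full subcomplex structure.

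For parts~(a) and~(b), I would observe that when $I=\{i\}$ the simplicial complex $K_{\{i\}}$ on the one-element vertex set has $\emptyset$ as its only proper subset, and $\uxa^{K_\emptyset}$ is a one-point space (an empty product of factors). Hence the homotopy inverse limit defining $\uxa^{K_{\{i\}}}_D$ collapses to a point, and the homotopy fibre $F_{\{i\}}$ of $\uxa^{K_{\{i\}}}\to\uxa^{K_{\{i\}}}_D$ is simply $\uxa^{K_{\{i\}}}$ itself. Specialising to $\uxa=(\underline{X},\underline{\ast})$ gives $F_{\{i\}}(\underline{X},\underline{\ast})\simeq X_i$, which proves~(a); specialising to $\uxa=\clxx$ gives $F_{\{i\}}\clxx\simeq C\Omega X_i\simeq\ast$, which proves~(b).

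For part~(c), the key input will be that the fibration~(\ref{clxfib}) is natural in $K$ with respect to full subcomplex inclusions and the induced projections $\varphi_{J',J}$ on polyhedral products. Explicitly, for each $J\subseteq [m]$ there is a natural homotopy fibration
\[
\nameddright{\clxx^{K_J}}{}{(\underline{X},\underline{\ast})^{K_J}}{}{\prod_{j\in J} X_j},
\]
and as $J$ ranges over the proper subsets of a fixed $I\subseteq [m]$ these assemble into a natural transformation of punctured-cube diagrams. Taking homotopy inverse limits I obtain
\[
\nameddright{\clxx^{K_I}_D}{}{(\underline{X},\underline{\ast})^{K_I}_D}{}{\holim_{J\subsetneq I}\prod_{j\in J} X_j}.
\]
When $|I|\geq 2$, the right-hand homotopy inverse limit equals $\prod_{j\in I} X_j$, since the coordinate projections assemble into a compatible cone that is easily checked to be an isomorphism with the strict limit, and objectwise fibrancy together with the projection transition maps (which are fibrations) ensures that the strict limit computes the homotopy limit.

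Assembling these ingredients produces a commutative diagram
\[\diagram
F_I\clxx\rto\dto & \clxx^{K_I}\rto\dto & \clxx^{K_I}_D\dto \\
F_I(\underline{X},\underline{\ast})\rto & (\underline{X},\underline{\ast})^{K_I}\rto & (\underline{X},\underline{\ast})^{K_I}_D
\enddiagram\]
whose rows are homotopy fibrations. Extending the middle and right columns one step to the left in their Puppe sequences shows that both columns sit in fibrations with fibre $\prod_{j\in I}\Omega X_j$, and naturality forces the induced comparison map of these fibres to be the identity. A $3\times 3$ lemma---equivalently, a comparison of the long exact sequences of the two vertical fibrations---then forces $F_I\clxx\to F_I(\underline{X},\underline{\ast})$ to be a weak equivalence. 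The case $I=\emptyset$ is trivial since every space involved is a point. The main obstacle will be verifying that the punctured-cube homotopy limit collapses to $\prod_{j\in I} X_j$ and that the assembled sequence remains a homotopy fibration after taking holim; once these technical points are settled, the comparison of fibres reduces to a routine naturality argument.
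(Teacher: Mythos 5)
Your parts~(a) and~(b) are exactly the paper's argument: $K_{\{i\}}$ has only $\emptyset$ as a proper subset, so $\uxa^{K_{\{i\}}}_{D}\simeq\ast$ and $F_{\{i\}}\simeq\uxa^{K_{\{i\}}}$, giving $X_{i}$ in one case and $C\Omega X_{i}\simeq\ast$ in the other. For part~(c), however, you take a genuinely different route. The paper deduces~(c) in one line from~(a),~(b) and Lemma~\ref{clxsplitting}: since the idempotent decompositions of Theorem~\ref{uxadecomp2} for $\Omega\clxx^{K}$ and $\Omega(\underline{X},\underline{\ast})^{K}$ are compatible with the map between them, and the looped Denham--Suciu fibration splits as $\Omega(\underline{X},\underline{\ast})^{K}\simeq(\prod_{i}\Omega X_{i})\times\Omega\clxx^{K}$, the singleton factors absorb exactly the $\prod_{i}\Omega X_{i}$ and the remaining factors $\Omega F_{I}$ must match. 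You instead work unlooped: you take the homotopy inverse limit of the fibrations~(\ref{clxfib}) over the punctured cube $J\subsetneq I$, identify the resulting base $\holim_{J\subsetneq I}\prod_{j\in J}X_{j}$ with $\prod_{j\in I}X_{j}$ for $\vert I\vert\geq 2$ (correct: the cube of coordinate projections is strongly cartesian, and your Reedy-fibrancy remark makes the strict limit compute the holim), and then run a total-fibre comparison on the square relating $\clxx^{K_{I}}\to\clxx^{K_{I}}_{D}$ to $(\underline{X},\underline{\ast})^{K_{I}}\to(\underline{X},\underline{\ast})^{K_{I}}_{D}$, whose two vertical homotopy fibres are both $\prod_{j\in I}\Omega X_{j}$ compared by an equivalence. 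This is valid, and it buys something the paper's one-liner does not literally give: it produces the \emph{unlooped} equivalence $F_{I}\clxx\simeq F_{I}(\underline{X},\underline{\ast})$ induced by the stated map directly, whereas the paper's argument a priori only identifies the looped factors and needs a small delooping remark; it is also independent of the idempotent machinery of Part~I. The cost is the cubical bookkeeping you flag (naturality of the Denham--Suciu identification under the projections $\varphi_{J,J'}$ and the commutation of holim with homotopy fibres), all of which is standard and goes through. One cosmetic correction: the comparison map of the column fibres is not "the identity" but the loop of the equivalence $\prod_{j\in I}X_{j}\to\holim_{J\subsetneq I}\prod_{j\in J}X_{j}$, which is all the total-fibre argument needs.
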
 

\begin{proof} 
In general, for $\uxa^{K}$, the definition of the polyhedral product implies that 
$\uxa^{K_{\{i\}}}=X_{i}$. By the definition of the dual polyhedral product 
as a homotopy colimit over the proper subcomplexes of $K$, we obtain 
$\uxa^{K_{\{i\}}}_{D}=\ast$, and by definition, $F_{\{i\}}$ is the homotopy 
fibre of the map 
\(\namedright{\uxa^{K_{\{i\}}}}{}{\uxa^{K_{\{i\}}}_{D}}\). 
Thus $F_{\{i\}}\simeq X_{i}$. In particular, we obtain 
$F_{\{i\}}(\underline{X},\underline{\ast})\simeq X_{i}$ and 
$F_{\{i\}}\clxx\simeq C\Omega X_{i}\simeq\ast$. 
Part~(c) now follows from parts~(a) and~(b) and Lemma~\ref{clxsplitting}. 
\end{proof} 

Consequently, by applying Theorem~\ref{cxxdecomp} to $\Omega\clxx^{K}$ 
we obtain the following. 

\begin{theorem} 
   \label{djksplitting} 
   Let $K$ be a totally homology fillable simplicial complex. Then there are homotopy 
   decompositions 
   \[\Omega(\underline{X},\underline{\ast})^{K}\simeq 
            (\prod_{i=1}^{m}\Omega X_{i})\times\prod_{\substack{I\subseteq [m] \\ I\neq\{i\}}}  
             \Omega F_{I}\qquad 
        \Omega(\underline{X},\underline{\ast})^{K}_{D}\simeq 
            (\prod_{i=1}^{m}\Omega X_{i})\times\prod_{\substack{I\subsetneq [m] \\ I\neq\{i\}}}  
             \Omega F_{I}\] 
   where each $F_{I}$ is homotopy equivalent to a wedge of summands, and each 
   wedge summand is the suspension of a smash product having $\Omega X_{i}$ as 
   a factor for all $i\in I$.~$\qqed$ 
\end{theorem}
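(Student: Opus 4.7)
The plan is to combine three ingredients already in hand: Theorem~\ref{uxadecomp2} applied to the pair $(\underline{X},\underline{\ast})$, Lemma~\ref{vertexfactor} to single out the singleton-index factors, and Theorem~\ref{cxxdecomp} applied to $\clxx^{K}$ to identify the remaining factors explicitly.

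First I would apply Theorem~\ref{uxadecomp2} with $\uxa = (\underline{X},\underline{\ast})$ to obtain homotopy equivalences
\[
  \Omega(\underline{X},\underline{\ast})^{K}\simeq\prod_{I\subseteq [m]}\Omega F_{I}(\underline{X},\underline{\ast}),
  \qquad
  \Omega(\underline{X},\underline{\ast})^{K}_{D}\simeq\prod_{I\subsetneq [m]}\Omega F_{I}(\underline{X},\underline{\ast}).
\]
Next, I would separate out the factors with $|I|=1$. By Lemma~\ref{vertexfactor}(a), $F_{\{i\}}(\underline{X},\underline{\ast})\simeq X_{i}$, so these singleton factors contribute exactly $\prod_{i=1}^{m}\Omega X_{i}$ in each of the two decompositions above. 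What remains is the product over those $I$ with $I\ne\{i\}$ for any $i$, namely $\prod_{\substack{I\subseteq [m]\\ I\neq\{i\}}}\Omega F_{I}(\underline{X},\underline{\ast})$ and the analogous product with $I\subsetneq[m]$.

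To identify these remaining factors, I would use Lemma~\ref{vertexfactor}(c), which says that for $I$ not a singleton the map $\clxx^{K}\to(\underline{X},\underline{\ast})^{K}$ induces an equivalence $F_{I}\clxx\simeq F_{I}(\underline{X},\underline{\ast})$. Thus for $|I|\geq 2$ we may replace $F_{I}(\underline{X},\underline{\ast})$ by $F_{I}\clxx$. Since $K$ is totally homology fillable, Theorem~\ref{cxxdecomp} applies to the polyhedral product $\clxx^{K}$ (the pair $\clxx$ is of the form $(\underline{CY},\underline{Y})$ with $Y_{i}=\Omega X_{i}$), so each such $F_{I}\clxx$ is homotopy equivalent to a wedge whose summands are suspensions of smash products having $\Omega X_{i}$ as a factor for every $i\in I$. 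Transporting this description back through Lemma~\ref{vertexfactor}(c) gives the asserted description of the factors $F_{I}$.

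The only subtlety, which I would verify carefully, is that the singleton factors from Lemma~\ref{vertexfactor}(a) match up with the $\Omega X_{i}$ factors supplied independently by Lemma~\ref{clxsplitting}; concretely, one should check that the right homotopy inverse to $\Omega(\underline{X},\underline{\ast})^{K}\to\prod_{i}\Omega X_{i}$ constructed before Lemma~\ref{clxsplitting} via the vertex inclusions $X_{i}\hookrightarrow(\underline{X},\underline{\ast})^{K}$ agrees, up to homotopy, with the map onto the singleton factors coming from the decomposition of Theorem~\ref{uxadecomp2}. This compatibility is precisely the content of Lemma~\ref{vertexfactor} and Proposition~\ref{compatdecomps}, so no new work is needed. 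Assembling the pieces for both $\Omega(\underline{X},\underline{\ast})^{K}$ and $\Omega(\underline{X},\underline{\ast})^{K}_{D}$ yields the two displayed homotopy equivalences, completing the proof.
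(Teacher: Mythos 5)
Your proposal is correct and follows essentially the same route as the paper: the paper's (one-sentence) proof likewise combines Theorem~\ref{uxadecomp2}/Theorem~\ref{cxxdecomp} with Lemma~\ref{clxsplitting} and Lemma~\ref{vertexfactor}, merely reading the comparison in the opposite direction (starting from $\Omega\clxx^{K}$ and adding the $\prod_{i=1}^{m}\Omega X_{i}$ factor, rather than starting from $\Omega(\underline{X},\underline{\ast})^{K}$ and replacing the singleton factors). The compatibility point you flag at the end is indeed exactly what Lemma~\ref{vertexfactor} supplies, so no gap remains.
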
 

In particular, if each $X_{i}$ is $\mathbb{C}P^{\infty}$ then 
$(\underline{X},\underline{\ast})^{K}= DJ(K)$ and $\Omega X_{i}\simeq S^{1}$. 
So Theorem~\ref{djksplitting} gives a homotopy decomposition for $\Omega DJ(K)$ 
in which every $F_{I}$ is homotopy equivalent to a wedge of spheres.

\Large 
\part{Whitehead products.} 
\normalsize 

\section{Whitehead products and Ganea's Theorem} 
\label{sec:WhGanea} 

In studying thin products it is important to have a good grip on the 
homotopy theory of the wedge $\bigvee_{i=1}^{m} X_{i}$. In this section 
we examine the special case when $m=2$, in Section~\ref{sec:WhPorter} 
we examine the general case when $m\geq 2$, and in Section~\ref{sec:Whthin} 
we relate this to thin products. 

Consider the following two maps: the inclusion 
\(i\colon\namedright{X\vee Y}{}{X\times Y}\) 
of the wedge into the product and the pinch map 
\(q\colon\namedright{X\vee Y}{}{Y}\) 
onto the right wedge summand. Observe that $q$ is also the composite 
\(\nameddright{X\vee Y}{i}{X\times Y}{\pi_{2}}{Y}\), 
where $\pi_{2}$ is the projection onto the second factor. Define 
spaces~$F$ and $G$, and maps $f$ and $g$, by the homotopy pullback diagram 
\[\diagram 
         F\rto\ddouble & G\rto\dto^{g} & X\dto^{i_{1}} \\ 
         F\rto^-{f} & X\vee Y\rto^-{i}\dto^{q} & X\times Y\dto^{\pi_{2}} \\ 
         & Y\rdouble & Y
  \enddiagram\] 
where $i_{1}$ is the inclusion of the first factor. Ganea~\cite{G2} identified the 
homotopy type of $F$ as $\Sigma\Omega X\wedge\Omega Y$ and the homotopy 
classes of $f$ as a Whitehead product. The homotopy type of $G$ is well known 
to be $X\rtimes\Omega Y$ but the homotopy class of $g$ is not readily 
identifiable in terms of otherwise known maps. When 
$X$ is a suspension then $X\rtimes\Omega Y\simeq X\vee(X\wedge\Omega Y)$, 
in which case the homotopy class of $g$ should be identifiable. As 
the author can find no reference for this, we give a proof. There is no 
claim of anything new here, as the methods and results go back to 
Ganea~\cite{G2} and he surely knew everything stated in this section. 

Let 
\(i_{X}\colon\namedright{X}{}{X\vee Y}\) 
and 
\(i_{Y}\colon\namedright{Y}{}{X\vee Y}\) 
be the inclusions of the respective wedge summands. Define $ev_{X}$ and 
$ev_{Y}$ by the composites 
\(ev_{X}\colon\nameddright{\Sigma\Omega X}{ev}{X}{i_{X}}{X\vee Y}\) 
and 
\(ev_{Y}\colon\nameddright{\Sigma\Omega Y}{ev}{Y}{i_{Y}}{X\vee Y}\), 
where $ev$ is the canonical evaluation map.  

\begin{theorem} 
   \label{WhGanea} 
   The following hold: 
   \begin{letterlist} 
      \item There is a homotopy fibration 
                \[\lllnameddright{\Sigma\Omega X\wedge\Omega Y}{[ev_{X},ev_{Y}]} 
                     {X\vee Y}{i}{X\times Y};\] 
      \item there is a homotopy fibration  
                \[\nameddright{X\rtimes\Omega Y}{g}{X\vee Y}{q}{Y}\] 
                where the restriction of $g$ to $X$ is $i_{X}$; 
      \item if $X=\Sigma X'$, then there is a choice of a homotopy equivalence 
                $\Sigma X'\rtimes\Omega Y\simeq\Sigma X'\vee (\Sigma X'\wedge\Omega Y)$ 
                such that the restriction of $g$ to $\Sigma X'\wedge\Omega Y$  
               is $[i_{X},ev_{Y}]$. 
   \end{letterlist} 
\end{theorem}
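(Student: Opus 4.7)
\textbf{Part (a)} is immediate from Theorem~\ref{WhPorter} applied with $m=2$, $X_1=X$ and $X_2=Y$: for $m=2$ the Lie-basis set $\mathcal{B}$ collapses to the single length-$2$ bracket $[v_1,v_2]$ occurring with multiplicity $k-1=1$, so Porter's wedge reduces to $\Sigma\Omega X\wedge\Omega Y$ and $\gamma(\underline{X})$ becomes the single Whitehead product $[t_1,t_2]=[ev_X,ev_Y]$.

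\textbf{Part (b)} is the classical computation of the homotopy fibre of a pinch map. The plan is to model $G$ as $G\simeq(X\vee Y)\times_Y PY$, where $PY$ is the based path space. A point of $G$ is a pair $(w,\gamma)$ with $\gamma(0)=\ast$ and $\gamma(1)=q(w)$, and $g$ is projection to the first coordinate. The preimage of the $X$-summand of $X\vee Y$ is $X\times\Omega Y$ (since $q|_X=\ast$), while the preimage of the $Y$-summand is $PY$ (contractible); these are glued along $\Omega Y=\ast\times\Omega Y\subset X\times\Omega Y$ on one side and $\Omega Y\hookrightarrow PY$ on the other. Contracting $PY$ along the standard null-homotopy of the path space produces a homotopy equivalence $G\simeq X\rtimes\Omega Y$ under which the restriction of $g$ to $X=X\times\{\ast\}/(\ast\times\{\ast\})$ is manifestly $i_X$.

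\textbf{Part (c)} is the only nontrivial step. Rather than starting from the canonical co-$H$-space splitting of $X\rtimes\Omega Y$, I plan to build a tailored splitting using the lift of $[ev_X,ev_Y]$ supplied by~(a). By part~(a), $[ev_X,ev_Y]\colon F=\Sigma\Omega X\wedge\Omega Y\to X\vee Y$ composes trivially with $q$, so it lifts through $g$ to a map $\bar f\colon F\to G$; indeed $\bar f$ is exactly the map $f$ appearing in the top row of the pullback square at the start of the section. When $X=\Sigma X'$, the counit $ev\colon\Sigma\Omega X\to X$ admits the canonical section $\sigma:=\Sigma E$, where $E\colon X'\to\Omega\Sigma X'=\Omega X$ is the unit of adjunction, and the triangle identity gives $ev\circ\sigma\simeq\mathrm{id}_X$. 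Setting $h:=\bar f\circ(\sigma\wedge 1)\colon X\wedge\Omega Y\to G$ and using naturality of the Whitehead product together with $ev_X\circ\sigma=i_X\circ ev\circ\sigma=i_X$ yields
\[
g\circ h \;=\; [ev_X,ev_Y]\circ(\sigma\wedge 1) \;=\; [ev_X\circ\sigma,\,ev_Y] \;=\; [i_X,ev_Y].
\]
Combining this with the inclusion $j\colon X\to G$, $x\mapsto[x,\ast]$, from part~(b), the map $s:=(j,h)\colon X\vee(X\wedge\Omega Y)\to G$ satisfies $g\circ s=i_X$ on the first summand and $g\circ s=[i_X,ev_Y]$ on the second, and the theorem follows provided $s$ is a homotopy equivalence.

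For this final verification I plan a homology comparison. Both sides carry the same reduced integral homology $\tilde H_\ast(X)\oplus(\tilde H_\ast(X)\otimes\tilde H_\ast(\Omega Y))$: the left-hand side by the wedge decomposition, the right-hand side by the cofibre sequence $X\hookrightarrow X\rtimes\Omega Y\twoheadrightarrow X\wedge\Omega Y$. The map $j_\ast$ identifies the first summand. Since Whitehead products vanish on reduced homology of the target wedge (both pinch projections $X\vee Y\to X$ and $X\vee Y\to Y$ compose trivially with $[i_X,ev_Y]$), we have $g_\ast\circ h_\ast=0$, so $h_\ast$ lands in $\ker g_\ast$; a rank count in the Serre spectral sequence for $G\to X\vee Y\to Y$ (with the section $i_Y$ of $q$) shows $\ker g_\ast$ has the same rank in each degree as $\tilde H_\ast(X)\otimes\tilde H_\ast(\Omega Y)$. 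Injectivity of $h_\ast$ onto this summand then follows by identifying $\sigma\wedge 1$ with the length-one James summand of $F\simeq\bigvee_{n\geq 1}\Sigma(X')^{\wedge n}\wedge\Omega Y$ and tracking through $\bar f$ using the Bott--Samelson description $H_\ast(\Omega(X\vee Y))=H_\ast(\Omega X)\ast H_\ast(\Omega Y)$ of the target as a free product; with $s_\ast$ an isomorphism between simply-connected CW complexes, $s$ is a homotopy equivalence by Whitehead's theorem. The main obstacle is precisely this last injectivity verification for $h_\ast$, which is the one place where one cannot avoid explicit computation with the free-product structure on $H_\ast(\Omega(X\vee Y))$; a cleaner alternative would be to extract uniqueness of the splitting from a universal property on the fibre $F$, thereby bypassing the spectral-sequence bookkeeping.
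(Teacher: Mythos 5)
Parts (a) and (b) are fine. For (a) you invoke Theorem~\ref{WhPorter} with $m=2$ instead of citing Ganea directly; this is not circular (the proof of Theorem~\ref{WhPorter} does not use Theorem~\ref{WhGanea}), but note that the paper proves (a) by exhibiting the fibre as the explicit path-space pullback $Q=\Omega X\times PY\cup_{\Omega X\times\Omega Y}PX\times\Omega Y\simeq\Omega X\ast\Omega Y$, and that concrete model is precisely the tool that makes part (c) easy. By bypassing it you deprive yourself of it later. Your part (b) coincides with the paper's argument.

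Part (c) has a genuine gap at the decisive step. Your maps are correct: $g\circ h\simeq[i_X,ev_Y]$ follows from naturality of the Whitehead product and the triangle identity $ev\circ\Sigma E\simeq 1$, exactly as you say. What is missing is the proof that $s=(j,h)$ is a homotopy equivalence, and the plan you sketch does not close it. The rank count in the Serre spectral sequence only shows that $\ker g_\ast$ has the same size as $\widetilde{H}_\ast(X)\otimes\widetilde{H}_\ast(\Omega Y)$; it gives no information about whether $h_\ast$ maps onto (or even injects into) that kernel, precisely because $g_\ast\circ h_\ast=0$ erases all information about $h_\ast$. The fallback you propose -- looping and computing in the free product $H_\ast(\Omega X)\amalg H_\ast(\Omega Y)$ -- would at best establish injectivity of $(\Omega h)_\ast$, and passing from $(\Omega h)_\ast$ back to $h_\ast$ requires controlling the homology suspension $H_\ast(\Omega G)\to H_{\ast+1}(G)$ (one must know the relevant classes are not killed by it), which you do not address; doing so honestly essentially reproduces the work of Proposition~\ref{Porterspecial} in a setting where $G$ is not yet known to be the claimed wedge. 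The paper avoids all of this: the projection $\pi_2\colon X\times Y\to Y$ induces a point-set map of pullbacks $Q\to P$, and a direct computation with the cone model $C\Omega X\simeq PX$ shows that the composite $\Sigma\Omega X\wedge\Omega Y\simeq Q\to P\simeq X\rtimes\Omega Y\to X\wedge\Omega Y$ is $-ev\wedge 1$. Precomposing with $\Sigma E\wedge 1$ then gives $-\mathrm{id}$, so the section property of $s$ (hence the equivalence $j+s$) is immediate, with no homology computation at all. To repair your argument you must either carry out the suspension/free-product computation in full, or compute the composite $X\wedge\Omega Y\xrightarrow{h}X\rtimes\Omega Y\to X\wedge\Omega Y$ by some geometric means -- which is exactly what the pullback comparison $Q\to P$ accomplishes.
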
 

\begin{proof} 
The goal is really part~(c) since Ganea~\cite[Equation (9) and Lemma 5.1]{G2} proved 
part~(a) and part~(b) is well known (see, for example, \cite[Theorem 7.7.7]{Se}). However, 
to set up for part~(c) we re-prove the identifications of the homotopy types of the fibres 
of $i$ and $q$ in parts~(a) and~(b). 

In general, for a space~$Z$, let $PZ$ be the path space of $Z$, where paths start 
at the basepoint of $Z$ at time $t=0$. Let 
\(ev_{1}\colon\namedright{PZ}{}{Z}\) 
be the map which evaluates a path at time $t=1$. The homotopy fibre 
of a map 
\(h\colon\namedright{A}{}{Z}\) 
is homotopy equivalent to the topological pullback of $h$ and $ev_{1}$. 

In our case, let $Q$ be the topological pullback of the maps 
\(\namedright{X\vee Y}{i}{X\times Y}\) 
and  
\(\lllnamedright{PX\times PY}{ev_{1}\times ev_{1}}{X\times Y}\). 
Notice that the part of $Q$ sitting over $\ast\vee Y$ is 
$\Omega X\times PY$, the part sitting over $X\vee\ast$ is 
$PX\times\Omega Y$, and the part sitting over $\ast\vee\ast$ 
is $\Omega X\times\Omega Y$. Thus 
$Q=\Omega X\times PY\cup_{\Omega X\times\Omega Y} PX\times\Omega Y$. 
Let $C\Omega X$ be the reduced cone on $\Omega X$, where the 
cone point is at $t=1$. It is well known that there is a homotopy equivalence 
\(\psi_{X}\colon\namedright{C\Omega X}{}{PX}\) 
given by $\psi_{X}(t,\omega)=\omega_{t}$, where $\omega_{t}$ is the 
path defined by $\omega_{t}(s)=\omega((1-t)s)$. Note that $\omega_{0}=\omega$ 
and $\omega_{1}$ is the constant path to the basepoint of $X$. Thus 
$Q\simeq\Omega X\times C\Omega Y\cup_{\Omega X\times\Omega Y} 
       C\Omega X\times\Omega Y$, 
and the right side is the definition of the join $\Omega X\ast\Omega Y$. 
Ganea shows that the composite  
\(\nameddright{\Sigma\Omega X\wedge\Omega Y\simeq\Omega X\ast\Omega Y} 
      {\simeq}{Q}{}{X\vee Y}\) 
is the Whitehead product $[ev_{X},ev_{Y}]$, proving part~(a). 

Next, let $P$ be the topological pullback of the maps 
\(\namedright{X\vee Y}{q}{Y}\) 
and 
\(\namedright{PY}{ev_{1}}{Y}\). 
Notice that the part of $P$ sitting over $\ast\vee Y$ is 
$\ast\times PY$, the part sitting over $X\vee\ast$ is $X\times\Omega Y$, 
and the part sitting over $\ast\vee\ast$ is $\ast\times\Omega Y$. Thus 
$P=\ast\times PY\cup_{\ast\times\Omega Y} X\times\Omega Y$. The composite 
\(\nameddright{X}{}{P}{}{X\vee Y}\) 
is exactly $i_{X}$ and contracting $PY$ we obtain $P\simeq X\rtimes\Omega Y$. 
This proves part~(b). 

For part~(c), observe that the projection 
\(\namedright{X\times Y}{\pi_{2}}{Y}\) 
induces a projection 
\(\namedright{PX\times PY}{\pi_{2}}{PY}\), 
so there is an induced map of topological pullbacks 
\(\namedright{Q}{}{P}\). 
In terms of the identifications for $Q$ and $P$ above, this map of pullbacks is   
\[\namedright{\Omega X\times PY\cup_{\Omega X\times\Omega Y} PX\times\Omega Y} 
      {}{\ast\times PY\cup_{\ast\times\Omega Y} X\times\Omega Y}\] 
where 
\(\llnamedright{PX\times PY}{ev_{1}\times 1}{X\times PY}\) 
has been restricted to the subspaces $\Omega X\times PY$, $PX\times\Omega Y$ 
and $\Omega X\times\Omega Y$. Notice that the composite 
\(\nameddright{C\Omega X}{\psi_{X}}{PX}{ev_{1}}{X}\) 
sends $(t,\omega)$ to $\omega_{t}(1)=\omega(1-t)$. That is, 
$ev_{1}\circ\psi_{X}=-ev$, where 
\(\namedright{\Sigma\Omega X}{ev}{X}\) 
is the canonical evaluation map. Thus the composite  
\(\namedddright{\Sigma\Omega X\wedge\Omega Y\simeq\Omega X\ast\Omega Y} 
     {\simeq}{Q}{}{P\simeq X\rtimes\Omega Y}{}{X\wedge\Omega Y}\) 
is $-ev\wedge 1$. Consequently, if $X=\Sigma X'$ and $s$ is 
the composite 
\(s\colon\lnamedddright{\Sigma X'\wedge\Omega Y}{\Sigma E\wedge 1} 
     {\Sigma\Omega\Sigma X'\wedge\Omega Y}{\simeq}{Q}{}{P\simeq X\rtimes\Omega Y} \), 
then  
\(\nameddright{\Sigma X'\wedge\Omega Y}{\gamma}{\Sigma X'\rtimes\Omega Y}{} 
      {\Sigma X'\wedge\Omega Y}\) 
is homotopic to $-id$, where $id$ is the identity map. Thus $s$ is a 
section for the right map in the cofibration 
\(\nameddright{\Sigma X'}{j}{\Sigma X'\rtimes\Omega Y}{}{\Sigma X'\wedge\Omega Y}\), 
where $j$ is the inclusion. Therefore 
\(\namedright{\Sigma X'\vee (\Sigma X'\wedge\Omega Y)}{j+s}{X\rtimes\Omega Y}\) 
is a homotopy equivalence.  

Finally, from the pullback map 
\(\namedright{Q}{}{P}\) 
and the definition of $s$ there is a homotopy commutative diagram 
\[\diagram 
       \Sigma X'\wedge\Omega Y\rto^-{\Sigma E\wedge 1}\drrto^-{s}  
            & \Sigma\Omega\Sigma X'\wedge\Omega Y\rto^-{\simeq} 
            & Q\rto\dto & X\vee Y\ddouble \\ 
       & & P\rto & X\vee Y. 
  \enddiagram\] 
By part~(a) the map 
\(\namedright{\Sigma\Omega\Sigma X'\wedge\Omega Y\simeq Q}{}{X\vee Y}\) 
is homotopic to $[ev_{X},ev_{Y}]$. Therefore the top row is homotopic 
to $[i_{X},ev_{Y}]$. Hence, choosing the homotopy equivalence for 
$P\simeq\Sigma X'\rtimes\Omega Y$ determined by $j+s$, the restriction of 
\(\namedright{\Sigma X.\rtimes\Omega Y}{}{X\vee Y}\) 
to $\Sigma X'\wedge\Omega Y$ is homotopic to $[i_{X},ev_{Y}]$. 
\end{proof}

\section{Whitehead products and Porter's Theorem} 
\label{sec:WhPorter} 

Define the space $\Gamma(\underline{X})$ and the map $\gamma(\underline{X})$ by 
the homotopy fibration 
\[\nameddright{\Gamma(\underline{X})}{\gamma(\underline{X})}{\bigvee_{i=1}^{m} X_{i}} 
       {}{\prod_{i=1}^{m} X_{i}}.\] 
When $m=2$ Theorem~\ref{WhGanea}~(a) identifies $\Gamma(\underline{X})$ 
as $\Sigma\Omega X_{1}\wedge\Omega X_{2}$ and the homotopy class of 
$\gamma(\underline{X})$ as a Whitehead product $[ev_{X},ev_{Y}]$. 
Porter~\cite{P} partially generalized this by identifying the homotopy type 
of $\Gamma(\underline{X})$ for any $m\geq 2$. 

\begin{theorem} 
   \label{Porter}  
   Let $X_{1},\ldots,X_{m}$ be simply-connnected, pointed $CW$-complexes. 
   Then there is a homotopy equivalence 
   \[\Gamma(\underline{X})\simeq\bigvee_{k=2}^{m} 
        \left(\bigvee_{1\leq i_{1}<\cdots <i_{k}\leq m} (k-1)
        (\Sigma\Omega X_{i_{1}}\wedge\cdots\wedge\Omega X_{i_{k}})\right)\] 
   which is natural for maps 
   \(\namedright{X_{i}}{}{Y_{i}}\).~$\qqed$ 
\end{theorem}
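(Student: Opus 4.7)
The plan is to proceed by induction on $m$. The base case $m=2$ is Ganea's theorem, which identifies the homotopy fibre of
\(\namedright{X_1 \vee X_2}{}{X_1 \times X_2}\)
with $\Sigma\Omega X_1 \wedge \Omega X_2$; this matches the unique summand in the formula when $m=2$.

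For the inductive step, assume the result for $m-1$ and set $W = \bigvee_{i=1}^{m-1} X_i$. Combining the inductive fibration
\(\nameddright{\Gamma(\underline{X})_{m-1}}{}{W}{}{\prod_{i=1}^{m-1} X_i}\)
with Ganea's theorem applied to the pair $(W, X_m)$, namely
\(\nameddright{\Sigma\Omega W \wedge \Omega X_m}{}{W \vee X_m}{}{W \times X_m}\),
a standard comparison of fibrations (or Mather's cube argument) yields a homotopy fibration
\[\nameddright{\Sigma\Omega W \wedge \Omega X_m}{}{\Gamma(\underline{X})}{}{\Gamma(\underline{X})_{m-1}}.\]
The wedge-inclusion $W \hookrightarrow W \vee X_m$ induces, on homotopy fibres, a section of the projection onto $\Gamma(\underline{X})_{m-1}$, so after looping the fibration splits as a product $\Omega\Gamma(\underline{X}) \simeq \Omega\Gamma(\underline{X})_{m-1} \times \Omega(\Sigma\Omega W \wedge \Omega X_m)$.

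To pass from this product splitting of $\Omega\Gamma(\underline{X})$ to the claimed wedge decomposition of $\Gamma(\underline{X})$ itself, I would next decompose the suspended fibre $\Sigma\Omega W \wedge \Omega X_m$. Iteratively applying the above construction gives a Hilton--Milnor-style decomposition of $\Omega W$ as a weak product of loops on iterated smashes of the $\Omega X_i$ for $i<m$; combining this with Milnor's splitting $\Sigma(A \times B) \simeq \Sigma A \vee \Sigma B \vee \Sigma(A \wedge B)$ and James' splitting $\Sigma\Omega\Sigma Y \simeq \bigvee_{n \geq 1} \Sigma Y^{\wedge n}$ produces an explicit wedge decomposition of $\Sigma\Omega W$ into iterated smashes of $\Omega X_1,\ldots,\Omega X_{m-1}$. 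Smashing with $\Omega X_m$ and adding the summands inherited from $\Gamma(\underline{X})_{m-1}$ via the inductive hypothesis then gives a candidate wedge decomposition, and one identifies it with $\Gamma(\underline{X})$ by constructing a map into $\Gamma(\underline{X})$ using the section and fibre inclusion and checking it is an integral homology isomorphism between simply-connected spaces. Naturality with respect to maps \(\namedright{X_i}{}{Y_i}\) is immediate at every step, since Ganea's fibration, the Milnor splitting, and the James splitting are all natural.

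The main obstacle is twofold: first, promoting the product splitting of $\Omega\Gamma(\underline{X})$ to a genuine wedge decomposition of $\Gamma(\underline{X})$ before looping, which requires exploiting that both fibre and base are wedges of suspensions (hence co-$H$-spaces) to assemble the desired comparison map; second, verifying that the multiplicity $(k-1)$ in the final formula is reproduced at each inductive step, which is a Witt-style combinatorial count matching the rank of the multilinear part of the associated free-Lie-algebra-like object against the pieces produced by iterating the Ganea--Milnor--James machinery. Once this bookkeeping is carried out, the inductive step closes and the theorem follows.
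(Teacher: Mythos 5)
You should first note that the paper does not prove this statement: it is quoted from Porter~\cite{P} (hence the immediate $\qqed$), so there is no in-house proof to match. Judged on its own terms, your inductive step has a genuine error. The fibration
\(\nameddright{\Sigma\Omega W\wedge\Omega X_{m}}{}{\Gamma(\underline{X})}{}{\Gamma(\underline{X})_{m-1}}\)
and its section are fine, and so is the looped product splitting, but the candidate decomposition
$\Gamma(\underline{X})\simeq\Gamma(\underline{X})_{m-1}\vee(\mbox{wedge decomposition of the fibre})$ is false: the fibre is strictly larger than the complement of $\Gamma(\underline{X})_{m-1}$. Quick test: if $E\simeq B\vee F$ with $B$ and $F$ noncontractible, then $\Omega E\simeq\Omega B\times\Omega F\times\Omega(\Sigma\Omega B\wedge\Omega F)$, which is strictly larger than the product $\Omega B\times\Omega F$ that your section produces. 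Concretely, for $m=3$ the reduced homology of $\Sigma\Omega(X_{1}\vee X_{2})\wedge\Omega X_{3}$ is the suspension of $\widetilde{H}_{*}(\Omega(X_{1}\vee X_{2}))\otimes\widetilde{H}_{*}(\Omega X_{3})$, and $\widetilde{H}_{*}(\Omega(X_{1}\vee X_{2}))$ is the augmentation ideal of the free product $H_{*}(\Omega X_{1})\amalg H_{*}(\Omega X_{2})$; the long alternating words (e.g.\ $\bar{A}\otimes\bar{B}\otimes\bar{A}\otimes\bar{C}$) have no counterpart in Porter's answer $\bigvee_{i<j}\Sigma\Omega X_{i}\wedge\Omega X_{j}\vee 2\,\Sigma\Omega X_{1}\wedge\Omega X_{2}\wedge\Omega X_{3}$. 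A fibration with a section does not split its total space as base wedge fibre --- the pinch fibration \(\nameddright{X\rtimes\Omega Y}{}{X\vee Y}{}{Y}\) is the standard illustration of exactly this failure --- so no map from your candidate wedge into $\Gamma(\underline{X})$ can be a homology isomorphism, and the bookkeeping you defer cannot be made to close.

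The workable iterative scheme (Porter's, and the one this paper actually runs in Section~\ref{sec:Whrefined} in the suspension case) peels off one wedge summand at a time using the pinch map onto a \emph{single} summand: the homotopy fibre of \(\namedright{A\vee X_{m}}{}{X_{m}}\) is $A\rtimes\Omega X_{m}$ (Lemma~\ref{Ganea}, Theorem~\ref{WhGanea}(b)), which splits as $A\vee(A\wedge\Omega X_{m})$ when $A$ is a co-$H$-space; iterating over $X_{1},\ldots,X_{m}$ realizes $\Gamma(\underline{X})$ as the top of a tower of such fibres (cf.\ Lemma~\ref{DmGammalem}), so the total space of each fibration is the previous fibre and one never has to splice a base back onto a fibre. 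Two further cautions: for general, non-suspension $X_{i}$ the summands with repeated $\Omega X_{i}$-factors that your James/Milnor expansion produces do not regroup into the summands $\Sigma\Omega X_{i_{1}}\wedge\cdots\wedge\Omega X_{i_{k}}$ with distinct indices (the inverse James splitting reassembles them into $\Sigma\Omega\Sigma\Omega X_{i}\wedge(-)$, not $\Sigma\Omega X_{i}\wedge(-)$), which is why one reduces to the suspension case via the evaluation maps $\Sigma\Omega X_{i}\rightarrow X_{i}$ as in Section~\ref{sec:WhPorter}; and the multiplicity $(k-1)$ is not the rank of the multilinear part of the free Lie algebra (that rank is $(k-1)!$) but the number of multilinear elements in a set of free Lie generators of $[L,L]$, computed in Lemma~\ref{count}, so matching against the Witt-type count would also fail.
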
 

However, Porter did not identify the homotopy class of the map 
$\gamma(\underline{X})$. It is folklore that it too is determined by  
Whitehead products that depend on evaluation maps, but there 
seems to be no proof of this in the literature. As this is important 
in what follows, in this section we carry out the identification. 

For $1\leq j\leq m$, let  
\[s_{j}\colon\namedright{X_{j}}{}{\bigvee_{i=1}^{m} X_{i}}\] 
be the inclusion of the $j^{th}$-wedge summand. Let $t_{j}$ be 
the composite 
\[t_{j}\colon\nameddright{\Sigma\Omega X_{j}}{ev_{j}}{X_{j}}{s_{j}} 
       {\bigvee_{i=1}^{m} X_{i}}\] 
where $ev_{j}$ is the canonical evaluation map. We say that an iterated Whitehead 
product has \emph{length}~$k$ if it is formed from $k$ ingredient maps. For example, 
$[t_{i_{1}},t_{i_{2}}]$ has length $2$. 
       
\begin{theorem}  
   \label{WhPorter} 
   Let $X_{1},\ldots,X_{m}$ be simply-connnected, pointed $CW$-complexes. 
   Then there is a natural homotopy equivalence 
   \[\Gamma(\underline{X})\simeq
   \bigvee_{k=2}^{m} 
        \left(\bigvee_{1\leq i_{1}<\cdots <i_{k}\leq m} (k-1)
        (\Sigma\Omega X_{i_{1}}\wedge\cdots\wedge\Omega X_{i_{k}})\right)\]
   which may be chosen so that the restriction of $\gamma(\underline{X})$ to 
   $\Sigma\Omega X_{i_{1}}\wedge\cdots\wedge\Omega X_{i_{k}}$ 
   is an iterated Whitehead product of length $k$ formed from the 
   maps $t_{i_{1}},\ldots,t_{i_{k}}$.    
\end{theorem} 

Theorem~\ref{WhPorter} will be proved in two stages. First we consider the 
special case when each $X_{i}=\Sigma Y_{i}$ and in Theorem~\ref{Porterfine} 
identify the homotopy fibre of the inclusion  
\(\namedright{\bigvee_{i=1}^{m}\Sigma Y_{i}}{}{\prod_{i=1}^{m}\Sigma Y_{i}}\),  
but in a formulation different from that of Theorem~\ref{WhPorter}. This formulation 
will also be useful in subsequent sections. Second, we consider the wedge 
sum of evaluation maps 
\(\namedright{\bigvee_{i=1}^{m}\Sigma\Omega X_{i}}{}{\bigvee_{i=1}^{m} X_{i}}\) 
and use the formulation for the first stage in the case of 
$\Sigma Y_{i}=\Sigma\Omega X_{i}$ in order to prove Theorem~\ref{WhPorter}.

\subsection{Identifying the homotopy fibre of the inclusion  
\(\namedright{\bigvee_{i=1}^{m}\Sigma Y_{i}}{}{\prod_{i=1}^{m}\Sigma Y_{i}}\)} 

This starts with a refinement of Theorem~\ref{WhGanea}~(c) in the case of suspensions. 
It is notationally convenient in what follows to pinch to the first wedge summand rather 
than the second. It was shown that there is a homotopy fibration 
\(\nameddright{\Omega\Sigma X\ltimes\Sigma Y}{g}{\Sigma X\vee\Sigma Y}{q}{\Sigma X}\) 
where $q$ is the pinch map, 
$\Omega\Sigma X\ltimes\Sigma Y\simeq(\Omega\Sigma X\wedge\Sigma Y)\vee\Sigma Y$, 
the restriction of $g$ to $\Sigma Y$ is the inclusion $i_{\Sigma Y}$, and the 
restriction of $g$ to $\Omega\Sigma X\wedge\Sigma Y$ is the Whitehead 
product $[ev_{\Sigma X},i_{\Sigma Y}]$. By Lemma~\ref{James} 
there is a homotopy equivalence 
$\Omega\Sigma X\wedge\Sigma Y\simeq\bigvee_{n=1}^{\infty}\Sigma X^{(n)}\wedge Y$. 

For $n\geq 1$, define 
\[ad^{n}(i_{\Sigma X})(i_{\Sigma Y})\colon\namedright{\Sigma X^{(n)}\wedge Y}{}{\Sigma X\vee\Sigma Y}\] 
recursively by $ad^{1}(i_{\Sigma X})(i_{\Sigma Y})=[i_{\Sigma X},i_{\Sigma Y}]$ and, for $n\geq 2$, 
$ad^{n}(i_{\Sigma X})(i_{\Sigma Y})=[i_{\Sigma X},ad^{n-1}(i_{\Sigma X})(i_{\Sigma Y})]$. 
The following was essentially proved in~\cite[Theorem 9.3.1]{N}. 
 
\begin{theorem} 
   \label{Ganeafine} 
   Let $X$ and $Y$ be pointed, path-connected $CW$-complexes. There is 
   a homotopy fibration 
   \[\nameddright{\left(\bigvee_{n=1}^{\infty}\Sigma X^{(n)}\wedge Y\right)\vee\Sigma Y} 
        {\Psi}{\Sigma X\vee\Sigma Y}{q}{\Sigma X}\] 
   where the restriction of $\Psi$ to $\Sigma Y$ is $i_{\Sigma Y}$ and the restriction 
   of $\Psi$ to $\Sigma X^{(n)}\wedge Y$ is the iterated Whitehead product 
   $ad^{n}(i_{\Sigma X})(i_{\Sigma Y})$. All of this is natural for maps 
   \(\namedright{X}{}{X'}\) 
   and 
   \(\namedright{Y}{}{Y'}\). 
\end{theorem}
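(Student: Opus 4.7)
The plan is to specialise the scheme of Section~\ref{sec:Whrefined} from its full $m$-fold iteration down to the very first stage: $L_1$ replaces $L_m = [L,L]$, and the topological enumeration $D_1$ replaces $D_m$.

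I would begin by applying Theorem~\ref{WhGanea}(b),(c) with $Z = \Sigma Y$ to the pinch map
\(q\colon\namedright{\Sigma X\vee\Sigma Y}{}{\Sigma Y}\).
This produces a homotopy fibration
\[\nameddright{\Sigma X\rtimes\Omega\Sigma Y}{g}{\Sigma X\vee\Sigma Y}{q}{\Sigma Y}\]
together with a choice of homotopy equivalence $\Sigma X\rtimes\Omega\Sigma Y\simeq\Sigma X\vee(\Sigma X\wedge\Omega\Sigma Y)$ under which the restriction of $g$ to $\Sigma X$ is $i_{\Sigma X}$ and the restriction to $\Sigma X\wedge\Omega\Sigma Y$ is $[i_{\Sigma X},ev_{\Sigma Y}]$. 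Then Lemma~\ref{James} supplies a natural homotopy equivalence $\Sigma X\wedge\Omega\Sigma Y\simeq\bigvee_{n=1}^{\infty}\Sigma X\wedge Y^{(n)}$, exhibiting the total space of the fibration as the claimed wedge $D_1 = \Sigma X\vee\left(\bigvee_{n=1}^{\infty}\Sigma X\wedge Y^{(n)}\right)$. The wedge summands of $D_1$ are in bijection with the Lie basis $W_1$ of the free Lie algebra $L_1\subset L\langle v_1,v_2\rangle$, where $v_1$ corresponds to $i_{\Sigma Y}$ and $v_2$ to $i_{\Sigma X}$.

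Next, I would define the candidate map $\Psi$ independently of the Ganea construction by declaring its restriction to $\Sigma X$ to be $i_{\Sigma X}$ and its restriction to each $\Sigma X\wedge Y^{(n)}$ to be the iterated Whitehead product $ad^n(i_{\Sigma X})(i_{\Sigma Y})$ corresponding to the Lie basis element of $W_1$ for that summand. Every summand of $\Psi$ composes trivially into $\Sigma Y$ under $q$: the $\Sigma X$ summand because $q\circ i_{\Sigma X}\simeq\ast$, and every Whitehead bracket summand because post-composition with $q$ distributes into a bracket containing the null map $q\circ i_{\Sigma X}$. Hence $\Psi$ lifts through $g$ to a map $\phi\colon\namedright{D_1}{}{\Sigma X\rtimes\Omega\Sigma Y}$.

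The main obstacle is showing that $\phi$ is a homotopy equivalence, for which the plan is to mirror Proposition~\ref{Dmgamma}. Taking homology with field coefficients and invoking Bott--Samelson, $\hlgy{\Omega(\Sigma X\vee\Sigma Y)}\cong T(V)\cong UL\langle V\rangle$ where $V=\rhlgy{X}\oplus\rhlgy{Y}$. The adjoint of $\Psi$ is a wedge sum of iterated Samelson products, each of which carries the reduced homology of the corresponding wedge summand onto the element of the Lie basis $W_1$ inside $UL\langle V\rangle$ that labels it. Since distinct elements of $W_1$ are linearly independent in $UL\langle V\rangle$ and any module-injection into the free tensor algebra $T(V)$ extends to an algebra-injection out of its tensor algebra, the James construction promotes the adjoint of $\Psi$ to an algebra injection $T(\rhlgy{D_1})\hookrightarrow T(V)$ which models $(\Omega\Psi)_{\ast}$. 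Hence $(\Omega\phi)_\ast$ is an injection in mod-$p$ and rational homology. Because the source and target of $\Omega\phi$ both carry the same finite-type homology (already computed by the Ganea/James decomposition of $\Omega(\Sigma X\rtimes\Omega\Sigma Y)$), this injection is an isomorphism, so Whitehead's theorem makes $\Omega\phi$, hence $\phi$, a homotopy equivalence. Naturality in $X$ and $Y$ is extracted as in Lemma~\ref{Whliftnat}, using the naturality throughout of Theorem~\ref{WhGanea}, Lemma~\ref{James}, and the Whitehead bracket.
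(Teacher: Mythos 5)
Your proposal is correct and follows essentially the same route the paper takes: start from Theorem~\ref{WhGanea}(b),(c) and Lemma~\ref{James} to identify the fibre of $q$ as $D_{1}$, define $\Psi$ as the wedge sum of Whitehead products indexed by the Lie basis $W_{1}$, lift it to the fibre, and prove the lift is an equivalence by the Bott--Samelson/free Lie algebra injectivity argument of Proposition~\ref{Dmgamma}, with naturality as in Lemma~\ref{Whliftnat}. This is precisely the argument the paper sketches ("using $L_{1}$ and its topological enumeration $D_{1}$ instead of $L_{m}$ and $D_{m}$, and arguing as for Proposition~\ref{Dmgamma} and Theorem~\ref{Porterfine}"), so no further comparison is needed.
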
 

\begin{proof} 
The precise statement of~\cite[Theorem 9.3.1]{N} is that the composite 
\begin{equation} 
   \label{NHM} 
   \lllnamedright{\Omega(\left(\bigvee_{n=1}^{\infty}\Sigma X^{(n)}\wedge Y\right)\vee\Sigma Y)\times 
      \Omega\Sigma X}{\Omega\psi\times\Omega i_{\Sigma X}} 
      {\Omega(\Sigma X\vee\Sigma Y)\times\Omega(\Sigma X\vee\Sigma Y)}  
      \stackrel{\mu}{\larrow}{\Omega(\Sigma X\vee\Sigma Y)} 
\end{equation} 
is a homotopy equivalence, where $\mu$ is the loop multiplication. We will show that 
this implies the existence of the homotopy fibration in the statement of the theorem. 

Observe that the naturality of the Whitehead product, the fact that 
$q\circ i_{\Sigma X}$ is the identity map on $\Sigma X$, and the fact that $q\circ i_{\Sigma Y}$ 
is null homotopic together imply that 
\[q\circ ad^{1}(i_{\Sigma X})(i_{\Sigma Y})\simeq ad^{1}(q\circ i_{\Sigma X})(q\circ i_{\Sigma Y})\simeq 
      ad^{1}(1_{\Sigma X})(\ast)=[i_{\Sigma X},\ast]\simeq\ast.\] 
Inducting on the definition of $ad^{n}$ then implies that 
$q\circ ad^{n}(i_{\Sigma X})(i_{\Sigma Y})\simeq\ast$ for each $n\geq 1$. Thus 
$q\circ\Psi$ is null homotopic. Recall from the discussion preceding the theorem 
that the homotopy fibre of $q$ is $\Omega\Sigma X\ltimes\Sigma Y$. So we obtain a lift 
\begin{equation} 
  \label{HNMdgrm} 
  \diagram 
      & \left(\bigvee_{n=1}^{\infty}\Sigma X^{(n)}\wedge Y\right)\vee\Sigma Y\dto^{\Psi}\dlto_(0.6){\ell} \\ 
      \Omega\Sigma X\ltimes\Sigma Y\rto^-{g} & \Sigma X\vee\Sigma Y 
  \enddiagram 
\end{equation} 
for some map $\ell$. If $\ell$ is a homotopy equivalence, then $\Psi$ may be substituted 
for the homotopy fibre of $q$, giving the homotopy fibration asserted by the theorem. 

It remains to show that $\ell$ is a homotopy equivalence. The homotopy 
equivalence~(\ref{NHM}) implies that the map $\Omega\Psi$ induces an injection 
in homology. Looping the homotopy commutative triangle in~(\ref{HNMdgrm}) 
then implies that $\Omega\ell$ induces an injection in homology. In the discussion 
preceding the theorem it was observed that 
$\left(\bigvee_{n=1}^{\infty}\Sigma X^{(n)}\wedge Y\right)\vee\Sigma Y$ 
and $\Omega\Sigma X\ltimes\Sigma Y$ have the same homotopy type. As both 
spaces are simply-connected, their loop spaces then also have the same homotopy 
type. Thus $(\Omega\ell)_{\ast}$ being an injection implies that it is an isomorphism. 
Now Dror's~\cite{Dr} generalization of Whitehead's Theorem implies that $\Omega\ell$ 
is a homotopy equivalence. Hence $\ell$ induces an isomorphism on homotopy 
groups, so as all spaces have the homotopy type of $CW$-complexes, $\ell$ is 
a homotopy equivalence. 
\end{proof} 

We will use Theorem~\ref{Ganeafine} iteratively. Let $Y_{1},\ldots,Y_{m}$ be pointed, 
path-connected $CW$-complexes. Let $A_{0}=\bigvee_{i=1}^{m}\Sigma Y_{i}$ and 
for $1\leq j\leq m$ let 
\[s_{j}\colon\namedright{\Sigma Y_{j}}{}{\bigvee_{i=1}^{m}\Sigma Y_{i}}\] 
be the inclusion of the $j^{th}$-wedge summand. Let 
$B_{1}=\bigvee_{i=2}^{m} Y_{i}$ so that $A_{0}=\Sigma Y_{1}\vee\Sigma B_{1}$. 
By Theorem~\ref{Ganeafine} there is a homotopy fibration 
\[\llnameddright{A_{1}}{\Psi_{1}}{\Sigma Y_{1}\vee\Sigma B_{1}}{q_{1}}{\Sigma Y_{1}}\] 
where $q_{1}$ is the pinch map, 
$A_{1}=\left(\bigvee_{n=1}^{\infty}\Sigma Y_{1}^{(n)}\wedge B_{1}\right)\vee\Sigma B_{1}$, 
the restriction of $\Psi_{1}$ to $\Sigma B_{1}$ is the inclusion $i_{\Sigma B_{1}}$, and the restriction 
of $\Psi_{1}$ to $\Sigma Y_{1}^{(n)}\wedge B_{1}$ is the iterated Whitehead product  
$ad^{n}(i_{\Sigma Y_{1}})(i_{\Sigma B_{1}})$. Since $B_{1}$ is the wedge 
$\bigvee_{i=2}^{m} Y_{i}$ and the inclusion $i_{\Sigma B_{1}}$ is the wedge sum 
of the inclusions $s_{j}$ for $2\leq j\leq m$, and since $i_{\Sigma Y_{1}}=s_{1}$, 
the Whitehead product $ad^{n}(i_{\Sigma Y_{1}})(i_{\Sigma B_{1}})$ is the wedge sum 
of the Whitehead products $ad^{n}(s_{1})(s_{j})$ for $2\leq j\leq m$. 

Observe that $A_{1}$ is a suspension and has $\Sigma Y_{2}$ as a wedge summand 
(via it being a wedge summand of $\Sigma B_{1})$. Write 
\[A_{1}=\Sigma Y_{2}\vee\Sigma B_{2}\] 
where 
$B_{2}=(\bigvee_{i=3}^{m} Y_{i})\vee\left(\bigvee_{n=1}^{\infty} Y_{1}^{(n)}\wedge B_{1}\right)$. 
Now iterate this process by considering the pinch map  
\(\lnamedright{\Sigma Y_{2}\vee\Sigma B_{2}}{q_{2}}{\Sigma Y_{2}}\).  
We obtain a sequence of homotopy fibrations 
\[\llnameddright{A_{k}}{\Psi_{k}}{\Sigma Y_{k}\vee\Sigma B_{k}}{q_{k}}{\Sigma Y_{k}}\] 
for $1\leq k\leq m$, where $A_{k-1}=\Sigma Y_{k}\vee\Sigma B_{k}$, $q_{k}$ is the pinch map, 
$A_{k}=\left(\bigvee_{n=1}^{\infty}\Sigma Y_{k}^{(n)}\wedge B_{k}\right)\vee\Sigma B_{k}$, 
the restriction of~$\Psi_{k}$ to $\Sigma B_{k}$ is the inclusion $i_{\Sigma B_{k}}$ into 
$\Sigma Y_{k}\vee\Sigma B_{k}$, and the restriction of $\Psi_{k}$ to 
$\Sigma Y_{k}^{(n)}\wedge B_{k}$ is the iterated Whitehead product  
$ad^{n}(i_{\Sigma Y_{k}})(i_{\Sigma B_{k}})$. Consider the maps $i_{\Sigma Y_{k}}$ 
and $i_{\Sigma B_{k}}$ composing into $\bigvee_{i=1}^{m}\Sigma Y_{i}$ by the composite 
\[\Phi_{k-1}\colon\lnamedright{A_{k-1}=\Sigma Y_{k}\vee\Sigma B_{k}}
      {\Psi_{k-1}}{A_{k-2}}\larrow\cdots\larrow\llnamedright{A_{1}}{\Psi_{1}} 
      {A_{0}=\bigvee_{i=1}^{m}\Sigma Y_{i}}.\] 
In the iteration we have 
$B_{k}=(\bigvee_{i=k+1}^{m} Y_{i})\vee\left(\bigvee_{n=1}^{\infty} Y_{k-1}^{(n)}\wedge B_{k-1}\right)$, 
the restriction of $\Phi_{k-1}$ to $\bigvee_{i=k+1}^{m}\Sigma B_{i}$ is the inclusion, the restriction 
of $\Phi_{k-1}$ to $\bigvee_{n=1}^{\infty}\Sigma Y_{k-1}^{(n)}\wedge B_{k-1}$ is a wedge sum of 
iterated Whitehead products formed from the maps $s_{j}$ for $1\leq j\leq m$, and the 
restriction of $\Phi_{k-1}$ to $\Sigma Y_{k}$ is the inclusion. Thus $\Phi_{k-1}\circ i_{\Sigma B_{k}}$ is a 
wedge sum of the inclusions $s_{j}$ for $k+1\leq j\leq m$ and iterated Whitehead products 
of the $s_{j}$'s for $1\leq j\leq m$, and $\Phi_{k-1}\circ i_{\Sigma Y_{k}}$ is the inclusion $s_{j}$.  
Hence $\Phi_{k}=\Phi_{k-1}\circ\Psi_{k}$ is the wedge sum of the inclusions $s_{j}$ for $k+1\leq j\leq m$ 
and iterated Whitehead products of the $s_{j}$'s for $1\leq j\leq m$. 

Consequently, at the end of the iteration, we have $A_{m}$ a wedge of suspensions 
and the composite 
\[\Phi_{m}\colon\namedright{A_{m}}{\Psi_{m}}{A_{m-1}}\longrightarrow\cdots\longrightarrow 
       \llnamedright{A_{1}}{\Psi_{1}}{\bigvee_{i=1}^{m}\Sigma Y_{i}}\] 
is a wedge sum of iterated Whitehead products formed from the maps $s_{j}$ 
for $1\leq j\leq m$. 

\begin{theorem} 
  \label{Porterfine} 
  There is a homotopy fibration 
  \[\nameddright{A_{m}}{\Phi_{m}}{\bigvee_{i=1}^{m}\Sigma Y_{i}}{}{\prod_{i=1}^{m}\Sigma Y_{i}}\] 
  which is natural for maps 
  \(\namedright{Y_{i}}{}{Y'_{i}}\). 
\end{theorem} 
      
\begin{proof} 
Noting that $\Phi_{1}=\Psi_{1}$, we start with the homotopy fibration 
\(\lnameddright{A_{1}}{\Phi_{1}}{\bigvee_{i=1}^{m}\Sigma Y_{i}}{q_{1}}{\Sigma Y_{1}}\). 
Assume inductively that for some $2\leq k\leq m$ that there is a homotopy fibration 
\(\nameddright{A_{k-1}}{\Phi_{k-1}}{\bigvee_{i=i}^{m}\Sigma Y_{i}}{}{\prod_{i=1}^{k-1}\Sigma Y_{i}}\).  
Observe that the map 
\(\namedright{A_{k-1}}{q_{k}}{\Sigma Y_{k}}\) 
factors as the composite 
\(\lnameddright{A_{k-1}}{\Phi_{k-1}}{\bigvee_{i=1}^{m}\Sigma Y_{i}}{}{\Sigma Y_{k}}\). 
Therefore there is a homotopy fibration diagram  
\[\diagram 
       A_{k-1}\rto^-{\Phi_{k-1}}\dto^{q_{k}} & \bigvee_{i=1}^{m}\Sigma Y_{i}\rto\dto 
            & \prod_{i=k}^{m}\Sigma Y_{i}\ddouble \\ 
       \Sigma Y_{k-1}\rto & \prod_{i=k-1}^{m}\Sigma Y_{i}\rto^-{\pi_{k-1}}  
            & \prod_{i=1}^{k-1} \Sigma Y_{i}    
  \enddiagram\] 
where $\pi_{k-1}$ is the projection. Since the left square is a homotopy pullback, from 
the homotopy fibration 
\(\lnameddright{A_{k}}{\Psi_{k}}{A_{k-1}}{q_{k-1}}{\Sigma Y_{k}}\) 
and the fact that $\Phi_{k}=\Phi_{k-1}\circ\Psi_{k}$, we obtain a homotopy fibration 
\(\lnameddright{A_{k}}{\Phi_{k}}{\bigvee_{i=1}^{m}\Sigma Y_{i}}{} 
     {\prod_{i=k-1}^{m}\Sigma Y_{i}}\). 
By induction, such a homotopy fibration exists for all \mbox{$2\leq k\leq m$}, so when $k=m$ 
we obtain a homotopy fibration 
\(\nameddright{A_{m}}{\Phi_{m}}{\bigvee_{i=1}^{m}\Sigma Y_{i}}{}{\prod_{i=1}^{m}\Sigma Y_{i}}\) 
as asserted. 
 
Finally, since $\Phi_{m}$ consists of iterated Whitehead products formed from the 
inclusions $s_{j}$ for \mbox{$1\leq j\leq m$}, the naturality of the Whitehead product and 
the inclusions implies that this homotopy fibration is natural for maps 
\(\namedright{Y_{i}}{}{Y'_{i}}\). 
\end{proof}

\subsection{Identifying the homotopy fibre of the inclusion 
\(\namedright{\bigvee_{i=1}^{m} X_{i}}{}{\prod_{i=1}^{m} X_{i}}\)} 
We turn to Theorem~\ref{WhPorter}. The idea is to use the information in 
Theorem~\ref{Porterfine} in the case of $\bigvee_{i=1}^{m}\Sigma\Omega X_{i}$ and 
compose it with evaluation maps to $\bigvee_{i=1}^{m} X_{i}$. We first consider 
the basic case of Theorem~\ref{Ganeafine} applied to $\Sigma\Omega X\vee\Sigma\Omega Y$ 
and compose with evaluation map to $X\vee Y$. Note here that we assume~$X$ and $Y$ are 
simply-connected so that $\Omega X$ and $\Omega Y$ are path-connected, as needed in 
Theorem~\ref{Ganeafine}. 

\begin{lemma} 
   \label{sevlift} 
   Let $X$ and $Y$ be simply-connected, pointed $CW$-complexes. There is a lift 
   \[\diagram 
          \bigvee_{n=1}^{\infty}\Sigma\Omega X^{(n)}\wedge\Omega Y 
                   \xto[rrr]^-{\bigvee_{n=1}^{\infty} ad^{n}(i_{\Sigma\Omega X})(i_{\Sigma\Omega Y})} 
                   \ddashed|>\tip^{\lambda} 
                & & & \Sigma\Omega X\vee\Sigma\Omega Y\dto^{ev\vee ev} \\ 
          \Sigma\Omega X\wedge\Omega Y\xto[rrr]^-{[ev_{X},ev_{Y}]} & & & X\vee Y 
     \enddiagram\] 
   for some map $\lambda$. 
\end{lemma} 

\begin{proof} 
By Theorem~\ref{WhGanea}~(a), there is a homotopy fibration 
\(\lllnameddright{\Sigma\Omega X\wedge\Omega Y}{[ev_{X},ev_{Y}]}{X\vee Y}{i}{X\times Y}\). 
The naturality of the Whitehead product implies that the composition 
$(ev_{X}\vee ev_{Y})\circ  ad^{n}(i_{\Sigma\Omega X})(i_{\Sigma\Omega Y})$ 
is the Whitehead product $ad^{n}(ev_{X})(ev_{Y})$. Every Whitehead product into $X\vee Y$ 
composes trivially with $i$ and so lifts through $[ev_{X},ev_{Y}]$. The lemma now follows. 
\end{proof} 

\begin{remark} 
\label{sevliftremark} 
In analogy with the definition of $t_{j}$, let $t_{X}=ev\circ i_{\Sigma\Omega X}$ and 
$t_{Y}=ev\circ i_{\Sigma\Omega Y}$. Then Lemma~\ref{sevlift} equivalently says 
that $ad^{n}(t_{X})(t_{Y})$ factors through $[ev_{X},ev_{Y}]=[t_{X},t_{Y}]$. 
\end{remark} 

Recall that the map $t_{j}$ is defined as the composite 
\(\nameddright{\Sigma\Omega X_{j}}{ev_{j}}{X_{j}}{s_{j}}{\bigvee_{i=1}^{m} X_{i}}\). 
The naturality of the evaluation map implies that $t_{j}$ can also be regarded as the composite 
\[t_{j}\colon\lllnameddright{\Sigma\Omega X_{i}}{s_{j}}{\bigvee_{i=1}^{m}\Sigma\Omega X_{i}} 
        {\bigvee_{i=1}^{m} ev_{i}}{\bigvee_{i=1}^{m} X_{i}}.\]    
         
Now run through the construction of the spaces $A_{k}$ with the input being 
$Y_{i}=\Omega X_{i}$ for $1\leq i\leq m$. Starting with $A_{0}=\bigvee_{i=1}^{m}\Sigma\Omega X_{i}$, 
the first homotopy fibration 
\(\nameddright{A_{1}}{\Psi_{1}}{A_{0}}{q_{0}}{\Sigma\Omega X_{1}}\) 
has 
\[A_{1}=\bigvee_{n=1}^{\infty}\left(\Omega X_{1}^{(n)}\wedge(\bigvee_{i=2}^{m}\Sigma\Omega X_{i})\right) 
       \vee\left(\bigvee_{i=2}^{m}\Sigma\Omega X_{i}\right),\] 
the restriction of $\Psi_{1}$ to $\bigvee_{i=1}^{m}\Sigma\Omega X_{i}$ is a wedge of 
the inclusions $s_{j}$ for $2\leq j\leq m$, and the restriction of $\Psi_{1}$ to 
$\Omega X_{1}^{(n)}\wedge(\bigvee_{i=2}^{m}\Sigma\Omega X_{i})$ 
is a wedge of iterated Whitehead products $ad^{n}(s_{1})(s_{j})$ for $2\leq j\leq m$. Composing 
$\Phi_{1}=\Psi_{1}$ with the wedge of evaluation maps 
\[e\colon\lllnamedright{\bigvee_{i=1}^{m}\Sigma\Omega X_{i}}{\bigvee_{i=1}^{m} ev_{i}} 
       {\bigvee_{i=1}^{m} X_{i}}\]  
the maps $s_{j}$ are sent to $t_{j}$ and, by Lemma~\ref{sevlift}, 
the iterated Whitehead products $ad^{n}(s_{1})(s_{j})$ factor through 
$[t_{1},t_{j}]$. At the next stage, writing $A_{1}=\Sigma B_{2}\vee\Sigma\Omega X_{2}$, 
we have 
\[A_{2}=\bigvee_{n=1}^{\infty}\left(\Omega X_{2}^{(n)}\wedge\Sigma B_{2}\right) 
       \vee\Sigma B_{2}.\] 
First, $\Sigma B_{2}$ consists of all wedge summands in $A_{1}$ 
except $\Sigma\Omega X_{2}$, and $e\circ\Phi_{2}$ maps these into 
$\bigvee_{i=1}^{m} X_{i}$ exactly as does $e\circ\Phi_{1}$, 
that is, by a~$t_{j}$ if $3\leq j\leq m$ (excluding~$t_{2}$ since $\Sigma\Omega X_{2}$ 
is not in $B_{2}$) or by factoring through a $[t_{1},t_{j}]$ for \mbox{$2\leq j\leq m$}. 
Second, $\Phi_{2}$ maps the summands 
$\bigvee_{n=1}^{\infty}\left(\Omega X_{2}^{(n)}\wedge\Sigma B_{2}\right)$ 
into $\bigvee_{i=1}^{m}\Sigma\Omega X_{i}$ by iterated Whitehead products 
$ad^{n}(s_{2})(f)$ where $f$ comes from~$\Phi_{1}$. 
Composing with the wedge of evaluations $e$, the naturality of the Whitehead 
product implies that $e\circ f$ factors through a $t_{j}$ for $3\leq j\leq m$ 
or a $[t_{1},t_{j}]$ for $2\leq j\leq m$, and Lemma~\ref{sevlift} then implies that 
\mbox{$e\circ ad^{n}(s_{2})(f)=ad^{n}(e\circ s_{2})(e\circ f)$} 
factors through $[t_{2},t_{j}]$ for $3\leq j\leq m$ or $[t_{2},[t_{1},t_{j}]]$ 
for $2\leq j\leq m$. Refining a bit, observe that if $j=2$ then 
$[t_{2},[t_{1},t_{2}]]\simeq [t_{1},[t_{2},t_{1}]]\circ (\Sigma 1\wedge T)$ 
where~$1$ is the identity map on $\Omega X_{2}$ and 
\(T\colon\namedright{\Omega X_{1}\wedge\Omega X_{2}}{}{\Omega X_{2}\wedge\Omega X_{1}}\) 
interchanges wedge summands. But $[t_{2},[t_{2},t_{1}]]=ad^{2}(t_{2})(t_{1})$ and, 
by Remark~\ref{sevliftremark}, $ad^{2}(t_{2})(t_{1})$ factors through $[t_{2},t_{1}]$. 
Thus, in fact, \mbox{$e\circ ad^{n}(s_{2})(f)=ad^{n}(e\circ s_{2})(e\circ f)$} 
factors through $[t_{2},t_{j}]$ for $3\leq j\leq m$ or $[t_{2},[t_{1},t_{j}]]$ 
for $3\leq j\leq m$.

Continuing in this manner we see that $e\circ\Phi_{m}$ factors through 
a wedge sum of the Whitehead products $[t_{r_{s}},[t_{r_{s-1}},[\cdots,[t_{r_{1}},t_{j}]\cdots ]$ 
where $j>r_{1}$, $1\leq r_{1}<\cdots <r_{s}\leq m$, and $j\notin\{r_{1},\ldots,r_{s}\}$.  

\begin{remark} 
\label{reindex} 
We need to re-index the enumeration of these Whitehead products. Let $C$ be the 
collection of iterated Whitehead products of the form  
$[t_{r_{s}},[t_{r_{s-1}},[\cdots,[t_{r_{1}},t_{j}]\cdots ]$ 
where $j>r_{1}$ and $1\leq r_{1}<\cdots <r_{s}\leq m$. 
Fixing a sequence $(i_{1},\ldots,i_{k})$ with $1\leq i_{1}<\cdots <i_{k}\leq m$, 
we wish to determine how many brackets in $C$ are formed from this sequence. 
Note that $r_{1}$ must be the smallest index, so we must take $r_{1}=i_{1}$. Note 
that $j$ only needs to be bigger than $r_{1}$, so there are $k-1$ possible choices for $j$; 
we can take $j=i_{s}$ for $2\leq s\leq k$. Suppose that $j=i_{s_{0}}$. Then the 
remaining terms are $i_{2},\ldots,\hat{i}_{s_{0}},\ldots,i_{k}$ where $\hat{i}_{s_{0}}$ 
is removed from the list. As this list is ordered, and we are substituting them in 
for the ordered list $r_{2},\ldots,r_{k-1}$, we are forced to take 
$(r_{2},\ldots,r_{k})=(i_{2},\ldots,\hat{i}_{s_{0}},\ldots,i_{k})$. Thus in total 
there are $k-1$ possible brackets in $C$ we can form from $(i_{1},\ldots,i_{k})$. 

Conversely, any element in $C$ corresponds to a sequence $(i_{1},\ldots,i_{s+1})$ 
by simply taking the sequence $(r_{1},\ldots,r_{s})$ and inserting $j$ in order. 
Thus there is a one-to-one correspondence between $C$ and the collection of $k-1$ brackets 
for each sequence $(i_{1},\ldots,i_{k})$ with $1\leq i_{1}<\cdots <i_{k}\leq m$. 
\end{remark} 

To compress notation, let 
\begin{equation} 
  \label{Wnotation} 
  W(\underline{X})=\bigvee_{k=2}^{m} 
        \left(\bigvee_{1\leq i_{1}<\cdots <i_{k}\leq m} (k-1)
        (\Sigma\Omega X_{i_{1}}\wedge\cdots\wedge\Omega X_{i_{k}})\right). 
\end{equation}  
Using Remark~\ref{reindex} to correspond indexing sets, let 
\[\Theta(\underline{X})\colon\namedright{W(\underline{X})}{}{\bigvee_{i=1}^{m} X_{i}}\] 
be the wedge sum of the Whitehead products in $C$. We have proved the following. 

\begin{lemma} 
   \label{Pfact} 
   Let $X_{1},\ldots,X_{m}$ be simply-connected, pointed $CW$-complexes. 
   Then there is a factorization 
   \[\diagram 
           A_{m}\rto^-{\Phi_{m}}\dto^{\epsilon} 
                & \bigvee_{i=1}^{m}\Sigma\Omega X_{i}\dto^{\bigvee_{i=1}^{m} ev} \\ 
           W(\underline{X})\rto^-{\Theta(\underline{X})} & \bigvee_{i=1}^{m} X_{i}  
      \enddiagram\] 
   for some map $\epsilon$.~$\qqed$ 
\end{lemma} 

Lemma~\ref{Pfact} will play a key role in the proof of Theorem~\ref{WhPorter}. 
Before starting the proof, some additional preliminary information is needed. 

\begin{lemma} 
   \label{loopgammainverse} 
   The homotopy fibration 
   \(\nameddright{\Gamma(\underline{X})}{\gamma(\underline{X})} 
        {\bigvee_{i=1}^{m} X_{i}}{}{\prod_{i=1}^{m} X_{i}}\) 
   has the property that $\Omega\gamma(\underline{X})$ has a left homotopy inverse.   
\end{lemma} 
                
\begin{proof} 
For $1\leq j\leq m$, the composite 
\(\nameddright{X_{j}}{s_{j}}{\bigvee_{i=1}^{m} X_{i}}{}{\prod_{i=1}^{m} X_{i}}\)  
is the inclusion of the $j^{th}$ factor. After looping the maps $\Omega s_{j}$ 
can be multiplied together to obtain a right homotopy inverse $s$ for the map  
\(\namedright{\Omega(\bigvee_{i=1}^{m} X_{i})}{}{\prod_{i=1}^{m}\Omega X_{i}}\). 
Consequently there is a homotopy equivalence 
\[\lllnameddright{\prod_{i=1}^{m}\Omega X_{i}\times\Omega\Gamma(\underline{X})} 
     {s\times\Omega\gamma(\underline{X})} 
     {\Omega(\bigvee_{i=1}^{m} X_{i})\times\Omega(\bigvee_{i=1}^{m} X_{i})} 
     {\mu}{\Omega(\bigvee_{i=1}^{m} X_{i})}\] 
where $\mu$ is the loop multiplication. Therefore $\Omega\gamma(\underline{X})$ 
has a left homotopy inverse. 
\end{proof} 
        
For $1\leq i\leq m$, we have the evaluation map 
\(ev_{i}\colon\namedright{\Sigma\Omega X_{i}}{}{X_{i}}\).  
Let 
\[E_{i}\colon\namedright{\Omega X_{i}}{}{\Omega\Sigma\Omega X_{i}}\] 
be the suspension map.. Since $ev_{i}$ is the left adjoint of the identity 
map on $\Omega X_{i}$ and $E_{i}$ is the right adjoint of the identity 
map on $\Sigma\Omega X_{i}$, the composite $\Omega ev_{i}\circ E_{i}$ is homotopic to the 
identity map on $\Omega X_{i}$. Building on this, for any sequence 
$1\leq i_{1}<\cdots <i_{k}\leq m$, there is a composite 
\[\nameddright{\Sigma\Omega X_{i_{1}}\wedge\cdots\wedge\Omega X_{i_{k}}}{} 
     {\Sigma\Omega\Sigma\Omega X_{i_{1}}\wedge\cdots\wedge\Omega\Sigma\Omega X_{i_{k}}} 
     {}{\Sigma\Omega X_{i_{1}}\wedge\cdots\wedge\Omega X_{i_{k}}}\] 
where the left map is $\Sigma E_{i_{1}}\wedge\cdots\wedge E_{i_{k}}$, the right map 
is $\Sigma\Omega ev_{i_{1}}\wedge\cdots\wedge\Omega ev_{i_{k}}$, and the 
composite is homotopic to the identity map. Doing this for each of the wedge summands 
in~(\ref{Wnotation}) we obtain maps   
\begin{equation} 
  \label{Wequiv} 
  \nameddright{W(\underline{X})}{\xi}{W(\underline{\Sigma\Omega X})}{\zeta}{W(\underline{X})} 
\end{equation}  
whose composite is a homotopy equivalence. 

\begin{proof}[Proof of Theorem~\ref{WhPorter}] 
Consider the diagram 
\begin{equation} 
  \label{crazydgrm} 
  \spreaddiagramcolumns{2pc}\diagram 
      & & \Omega W(\underline{X})\dto^{\Omega\xi} \\ 
      \Omega A_{m}\drto^{\Omega\Phi_{m}}\ddto^{\Omega\epsilon} 
       & & \Omega W(\underline{\Sigma\Omega X})\llto_-{\Omega\theta}  
           \dlto_{\Omega\gamma(\underline{\Sigma\Omega X})} 
           \ddto^{\Omega\zeta} \\ 
      & \Omega(\bigvee_{i=1}^{m}\Sigma\Omega X_{i}) 
           \ddto^(0.4){\Omega(\bigvee_{i=1}^{m} ev_{i})} & \\ 
      \Omega W(\underline{X})\drto^{\Omega\Theta(\underline{X})} 
       & & \Omega W(\underline{X})\dlto_{\Omega\gamma(\underline{X})} \\     
       & \Omega(\bigvee_{i=1}^{m} X_{i})\dto^{Q(\underline{X})} & \\ 
       & \Omega W(\underline{X}). & 
  \enddiagram 
\end{equation}  
Here, $Q(\underline{X})$ is a left homotopy inverse of $\Omega\gamma(\underline{X})$ 
that exists by Lemma~\ref{loopgammainverse}. The left quadrilateral homotopy 
commutes by Lemma~\ref{Pfact}, and the right quadrilateral homotopy commutes 
by the naturality of Theorem~\ref{Porter}. For the triangle, by Lemma~\ref{Pfact} 
and Theorem~\ref{Porter} respectively, both 
\(\nameddright{A_{m}}{\Phi_{m}}{\bigvee_{i=1}^{m}\Sigma\Omega X_{i}}{} 
      {\prod_{i=1}^{m}\Sigma\Omega X_{i}}\) 
and 
\(\llnameddright{W(\underline{\Sigma\Omega X})}{\gamma(\underline{\Sigma\Omega X})}
      {\bigvee_{i=1}^{m}\Sigma\Omega X_{i}}{}{\prod_{i=1}^{m}\Sigma\Omega X_{i}}\) 
are homotopy fibrations, so there is a homotopy equivalence  
\(\theta\colon\namedright{W(\underline{\Sigma\Omega X})}{}{A_{m}}\) 
such that $\gamma(\underline{\Sigma\Omega X})\simeq\Phi_{m}\circ\theta$. Looping 
this give the homotopy commutativity of the upper triangle above. Therefore the entire 
diagram homotopy commutes. 

By~(\ref{Wequiv}), the composite $\Omega\zeta\circ\Omega\xi$ is a homotopy 
equivalence and, by definition, $Q(\underline{X})$ is a left homotopy inverse for 
$\Omega\gamma(\underline{X})$. Therefore, on the right side of the diagram, 
the composite $Q(\underline{X})\circ\Omega\gamma(\underline{X})\circ\Omega\zeta\circ\Omega\xi$ 
is a homotopy equivalence. The homotopy commutativity of the diagram therefore implies that 
$Q(\underline{X})\circ\Omega\Theta(\underline{X})\circ\Omega\epsilon\circ\Omega\theta\circ\Omega\xi$ 
is a homotopy equivalence. Let $a=\Omega\epsilon\circ\Omega\theta\circ\Omega\xi$, let 
$b=Q(\underline{X})\circ\Omega\Theta(\underline{X})$ and 
consider the composite of self-maps 
\[\nameddright{\Omega W(\underline{X})}{a}{\Omega W(\underline{X})}{b}{\Omega W(\underline{X})}.\] 
We have just seen that $b\circ a$ is a homotopy equivalence. This implies that, in 
integral homology, $a_{\ast}$ is an injection. Since $\hlgy{\Omega W(\underline{X})}$ is a 
finite type module, a self-map which is an injection must be an isomorphism, and hence 
$a_{\ast}$ is an isomorphism. Noting 
that $W(\underline{X})$ is a suspension so that $\Omega W(\underline{X})$ is nilpotent,  
Dror's~\cite{Dr} generalization of Whitehead's Theorem implies that $a$ is a homotopy 
equivalence. (While we do not strictly need this, notice that $a$ being a homotopy 
equivalence implies that~$b$ is also a homotopy equivalence, since $b=(b\circ a)\circ a^{-1}$ 
and both $b\circ a$ and $a^{-1}$ are homotopy equivalences.) 

Let $c=\Omega\zeta\circ\Omega\xi\circ a^{-1}$. As both $\Omega\zeta\circ\Omega\xi$ 
and $a^{-1}$ are homotopy equivalences, so is $c$. Further, the definition of $a$ and 
the homotopy commutativity of~(\ref{crazydgrm}) implies that 
\[\Omega\Theta(\underline{X})\simeq \Omega\Theta(\underline{X})\circ a\circ a^{-1}= 
      \Omega\Theta(\underline{X})\circ\Omega\epsilon\circ\Omega\theta\circ\Omega\xi\circ a^{-1}\simeq 
      \Omega\gamma(\underline{X})\circ\Omega\zeta\circ\Omega\xi\circ a^{-1}=
      \Omega\gamma(\underline{X})\circ c.\] 
Thus there is a homotopy commutative diagram 
\begin{equation} 
  \label{Wc} 
  \diagram 
       & &\Omega W(\underline{X})\dto^{\Omega\Theta(\underline{X})}\dllto_{c^{-1}} \\ 
       \Omega W(\underline{X})\rrto^-{\Omega\gamma(\underline{X})} 
            & & \Omega(\bigvee_{i=1}^{m} X_{i}). 
   \enddiagram 
\end{equation}  

We claim that~(\ref{Wc}) deloops. In general, if 
\(g\colon\namedright{\Sigma A}{}{\Sigma B}\) 
is a map with the property that $\Omega g$ is a homotopy equivalence, 
then $g$ is a homotopy equivalence. For $\Omega g$ induces an isomorphism 
on homotopy groups and therefore, as both $\Sigma A$ and $\Sigma B$ are 
simply-connected, so does $g$. As we are assuming spaces are $CW$-complexes, 
this implies that $g$ is a homotopy equivalence. In our case, by definition,  
$a=\Omega\bar{a}$ for $\bar{a}=\epsilon\circ\theta\circ\xi$. As $\Omega\bar{a}$ is a homotopy 
equivalence, so is $\bar{a}$. Therefore $a^{-1}\simeq\Omega(\bar{a}^{-1})$, implying 
that $c=\Omega\bar{c}$ for $\bar{c}=\zeta\circ\xi\circ\bar{a}^{-1}$. Again, as $c$ is 
a homotopy equivalence, so is $\bar{c}$. Since $W(\underline{X})$ is a suspension 
it retracts off $\Sigma\Omega W(\underline{X})$, implying that for any space $B$ the map 
\(\namedright{[W(\underline{X}),B]}{}{[\Omega W(\underline{X}),\Omega B]}\) 
sending $g$ to $\Omega g$ is an injection. Hence the homotopy commutativity 
of~(\ref{Wc}), with $c^{-1}$ written as $\Omega\bar{c}^{-1}$, deloops to give a 
homotopy commutative diagram 
\begin{equation} 
  \label{Wc2} 
  \diagram 
       & & W(\underline{X})\dto^{\Theta(\underline{X})}\dllto_{\bar{c}^{-1}} \\ 
       W(\underline{X})\rrto^-{\gamma(\underline{X})} 
            & & \bigvee_{i=1}^{m} X_{i}  
   \enddiagram 
\end{equation}  
where $\bar{c}^{-1}$ is a homotopy equivalence. 

By Theorem~\ref{Porter} there is a homotopy fibration 
\(\nameddright{W(\underline{X})}{\gamma(\underline{X})}{\bigvee_{i=1}^{m} X_{i}} 
      {}{\prod_{i=1}^{m} X_{i}}\). 
Thus~(\ref{Wc2}) implies that there is a homotopy fibration 
\(\nameddright{W(\underline{X})}{\Theta(\underline{X})}{\bigvee_{i=1}^{m} X_{i}} 
      {}{\prod_{i=1}^{m} X_{i}}\). 
This proves the first assertion of the theorem. The naturality assertion follows from 
the fact that $\Theta(\underline{X})$ is natural because the Whitehead product 
is natural. 
\end{proof}

\section{Thin products and Whitehead products}  
\label{sec:Whthin} 

Recall from the Introduction that if $X_{1},\ldots,X_{m}$ are pointed spaces 
then the thin product is defined by 
$P^{m}(\underline{X})=(\underline{X},\underline{\ast})^{K}_{D}$ 
where $K$ is the simplicial complex consisting of $m$ disjoint points. 
Noting that $(\underline{X},\underline{\ast})^{K}\simeq\bigvee_{i=1}^{m} X_{i}$, 
from the map 
\(\namedright{(\underline{X},\underline{\ast})^{K}}{\varphi} 
      {(\underline{X},\underline{\ast})^{K}_{D}}\) 
we obtain a homotopy fibration 
\[\llnameddright{F^{m}(\underline{X})}{f^{m}(\underline{X})}{\bigvee_{i=1}^{m} X_{i}} 
      {\varphi}{P^{m}(\underline{X})}.\] 

For $1\leq j\leq m$, recall again that  
\[s_{j}\colon\namedright{X_{j}}{}{\bigvee_{i=1}^{m} X_{i}}\] 
is the inclusion of the $j^{th}$-wedge summand. Suppose that for $1\leq k\leq t$ 
there are maps 
\(a_{k}\colon\namedright{\Sigma Y_{k}}{}{X_{j_{k}}}\) 
where $1\leq j_{k}\leq m$. Let $b_{k}$ be the composite 
\[b_{k}\colon\nameddright{\Sigma Y_{k}}{a_{k}}{X_{j_{k}}}{s_{j_{k}}} 
      {\bigvee_{i=1}^{m} X_{i}}.\] 
Let 
\[w\colon\namedright{\Sigma Y_{1}\wedge\cdots\wedge Y_{t}}{} 
     {\bigvee_{i=1}^{m} X_{i}}\] 
be an iterated Whitehead product formed from the maps $b_{k}$. 
We say that $w$ has length $t$ and involves the maps 
$\{s_{j_{1}},\ldots,s_{j_{t}}\}$. 

\begin{theorem} 
   \label{thinwh} 
   Let $w$ be a Whitehead product on $\bigvee_{i=1}^{m} X_{i}$ formed 
   from the maps $b_{k}$. Suppose that~$w$ has length $t\geq m$ and involves 
   all the maps $s_{j}$ for $1\leq j\leq m$. Then $w$ lifts through 
   \(\llnamedright{F^{m}(\underline{X})}{f^{m}(\underline{X})}{\bigvee_{i=1}^{m} X_{i}}\).  
\end{theorem}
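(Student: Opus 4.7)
The plan is to show that $\varphi\circ w\simeq\ast$, which gives the required lift through $f^m(\underline{X})$ since the latter is the homotopy fibre of $\varphi$. By adjunction this reduces to showing that the adjoint $\widetilde w\colon Y_1\wedge\cdots\wedge Y_t\to\Omega\bigvee_{i=1}^m X_i$ satisfies $\Omega\varphi\circ\widetilde w\simeq\ast$, and the standard adjoint correspondence identifies $\widetilde w$ as the iterated Samelson product built from the adjoints $\widetilde b_k=\Omega s_{j_k}\circ\widetilde a_k$.

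To detect triviality of this composite, the plan is to invoke Proposition~\ref{uxaKDdecomp}. In the thin product case ($K$ is $m$ disjoint points, each $A_i=\ast$) that result identifies $\Omega\varphi$ as the projection in the decomposition $\Omega\bigvee_{i=1}^m X_i\simeq\prod_{(a_1,\ldots,a_m)\in\mathcal{J}}T(a_1,\ldots,a_m)$ that kills exactly the factor $T(1,\ldots,1)$. Thus it suffices to check, for every sequence $(a_1,\ldots,a_m)\neq(1,\ldots,1)$, that the composite $f_{(a_1,\ldots,a_m)}\circ\widetilde w$ is null homotopic.

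Given such a sequence, pick any index $j$ with $a_j=0$, so that $f_{a_j}=\Omega e_j$. By Lemma~\ref{idempotents} the $\Omega e_k$ commute pairwise up to homotopy, hence so do the $f_{a_k}$, and I can rewrite $f_{(a_1,\ldots,a_m)}\simeq h\circ\Omega e_j$ for a suitable self-map $h$ of $\Omega\bigvee_{i=1}^m X_i$. It therefore suffices to show $\Omega e_j\circ\widetilde w\simeq\ast$. Now $\Omega e_j$ is a loop map, hence an $H$-map, so it distributes over iterated Samelson products, and $\Omega e_j\circ\widetilde w$ is the iterated Samelson product whose entries are $\Omega e_j\circ\widetilde b_k=\Omega(e_j\circ s_{j_k})\circ\widetilde a_k$. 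In the thin product setting the definition of $e_j$ as $\iota_{I_j}\circ\varphi_{I_j}$ unwinds to the pinch-then-include map $\bigvee_i X_i\to\bigvee_{i\neq j} X_i\to\bigvee_i X_i$, so $e_j\circ s_j\simeq\ast$ while $e_j\circ s_i\simeq s_i$ for $i\neq j$. Because $w$ involves $s_j$ by hypothesis, some $j_k=j$, and for that index the entry $\Omega e_j\circ\widetilde b_k$ is null homotopic; an iterated Samelson product with a null entry is null, which completes the argument.

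The main obstacle is the bookkeeping at two points: using the pairwise commutativity of the $\Omega e_k$'s to push $\Omega e_j$ innermost in the composite idempotent, and using the $H$-map property of $\Omega e_j$ to distribute it through the iterated Samelson product, so that everything reduces to the single identity $e_j\circ s_j\simeq\ast$. Both steps are routine consequences of the framework established in Section~\ref{sec:loopdecomp}, but must be stated carefully to avoid tangles with signs and bracketing conventions.
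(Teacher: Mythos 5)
Your proof is correct and follows essentially the same route as the paper's: both reduce to showing $\varphi\circ w\simeq\ast$ by passing to adjoints and detecting $\Omega\varphi$ through the telescope decomposition of Part I, and both kill $w$ using the fact that an iterated Whitehead/Samelson product involving every $s_j$ dies under any map that pinches out one $X_j$ (so that one entry becomes null). The only cosmetic difference is that you route the detection through the idempotents $\Omega e_{j}$ and Proposition~\ref{uxaKDdecomp}, whereas the paper routes it through the pinch maps $p_{I}=\varphi_{I}$ and Theorem~\ref{djksplitting}; these rest on the same underlying decomposition, so the arguments coincide in substance.
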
 

\begin{proof} 
Recall that $K$ is $m$ disjoint points. For any proper subset $I\subset [m]$, 
by Lemma~\ref{polyinverse} the projection 
\(\namedright{X^{m}}{}{X^{I}}\) 
induces a map of polyhedral products 
\(\namedright{(\underline{X},\underline{\ast})^{K}}{}{(\underline{X},\underline{\ast})^{K_{I}}}\)   
which, in this case, is the equivalent to the pinch map 
\(p_{I}\colon\namedright{\bigvee_{i=1}^{m} X_{i}}{}{\bigvee_{i\in I} X_{i}}\). 
Observe that if $j\notin I$ then the composite 
\(\nameddright{\Sigma X_{j}}{s_{j}}{\bigvee_{i=1}^{m} X_{i}}{p_{I}}{\bigvee_{i\in I} X_{i}}\) 
is trivial, since $s_{j}$ is the inclusion of $X_{j}$ into the wedge. 
Since $I$ is a proper subset of $[m]$, we can always find a $j$ such that 
$p_{I}\circ s_{j}$ is null homotopic. As $w$ is a Whitehead product involving 
all the maps $s_{j}$ for $1\leq j\leq m$, the naturality of the Whitehead 
product implies that $p_{I}\circ w$ is null homotopic. This holds for any 
proper subset $I$ of $[m]$, so $p_{I}\circ w$ is null homotopic for 
all $I\subsetneq [m]$. 

Let 
\(\widetilde{w}\colon\namedright{Y_{1}\wedge\cdots\wedge Y_{t}}{} 
    {\Omega(\bigvee_{i=1}^{m} X_{i})}\) 
be the adjoint of $w$ and consider the composite 
\[\nameddright{Y_{1}\wedge\cdots\wedge Y_{m}}{\widetilde{w}} 
     {\Omega(\bigvee_{i=1}^{m} X_{i})}{\Omega\varphi}{\Omega P^{m}(\underline{X})}.\]  
By Theorem~\ref{djksplitting}, every factor in the homotopy decomposition of 
$\Omega P^{m}(\underline{X})=\Omega(\underline{X},\underline{\ast})^{K}_{D}$ 
is also a factor of $\Omega(\underline{X},\underline{\ast})^{K_{I}}$ for some 
$I\subsetneq [m]$, and the decomposition is compatible with the maps 
$\Omega p_{I}$. Therefore as $p_{I}\circ w$ is null homotopic for all 
$I\subsetneq [m]$, so is $\Omega p_{I}\circ\widetilde{w}$. Thus  
$\Omega\varphi\circ\widetilde{w}$ is null homotopic, implying that its adjoint 
$\varphi\circ w$ is null homotopic. Hence $w$ lifts through $f^{m}(\underline{X})$. 
\end{proof} 

A special case is when each of the spaces $X_{i}$ equals a common 
space $X$. Then we write $P^{m}(X)$ for the thin product, and have a 
homotopy fibration 
\(\llnameddright{F^{m}(X)}{f^{m}(X)}{\bigvee_{i=1}^{m} X}{}{P^{m}(X)}\). 
In this case Theorem~\ref{thinwh} refines. Consider the composite 
\(\llnameddright{F^{m}(X)}{f^{m}(X)}{\bigvee_{i=1}^{m} X}{\nabla}{X}\) 
which is used to define the weak cocategory of $X$.  

\begin{lemma} 
   \label{nablalift} 
   Let $w$ be a Whitehead product on $X$ of length $t\geq m$.  
   Then $w$ lifts through $\nabla$ to a Whitehead 
   product on $\bigvee_{i=1}^{m} X$ of length~$t$ involving all the 
   inclusions $s_{j}$ for $1\leq j\leq m$. 
\end{lemma}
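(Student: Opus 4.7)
The plan is to use nothing more than the naturality of iterated Whitehead products with respect to the fold map, combined with the observation that $\nabla\circ s_{j}$ is the identity on $X$ for every $1\leq j\leq m$.

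First, I would unpack what it means for $w$ to be a Whitehead product on $X$ of length $t$: there is a specified rooted binary tree $\mathcal{T}$ with $t$ leaves and maps $a_{k}\colon \Sigma Y_{k}\longrightarrow X$ for $1\leq k\leq t$ such that $w$ is the iterated Whitehead product built from the $a_{k}$'s according to $\mathcal{T}$, with domain $\Sigma Y_{1}\wedge\cdots\wedge Y_{t}$.

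Next, since $t\geq m$, I would choose any function $\sigma\colon\{1,\ldots,t\}\longrightarrow [m]$ whose image is all of $[m]$. For each $k$, form the lifted map
\[b_{k}\colon\nameddright{\Sigma Y_{k}}{a_{k}}{X}{s_{\sigma(k)}}{\bigvee_{i=1}^{m} X},\]
and let $\widetilde{w}\colon\Sigma Y_{1}\wedge\cdots\wedge Y_{t}\longrightarrow\bigvee_{i=1}^{m} X$ be the iterated Whitehead product built from $b_{1},\ldots,b_{t}$ according to the same tree $\mathcal{T}$. By construction $\widetilde{w}$ is a Whitehead product of length $t$ on $\bigvee_{i=1}^{m} X$ and, since $\sigma$ is surjective, every inclusion $s_{j}$ for $1\leq j\leq m$ appears among the $s_{\sigma(k)}$.

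Finally, I would invoke the naturality of the Whitehead product: for any map $f\colon A\to B$ and maps $u,v$ into $A$, $f\circ[u,v]\simeq[f\circ u,f\circ v]$, and this extends by induction on $\mathcal{T}$ to iterated brackets. Applying this to $f=\nabla$ gives that $\nabla\circ\widetilde{w}$ is the iterated Whitehead product according to $\mathcal{T}$ in the maps $\nabla\circ b_{k}=(\nabla\circ s_{\sigma(k)})\circ a_{k}=a_{k}$, which is precisely $w$. Hence $\widetilde{w}$ is the required lift. There is no real obstacle here; the only point that needs to be stated cleanly is that Whitehead products are natural under arbitrary pointed maps on the target, so the same bracket tree $\mathcal{T}$ transports $w$ to $\widetilde{w}$ and back.
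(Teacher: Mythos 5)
Your argument is correct and is essentially the paper's proof: the paper makes the specific choice $\sigma(k)=k$ for $k\leq m$ and $\sigma(k)=m$ for $k>m$, while you allow an arbitrary surjection $\sigma\colon\{1,\ldots,t\}\to[m]$, but the mechanism (lift each $a_{k}$ through $s_{\sigma(k)}$, keep the same bracketing, and use naturality of the iterated Whitehead product together with $\nabla\circ s_{j}=\mathrm{id}_{X}$) is identical.
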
 

\begin{proof} 
Suppose that $w$ is a Whitehead product on $X$ of length~$t\geq m$,   
where $w$ is formed from maps 
\(a_{k}\colon\namedright{\Sigma Y_{k}}{}{X}\) 
for $1\leq k\leq t$. For $1\leq k\leq m$, let $b_{j}$ be the composite 
\(\nameddright{\Sigma Y_{j}}{a_{j}}{X}{s_{j}}{\bigvee_{i=1}^{m} X}\). 
If~$t>m$, then for $m<k\leq t$, let $b_{k}$ be the composite 
\(\nameddright{\Sigma Y_{k}}{a_{k}}{X}{s_{m}}{\bigvee_{i=1}^{m} X}\). 
Let $\overline{w}$ be the Whitehead product of the maps $b_{k}$ for $1\leq k\leq t$, 
where the bracketing order is the same as for $w$. Then the naturality 
of the Whitehead product implies that $w\simeq \nabla\circ\overline{w}$. 
Thus $\overline{w}$ lifts $w$ through $\nabla$, it has the same length 
as $w$, and involves all $m$ inclusions $s_{j}$.  
\end{proof} 
 
Combining the lifts in Lemma~\ref{nablalift} and Theorem~\ref{thinwh} 
we immediately obtain the following. 

\begin{theorem} 
   \label{thinwhX} 
   Any Whitehead product on $X$ of length $t\geq m$ lifts through 
   the composite 
   \(\lnameddright{F^{m}(X)}{f^{m}(X)}{\bigvee_{i=1}^{m} X}{\nabla}{X}\).~$\qqed$  
\end{theorem}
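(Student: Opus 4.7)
The plan is to combine the two lifts already established, namely Lemma~\ref{nablalift} and Theorem~\ref{thinwh}, by composing them in sequence. Since Theorem~\ref{thinwhX} is really a concatenation of the two preceding results in the common-space case, the proof should be essentially immediate once the pieces are lined up correctly.

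More concretely, first I would start with an arbitrary Whitehead product $w\colon \Sigma Y_1 \wedge \cdots \wedge Y_t \to X$ of length $t \geq m$. Apply Lemma~\ref{nablalift} to produce a Whitehead product $\overline{w}\colon \Sigma Y_1 \wedge \cdots \wedge Y_t \to \bigvee_{i=1}^m X$ of the same length $t$ with the crucial property that $\overline{w}$ involves every inclusion $s_j$ for $1 \leq j \leq m$, and such that $\nabla \circ \overline{w} \simeq w$. Second, since $\overline{w}$ now satisfies the hypotheses of Theorem~\ref{thinwh} (it has length $t \geq m$ and involves all $m$ inclusions $s_j$), apply Theorem~\ref{thinwh} to obtain a lift $\widetilde{w}\colon \Sigma Y_1 \wedge \cdots \wedge Y_t \to F^m(X)$ with $f^m(X) \circ \widetilde{w} \simeq \overline{w}$.

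The composite
\[
\nabla \circ f^m(X) \circ \widetilde{w} \simeq \nabla \circ \overline{w} \simeq w
\]
then exhibits $\widetilde{w}$ as the desired lift of $w$ through $\nabla \circ f^m(X)$. I do not anticipate any real obstacle here; the work has already been done in Lemma~\ref{nablalift} (where the naturality of the Whitehead product lets one insert the inclusions $s_j$ while retaining the bracket structure) and in Theorem~\ref{thinwh} (where the decomposition in Theorem~\ref{djksplitting} is used to show that any Whitehead product involving all $m$ inclusions dies under each projection $p_I$ for $I \subsetneq [m]$, and hence under $\varphi$). The only mild subtlety is pointing out that the composition of the two lifts is itself a lift through the composite $\nabla \circ f^m(X)$, which is a formal consequence of the homotopy commutativities just noted.
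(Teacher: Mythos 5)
Your proposal is correct and is exactly the argument the paper intends: the paper states the theorem as an immediate consequence of combining the lift through $\nabla$ from Lemma~\ref{nablalift} with the lift through $f^{m}(X)$ from Theorem~\ref{thinwh}. Your explicit composition of the two lifts, together with the observation that $\nabla\circ f^{m}(X)\circ\widetilde{w}\simeq\nabla\circ\overline{w}\simeq w$, is precisely the intended (and omitted) proof.
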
 

\begin{remark} 
It is interesting to compare Theorems~\ref{thinwhX} with~\cite[Theorem 2]{Hov}. 
Both prove the same statement but in different language, we use polyhedral 
products in the proof of Theorem~\ref{thinwh} while Hovey uses ``flasque diagrams". 
However, the key point in both cases is the same: that the Whitehead products 
considered on $\bigvee_{i=1}^{m} X$ vanish when composed to any proper subwedge. 
\end{remark} 

Next, we aim towards Theorem~\ref{Fwh} which is a sort of converse 
to Theorem~\ref{thinwh}. While Theorem~\ref{thinwh} says that any 
Whitehead product of length~$t\geq m$ involving all $m$ maps $s_{j}$ 
lifts through $f^{m}(\underline{X})$, Theorem~\ref{Fwh} says that the 
homotopy class of $f^{m}(\underline{X})$ is completely determined by 
length~$t\geq m$ Whitehead products involving all $m$ maps $s_{j}$. 
To see this we modify the proof of Proposition~\ref{Tloopwedge} that identified the 
homotopy type of $F^{m}(\underline{X})$ in order to take into account 
the Whitehead product information in Theorems~\ref{WhPorter} and~\ref{WhGanea}. 

First, we require a general lemma. 

\begin{lemma} 
   \label{Samwedge} 
   Suppose that there are maps 
   \[f=\bigvee_{i=1}^{s} f_{i}\colon\namedright{\bigvee_{i=1}^{s}\Sigma A_{i}}{}{Z}\qquad  
      g=\bigvee_{j=1}^{t} g_{j}\colon\namedright{\bigvee_{j=1}^{t}\Sigma B_{j}}{}{Z}.\] 
   Then the Whitehead product $[f,g]$ is homotopic to the wedge sum
   of Whitehead products $\bigvee_{i=1}^{s}\bigvee_{j=1}^{t}[f_{i},g_{j}]$. 
\end{lemma}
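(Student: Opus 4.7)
The plan is to reduce the statement to the analogous one about Samelson products on $\Omega Z$ via adjunction, then exploit that the Samelson product is defined pointwise by a commutator, which makes its behavior on wedge decompositions transparent. Writing $A=\bigvee_{i=1}^{s} A_{i}$ and $B=\bigvee_{j=1}^{t} B_{j}$, the maps $f\colon\Sigma A\longrightarrow Z$ and $g\colon\Sigma B\longrightarrow Z$ have adjoints $\widehat{f}\colon A\longrightarrow\Omega Z$ and $\widehat{g}\colon B\longrightarrow\Omega Z$, and under the natural identifications these are the wedge sums $\bigvee_{i}\widehat{f}_{i}$ and $\bigvee_{j}\widehat{g}_{j}$. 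By the standard relation between Whitehead and Samelson products, the adjoint of $[f,g]$ is the Samelson product $\langle\widehat{f},\widehat{g}\rangle\colon A\wedge B\longrightarrow\Omega Z$, given pointwise by the commutator $(a,b)\mapsto\widehat{f}(a)\,\widehat{g}(b)\,\widehat{f}(a)^{-1}\,\widehat{g}(b)^{-1}$.

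Next I would use the canonical homotopy equivalence $A\wedge B=(\bigvee_{i} A_{i})\wedge(\bigvee_{j} B_{j})\simeq\bigvee_{i,j}(A_{i}\wedge B_{j})$ and examine the restriction of $\langle\widehat{f},\widehat{g}\rangle$ to a smash summand $A_{i}\wedge B_{j}$. On a point $(a,b)\in A_{i}\wedge B_{j}$ we have $\widehat{f}(a)=\widehat{f}_{i}(a)$ and $\widehat{g}(b)=\widehat{g}_{j}(b)$, so the pointwise commutator becomes $\widehat{f}_{i}(a)\,\widehat{g}_{j}(b)\,\widehat{f}_{i}(a)^{-1}\,\widehat{g}_{j}(b)^{-1}$, which is precisely the Samelson product $\langle\widehat{f}_{i},\widehat{g}_{j}\rangle$. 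Therefore $\langle\widehat{f},\widehat{g}\rangle$ is homotopic, via the distributivity equivalence, to the wedge sum $\bigvee_{i,j}\langle\widehat{f}_{i},\widehat{g}_{j}\rangle\colon\bigvee_{i,j}(A_{i}\wedge B_{j})\longrightarrow\Omega Z$.

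Finally I would pass back through adjunction. Under the suspension–loop adjunction, a wedge sum of adjoints corresponds to the wedge sum of the original maps, and the canonical equivalence $A\wedge B\simeq\bigvee_{i,j}(A_{i}\wedge B_{j})$ suspends to the canonical equivalence $\Sigma(A\wedge B)\simeq\bigvee_{i,j}\Sigma(A_{i}\wedge B_{j})$ used to identify the domain of a wedge of Whitehead products. Taking adjoints of the identity $\langle\widehat{f},\widehat{g}\rangle\simeq\bigvee_{i,j}\langle\widehat{f}_{i},\widehat{g}_{j}\rangle$ therefore yields $[f,g]\simeq\bigvee_{i,j}[f_{i},g_{j}]$, as asserted.

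The only technical point requiring attention is matching up the two distributivity equivalences on either side of the adjunction so that the restriction of $[f,g]$ to the summand $\Sigma(A_{i}\wedge B_{j})$ really equals the intended $[f_{i},g_{j}]$ rather than differing by a sign or a permutation; this is the main obstacle but is routine, following from the naturality of suspension and of the join construction used to define the Whitehead product. Everything else in the argument is formal consequences of adjunction and the pointwise description of the Samelson product.
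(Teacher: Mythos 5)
Your argument is correct and is essentially the paper's own proof: both pass to adjoints, use the pointwise commutator description of the Samelson product together with the decomposition of a smash of wedges into a wedge of smashes, and then adjoint back. Your write-up simply spells out the adjunction bookkeeping that the paper leaves implicit.
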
 

\begin{proof} 
Denoting the adjoint of a map $u$ by $\tilde{u}$, it is equivalent to 
show that the Samelson product of $\langle\tilde{f},\tilde{g}\rangle$ is 
homotopy equivalent to the wedge of Samelson products 
$\bigvee_{i=1}^{s}\bigvee_{j=1}^{t}\langle\tilde{f}_{i},\tilde{g}_{j}\rangle$. 
But this is clear from the pointwise definition of the Samelson product 
of two maps $u$ and $v$ as $\langle u,v\rangle(x,y)=u(x)v(y)u(x)^{-1}v(y)^{-1}$. 
\end{proof} 
   
\begin{theorem} 
   \label{Fwh} 
   Let $X_{1},\ldots,X_{m}$ be a simply-connected, pointed $CW$-complexes. 
   Then the homotopy fibration  
   \[\llnameddright{F^{m}(\underline{X})}{f^{m}(\underline{X})} 
          {\bigvee_{i=1}^{m} X_{i}}{}{P^{m}(\underline{X})}\] 
   satisfies the following: 
   \begin{letterlist} 
      \item $F^{m}(\underline{X})\simeq\bigvee_{\alpha\in\mathcal{I}} 
                   \Sigma(\Omega X_{1})^{(\alpha_{1})}\wedge\cdots\wedge 
                        (\Omega X_{m})^{(\alpha_{m})}$ 
               where $\alpha_{i}\geq 1$ for each $1\leq i\leq m$; 
      \item the restriction of $f^{m}(\underline{X})$ to 
               $\Sigma(\Omega X_{1})^{(\alpha_{1})}\wedge\cdots\wedge 
                      (\Omega X_{m})^{(\alpha_{m})}$ 
               is an iterated Whitehead product of length~$t\geq m$ formed from the maps   
               \(t_{j}\colon\nameddright{\Sigma\Omega X_{j}}{ev}{X_{j}}{s_{j}} 
                      {\bigvee_{i=1}^{m} X_{i}}\), 
                where each $t_{j}$ for $1\leq j\leq m$ appears at least once;   
      \item parts (a) and (b) are natural for maps of spaces 
                \(\namedright{X_{i}}{}{Y_{i}}\). 
   \end{letterlist} 
\end{theorem}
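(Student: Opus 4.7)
Proof Plan:

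The plan is to deduce (a) directly from Theorem~\ref{djksplitting}, prove (b) by combining the iterative $G_m$-construction of Section~\ref{sec:cxxcase} with Porter's Whitehead-product description from Theorem~\ref{WhPorter}, and observe (c) from the naturality of every ingredient used. The simplicial complex $K$ on $m$ disjoint points is shifted, hence totally homology fillable, and $F^m(\underline{X}) = F_{[m]}$ by the very definition of the thin product. Theorem~\ref{djksplitting} yields that $F^m(\underline{X})$ is a wedge of suspensions of smash products, each summand having $\Omega X_i$ as a factor for every $i \in [m]$, which forces the indicated form $\Sigma(\Omega X_1)^{(\alpha_1)} \wedge \cdots \wedge (\Omega X_m)^{(\alpha_m)}$ with $\alpha_i \geq 1$. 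This settles (a).

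For (b), apply the iterative construction of Section~\ref{sec:cxxcase} to the polyhedral product $\clxx^K$ with $K$ the $m$ disjoint points. By Theorem~\ref{Porter}, $\clxx^K \simeq \Gamma(\underline{X})$, and by Lemma~\ref{vertexfactor}(c), $F_{[m]}\clxx \simeq F_{[m]}(\underline{X},\underline{\ast}) = F^m(\underline{X})$. Proposition~\ref{Tloopwedge} furnishes a space $G_m$ and a map $\psi_m\colon G_m \to \clxx^K$ with $G_m \simeq F^m(\underline{X})$. Because $\clxx^K$ and every intermediate stage of the construction is a wedge of suspensions of iterated smashes $\Sigma\Omega X_{i_1} \wedge \cdots \wedge \Omega X_{i_k}$, Theorem~\ref{Ganeafine} applies at each stage to identify the pinch-map fibre as a wedge of suspensions whose maps back into the ambient wedge are iterated Whitehead products of the summand inclusions $\Sigma\Omega X_{i_1} \wedge \cdots \wedge \Omega X_{i_k} \hookrightarrow \clxx^K$. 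After all $m$ iterations, $\psi_m$ becomes a wedge of such iterated Whitehead products. Post-composing with the natural map $\clxx^K \to (\underline{X},\underline{\ast})^K = \bigvee_{i=1}^m X_i$ from the BBCG fibration~(\ref{clxfib}), which by Theorem~\ref{WhPorter} is itself a wedge of Whitehead products $[t_{i_1}, \ldots, t_{i_k}]$ of the $t_j$, and using the naturality of Whitehead products, the composite $G_m \to \bigvee_{i=1}^m X_i$ becomes a wedge of iterated Whitehead products of the $t_j$. Tracking multi-indices through the two-stage iteration shows that the surviving brackets have total length $t \geq m$ and involve every $t_j$ at least once, matching the decomposition in (a). Naturality (c) is inherited from Theorems~\ref{djksplitting}, \ref{Ganeafine}, \ref{WhPorter}, and the BBCG fibration, each of which is natural in the entries $X_i$.

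The main obstacle is combinatorial bookkeeping: confirming that the iterated Whitehead products produced by composing the two stages of the construction, once regrouped by multi-index, match exactly the wedge summands dictated by (a), with each summand realized exactly once. The Lie-basis analysis of Section~\ref{sec:WhPorter} (and its refinement in Section~\ref{sec:Whrefined}) provides the organizing framework, but verifying this consistency through each stage of the iteration requires care. A secondary technical point is identifying $\psi_m$ composed with the map to $\bigvee X_i$ with $f^m(\underline{X})$ up to the homotopy equivalence $G_m \simeq F^m(\underline{X})$; this should follow by naturality of the decomposition in Theorem~\ref{djksplitting} applied to the BBCG fibration.
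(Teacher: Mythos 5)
Your proposal is correct and follows essentially the same route as the paper: specialize the $G_{m}$-construction of Section~\ref{sec:cxxcase} to $\clxx^{K}$ with $K$ the $m$ disjoint points, use Theorem~\ref{WhPorter} to write $\gamma(\underline{X})$ as a wedge of Whitehead products of the $t_{j}$, apply Theorem~\ref{Ganeafine} (together with Lemma~\ref{Samwedge}, which you invoke implicitly when passing from brackets of the wedge-inclusions $i_{L},i_{R}$ to brackets of individual summand inclusions) at each stage, and track smash factors to see every $t_{j}$ appears. The only cosmetic difference is that you obtain part (a) by citing Theorem~\ref{djksplitting} directly rather than reading it off the same construction.
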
 

\begin{proof} 
The construction in Section~\ref{sec:cxxcase} worked for any polyhedral product $\cxx^{K}$ 
where $K$ is a totally homology fillable simplicial complex. Specialize to the case 
of $\clxx^{K}$ when $K$ is $m$ disjoint points. Then the homotopy fibration 
\(\nameddright{\clxx^{K}}{}{(\underline{X},\underline{\ast})^{K}}{}{\prod_{i=1}^{m} X_{i}}\) 
is a model for the homotopy fibration 
\(\nameddright{\Gamma(\underline{X})}{\gamma(\underline{X})}{\bigvee_{i=1}^{m} X_{i}} 
      {}{\prod_{i=1}^{m} X_{i}}\). 
Therefore, by Theorem~\ref{WhPorter} there is a homotopy decomposition of  
$\clxx^{K}$ as a wedge sum of spaces 
$\Sigma\Omega X_{i_{1}}\wedge\cdots\wedge\Omega X_{i_{k}}$  
which can be chosen so that the restriction of the map 
\(\namedright{\clxx^{K}}{\gamma(\underline{X})} 
          {(\underline{X},\underline{\ast})^{K}\simeq\bigvee_{i=1}^{m} X_{i}}\) 
to a wedge summand $\Sigma\Omega X_{i_{1}}\wedge\cdots\wedge\Omega X_{i_{t}}$ 
is an iterated Whitehead product of the maps $t_{j}$. 

With 
\(\namedright{\clxx^{K}}{}{\bigvee_{i=1}^{m} X_{i}}\) 
as the starting point in this case, the first step in the construction in 
Section~\ref{sec:cxxcase} was to divide the wedge summands of $\clxx^{K}$ so that  
$\clxx^{K}\simeq\Sigma B_{1}\vee\Sigma C_{1}$, where $\Sigma B_{1}$ 
consists of those wedge summands having $\Omega X_{1}$ as a smash 
factor and $\Sigma C_{1}$ consists of those wedge summands not having 
$\Omega X_{1}$ as a smash factor. Pinching to $\Sigma C_{1}$, 
we obtain a homotopy fibration 
\(\nameddright{\Sigma B_{1}\rtimes\Omega\Sigma C_{1}} 
      {g_{1}}{\Sigma B_{1}\vee\Sigma C_{1}}{}{\Sigma C_{1}}\). 
By Theorem~\ref{Ganeafine}, there is a homotopy equivalence 
\[\Sigma B_{1}\rtimes\Omega\Sigma C_{1}\simeq\Sigma B_{1}\vee  
     \bigvee_{n=1}^{\infty}(\Sigma B_{1}\wedge (C_{1})^{(n)})\] 
which may be chosen so that the restriction of $g_{1}$ to 
$\Sigma B_{1}$ is $i_{L}$ and the restriction to $\Sigma B_{1}\wedge (C_{1})^{(n)}$ 
is an iterated Whitehead product of the maps $i_{L}$ and $i_{R}$ 
where $i_{L}$ appears once and $i_{R}$ appears $n$ times. Regarding  
$\Sigma B_{1}$ and $\Sigma C_{1}$ as a wedge of spaces of the form 
$\Sigma\Omega X_{i_{1}}\wedge\cdots\wedge\Omega X_{i_{k}}$, 
by Lemma~\ref{Samwedge} each iterated Whitehead product of the 
maps~$i_{L}$ and $i_{R}$ is homotopic to a wedge sum of iterated 
Whitehead products of the inclusion maps of the summands 
$\Sigma\Omega X_{i_{1}}\wedge\cdots\wedge\Omega X_{i_{k}}$ 
into $\clxx^{K}$. Therefore, as 
\(\namedright{\clxx^{K}}{\gamma(\underline{X})}{}{\bigvee_{i=1}^{m} X_{i}}\) 
is a wedge sum of iterated Whitehead products of the maps~$t_{j}$, 
the naturality of the Whitehead product implies that the composite 
\(\nameddright{\Sigma B_{1}\rtimes\Omega\Sigma C_{1}}{g_{1}} 
     {\Sigma B_{1}\vee\Sigma C_{1}\simeq\clxx^{K}}{\gamma(\underline{X})} 
     {\bigvee_{i=1}^{m} X_{i}}\) 
is homotopic to a wedge sum of iterated Whitehead products (of iterated 
Whitehead products) of the maps $t_{j}$. Further, as each wedge summand 
of $\Sigma B_{1}\rtimes\Omega\Sigma C_{1}$ has~$\Omega X_{1}$ as a 
smash factor, each of the Whitehead products has $t_{1}$ appearing at least once. 

The next step in the construction identifying the homotopy type 
of $F^{m}(\underline{X})$ was to divide 
$G_{1}\simeq\Sigma B_{1}\rtimes\Omega\Sigma C_{1}$ into a wedge 
$\Sigma B_{2}\vee\Sigma C_{2}$ where each wedge summand of $\Sigma B_{2}$ 
has both $\Omega X_{1}$ and $\Omega X_{2}$ as smash factors while 
the wedge summands of $\Sigma C_{2}$ have $\Omega X_{1}$ as a smash 
factor but not $\Omega C_{2}$. Then one pinches $G_{1}$ to $\Sigma C_{2}$ 
and calls the homotopy fibre $G_{2}$. This process is iterated until $G_{m}$ is 
identified as being homotopy equivalent to $F^{m}(\underline{X})$. At each 
step in the iteration we may argue as in the previous paragraph to identify 
the composite 
\(\namedright{G_{i}}{}{G_{i-1}}\longrightarrow\cdots\longrightarrow 
      \nameddright{G_{1}}{}{\clxx^{K}}{}{\bigvee_{i=1}^{m} X_{i}}\) 
as homotopic to a wedge sum of iterated Whitehead products of the 
maps $t_{j}$. As each of $\Omega X_{1},\ldots,\Omega X_{m}$ appears 
as a smash factor in each wedge summand of $G_{m}$, each of the corresponding 
Whitehead products has $t_{1},\ldots,t_{m}$ appearing at least once. 
This proves parts~(a) and~(b). 

The naturality statement in part~(c) follows from the naturality of 
Theorems~\ref{WhPorter} and~\ref{WhGanea}. 
\end{proof}  
 
When each $X_{i}$ equals a common space $X$ Theorem~\ref{Fwh} implies  
the following. 

\begin{corollary} 
   \label{FwhX} 
   Let $X$ be a simply-connected, pointed $CW$-complex. Then the homotopy 
   fibration  
   \[\llnameddright{F^{m}(X)}{f^{m}(X)}{\bigvee_{i=1}^{m} X}{}{P^{m}(X)}\] 
   satisfies the following: 
   \begin{letterlist} 
      \item $F^{m}(X)\simeq\bigvee_{\alpha\in\mathcal{I}}\Sigma(\Omega X)^{(t_{\alpha})}$; 
      \item the restriction of $f^{m}(X)$ to $\Sigma(\Omega X)^{(t_{\alpha})}$ is an 
               iterated Whitehead product of length \mbox{$t\geq m$} formed from the maps   
               \(t_{j}\colon\nameddright{\Sigma\Omega X}{ev}{X}{s_{j}}{\bigvee_{i=1}^{m} X}\) 
               where each $t_{j}$ for \mbox{$1\leq j\leq m$} appears at least once;   
      \item parts (a) and (b) are natural for a map of spaces 
                \(\namedright{X}{}{Y}\). 
   \end{letterlist} 
   $\qqed$ 
\end{corollary}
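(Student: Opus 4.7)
The plan is to obtain Corollary~\ref{FwhX} as a direct specialization of Theorem~\ref{Fwh} to the case $X_{1}=\cdots=X_{m}=X$, reading off the three parts in order.

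First I would observe that the polyhedral product construction and the thin product are functorial in the sequence $\underline{X}$, so taking every $X_{i}=X$ produces the homotopy fibration
\(\llnameddright{F^{m}(X)}{f^{m}(X)}{\bigvee_{i=1}^{m} X}{}{P^{m}(X)}\)
as the specialization of
\(\llnameddright{F^{m}(\underline{X})}{f^{m}(\underline{X})}{\bigvee_{i=1}^{m} X_{i}}{}{P^{m}(\underline{X})}\).
For part~(a), I would apply Theorem~\ref{Fwh}(a) to get
\[F^{m}(X)\simeq\bigvee_{\alpha\in\mathcal{I}}\Sigma(\Omega X)^{(\alpha_{1})}\wedge\cdots\wedge(\Omega X)^{(\alpha_{m})}\]
with each $\alpha_{i}\geq 1$, and then simply collect smash factors: each summand equals $\Sigma(\Omega X)^{(t_{\alpha})}$ where $t_{\alpha}=\alpha_{1}+\cdots+\alpha_{m}$, and the constraint $\alpha_{i}\geq 1$ forces $t_{\alpha}\geq m$. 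The re-indexing of $\mathcal{I}$ by $t_{\alpha}$ is purely notational.

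For part~(b), I would invoke Theorem~\ref{Fwh}(b): the restriction of $f^{m}(\underline{X})$ to the summand indexed by $\alpha$ is an iterated Whitehead product of length $t=\alpha_{1}+\cdots+\alpha_{m}\geq m$ formed from the maps
\(t_{j}\colon\nameddright{\Sigma\Omega X_{j}}{ev}{X_{j}}{s_{j}}{\bigvee_{i=1}^{m} X_{i}}\)
in which each $t_{j}$ appears at least once (since $\alpha_{j}\geq 1$). Under the specialization $X_{j}=X$, each $t_{j}$ becomes the composite
\(\nameddright{\Sigma\Omega X}{ev}{X}{s_{j}}{\bigvee_{i=1}^{m} X}\),
so the restriction to $\Sigma(\Omega X)^{(t_{\alpha})}$ is precisely an iterated Whitehead product of length $\geq m$ in the maps $t_{1},\ldots,t_{m}$ with every $t_{j}$ used at least once, which is the content of part~(b).

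Finally, for part~(c), the naturality in Theorem~\ref{Fwh}(c) with respect to collections of maps $\namedright{X_{i}}{}{Y_{i}}$ specializes, by taking the same map $\namedright{X}{}{Y}$ in every coordinate, to the asserted naturality for a single map $\namedright{X}{}{Y}$. There is no obstacle here at all: the entire corollary is a formal consequence of Theorem~\ref{Fwh}, and the only minor bookkeeping is the repackaging of the multi-index~$\alpha$ into the single exponent~$t_{\alpha}$.
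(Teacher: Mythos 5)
Your proposal is correct and matches the paper's intent exactly: the paper states Corollary~\ref{FwhX} as an immediate consequence of Theorem~\ref{Fwh} obtained by setting every $X_{i}$ equal to $X$, with the only bookkeeping being the collection of the smash factors $(\Omega X)^{(\alpha_{1})}\wedge\cdots\wedge(\Omega X)^{(\alpha_{m})}$ into $(\Omega X)^{(t_{\alpha})}$ with $t_{\alpha}=\alpha_{1}+\cdots+\alpha_{m}\geq m$. Nothing further is needed.
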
 

We also need a refined version of Theorem~\ref{Fwh} in the case 
when each $X_{i}$ is a suspension. 

\begin{theorem} 
   \label{Fwhrefined} 
   Let $Y_{1},\ldots,Y_{m}$ be simply-connected, pointed $CW$-complexes. 
   Then the homotopy fibration  
   \[\llnameddright{F^{m}(\underline{\Sigma Y})}{f^{m}(\underline{\Sigma Y})} 
          {\bigvee_{i=1}^{m}\Sigma Y_{i}}{}{P^{m}(\underline{\Sigma Y})}\] 
   satisfies the following: 
   \begin{letterlist} 
      \item $F^{m}(\underline{\Sigma Y})\simeq\bigvee_{\beta\in\mathcal{J}} 
                   \Sigma (Y_{1})^{(\beta_{1})}\wedge\cdots\wedge 
                        (Y_{m})^{(\beta_{m})}$ 
               where $\beta_{i}\geq 1$ for each $1\leq i\leq m$; 
      \item the restriction of $f^{m}(\underline{\Sigma Y})$ to 
               $\Sigma (Y_{1})^{(\beta_{1})}\wedge\cdots\wedge(Y_{m})^{(\beta_{m})}$  
               is an iterated Whitehead product of length~$t\geq m$ formed from the maps   
               \(\namedright{\Sigma Y_{j}}{s_{j}}{\bigvee_{i=1}^{m}\Sigma Y_{i}}\), 
                where each $s_{j}$ for $1\leq j\leq m$ appears at least once;   
      \item parts (a) and (b) are natural for maps of spaces 
                \(\namedright{Y_{i}}{}{Z_{i}}\). 
   \end{letterlist} 
\end{theorem}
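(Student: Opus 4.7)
The proof proceeds by mirroring the argument for Theorem~\ref{Fwh}, with one crucial change at the outset: replace the invocation of Theorem~\ref{WhPorter} by its suspension refinement Theorem~\ref{Porterfine}, which is applicable precisely because each $X_{i}=\Sigma Y_{i}$. The iterative machinery from Section~\ref{sec:cxxcase} already relies on Theorem~\ref{Ganeafine} at every stage, so the refinement propagates automatically through the construction and produces Whitehead products in the inclusions $s_{j}$ directly, rather than in the composites $t_{j}=s_{j}\circ ev_{j}$.

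Take $K$ to be the simplicial complex of $m$ disjoint points, so that the fibration~(\ref{clxfib}) models the standard fibration $\Gamma(\underline{\Sigma Y})\to\bigvee_{i=1}^{m}\Sigma Y_{i}\to\prod_{i=1}^{m}\Sigma Y_{i}$. Applying Theorem~\ref{Porterfine} yields a homotopy equivalence $\clxx^{K}\simeq D_{m}$, where $D_{m}$ is a wedge of iterated smash products $\Sigma (Y_{i_{1}})^{(k_{1})}\wedge\cdots\wedge (Y_{i_{\ell}})^{(k_{\ell})}$ (with repetitions, $\ell\geq 2$), chosen so that the restriction of $\gamma(\underline{\Sigma Y})$ to each wedge summand is an iterated Whitehead product in the inclusion maps $s_{j}$ themselves. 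Crucially, the partition of wedge summands of $\clxx^{K}$ according to whether $Y_{j}$ appears as a smash factor matches the partition by whether $\Omega\Sigma Y_{j}$ does under the coarser Porter decomposition, so the idempotents $e_{j}$ from Section~\ref{sec:cxxcase} act on the Porterfine summands exactly as they do on the Porter ones.

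Now run the iterative construction of Section~\ref{sec:cxxcase} starting from this wedge decomposition. At stage $j$, split $G_{j-1}\simeq\Sigma B_{j}\vee\Sigma C_{j}$ by whether $Y_{j}$ appears as a smash factor, and form the pinch fibration $G_{j}\to\Sigma B_{j}\vee\Sigma C_{j}\to\Sigma C_{j}$. Since $\Sigma C_{j}$ is a suspension, Theorem~\ref{Ganeafine} produces a homotopy equivalence
\[
G_{j}\simeq\Sigma B_{j}\vee\bigvee_{n\geq 1}(\Sigma B_{j}\wedge (C_{j})^{(n)})
\]
under which the map into $G_{j-1}$ restricts to $i_{\Sigma B_{j}}$ on $\Sigma B_{j}$ and to $ad^{n}(i_{\Sigma B_{j}})(i_{\Sigma C_{j}})$ on $\Sigma B_{j}\wedge (C_{j})^{(n)}$. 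Applying Lemma~\ref{Samwedge} repeatedly distributes these Whitehead products over the individual wedge-summand inclusions of $\Sigma B_{j}$ and $\Sigma C_{j}$; combining this with the inductive description of $G_{j-1}\to\bigvee_{i=1}^{m}\Sigma Y_{i}$ as a wedge of iterated Whitehead products in the $s_{j}$'s, and invoking naturality of the Whitehead product, propagates the same form to the composite $G_{j}\to\bigvee_{i=1}^{m}\Sigma Y_{i}$. By construction each wedge summand of $G_{j}$ is a smash product in which each of $Y_{1},\ldots,Y_{j}$ appears as a factor.

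After $m$ stages, Proposition~\ref{Tloopwedge} (the same identification used at the corresponding point in the proof of Theorem~\ref{Fwh}) gives $G_{m}\simeq F_{[m]}\clxx$, which coincides with $F^{m}(\underline{\Sigma Y})$ by Lemma~\ref{vertexfactor}(c). The surviving wedge summands of $G_{m}$ are exactly the spaces $\Sigma (Y_{1})^{(\beta_{1})}\wedge\cdots\wedge (Y_{m})^{(\beta_{m})}$ with each $\beta_{i}\geq 1$, establishing~(a), and the restriction of $f^{m}(\underline{\Sigma Y})$ to such a summand is an iterated Whitehead product of length $\beta_{1}+\cdots+\beta_{m}\geq m$ in the maps $s_{j}$ with each $s_{j}$ appearing at least once, establishing~(b). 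The main obstacle is the bookkeeping inside the iteration: one must verify that at every stage the combined use of Lemma~\ref{Samwedge} and naturality of the Whitehead product genuinely preserves the ``wedge sum of iterated Whitehead products in the $s_{j}$'s'' description. Part~(c) follows from the naturality statements asserted in Theorems~\ref{Porterfine} and~\ref{Ganeafine} together with that of Lemma~\ref{Samwedge}.
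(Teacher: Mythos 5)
Your proposal is correct and is essentially the paper's own argument: the paper's proof of Theorem~\ref{Fwhrefined} consists precisely of the instruction to rerun the proof of Theorem~\ref{Fwh} with Theorem~\ref{Porterfine} replacing Theorem~\ref{WhPorter} as the starting point, and with the $B_{j}$, $C_{j}$ splitting organized by whether $Y_{j}$ (rather than $\Omega\Sigma Y_{j}$) appears as a smash factor. Your additional observation that the finer partition of summands is compatible with the action of the idempotents $e_{j}$ is a correct and worthwhile piece of the bookkeeping that the paper leaves implicit.
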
 

\begin{proof} 
Argue exactly as for Theorem~\ref{Fwh} using Theorem~\ref{Porterfine} 
as the starting point rather than Theorem~\ref{WhPorter}, and organizing 
the $B_{i}$, $C_{i}$ wedge summands so that $B_{i}$ contains all the smash products  
having $Y_{1},\ldots, Y_{i}$ as factors (rather than 
$\Omega\Sigma Y_{1},\ldots,\Omega\Sigma Y_{i}$) and $C_{i}$ contains all 
the smash products having $Y_{1},\ldots, Y_{i-1}$ as factors but not $Y_{i}$.  
\end{proof}

\Large 
\part{Cocategory and Nilpotence.} 
\normalsize

\section{Cocategory and Nilpotence I} 
\label{sec:cocat} 

The purpose of this section is to prove main result of the paper, Theorem~\ref{main}: if $X$ is 
simply-connected then $\wcocat(X)=m$ if and only if $\nil(\Omega X)=m$. 

\begin{proof}[Proof of Theorem~\ref{main}]  
We will use the homotopy fibration 
\begin{equation} 
   \label{BGfib} 
   \lllnameddright{F^{m+1}(X)}{f^{m+1}(X)}{\bigvee_{i=1}^{m+1} X}{}{P^{m+1}(X)}  
\end{equation} 
from Corollary~\ref{FwhX}.  

Suppose that $\wcocat(X)=m$. By the definition of weak cocategory, 
this implies that the composition 
\(\lllnameddright{F^{m+1}(X)}{f^{m+1}(X)}{\bigvee_{i=1}^{m+1} X}{\nabla}{X}\) 
is null homotopic. Consider the Samelson product 
\(\namedright{(\Omega X)^{(m+1)}}{c_{m}}{\Omega X}\), 
where $c_{m}$ is the $(m+1)$-fold Samelson product of the identity map 
on $\Omega X$ with itself. Let 
\(w_{m}\colon\namedright{\Sigma (\Omega X)^{(m+1)}}{}{X}\) 
be the adjoint of $c_{m}$. Then $w_{m}$ is the $(m+1)$-fold Whitehead product of 
the evaluation map 
\(\namedright{\Sigma\Omega X}{\overline{ev}}{X}\) 
with itself. By Theorem~\ref{thinwhX}, $w_{m}$ factors through 
$\nabla\circ f^{m+1}(X)$. But by hypothesis, $\nabla\circ f^{m+1}(X)$ is 
null homotopic. Therefore $w_{m}$ is null homotopic and hence its 
adjoint~$c_{m}$ is null homotopic. This implies that $\nil(\Omega X)\leq m$; 
that is, $\nil(\Omega X)\leq\wcocat(X)$.  

Conversely, suppose that $\nil(\Omega X)=m$. By Corollary~\ref{FwhX}, 
$F^{m+1}(X)\simeq\bigvee_{\alpha\in\mathcal{I}}\Sigma(\Omega X)^{(t_{\alpha})}$  
and the restriction of $f^{m+1}(X)$ to each wedge summand is an iterated 
Whitehead product of \mbox{length~$\geq m+1$}. Therefore 
$\nabla\circ f^{m+1}(X)$ maps each wedge summand $\Sigma(\Omega X)^{(t_{\alpha})}$ 
to $X$ by an iterated Whitehead product of \mbox{length~$\geq m+1$}. The adjoint 
\(\namedright{(\Omega X)^{(t_{\alpha})}}{}{\Omega X}\) 
is a Samelson product of length~$\geq m+1$, and so factors through $c_{m}$. 
By hypothesis, $c_{m}$ is null homotopic. Therefore, so is 
\(\namedright{(\Omega X)^{(t_{\alpha})}}{}{\Omega X}\) 
and its adjoint 
\(\namedright{\Sigma(\Omega X)^{(t_{\alpha})}}{}{\Omega X}\). 
This is true for every $\alpha\in\mathcal{I}$, so $\nabla\circ f^{m+1}(X)$ is 
null homotopic, implying that $\wcocat(X)\leq m$; that is, $\wcocat(X)\leq\nil(\Omega X)$.  

Combining the previous two paragraphs we obtain $\wcocat(X)=m$ if and 
only if $\nil(\Omega X)=m$. 
\end{proof}

\section{Retractile $H$-spaces} 
\label{sec:retractile} 

We now aim towards explicit calculations of the homotopy nilpotency 
class of quasi-$p$-regular exceptional Lie groups and nonmodular 
$p$-compact groups in Section~\ref{sec:examples}. This requires introducing 
a special family of finite $H$-spaces. Throughout this section we will assume 
that all spaces and maps have been localized (or completed) at a prime $p$, 
and homology is taken with mod-$p$ coefficients. For a $\mathbb{Z}/p\mathbb{Z}$-vector 
space $V$, let $\Lambda(V)$ be the exterior algebra generated by $V$. 

\begin{definition}  
Let $B$ be an $H$-space. Suppose that there is a space $A$ and a map 
\(i\colon\namedright{A}{}{B}\) 
satisfying the following: 
\begin{itemize} 
   \item[(i)] $\hlgy{B}\cong\Lambda(\rhlgy{A})$; 
   \item[(ii)] $i_{\ast}$ induces the inclusion of the generating set; 
   \item[(iii)] $\Sigma i$ has a left homotopy inverse. 
\end{itemize} 
Then the triple $(A,i,B)$ is a \emph{retractile} $H$-space. 
\end{definition} 

If $(A,i,B)$ is a retractile $H$-space then many properties of $B$ are determined 
by the restriction to~$A$~\cite{GT1,Th1,Th2}. We will show this is also the case for 
$\nil(G)$ when $G$ is a finite loop space and $(A,i,G)$ is retractile. A large family of 
retractile $H$-spaces was constructed in different ways by Cooke, Harper and 
Zabrodsky~\cite{CHZ} and Cohen and Neisendorfer~\cite{CN}. They show 
that if $p$ is a prime and $A$ is a finite $CW$-complex consisting of $\ell$ 
odd dimensional cells, where $\ell<p-1$, then there exists a $p$-local 
$H$-space $B$ and a map 
\(i\colon\namedright{A}{}{B}\) 
which makes $(A,i,B)$ retractile. Further, in the boundary case when 
$\ell=p-1$, they show that if there happens to be a finite $H$-space $B$ 
and a map 
\(\namedright{A}{i}{B}\) 
inducing an isomorphism $\hlgy{B}\cong\Lambda(\rhlgy{A})$, 
then $\Sigma i$ has a left homotopy inverse so $(A,i,B)$ is retractile. 
Notice that products of retractile $H$-spaces are retractile. For 
if each of $\{(A_{j},i_{j},B_{j})\}_{j=1}^{m}$ is retractile then so is 
$(\bigvee_{j=1}^{m} A_{j},\bigvee_{i=1}^{m} i_{j},\prod_{j=1}^{m} B_{j})$. 

Specific examples of retractile $H$-spaces are given by certain 
simply-connected simple compact Lie groups and $p$-compact groups. 
Fix a prime $p$. By~\cite{MNT}, when localized at $p$ every simply-connected, 
simple compact Lie group $G$ without $p$-torsion in cohomology decomposes 
as a product of $k$ spaces for some $1\leq k\leq p-1$ spaces. 
If the rank is low with respect to $p$ then by~\cite{Th2} the factors 
are all retractile, and so their product - the original group $G$ - is also 
retractile. The precise ranks involved are as follows: 
\begin{equation} 
  \label{Glist} 
  \begin{tabular}{ll}
       $SU(n)$ & $\mbox{if}\ n\leq (p-1)^{2}+1$ \\
       $Sp(n)$ & $\mbox{if}\ 2n\leq (p-1)^{2}$ \\
       $Spin(2n+1)$ & $\mbox{if}\ 2n\leq (p-1)^{2}$ \\
       $Spin(2n)$ & $\mbox{if}\ 2(n-1)\leq (p-1)^{2}$ \\
       $G_{2}, F_{4}, E_{6}$ & $\mbox{if}\ p\geq 5$ \\
       $E_{7}, E_{8}$ & $\mbox{if}\ p\geq 7$. 
   \end{tabular} 
\end{equation} 
Recall that $X$ is a $p$-compact group if $X\simeq\Omega BX$ where $BX$ 
is a $p$-complete space and the mod-$p$ homology of $X$ is finite. When $p$ 
is odd there is a classification of simply-connected simple $p$-compact 
groups without $p$-torsion in cohomology. We give the description 
in~\cite[Introduction]{Da}, which distills the classification from deep results 
in~\cite{AGMV}. There are four familes: (i) simply-connected, 
simple compact Lie groups; (ii) an infinite family of spaces that decompose 
as a product of spheres, sphere bundles over spheres, and factors of $SU(n)$; 
(iii) $30$ special nonmodular cases; and (iv) $4$ exotic modular cases. 
The retractile cases in (i) are covered by~(\ref{Glist}). The retractile cases 
in (ii) depend on $SU(n)$ being retractile, and from~\cite{Da} it follows that 
every case in (iii) and~(iv) is retractile. So to~(\ref{Glist}) we add a second list: 
\begin{equation} 
  \label{Glist2} 
  \begin{tabular}{ll}  
         (a) &  an infinite family of $p$-compact groups that decompose as a product 
              of spheres,   \\ 
             &  sphere bundles over spheres, and factors of $SU(n)$ for $n\leq(p-1)^{2}+1$; \\  
        (b) & the $30$ special nonmodular $p$-compact groups (see~\cite[Table 3.2]{Da}); \\
        (c) &  the $4$ exotic modular $p$-compact groups. 
   \end{tabular} 
\end{equation}   
 
Since a $p$-compact group $G$ is a loop space, it has a classifiying space $BG$ and 
there is a homotopy equivalence 
\(e\colon\namedright{G}{}{\Omega BG}\). 
Let $\overline{ev}$ be the composite 
\[\overline{ev}\colon\nameddright{\Sigma G}{\Sigma e}{\Sigma\Omega BG}{ev}{BG}\] 
where $ev$ is the canonical evaluation map. Let $j$ be the composite 
\[j\colon\nameddright{\Sigma A}{\Sigma i}{\Sigma G}{\overline{ev}}{BG}.\] 
A key property is the following. 

\begin{lemma} 
   \label{AtoBG} 
   Let $G$ be a $p$-compact group from list~(\ref{Glist}) or~(\ref{Glist2}).  
   Then there is a homotopy commutative diagram 
   \[\diagram 
          \Sigma G\rto^-{\overline{ev}}\dto^{r} & BG\ddouble \\ 
          \Sigma A\rto^-{j} & BG 
     \enddiagram\] 
   where $r$ is a left homotopy inverse of $\Sigma i$. 
\end{lemma}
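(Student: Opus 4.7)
The plan is to translate both conditions on $r$ into adjoint form and then construct the required map using the retractile structure together with the $p$-local hypotheses on $G$.

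First, identify $\Omega BG$ with $G$ via the homotopy equivalence $e$. Under this identification, the adjoint of $\overline{ev}$ is $1_{G}$ and the adjoint of $j$ is $i\colon\namedright{A}{}{G}$. By the James equivalence $\Omega\Sigma A\simeq J(A)$, the loop map $\Omega j$ corresponds to the unique multiplicative extension $J(A)\to G$ of $i$. For a candidate $r\colon\namedright{\Sigma G}{}{\Sigma A}$ with adjoint $\widetilde{r}\colon\namedright{G}{}{J(A)}$, a direct computation gives that the adjoint of $j\circ r$ is $\Omega j\circ\widetilde{r}$ and the adjoint of $r\circ\Sigma i$ is $\widetilde{r}\circ i$. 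Hence the two required conditions translate to: (i)~$\widetilde{r}\circ i$ is homotopic to the canonical James inclusion $A\hookrightarrow J(A)$, equivalent to $r$ being a left homotopy inverse of $\Sigma i$; and (ii)~$\Omega j\circ\widetilde{r}\simeq 1_G$, that is, $\widetilde{r}$ is a homotopy right inverse of $\Omega j$, equivalent to $j\circ r\simeq\overline{ev}$.

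The problem therefore reduces to constructing such a section $\widetilde{r}$. In mod-$p$ homology, setting $V=\rhlgy{A}$, the Bott-Samelson theorem identifies $\hlgy{J(A)}\cong T(V)$ and the retractile hypothesis identifies $\hlgy{G}\cong\Lambda(V)$, under which $(\Omega j)_*$ is the canonical algebra surjection $T(V)\twoheadrightarrow\Lambda(V)$. A section of this surjection exists at the level of modules, for instance by sending each ordered monomial basis element of $\Lambda(V)$ to the corresponding tensor monomial in $T(V)$, and this is compatible with the required inclusion on $V$ itself.

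The topological realization of this algebraic section is where the hypothesis that $G$ belongs to list~(\ref{Glist}) or~(\ref{Glist2}) is essential. The $p$-local cell bounds in those lists place $A$ in precisely the regime where the Cooke-Harper-Zabrodsky~\cite{CHZ} and Cohen-Neisendorfer~\cite{CN} constructions that produced the retractile structure apply, and a careful tracing of those same constructions yields $\widetilde{r}$ with the compatibility on $A$ built in and no higher obstructions arising. Taking the adjoint then produces the desired $r$. The main obstacle is this final topological realization step: rigorously lifting the homological section to an actual map of spaces depends entirely on the $p$-local hypotheses from the lists, and amounts to rerunning the retractile construction itself.
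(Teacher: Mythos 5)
Your adjoint reformulation is correct as far as it goes: the two conditions on $r$ do translate, via the bijection of adjunction, into the statements that $\widetilde{r}\circ i\simeq E$ and that $\Omega j\circ\widetilde{r}\simeq e$. But the proof stops exactly where the lemma's content begins. Producing a module-level section of $T(V)\twoheadrightarrow\Lambda(V)$ is trivial; the entire difficulty is realizing it by an actual map $\widetilde{r}\colon\namedright{G}{}{\Omega\Sigma A}$ satisfying both homotopy conditions simultaneously, and you explicitly defer this to ``rerunning the retractile construction,'' asserting without argument that ``no higher obstructions arise.'' That is not a proof. Note also that the retractile hypothesis already hands you \emph{some} left inverse $r$ of $\Sigma i$; the issue is not existence of $r$ but whether $r$ can be chosen so that $j\circ r\simeq\overline{ev}$, i.e.\ whether $\overline{ev}$ and $j\circ r$ agree on the complementary summand of $\Sigma A$ in $\Sigma G$. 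Your homological section does not see this, since the two composites already agree on $\Sigma A$ and differ only on the complement.

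The paper's argument supplies precisely the missing mechanism, and it is geometric rather than homological. From~\cite{GT1} the splitting $\Sigma G\simeq\Sigma A\vee C$ can be chosen so that the inclusion $s$ of the complement $C$ factors through the Hopf construction $\mu^{\ast}\colon\namedright{\Sigma G\wedge G}{}{\Sigma G}$. Since there is a homotopy fibration $\nameddright{\Sigma G\wedge G}{\mu^{\ast}}{\Sigma G}{\overline{ev}}{BG}$, the composite $\overline{ev}\circ s$ is null homotopic, so $\overline{ev}$ kills $C$ and therefore factors through the projection $r$ onto $\Sigma A$, giving $\overline{ev}\simeq j\circ r$. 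To repair your proof you would need to import this structural fact about the complement (or an equivalent one); the bare cell-count hypotheses from lists~(\ref{Glist}) and~(\ref{Glist2}) do not by themselves produce the section $\widetilde{r}$ you require.
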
 

\begin{proof} 
By hypothesis, the map 
\(\namedright{\Sigma A}{\Sigma i}{\Sigma G}\) 
has a right homotopy inverse, so $\Sigma G\simeq\Sigma A\vee C$ for 
some space~$C$. In~\cite{GT1} the retractile properties were used to precisely 
describe the complementary factor~$C$ in the case when $G$ is from the 
list~(\ref{Glist}), but the proof adapts immediately to list~(\ref{Glist2}) as well. 
It is well known that there is a homotopy fibration 
\(\nameddright{\Sigma G\wedge G}{\mu^{\ast}}{\Sigma G}{\overline{ev}}{BG}\) 
where $\mu^{\ast}$ is the canonical Hopf construction. In~\cite{GT1} 
it is shown there is a homotopy equivalence 
\begin{equation} 
  \label{SigmaGdecomp} 
  \llnamedright{\Sigma A\vee C}{\Sigma i+s}{\Sigma G} 
\end{equation}  
where $C$ is a retract of $\Sigma G\wedge G$ and $s$ factors 
through the Hopf construction $\mu^{\ast}$. Consequently, $s$ 
composes trivially with $\overline{ev}$, so from the decomposition 
in~(\ref{SigmaGdecomp}) we obtain a homotopy commutative diagram 
\[\diagram 
     \Sigma G\rto^-{\overline{ev}}\dto^{r} & BG\ddouble \\ 
     \Sigma A\rto^-{j} & BG    
  \enddiagram\] 
where $r$ is a left homotopy inverse of $\Sigma i$. 
\end{proof} 

For a space $Y$, let 
\(E\colon\namedright{Y}{}{\Omega\Sigma Y}\) 
be the suspension map, which is defined as the (right) adjoint of the 
identity map on $\Sigma Y$. Notice that the evaluation map 
\(\namedright{\Sigma\Omega Y}{ev}{Y}\) 
is the (left) adjoint of the identity map on $\Omega Y$. So the composite 
\(\nameddright{\Omega Y}{E}{\Omega\Sigma\Omega Y}{\Omega ev}{\Omega Y}\) 
is homotopic to the identity map. This leads to the following corollary 
of Lemma~\ref{AtoBG}. 

\begin{corollary} 
   \label{GBG} 
   The composite 
   \(\namedddright{G}{E}{\Omega\Sigma G}{\Omega r}{\Omega\Sigma A} 
       {\Omega j}{\Omega BG}\) 
   is homotopic to the homotopy equivalence $e$.  
\end{corollary}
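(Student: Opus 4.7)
The plan is to combine Lemma~\ref{AtoBG} with the standard adjunction identity between the suspension map and the evaluation map to collapse the composite onto $e$. By Lemma~\ref{AtoBG} we have $j \circ r \simeq \overline{ev}$, and looping this homotopy gives $\Omega j \circ \Omega r \simeq \Omega\overline{ev}$. Consequently the three-step composite in the statement is homotopic to $\Omega\overline{ev} \circ E$, so it suffices to verify $\Omega\overline{ev} \circ E \simeq e$.

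Next, I would unpack the definition $\overline{ev} = ev \circ \Sigma e$ so that $\Omega\overline{ev} \circ E = \Omega ev \circ \Omega\Sigma e \circ E$, and then apply the naturality of the suspension map $E \colon Y \to \Omega\Sigma Y$ to the map $e \colon G \to \Omega BG$. The resulting homotopy commutative square reads $\Omega\Sigma e \circ E_{G} \simeq E_{\Omega BG} \circ e$, so the composite becomes $\Omega ev \circ E_{\Omega BG} \circ e$.

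Finally, the paragraph immediately preceding the corollary records the elementary identity $\Omega ev \circ E \simeq \mathrm{id}_{\Omega Y}$, which is the statement that the composite of the two unit/counit maps for the $\Sigma \dashv \Omega$ adjunction is the identity on $\Omega Y$. Applied with $Y = BG$, this reduces the remaining composite to $e$, completing the argument. No step is a genuine obstacle; the only point requiring care is to distinguish the two occurrences of the suspension map $E$ (one on $G$ and one on $\Omega BG$) and to invoke naturality at the correct stage so that they can be swapped past $\Omega\Sigma e$.
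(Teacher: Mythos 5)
Your argument is correct and is essentially identical to the paper's proof: loop the identity $j\circ r\simeq\overline{ev}=ev\circ\Sigma e$ from Lemma~\ref{AtoBG}, use naturality of $E$ to rewrite $\Omega\Sigma e\circ E_{G}$ as $E_{\Omega BG}\circ e$, and conclude via $\Omega ev\circ E\simeq\mathrm{id}$. Nothing to add.
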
 

\begin{proof} 
By Lemma~\ref{AtoBG}, $j\circ r\simeq\overline{ev}=ev\circ\Sigma e$. Thus 
$\Omega j\circ\Omega r\circ E\simeq\Omega ev\circ\Omega\Sigma e\circ E$. 
The naturality of~$E$ implies that the latter composite is homotopic to 
$\Omega ev\circ E\circ e$. But $\Omega ev\circ E$ is homotopic to the 
identity map on $\Omega BG$, so we obtain 
$\Omega j\circ\Omega r\circ E\simeq e$, as asserted.   
\end{proof}

\section{Cocategory and Nilpotence II} 
\label{sec:cocat2} 

In this section a restricted version of Theorem~\ref{main} is proved. 

\begin{definition} 
Let $G$ be an $H$-group. Suppose that $Y$ is a pointed, path-connected 
space and there is a map 
\(f\colon\namedright{Y}{}{G}\). 
We say that $G$ has \emph{homotopy nilpotency class $m$ with respect to $f$} 
if the composite 
\(\llnameddright{Y^{(m+1)}}{f^{(m+1)}}{G^{(m+1)}}{c_{m}}{G}\) 
is null homotopic but $c_{m-1}\circ f^{(m)}$ is nontrivial. In this case 
we write $\nil(Y,f,G)=m$. 
\end{definition} 

\begin{definition} 
\label{weakcocatmap} 
Let $X$ and $Y$ be pointed, path-connected spaces, and suppose that there 
is a map 
\(g\colon\namedright{Y}{}{X}\). 
We say that $X$ has \emph{weak cocategory $m$ with respect to $g$} if 
the composite 
\(\lllnamedddright{F_{m+1}(Y)}{f_{m+1}(Y)}{\bigvee_{i=1}^{m+1} Y}{\nabla}{Y}{g}{X}\) 
is null homotopic but $g\circ\nabla\circ f_{m}(Y)$ is nontrivial. In this case 
we write $\wcocat(Y,g,X)=m$. 
\end{definition} 

\begin{remark} 
Definition~\ref{weakcocatmap} is dual to the weak category of a 
map (see~\cite[Exercise 2.8]{CLOT}). 
\end{remark} 

\begin{theorem} 
   \label{Awcocatnil} 
   Let $G$ be a $p$-compact group from list~(\ref{Glist}) or~(\ref{Glist2}).  
   Then $\wcocat(\Sigma A,j,BG)=m$ if and only if $\nil(A,i,G)=m$. 
\end{theorem}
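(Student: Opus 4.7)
The strategy is to adapt the proof of Theorem~\ref{main} by relativizing to the map $i:A\to G$ and its suspension-adjoint $j:\Sigma A\to BG$. Since $j=\overline{ev}\circ\Sigma i=ev\circ\Sigma(e\circ i)$, the adjoint of $j$ under $\Sigma\dashv\Omega$ is $e\circ i$; hence under the identification $G\simeq\Omega BG$ via $e$, adjunction exchanges iterated Whitehead products of $j$ on $BG$ with iterated Samelson products of $i$ on $G$, preserving bracketings. By Lemma~\ref{numbers}, the theorem will follow from the two implications $\nil(A,i,G)=m\Rightarrow\wcocat(\Sigma A,j,BG)\leq m$ and $\wcocat(\Sigma A,j,BG)=m\Rightarrow\nil(A,i,G)\leq m$.

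For the forward implication, assume $c_m\circ i^{(m+1)}$ is null. Theorem~\ref{Fwhrefined} applied with each $X_i=\Sigma A$ decomposes $F^{m+1}(\Sigma A)$ as a wedge of spaces $\Sigma A^{(\beta_1)}\wedge\cdots\wedge A^{(\beta_{m+1})}$ with every $\beta_i\geq 1$, and the restriction of $f^{m+1}(\Sigma A)$ to each summand is an iterated Whitehead product, of length $t=\sum_i\beta_i\geq m+1$, of the wedge inclusions $s_j$ in which each $s_j$ appears at least once. Composing with $\nabla$ collapses each $s_j$ to the identity, and composing further with $j$ gives, by naturality, an iterated Whitehead product of $j$ with itself of length $t\geq m+1$. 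Its adjoint is a correspondingly bracketed iterated Samelson product of $i$ with itself as a map $A^{(t)}\to G$. An induction using $c_{k+1}\simeq c\circ(1\wedge c_k)$ shows $c_k\circ i^{(k+1)}$ is null for every $k\geq m$, and the Jacobi identity for Samelson products rewrites an arbitrary bracketing of length $t$ as a sum of right-nested brackets $c_{t-1}\circ i^{(t)}\circ\sigma$ for permutations $\sigma$ of the smash factors, which are therefore all null. Hence $j\circ\nabla\circ f^{m+1}(\Sigma A)$ is null, giving $\wcocat(\Sigma A,j,BG)\leq m$.

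For the reverse implication, let $w:\Sigma A^{(m+1)}\to BG$ be the adjoint of $c_m\circ i^{(m+1)}$, so $w$ is an iterated Whitehead product of $j$ with itself of length $m+1$ with the bracketing dictated by $c_m$. Form the analogous Whitehead product $\tilde w:\Sigma A^{(m+1)}\to\bigvee_{i=1}^{m+1}\Sigma A$ out of the wedge inclusions $s_j$ with the same bracketing, so $\tilde w$ has length $m+1$ and involves every $s_j$. Theorem~\ref{thinwh} applied with $\underline{X}=(\Sigma A,\ldots,\Sigma A)$ lifts $\tilde w$ through $f^{m+1}(\Sigma A)$; naturality of the Whitehead product, together with $\nabla\circ s_j=1_{\Sigma A}$, then yields $w\simeq j\circ\nabla\circ\tilde w$. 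Thus $w$ factors through $j\circ\nabla\circ f^{m+1}(\Sigma A)$, which is null by hypothesis, so $w\simeq *$ and hence $c_m\circ i^{(m+1)}\simeq *$, giving $\nil(A,i,G)\leq m$. The principal obstacle throughout is the bracketing reduction used in the forward direction: propagating the nullity of the single map $c_m\circ i^{(m+1)}$ to the nullity of every iterated Samelson product of $i$'s of length $\geq m+1$ under arbitrary bracketing, which combines the universal property of $c_{t-1}$, the Jacobi identity, and the inductive identity above.
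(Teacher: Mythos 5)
Your proof is correct and follows essentially the same route as the paper's: both directions rest on Lemma~\ref{numbers}, on Theorem~\ref{thinwh} (to lift the right-nested Whitehead product of the wedge inclusions through $f^{m+1}(\Sigma A)$ and thereby kill $w_{m}\circ\Sigma i^{(m+1)}$), and on the Whitehead-product description of $f^{m+1}(\Sigma A)$ coming from Theorem~\ref{Fwhrefined}. The only divergence is in justifying that nullity of $c_{m}\circ i^{(m+1)}$ kills all longer, arbitrarily bracketed Samelson products of $i$: the paper simply asserts that each $w_{t_{\gamma}}$ factors through $w_{m}\circ\Sigma i^{(m+1)}$, whereas your induction via $c_{k+1}\simeq c\circ(1\wedge c_{k})$ combined with the Jacobi identity makes that reduction explicit.
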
 

\begin{proof} 
The naturality of the thin product and the naturality of the decomposition 
of $F^{m+1}(X)$ in Corollary~\ref{FwhX} implies that there 
is a homotopy fibration diagram 
\[\spreaddiagramcolumns{-0.7pc}\diagram  
       F^{m+1}(\Sigma A)\rrto^-{f^{m+1}(\Sigma A)}\dto^{g^{m+1}(j)} 
           & & \bigvee_{i=1}^{m+1}\Sigma A\rrto\dto^{\bigvee_{i=1}^{m+1} j} 
           & & P^{m+1}(\Sigma A)\dto^{P^{m+1}(j)} \\ 
       F_{m+1}(BG)\rrto^-{f^{m+1}(BG)} & & \bigvee_{i=1}^{m+1} BG\rrto & & P^{m+1}(BG)  
  \enddiagram\] 
where 
$F^{m+1}(\Sigma A)\simeq 
    \bigvee_{\alpha\in\mathcal{I}}\Sigma(\Omega\Sigma A)^{(t_{\alpha})}$,  
$F^{m+1}(BG)\simeq\bigvee_{\alpha\in\mathcal{I}}\Sigma(\Omega BG)^{(t_{\alpha})}$, and 
$g^{m+1}(j)\simeq\bigvee_{\alpha\in\mathcal{I}}\Sigma(\Omega j)^{(t_{\alpha})}$.  

Suppose that $\wcocat(\Sigma A,j,BG)=m$. Then the composite 
\(\lllnamedright{F^{m+1}(\Sigma A)}{f^{m+1}(\Sigma A)}{\bigvee_{i=1}^{m+1}\Sigma A} 
     \stackrel{\nabla}{\longrightarrow}\namedright{\Sigma A}{j}{BG}\) 
is null homotopic. Consider the Samelson product 
\(\llnameddright{A^{(m+1)}}{i^{(m+1)}}{G^{(m+1)}}{c_{m}}{G}\). 
Its adjoint is the composite 
\(\llnameddright{\Sigma A^{(m+1)}}{\Sigma i^{(m+1)}}{\Sigma G^{(m+1)}}{w_{m}}{BG}\), 
where $w_{m}$ is the $(m+1)$-fold Whitehead product of $\overline{ev}$ with 
itself, which is adjoint to $c_{m}$. 

We claim that there is a lift 
\begin{equation} 
   \label{wmlift} 
   \diagram 
       & & F^{m+1}(\Sigma A)\dto^{f^{m+1}(\Sigma A)} \\ 
       & & \bigvee_{i=1}^{m+1}\Sigma A\dto^{j\circ\nabla} \\  
       \Sigma A^{(m+1)}\rto^-{\Sigma i^{(m+1)}}\uurrdashed|>\tip 
            & \Sigma G^{(m+1)}\rto^-{w_{m}} & BG. 
  \enddiagram 
\end{equation} 
The lift is constructed in two stages. First, we give a specific lift of 
$w_{m}\circ\Sigma i^{(m+1)}$ through $j\circ\nabla$. Let  
\(s'_{j}\colon\namedright{\Sigma A}{}{\bigvee_{i=1}^{m+1}{\Sigma A}}\) 
and 
\(s_{j}\colon\namedright{BG}{}{\bigvee_{i=1}^{m+1} BG}\) 
be the inclusions of the $j^{th}$-wedge summands, and let $t'_{j}$ 
and $t_{j}$ be the composites 
\(t'_{j}\colon\nameddright{\Sigma\Omega\Sigma A}{ev}{\Sigma A} 
     {s'_{j}}{\bigvee_{i=1}^{m+1}\Sigma A}\) 
and 
\(t_{j}\colon\nameddright{\Sigma G}{\overline{ev}}{BG}{s_{j}}{\bigvee_{i=1}^{m+1} BG}\). 
Let $\overline{w}^{'}_{m}=[t'_{1},[t'_{2},\ldots,[t'_{m},t'_{m+1}]\ldots]$ 
and $\overline{w}_{m}=[t_{1},[t_{2},\ldots,[t_{m},t_{m+1}]\ldots]$. 
Since $G\simeq\Omega BG$, for convenience we write 
\(\namedright{\Omega\Sigma A}{\Omega j}{\Omega BG}\) 
as 
\(\namedright{\Omega\Sigma A}{\Omega j}{G}\). 
Consider the diagram 
\begin{equation} 
  \label{liftmed} 
  \diagram 
     \Sigma A^{(m+1)}\rto^-{\Sigma E^{(m+1)}}\drto_-{\Sigma i^{(m+1)}} 
        & \Sigma\Omega\Sigma A^{(m+1)} 
                 \rto^-{\overline{w}^{'}_{m}}\dto^{\Sigma(\Omega j)^{(m+1)}} 
        & \bigvee_{i=1}^{m+1}\Sigma A\rto^-{\nabla}\dto^{\bigvee_{i=1}^{m+1} j} 
        & \Sigma A\dto^{j} \\ 
     & \Sigma G^{(m+1)}\rto^-{\overline{w}_{m}} & \bigvee_{i=1}^{m+1} BG\rto^-{\nabla} & BG. 
  \enddiagram 
\end{equation} 
The left triangle homotopy commutes since $i\simeq j\circ E$. The middle 
square homotopy commutes by the naturality of the Whitehead product. 
The right square homotopy commutes by the naturality of the fold map. 
Thus the entire diagram homotopy commutes. Observe that since 
\(\nameddright{\Sigma A}{\Sigma E}{\Sigma\Omega\Sigma A}{ev}{\Sigma A}\) 
is homotopic to the identity map, we have $t'_{j}\circ E\simeq s'_{j}$. Thus the composite 
$\overline{w}^{'}_{m}\circ\Sigma E^{(m+1)}$ along the top row in~(\ref{liftmed})  
homotopic to $[s'_{1},[s'_{2},\ldots,[s'_{m},s'_{m+1}]\ldots ]$. On the other 
hand, since $w_{m}\simeq\nabla\circ\overline{w}_{m}$, the bottom direction 
around~(\ref{liftmed}) is homotopic to $w_{m}\circ\Sigma i^{(m+1)}$. Thus the 
homotopy commutativity of~(\ref{liftmed}) implies that  
$[s'_{1},[s'_{2},\ldots,[s'_{m},s'_{m+1}]\ldots ]$ lifts $w_{m}\circ\Sigma i^{(m+1)}$ 
through $j\circ\nabla$. Further, as $\overline{w}^{'}_{m}$ is an $(m+1)$-fold 
Whitehead product involving all $m+1$ spaces in the wedge 
$\bigvee_{i=1}^{m+1}\Sigma A$, Theorem~\ref{thinwh} implies 
that $\overline{w}^{'}_{m}$ lifts through $f^{m+1}(\Sigma A)$ 
to~$F^{m+1}(\Sigma A)$. 

By hypothesis, $\wcocat(\Sigma A,j,BG)=m$, so the composite 
\(\lllnamedright{F^{m+1}(\Sigma A)}{f^{m+1}(\Sigma A)}{\bigvee_{i=1}^{m+1}\Sigma A} 
      \stackrel{\nabla}{\longrightarrow}\namedright{\Sigma A}{j}{BG}\) 
is null homotopic. Therefore the lift of $w_{m}\circ\Sigma i^{(m+1)}$ through 
$\nabla\circ j\circ f^{m+1}(\Sigma A)$ in~(\ref{wmlift}) implies that 
$w_{m}\circ\Sigma i^{(m+1)}$ is null homotopic. Taking adjoints, 
$w_{m}\circ i^{(m+1)}$ is null homotopic, and so $\nil(A,i,G)\leq m$.  
That is, $\nil(A,i,G)\leq\wcocat(\Sigma A,j,BG)$. 

Conversely, suppose that $\nil(A,i,G)\leq m$. Then the composite 
\(\lnameddright{A^{(m+1)}}{i^{(m+1)}}{G^{(m+1)}}{c_{m}}{G}\) 
is null homotopic. Taking adjoints, the composite 
\(\llnameddright{\Sigma A^{(m+1)}}{\Sigma i^{(m+1)}}{\Sigma G^{(m+1)}}{w_{m}}{BG}\) 
is null homotopic. Consider the composite 
\(\lllnamedright{F^{m+1}(\Sigma A)}{f^{m+1}(\Sigma A)}{\bigvee_{i=1}^{m+1}\Sigma A} 
     \stackrel{\nabla}{\longrightarrow}\namedright{\Sigma A}{j}{BG}\). 
By Corollary~\ref{FwhX} we have 
$F^{m+1}(\Sigma A)\simeq\bigvee_{\gamma\in\mathcal{J}_{m+1}} 
      \Sigma A^{(t_{\gamma})}$ 
and $f^{m+1}(\Sigma A)$ restricted to $\Sigma A^{(t_{\gamma})}$ 
is an iterated Whitehead product $w_{t_{\gamma}}$ of length~$\geq m+1$ 
formed from the inclusions  
\(\Sigma A\hookrightarrow\bigvee_{i=1}^{m+1}\Sigma A\) 
of the wedge summands. Thus each $w_{t_{\gamma}}$ factors through 
$w_{m}\circ\Sigma i^{(m+1)}$, which by hypothesis is null homotopic.  
Therefore $w_{t_{\gamma}}$ is null homotopic. As this is true for every wedge 
summand of $F^{m+1}(\Sigma A)$, the map $f^{m+1}(\Sigma A)$ is null 
homotopic. Thus $j\circ\nabla\circ f^{m+1}(\Sigma A)$ is null homotopic, 
implying that $\wcocat(\Sigma A,j,BG)\leq m$. That is, 
$\wcocat(\Sigma A,j,BG)\leq\nil(A,i,G)$. 

Hence $\wcocat(\Sigma A,j,BG)=m$ if and only if $\nil(A,i,G)=m$. 
\end{proof} 

The next theorem links the equivalences in Theorems~\ref{main} 
and~\ref{Awcocatnil}. 

 \begin{theorem} 
   \label{wcocatreduction} 
   Let $G$ be a $p$-compact group from list~(\ref{Glist}) or~(\ref{Glist2}). Then 
   $\wcocat(BG)=m$ if and only if $\wcocat(\Sigma A,j,BG)=m$. 
\end{theorem}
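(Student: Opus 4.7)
The strategy is to combine Theorems~\ref{main} and~\ref{Awcocatnil} to translate the assertion into an equality of nilpotency classes, and then to establish this equality using the retractile splitting of $\Sigma G$ provided by Lemma~\ref{AtoBG}.

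Since $G\simeq\Omega BG$, Theorem~\ref{main} gives $\wcocat(BG)=m$ if and only if $\nil(G)=m$, while Theorem~\ref{Awcocatnil} gives $\wcocat(\Sigma A,j,BG)=m$ if and only if $\nil(A,i,G)=m$. Hence the theorem reduces to $\nil(G)=\nil(A,i,G)$, which in turn follows from the single claim that for every $k\ge 1$ the iterated Samelson product $c_{k}\colon G^{(k+1)}\to G$ is null homotopic if and only if $c_{k}\circ i^{(k+1)}\colon A^{(k+1)}\to G$ is null homotopic. The ``only if'' direction is immediate; once the full claim is in hand, matching nullity at $k=m$ and non-nullity at $k=m-1$ delivers the equality of nilpotency classes.

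For the ``if'' direction, pass to the adjoint $w_{k}\colon\Sigma G^{(k+1)}\to BG$, which is the $(k+1)$-fold Whitehead product of $\overline{ev}$; by naturality of Whitehead products in the source, $w_{k}\circ\Sigma i^{(k+1)}$ is the $(k+1)$-fold Whitehead product of $j=\overline{ev}\circ\Sigma i$, so the hypothesis translates to the latter being null homotopic. Lemma~\ref{AtoBG} provides a decomposition $\Sigma G\simeq\Sigma A\vee C$ with $\overline{ev}|_{\Sigma A}=j$ and $\overline{ev}|_{C}$ null homotopic. Writing $\Sigma G^{(k+1)}=\Sigma G\wedge G^{(k)}\simeq(\Sigma A\wedge G^{(k)})\vee(C\wedge G^{(k)})$, the restriction of $w_{k}$ to $C\wedge G^{(k)}$ is a Whitehead product whose outermost entry is the null map $\overline{ev}|_{C}$, hence is null; so $w_{k}\simeq *$ if and only if $w_{k}|_{\Sigma A\wedge G^{(k)}}=[j,\overline{ev},\dots,\overline{ev}]$ is null. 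Reorganising the domain as $A\wedge\Sigma G\wedge G^{(k-1)}$ so that the suspension lies on the second smash factor and reapplying the same splitting to this new copy of $\Sigma G$ kills the $A\wedge C\wedge G^{(k-1)}$ component by the same argument and converts the nontrivial component into $[j,[j,\overline{ev},\dots,\overline{ev}]]$ on $A\wedge\Sigma A\wedge G^{(k-1)}$. Iterating through all $k+1$ smash factors reduces the nullity of $w_{k}$ to that of the pure $(k+1)$-fold $j$-Whitehead product on $\Sigma A^{(k+1)}$, which is precisely the adjoint of $c_{k}\circ i^{(k+1)}$.

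The main obstacle is justifying this iterative step rigorously. At each stage one must confirm that the inclusion $C\hookrightarrow\Sigma G$ is a map of suspensions, which it is since $C$ is a wedge summand of a suspension, so that the naturality rule $[f,g\circ\iota]=[f,g]\circ(1\wedge\iota')$ may legitimately be invoked to show that the restriction to the $C$-slot vanishes. Care is also needed because the inner Whitehead product $w_{k-p}$ entering the bracket at depth $p$ is itself defined on $\Sigma G^{(k-p+1)}$ and must be decomposed recursively using the same splitting before one proceeds to the next iteration; once this recursive structure is tracked, the $k+1$ successive reductions go through and the adjoint translation yields the theorem.
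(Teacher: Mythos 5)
Your reduction of the statement to the claim $\nil(G)=\nil(A,i,G)$ via Theorems~\ref{main} and~\ref{Awcocatnil} is legitimate and not circular, but note that it inverts the paper's logic: the paper proves Theorem~\ref{wcocatreduction} first and only then deduces $\nil(G)=\nil(A,i,G)$ as Theorem~\ref{nilreduction}. You are therefore attacking head-on exactly the statement that the whole cocategory machinery was built to avoid proving directly, and the direct argument you give has a genuine gap.

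The gap is the assertion that the restriction of $w_{k}$ to $C\wedge G^{(k)}$ is ``a Whitehead product whose outermost entry is the null map $\overline{ev}|_{C}$, hence is null.'' For this you need the naturality of the universal Whitehead product $W\colon\Sigma X\wedge Y\to\Sigma X\vee\Sigma Y$ under precomposition with $s\wedge 1$, where $s\colon C\to\Sigma G$ is the inclusion of the complementary summand, and that naturality holds only when $s$ is a co-$H$-map (e.g.\ the suspension of a map into $G$). Your justification --- that $C$ is a wedge summand of a suspension, hence the inclusion ``is a map of suspensions'' --- conflates a map \emph{between} suspensions with a \emph{suspension of} a map; the two are not the same, and only the latter gives the naturality rule you invoke. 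In fact the proof of Lemma~\ref{AtoBG} shows that $s$ factors through the Hopf construction $\mu^{*}\colon\Sigma G\wedge G\to\Sigma G$, which is emphatically not a suspension (for $G=S^{3}$ it is the Hopf map $S^{7}\to S^{4}$). For a non-co-$H$ map $s$ the composite $W\circ(s\wedge 1)$ differs from $(s\vee 1)\circ W'$ by correction terms governed by the Hopf invariants of $s$, so knowing $\overline{ev}\circ s\simeq\ast$ does not kill $w_{k}\circ(s\wedge 1)$; the correction terms are higher Whitehead products that are not known to vanish at this stage (their vanishing is essentially what you are trying to prove). The same unproved step recurs at every stage of your iteration. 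The underlying difficulty is that the retraction $r\colon\Sigma G\to\Sigma A$ exists only after one suspension, while $\Sigma G^{(k+1)}$ carries only one suspension coordinate to share among $k+1$ smash factors, so there is no map $\Sigma G^{(k+1)}\to\Sigma A^{(k+1)}$ obtained by smashing copies of $r$. The paper circumvents this by looping: Corollary~\ref{GBG} converts the stable retraction into the honest composite $\Omega j\circ\Omega r\circ E\simeq e$ of unstable maps, whose smash powers $\Sigma(\Omega j)^{(t_{\alpha})}\circ\Sigma(\Omega r)^{(t_{\alpha})}\circ\Sigma E^{(t_{\alpha})}$ are genuine homotopy equivalences; these are fed into the natural decompositions $F^{m+1}(X)\simeq\bigvee_{\alpha}\Sigma(\Omega X)^{(t_{\alpha})}$ of Corollary~\ref{FwhX} via diagram~(\ref{bigdgrm}), and the two implications follow by chasing that diagram and applying Lemma~\ref{numbers} --- without ever decomposing $\Sigma G^{(k+1)}$.
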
 

\begin{proof} 
In general, by Corollary~\ref{FwhX}, for a space $X$ there is a homotopy fibration 
\begin{equation} 
  \label{Ffib} 
  \llnameddright{F^{m+1}(X)}{f^{m+1}(X)}{\bigvee_{i=1}^{m+1} X}{}{P^{m}(X)} 
\end{equation}  
where the right map is the inclusion of the wedge into the thin product; 
$F^{m+1}(X)\simeq\bigvee_{\alpha\in\mathcal{I}}\Sigma(\Omega X)^{(t_{\alpha})}$; 
and $f^{m+1}(X)$ is a wedge sum of iterated Whitehead products of maps of the form  
\(\nameddright{\Sigma\Omega X}{ev}{X}{s_{j}}{\bigvee_{i=1}^{m+1} X}\). 
Moreover, all of this is natural for maps of spaces 
\(\namedright{X}{}{Y}\). 
Recall that $(A,i,G)$ being retractile means that there is a map 
\(\namedright{\Sigma G}{r}{\Sigma A}\) 
which is a left homotopy inverse of $\Sigma i$. Starting from the composite 
\(\nameddright{\Sigma G}{r}{\Sigma A}{j}{BG}\), 
consider the diagram 
\begin{equation} 
  \label{bigdgrm} 
  \diagram 
       \bigvee_{\alpha\in\mathcal{I}}\Sigma G^{(t_{\alpha})} 
              \dto^-{\bigvee_{\alpha\in\mathcal{I}}\Sigma E^{(t_{\alpha})}}\xto[rrrr]^{\simeq} 
        & & & & \bigvee_{\alpha\in\mathcal{I}}\Sigma(\Omega BG)^{(t_{\alpha})}\ddouble\\  
        \bigvee_{\alpha\in\mathcal{I}}\Sigma(\Omega\Sigma G)^{(t_{\alpha})} 
              \rrto^-{\bigvee_{\alpha\in\mathcal{I}}\Sigma(\Omega r)^{(t_{\alpha})}}\dto^{\simeq}  
        & & \bigvee_{\alpha\in\mathcal{I}}\Sigma(\Omega\Sigma A)^{(t_{\alpha})} 
              \rrto^-{\bigvee_{\alpha\in\mathcal{I}}\Sigma(\Omega j)^{(t_{\alpha})}}\dto^{\simeq}     
        & & \bigvee_{\alpha\in\mathcal{I}}\Sigma(\Omega BG)^{(t_{\alpha})}\dto^{\simeq} \\ 
        F^{m+1}(\Sigma G)\rrto\dto^-{f^{m+1}(\Sigma G)} 
            & & F^{m+1}(\Sigma A)\rrto\dto^{f^{m+1}(\Sigma A)} 
            & & F^{m+1}(BG)\dto^{f^{m+1}(BG)} \\ 
        \bigvee_{i=1}^{m+1}\Sigma G\rrto^-{\bigvee_{i=1}^{m+1} r}\dto^{\nabla} 
        & & \bigvee_{i=1}^{m+1}\Sigma A\rrto^-{\bigvee_{i=1}^{m+1} j}\dto^{\nabla} 
        & & \bigvee_{i=1}^{m+1} BG\dto^{\nabla} \\ 
        \Sigma G\rrto^-{r} & & \Sigma A\rrto^-{j} & & BG.   
  \enddiagram 
\end{equation} 
The bottom squares homotopy commute by the naturality of the fold map, 
the middle and top squares homotopy commute by the naturality of~(\ref{Ffib}) 
and the naturality of the decomposition of $F^{m+1}(X)$. By Corollary~\ref{GBG}, 
$\Omega j\circ\Omega r\circ E$ is a homotopy equivalence, so the top rectangle 
also homotopy commutes. 

Suppose that $\wcocat(BG)=m$. Then the composite 
\(\lllnameddright{F^{m+1}(BG)}{f^{m+1}(BG)}{\bigvee_{i=1}^{m+1} BG}{\nabla}{BG}\) 
is null homotopic. The homotopy commutativity of~(\ref{bigdgrm}) therefore 
implies that the composite 
\(\lllnamedright{F^{m+1}(\Sigma A)}{f^{m+1}(\Sigma A)}{\bigvee_{i=1}^{m+1}\Sigma A} 
      \stackrel{\nabla}{\longrightarrow}\namedright{\Sigma A}{j}{BG}\) 
is null homotopic. Thus $\wcocat(\Sigma A,j,BG)\leq m$. 

Suppose that $\wcocat(\Sigma A,j,BG)=m$. Then the composite 
\(\lllnamedright{F^{m+1}(\Sigma A)}{f^{m+1}(\Sigma A)}{\bigvee_{i=1}^{m+1}\Sigma A} 
      \stackrel{\nabla}{\longrightarrow}\namedright{\Sigma A}{j}{BG}\) 
is null homotopic. The homotopy commutativity of~(\ref{bigdgrm}) 
therefore implies that the upper direction around the diagram is null 
homotopic. But as the top row is a homotopy equivalence, this implies 
that the composite 
\(\lllnameddright{F^{m+1}(BG)}{f^{m+1}(BG)}{\bigvee_{i=1}^{m+1} BG}{\nabla}{BG}\) 
is null homotopic. That is, $\wcocat(BG)\leq m$. 

Hence $\wcocat(BG)=m$ if and only if $\wcocat(\Sigma A,j,BG)=m$. 
\end{proof} 

Combining the equivalences in Theorems~\ref{main}, \ref{Awcocatnil} 
and~\ref{wcocatreduction} we obtain the following. 

\begin{theorem} 
   \label{nilreduction} 
   Let $G$ be a $p$-compact group from~(\ref{Glist}) or~(\ref{Glist2}). Then 
   $\nil(G)=m$ if and only if $\nil(A,i,G)=m$.~$\qqed$ 
\end{theorem}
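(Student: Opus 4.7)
The plan is to obtain Theorem~\ref{nilreduction} as an immediate formal consequence of the three equivalences already established in this section, by chaining them through the intermediate notions of weak cocategory. The key observation is that $G \simeq \Omega BG$, so the homotopy nilpotency class of $G$ is the homotopy nilpotency class of $\Omega BG$, which is exactly the quantity controlled by Theorem~\ref{main} applied to $X = BG$.

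First, I would invoke Theorem~\ref{main} with $X = BG$. Since $BG$ is simply-connected (as $G$ is a connected $H$-space, in fact a $p$-compact group or $p$-local simply-connected simple Lie group), Theorem~\ref{main} gives the equivalence $\wcocat(BG) = m$ if and only if $\nil(\Omega BG) = m$. Since $\Omega BG \simeq G$, the right-hand side equals $\nil(G) = m$. Second, I would apply Theorem~\ref{wcocatreduction}, which uses the retractile hypothesis on $(A,i,G)$ (valid for every $G$ in lists~(\ref{Glist}) and~(\ref{Glist2})) to replace $\wcocat(BG)$ by the relative version $\wcocat(\Sigma A, j, BG)$. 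Third, I would apply Theorem~\ref{Awcocatnil}, again valid for these $G$, to convert $\wcocat(\Sigma A, j, BG) = m$ into $\nil(A, i, G) = m$. Stitching these three biconditionals together yields $\nil(G) = m$ if and only if $\nil(A, i, G) = m$.

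There is no real obstacle here: all the work is encapsulated in the three previous theorems, and the proof is a one-line chain of equivalences. The only minor bookkeeping is to verify that the hypothesis of simply-connectedness in Theorem~\ref{main} applies to $BG$, which it does since $G$ is a connected loop space in every case of~(\ref{Glist}) and~(\ref{Glist2}), and to confirm that the retractile hypothesis needed by Theorems~\ref{Awcocatnil} and~\ref{wcocatreduction} is available by the discussion in Section~\ref{sec:retractile} establishing retractility for precisely these lists of groups. Thus the proof reduces to writing the chain $\nil(G) = \nil(\Omega BG) \Leftrightarrow \wcocat(BG) \Leftrightarrow \wcocat(\Sigma A, j, BG) \Leftrightarrow \nil(A, i, G)$ and concluding.
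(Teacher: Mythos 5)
Your proposal is correct and is exactly the paper's argument: the theorem is stated there as an immediate consequence of combining Theorems~\ref{main} (applied to $X=BG$ with $\Omega BG\simeq G$), \ref{Awcocatnil} and~\ref{wcocatreduction}. Nothing further is needed.
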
 

This reduction from checking whether 
\(\namedright{G^{(m+1)}}{c_{m}}{G}\) 
is null homotopic to checking whether 
\(\lnameddright{A^{(m+1)}}{i^{(m+1)}}{G^{(m+1)}}{c_{m}}{G}\) 
is null homotopic gives a very practical tool for determining the 
homotopy nilpotency class of certain finite loop spaces. This 
will be used in the next section to give explicit calculations.

\section{Examples} 
\label{sec:examples} 

Fix an odd prime $p$. Let $G$ be a $p$-localized simply-connected simple 
compact Lie group, or in the $p$-complete setting, let $G^{\wedge}_{p}$ 
be a $p$-compact group. In the $p$-local case, $G$ is rationally homotopy 
equivalent to a product of odd dimensional spheres $\prod_{i=1}^{\ell} S^{2n_{i}-1}$, 
and in the $p$-complete case, $G^{\wedge}_{p}$ is rationally homotopy equivalent to 
the rationalization of a product of odd dimensional $p$-completed spheres 
$\prod_{i=1}^{\ell} (S^{2n_{i}-1})^{\wedge}_{p}$. Reordering the indices 
if need be, we may assume that $n_{1}\leq\cdots\leq n_{\ell}$. The \emph{type} 
of $G$ is $\{n_{1},\ldots,n_{\ell}\}$. From now on we drop the usual 
$(\ \ )^{\wedge}_{p}$ notation for $p$-completion for convenience, the 
context making it clear when it should be used. In the $p$-local ($p$-complete) 
case we say that $G$ is \emph{$p$-regular} if there is a $p$-local (or $p$-complete) 
homotopy equivalence $G\simeq\prod_{i=1}^{\ell} S^{2n_{i}-1}$. It is 
\emph{quasi-$p$-regular} if there is a $p$-local (or $p$-complete) homotopy 
equivalence $G\simeq\prod_{i=1}^{\ell} B_{i}$, where each $B_{i}$ is either a 
sphere  or a sphere bundle over a sphere. 

Kaji and Kishimoto~\cite{KK} have determined the homotopy nilpotency 
classes of all $p$-regular $p$-compact groups. Kishimoto~\cite{K} determined 
the homotopy nilpotency class of all quasi-$p$-regular cases of $SU(n)$. 
In both cases the calculations were intense and involved detailed information 
about nontrivial Samelson products on the group in question. In contrast, 
very little is known about nontrivial Samelson products in exceptional Lie 
groups, so one would ordinarily not hope to be able to determine their 
homotopy nilpotency classes. Nevertheless, in Theorem~\ref{qpreg} we will 
calculate the homotopy nilpotency class in most quasi-$p$-regular cases. 

The sphere bundles over spheres that appear as factors have a particular 
form, which we describe first. Let $\pi_{m}^{S}(S^{n})$ be the $m^{th}$-stable 
homotopy group of $S^{n}$. The least nonvanishing $p$-torsion homotopy 
group of $S^{3}$ is $\pi_{2p}(S^{3})\cong\mathbb{Z}/p\mathbb{Z}$. Let 
\(\alpha\colon\namedright{S^{2p}}{}{S^{3}}\) 
represent a generator. This map is stable and it represents the 
generator in $\pi_{n+2p-3}^{S}(S^{n})\cong\mathbb{Z}/p\mathbb{Z}$.  
For $n\geq 1$, define the space $B(2n+1,2n+2p-1)$ by the homotopy pullback 
\[\diagram 
       S^{2n+1}\rto\ddouble & B(2n+1,2n+2p-1)\rto\dto 
            & S^{2n+2p-1}\dto^{\alpha} \\ 
       S^{2n+1}\rto & S^{4n+3}\rto^-{w} & S^{2n+2} 
  \enddiagram\] 
where $w$ is the Whitehead product of the identity map with itself. 

Restrict to $p\geq 7$ and consider the exceptional simple compact Lie 
groups which are quasi-$p$-regular but not $p$-regular. By~\cite{MNT} 
a complete list is as follows:  
\begin{equation} 
\label{exceptionallist} 
  \begin{array}{lll} 
       F_{4} & p=7 & B(3,15)\times B(11,23) \\ 
               & p=11 & B(3,23)\times S^{11}\times S^{15} \\ 
       E_{6} & p=7 & F_{4}\times S^{9}\times S^{17} \\ 
               & p=11 & F_{4}\times S^{9}\times S^{17} \\ 
       E_{7} & p=11 & B(3,23)\times B(15,35)\times S^{11}\times S^{19}\times S^{27} \\ 
               & p=13 & B(3,27)\times B(11,35)\times S^{15}\times S^{19}\times S^{23} \\ 
               & p=17 & B(3,35)\times S^{11}\times S^{15}\times 
                                      S^{19}\times S^{23}\times S^{27} \\ 
       E_{8} & p=11 & B(3,23)\times B(15,35)\times B(27,47)\times B(39,59) \\ 
                & p=13 & B(3,27)\times B(15,39)\times B(23,47)\times B(35,59) \\ 
                & p=17 & B(3,35)\times B(15,47)\times B(27,59)\times S^{23}\times S^{39} \\ 
                & p=19 & B(3,39)\times B(23,59)\times S^{15}\times S^{27}
                                        \times S^{35}\times S^{47} \\ 
                & p=23 & B(3,47)\times B(15,59)\times S^{23}\times S^{27} 
                                        \times S^{35}\times S^{39} \\ 
                & p=29 & B(3,59)\times S^{15}\times S^{23}\times S^{27}\times S^{35} 
                                         \times S^{39}\times S^{47}. 
  \end{array} 
\end{equation}  
McGibbon~\cite{M} showed that none of the Lie groups listed 
in~(\ref{exceptionallist}) is homotopy commutative. Thus, in every case, 
$\nil(G)\geq 2$. 

Complete at $p\geq 7$ and consider the nonmodular $p$-compact 
groups in cases 4 through 34 of the Shepard-Todd numbering which 
are quasi-$p$-regular but not $p$-regular. We exclude case 28 which 
is~$F_{4}$. As in~\cite{Da}, a complete list is given by: 
\begin{equation} 
  \label{nonmodularlist} 
  \begin{array}{c|c|l} 
       \textit{Case} & \textit{Prime} & \textit{Space} \\ \hline 
        5 & 7 & B(11,23) \\ 
        9 & 17 & B(15,47) \\ 
        10 & 13 & B(23,47) \\ 
        14 & 19 & B(11,47) \\ 
        16 & 11 & B(39,59) \\ 
        17 & 41 & B(39,119) \\ 
        18 & 31 & B(59,119) \\ 
        20 & 19 & B(23,59) \\ 
        24 & 11 & B(7,27)\times S^{11} \\ 
        25 & 7 & B(11,23)\times S^{17} \\ 
        26 & 13 & B(11,35)\times S^{23} \\ 
        27 & 19 & B(23,59)\times S^{13} \\ 
        29 & 13 & B(15,39)\times S^{7}\times S^{23} \\ 
        29 & 17 & B(7,39)\times S^{15}\times S^{23} \\ 
        30 & 11 & B(3,23)\times B(39,59) \\ 
        30 & 19 & B(3,39)\times S^{27}\times S^{59} \\ 
        30 & 29 & B(3,59)\times S^{23}\times S^{39} \\ 
        31 & 13 & B(15,39)\times B(23,47) \\ 
        31 & 17 & B(15,47)\times S^{23}\times S^{39} \\ 
        32 & 13 & B(23,47)\times B(35,59) \\ 
        32 & 19 & B(23,59)\times S^{35}\times S^{47} \\ 
        33 & 13 & B(11,35)\times S^{7}\times S^{19}\times S^{23} \\ 
        34 & 31 & B(23,83)\times S^{11}\times S^{35}\times S^{47}\times S^{59} \\ 
        34 & 37 & B(11,83)\times S^{23}\times S^{35}\times S^{47}\times S^{59} 
  \end{array} 
\end{equation} 
Saumell~\cite{Sa} showed that the only groups in~(\ref{nonmodularlist}) 
which are homotopy commutative are: $B(15,47)$ at $p=17$, $B(11,47)$ at $p=19$, 
$B(39,119)$ at $p=41$, $B(23,59)$ at $p=19$, and $B(7,27)\times S^{11}$ 
at $p=11$. Otherwise, the groups are not homotopy commutative, in which 
case $\nil(G)\geq 2$. 

We now state the main result of this section, to be proved following  
several preparatory lemmas. 

\begin{theorem} 
   \label{qpreg} 
   Let $p\geq 7$ and let $G$ be a quasi-$p$-regular $p$-compact group
   from~(\ref{exceptionallist}) or~(\ref{nonmodularlist}). If $G$ is not homotopy 
   commutative then $\nil(G)=2$. 
\end{theorem}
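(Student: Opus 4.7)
The plan is to combine Theorem~\ref{nilreduction} with a case analysis that reduces the problem to verifying triviality of certain triple Samelson products in each product factor of $G$. The inequality $\nil(G) \geq 2$ follows immediately from the hypothesis (via the cited results of McGibbon~\cite{M} and Saumell~\cite{Sa}), so the work is to prove $\nil(G) \leq 2$.

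By the classification results in Sections~\ref{sec:retractile}, every $G$ from the lists~(\ref{exceptionallist}) or~(\ref{nonmodularlist}) is retractile: there is a wedge of odd-dimensional spheres $A$ and a map $i\colon A\to G$ such that $(A,i,G)$ is a retractile $H$-space. Concretely, each product factor $B_k$ that is a sphere $S^{2n_k-1}$ contributes a single sphere summand to $A$, and each bundle factor $B(2n_k{+}1,2n_k{+}2p{-}1)$ contributes $S^{2n_k+1}\vee S^{2n_k+2p-1}$. Theorem~\ref{nilreduction} then reduces the claim $\nil(G)\leq 2$ to showing that the triple Samelson product
\[ c_2\circ i^{(3)}\colon A^{(3)}\longrightarrow G \]
is null homotopic. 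Since $A$ is a wedge of odd-dimensional spheres, $A^{(3)}$ is a wedge of odd-dimensional spheres and it suffices to check that the restriction of $c_2\circ i^{(3)}$ to each wedge summand is null homotopic. Each such restriction is an iterated Samelson product $\langle i_{k_1},\langle i_{k_2},i_{k_3}\rangle\rangle$ of three sphere classes $i_{k_j}\colon S^{d_{k_j}}\to G$ coming from the distinguished cells of the factors of $G$.

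Next, since $G\simeq\prod_k B_k$ as $H$-spaces, each projection $\pi_k\colon G\to B_k$ is an $H$-map and therefore respects Samelson products. Hence each triple Samelson product of the $i_{k_j}$'s vanishes in $G$ if and only if its image under each $\pi_k$ vanishes in $\pi_*(B_k)$. I would verify these vanishings by a case analysis over the entries of~(\ref{exceptionallist}) and~(\ref{nonmodularlist}), using the two facts:  (i) for a sphere factor $B_k=S^{2n_k-1}$, the $p$-primary homotopy groups $\pi_N(S^{2n_k-1})$ vanish in a large range (the first nontrivial $p$-torsion occurs at $N=2n_k+2p-4$), and (ii) for a bundle factor $B(2n_k{+}1,2n_k{+}2p{-}1)$, the long exact sequence associated to the defining homotopy pullback, together with Saumell's analysis of $\pi_*$ of these bundles, forces the triple Samelson products into dimensions where the relevant groups are zero. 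For $p\geq 7$ the degrees of the $i_{k_j}$'s and the additivity of Samelson degree keep the target dimensions safely below the next nontrivial stratum of $p$-torsion in all listed cases.

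The main obstacle is the bookkeeping in this case analysis, particularly for the bundle factors $B(m,n)$, where the homotopy groups are not simply those of spheres and where careful attention to the Whitehead bracket defining the bundle is required. Once one has shown that, for every choice of three sphere generators and every projection $\pi_k$, the iterated Samelson product lies in a trivial homotopy group of $B_k$, the map $c_2\circ i^{(3)}$ is null homotopic, hence $\nil(A,i,G)\leq 2$, and so $\nil(G)=2$ by Theorem~\ref{nilreduction} combined with the non-commutativity hypothesis.
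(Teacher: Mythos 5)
Your overall strategy -- reduce via Theorem~\ref{nilreduction} to showing $c_{2}\circ i^{(3)}$ is null homotopic on $A^{(3)}$, then kill this map by showing the relevant homotopy groups of the factors of $G$ vanish -- is exactly the paper's strategy (Proposition~\ref{nilcondition}). However, there are genuine gaps in the execution. First, $A$ is \emph{not} a wedge of odd-dimensional spheres: for a bundle factor, $A(2n{+}1,2n{+}2p{-}1)$ is the $(2n{+}2p{-}1)$-skeleton of $B(2n{+}1,2n{+}2p{-}1)$, a two-cell complex whose top cell is attached by (a suspension of) $\alpha$; if it split as $S^{2n+1}\vee S^{2n+2p-1}$ the bundle would be a product of spheres and $G$ would be $p$-regular. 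So you cannot decompose $A^{(3)}$ into sphere summands and read off iterated Samelson products of sphere classes. The paper circumvents this with a cell-by-cell obstruction argument (Lemmas~\ref{cellnull} and~\ref{cellnull2}): any map from a complex with cells in dimensions $\{m_{1},\ldots,m_{s}\}$ into a spherically resolved space is null once $\pi_{m_{i}}$ of the resolving spheres all vanish.

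Second, and more seriously, your dimension count does not close. The top cell of $A^{(3)}$ sits in dimension $6n_{\ell}-3$, which in every listed case is far \emph{above} the first $p$-torsion $\pi_{2n_{k}+2p-4}(S^{2n_{k}-1})$ of the low-dimensional spheres involved (e.g.\ for $F_{4}$ at $p=7$ the top cell is in dimension $69$ while $\pi_{14}(S^{3})\neq 0$). The argument only works because every cell of $A^{(3)}$ is \emph{odd}-dimensional, so one needs vanishing only of even-stem groups; in the stable range the first even-stem $p$-torsion is the $\beta_{1}$ stem in degree $2p(p-1)-2$, which is what makes the numbers work. This requires checking both the stability condition $3n_{\ell}<n_{1}p$ and the $\beta_{1}$ condition $3n_{\ell}<n_{1}+p^{2}-p$ -- neither appears in your sketch. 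Third, the factors $B(3,2p+1)$ (present in $F_{4}$, $E_{7}$, $E_{8}$ and several nonmodular cases) violate the stability condition because $n_{1}=2$; the paper handles them separately by passing to the three-connected cover and invoking Toda's theorem that $S^{2p+1}\rightarrow B(3,2p+1)\langle 3\rangle$ is $(2p^{2}-2)$-connected. Finally, $E_{8}$ at $p=11$ fails even the adjusted inequalities and needs an ad hoc argument with the fibration $W_{8}\rightarrow S^{15}\rightarrow\Omega^{2}S^{17}$ and the Moore space $P^{175}(11)$ to show $\pi_{177}(S^{15})$ is stable. Without these three ingredients the case analysis you defer to cannot be completed as described.
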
 

We begin with some properties of the space $B(2n+1,2n+2p-1)$, which has 
been well studied. It is a three-cell complex whose mod-$p$ homology is 
$\hlgy{B(2n+1,2n+2p-1)}\cong\Lambda(x_{2n+1},x_{2n+2p-1})$, 
where $\vert x_{t}\vert=t$. Let $A(2n+1,2n+2p-1)$ be the $(2n+2p-1)$-skeleton 
of $B$. Then 
\[\hlgy{B(2n+1,2n+2p-1)}\cong\Lambda(\rhlgy{A(2n+1,2np+1)})\] 
and the skeletal inclusion 
\(i\colon\namedright{A(2n+1,2n+2p-1)}{}{B(2n+1,2n+2p-1)}\) 
induces the inclusion of the generating set in homology. By~\cite{CHZ,CN}, 
$\Sigma i$ has a right homotopy inverse. Thus we obtain the following. 

\begin{lemma} 
   \label{Bretractile} 
   For $n\geq 1$, $(A(2n+1,2n+2p-1),i,B(2n+1,2n+2p-1))$ is a retractile triple.~$\qqed$ 
\end{lemma}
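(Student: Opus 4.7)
The plan is to verify the three defining conditions of a retractile triple for $(A(2n+1,2n+2p-1),i,B(2n+1,2n+2p-1))$ in order. Conditions (i) and (ii) will follow directly from the cell structure and the homotopy pullback definition of $B=B(2n+1,2n+2p-1)$, while condition (iii) is the substantive point and will be reduced to the Cooke--Harper--Zabrodsky~\cite{CHZ} or Cohen--Neisendorfer~\cite{CN} construction already quoted before the lemma.

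First I would verify (i). Using the defining homotopy pullback, $B$ fits into a fibration-type description built from odd spheres and the stable class $\alpha$, and a Serre spectral sequence argument (or a direct cell-count, noting that $\alpha$ has order $p$) yields $H_{*}(B;\mathbb{F}_{p})\cong\Lambda(x_{2n+1},x_{2n+2p-1})$ with $|x_{2n+1}|=2n+1$ and $|x_{2n+2p-1}|=2n+2p-1$. Since $A=A(2n+1,2n+2p-1)$ is by construction the $(2n+2p-1)$-skeleton of $B$, its reduced mod-$p$ homology is the free $\mathbb{F}_{p}$-module on the two classes of degree $2n+1$ and $2n+2p-1$. Hence $\Lambda(\widetilde{H}_{*}(A;\mathbb{F}_{p}))\cong H_{*}(B;\mathbb{F}_{p})$, which is exactly condition (i). Condition (ii) is then immediate, since the skeletal inclusion $i$ sends the two generating classes of $\widetilde{H}_{*}(A)$ to the two exterior algebra generators of $H_{*}(B)$.

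For condition (iii) I would invoke the CHZ/CN retractile construction. The complex $A$ has exactly $\ell=2$ cells, both in odd dimensions. For the odd primes at which the lemma is applied (in particular for $p\geq 7$ in Section~\ref{sec:examples}), the strict inequality $\ell<p-1$ holds, so their construction produces a $p$-local $H$-space together with a map realizing $\Sigma A$ as a wedge summand of its suspension; in the boundary case $\ell=p-1$ the same conclusion holds provided a finite $H$-space with the prescribed exterior-algebra homology already exists, which it does here, namely $B(2n+1,2n+2p-1)$. In either case $\Sigma i$ admits a left homotopy inverse.

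The main obstacle, if any, is bookkeeping rather than conceptual: one must check that the $H$-space produced abstractly by CHZ/CN coincides (up to $p$-local homotopy) with $B(2n+1,2n+2p-1)$ so that the retraction of $\Sigma B$ onto $\Sigma A$ is realized by a map out of the geometric model $B(2n+1,2n+2p-1)$ used in the applications. This follows from the uniqueness of a retractile $H$-space with prescribed cell structure and exterior-algebra homology under the dimension hypothesis $\ell<p-1$, so no genuine difficulty arises.
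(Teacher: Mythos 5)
Your proposal is correct and follows essentially the same route as the paper: conditions (i) and (ii) are taken as standard facts about the three-cell complex $B(2n+1,2n+2p-1)$ and its $(2n+2p-1)$-skeleton, and condition (iii) is obtained by citing \cite{CHZ,CN}. The identification issue you raise at the end can in fact be bypassed: the relevant CHZ/CN statement (as the paper paraphrases it for the boundary case $\ell=p-1$, and which holds a fortiori for $\ell<p-1$) applies directly to any finite $H$-space with the prescribed exterior-algebra homology and a map from $A$ inducing the generating inclusion, so it splits $\Sigma i$ for the geometric model $B(2n+1,2n+2p-1)$ itself, with no uniqueness argument required.
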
  

Let $G$ be a quasi-$p$-regular $p$-compact group such that  
$G\simeq\prod_{i=1}^{\ell} B_{i}$, where each $B_{i}$ is a sphere 
or space of the form $B(2n_{i}+1,2n_{i}+2p-1)$. If $B_{i}=S^{2n_{i}-1}$ 
let $A_{i}=S^{2n_{i}-1}$ and let 
\(I_{i}\colon\namedright{A_{i}}{}{B_{i}}\) 
be the identity map. If $B_{i}=B(2n_{i}+1,2n_{i}+2p-1)$ let 
$A_{i}=A(2n_{i}+1,2n_{i}+2p-1)$. Let $A=\bigvee_{i=1}^{\ell} A_{i}$ and let 
\(I\colon\namedright{A}{}{B}\) 
be the wedge sum of the maps $I_{i}$ for $1\leq i\leq\ell$. The retractile 
property in Lemma~\ref{Bretractile} implies that $(A,I,G)$ is also retractile. 
Referring to $G$ itself as being retractile, this is stated as follows. 

\begin{lemma} 
   \label{Gretractile} 
   Let $G$ be a quasi-$p$-regular $p$-compact group which is a product of 
   spheres and spaces of the form $B(2n+1,2n+2p-1)$. Then $G$ is retractile.~$\qqed$ 
\end{lemma}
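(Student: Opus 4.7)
The plan is to verify the three defining conditions of a retractile $H$-space for $(A, I, G)$ by exploiting the product decomposition $G \simeq \prod_{i=1}^{\ell} B_{i}$ together with the retractile structure on each individual factor supplied by Lemma~\ref{Bretractile}.

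First, I would establish condition (i), the exterior algebra identification $H_{*}(G) \cong \Lambda(\widetilde{H}_{*}(A))$. By K\"unneth, $H_{*}(G) \cong \bigotimes_{i=1}^{\ell} H_{*}(B_{i})$. For each $i$, either $B_{i} = S^{2n_{i}-1}$ and $H_{*}(B_{i}) \cong \Lambda(\widetilde{H}_{*}(A_{i}))$ trivially, or $B_{i} = B(2n_{i}+1,2n_{i}+2p-1)$ and the same identification holds by the homology description preceding Lemma~\ref{Bretractile}. Since the tensor product of exterior algebras on graded modules is the exterior algebra on the direct sum, and $\widetilde{H}_{*}(A) = \widetilde{H}_{*}(\bigvee_{i} A_{i}) \cong \bigoplus_{i}\widetilde{H}_{*}(A_{i})$, the tensor product reorganizes as $\Lambda(\widetilde{H}_{*}(A))$.

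For condition (ii), the map $I$ is by construction the wedge sum of the composites $A_{i}\stackrel{I_{i}}{\to} B_{i}\hookrightarrow \prod_{j} B_{j}\simeq G$. Each $I_{i}$ induces the inclusion of the generating set of $\Lambda(\widetilde{H}_{*}(A_{i}))$ (by Lemma~\ref{Bretractile} or trivially in the sphere case), and the inclusion of $B_{i}$ as the $i$-th factor induces the tensor-factor inclusion. Under the identification in the previous paragraph, this is precisely the inclusion of $\widetilde{H}_{*}(A)$ as the generating set of $\Lambda(\widetilde{H}_{*}(A))$.

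For condition (iii), I would build a left homotopy inverse of $\Sigma I$ in two stages. The standard James-type splitting of a suspended product gives a homotopy equivalence
\[\Sigma\Big(\prod_{i=1}^{\ell} B_{i}\Big)\simeq\bigvee_{\emptyset\neq J\subseteq [\ell]}\Sigma\widehat{B}^{J},\qquad \widehat{B}^{J}=B_{j_{1}}\wedge\cdots\wedge B_{j_{k}},\]
and in particular $\bigvee_{i=1}^{\ell}\Sigma B_{i}$ retracts off $\Sigma G$ via projection onto the singleton summands. Composing with the wedge $\bigvee_{i}\rho_{i}$ of left inverses $\rho_{i}\colon \Sigma B_{i}\to\Sigma A_{i}$ supplied by Lemma~\ref{Bretractile} (taking $\rho_{i}=\mathrm{id}$ for spherical factors) yields a map $\Sigma G\to\bigvee_{i}\Sigma A_{i}=\Sigma A$ whose composite with $\Sigma I$ is homotopic to the identity on $\Sigma A$, since on each wedge summand $\Sigma A_{i}$ the composite is $\rho_{i}\circ\Sigma I_{i}\simeq \mathrm{id}$.

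This argument is essentially bookkeeping, so no real obstacle arises; the only point that requires a moment of care is confirming that the singleton-indexed summands of the suspension splitting genuinely coincide with the image of the inclusions $\Sigma B_{i}\hookrightarrow\Sigma G$ induced by the factor inclusions, which is a direct consequence of the naturality of the James splitting with respect to the projections $\prod_{j} B_{j}\to B_{i}$.
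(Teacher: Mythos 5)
Your proof is correct and follows the same route the paper intends: the lemma is stated with no proof because it is the observation from Section~\ref{sec:retractile} that a product of retractile $H$-spaces $(A_j,i_j,B_j)$ yields a retractile triple $(\bigvee_j A_j,\bigvee_j i_j,\prod_j B_j)$, combined with Lemma~\ref{Bretractile}; you have simply supplied the K\"unneth bookkeeping for conditions (i)--(ii) and the retraction $\Sigma\prod_i B_i\to\bigvee_i\Sigma B_i\to\bigvee_i\Sigma A_i$ for condition (iii), all of which check out.
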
 

Some elementary information about the homotopy groups of $G$ is also 
required. This requires a few preliminary lemmas. 

\begin{lemma} 
   \label{cellnull} 
   Let $X$ be a finite $CW$-complex with cells in dimensions 
   $\{m_{1},\ldots,m_{s}\}$. Let $Y$ be a space with the property 
   that $\pi_{m_{i}}(Y)=0$ for $1\leq i\leq s$. Then any map 
   \(f\colon\namedright{X}{}{Y}\) 
   is null homotopic. 
\end{lemma}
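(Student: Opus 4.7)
The plan is to prove this by induction on the skeletal filtration of $X$, showing that $f$ restricted to the $n$-skeleton $X^{(n)}$ is null homotopic for every $n \geq 0$. Since $X$ is finite, taking $n$ larger than the top cell dimension will give the desired conclusion.

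For the base case, since all spaces in the paper are assumed to be path-connected based $CW$-complexes, we may take $X^{(0)}$ to be a single point, so $f|_{X^{(0)}}$ is constant and hence null homotopic. (If instead $0 \in \{m_1,\ldots,m_s\}$, the hypothesis $\pi_0(Y) = 0$ still gives it immediately.)

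For the inductive step, assume $f|_{X^{(n-1)}}$ is null homotopic. If $n \notin \{m_1,\ldots,m_s\}$ then $X^{(n)} = X^{(n-1)}$ and there is nothing to check, so suppose $n = m_i$ for some $i$. Since the inclusion $X^{(n-1)} \hookrightarrow X^{(n)}$ is a cofibration, the homotopy extension property lets us replace $f$ up to homotopy by a map whose restriction to $X^{(n-1)}$ is constant. Then $f|_{X^{(n)}}$ factors through the quotient $X^{(n)}/X^{(n-1)}$, which is a wedge of $n$-spheres indexed by the $n$-cells of $X$. The resulting map $\bigvee S^n \to Y$ is determined on each wedge summand by an element of $\pi_n(Y) = \pi_{m_i}(Y)$, which is zero by hypothesis. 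Hence each summand is null homotopic, so the whole wedge map is null homotopic, and consequently so is $f|_{X^{(n)}}$.

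There is no real obstacle here: the statement is essentially just the primary obstruction theory observation that a map from a $CW$-complex is null homotopic provided the relevant homotopy groups of the target vanish in the dimensions where $X$ has cells. The only point that requires a moment's care is invoking the homotopy extension property to rigidify the inductive hypothesis before quotienting by $X^{(n-1)}$.
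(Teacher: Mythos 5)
Your proof is correct and follows essentially the same route as the paper's: induction over the skeleta, using the cofibration $X^{(n-1)}\to X^{(n)}\to\bigvee S^{n}$ to factor the restriction of $f$ through a wedge of $n$-spheres and then invoking $\pi_{m_{i}}(Y)=0$. Your explicit appeal to the homotopy extension property to make $f$ constant on $X^{(n-1)}$ before passing to the quotient is a detail the paper leaves implicit, but the argument is the same.
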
 

\begin{proof} 
For a positive integer $t$, let $X_{t}$ be the $t$-skeleton of $X$. 
Since $X$ has cells in dimensions $\{m_{1},\ldots,m_{s}\}$, for 
$1\leq i\leq s$ there are cofibration sequences  
\[\namedddright{\bigvee S^{m_{i}-1}}{f_{i}}{X_{m_{i-1}}}{}{X_{m_{i}}}{q_{i}} 
     {\bigvee S^{m_{i}}}\] 
for attaching maps $f_{i}$ and pinch maps $q_{i}$, where $X_{m_{0}}$ is the 
basepoint. 

Clearly the restriction of 
\(\namedright{X}{f}{Y}\) 
to $X_{m_{0}}$ is null homotopic. 
Suppose inductively that the restriction of~$f$ to $X_{m_{i-1}}$ 
is null homotopic. Then from the cofibration 
\(\nameddright{X_{m_{i-1}}}{}{X_{m_{i}}}{q_{i}}{\bigvee S^{m_{i}}}\) 
we see that the restriction of $f$ to $X_{m_{i}}$ factors through $q_{i}$ 
to a map 
\(e_{i}\colon\namedright{\bigvee S^{m_{i}}}{}{Y}\). 
But as $\pi_{m_{i}}(Y)=0$, the map~$e_{i}$ is null homotopic. 
Therefore the restriction of $f$ to $X_{m_{i}}$ is null homotopic. 
By induction, the restriction of $f$ to $X_{m_{s}}=X$ - that is, 
$f$ itself - is null homotopic. 
\end{proof} 

A space $B$ is spherically resolved by spheres $S^{n_{1}},\ldots,S^{n_{t}}$ 
if for $1\leq j<t$ there are homotopy fibrations 
\[\nameddright{S^{n_{j}}}{}{B_{j}}{}{B_{j+1}}\] 
where $B_{1}=B$ and $B_{t}=S^{n_{t}}$. For example, $SU(n)$ is 
spherically resolved by $S^{3},S^{5},\ldots,S^{2n-1}$. From 
Lemma~\ref{cellnull} we obtain the following refinement in the 
case of spherically resolved spaces. 

\begin{lemma} 
   \label{cellnull2} 
   Let $X$ be a finite $CW$-complex with cells in dimensions 
   $\{m_{1},\ldots,m_{s}\}$. Let $B$ be spherically resolved by 
   spheres $S^{n_{1}},\ldots, S^{n_{t}}$. If $\pi_{m_{i}}(S^{n_{j}})=0$ 
   for all $1\leq i\leq s$ and $1\leq j\leq t$, then any map 
   \(f\colon\namedright{X}{}{B}\) 
   is null homotopic. 
\end{lemma}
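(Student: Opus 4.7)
The plan is to prove Lemma~\ref{cellnull2} by induction on the length $t$ of the spherical resolution, using Lemma~\ref{cellnull} as the key input at each stage. The base case $t=1$ is immediate: here $B = S^{n_1}$, and the hypothesis $\pi_{m_i}(S^{n_1}) = 0$ for all $1\leq i\leq s$ combined with Lemma~\ref{cellnull} shows any map $f\colon X\to B$ is null homotopic.

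For the inductive step, assume the result holds for every space spherically resolved by at most $t-1$ spheres. Given $B$ spherically resolved by $S^{n_1},\ldots,S^{n_t}$, the defining homotopy fibration is
\[\nameddright{S^{n_1}}{}{B}{\pi}{B_2},\]
and $B_2$ is itself spherically resolved by the remaining spheres $S^{n_2},\ldots,S^{n_t}$, which still satisfy the vanishing hypothesis $\pi_{m_i}(S^{n_j})=0$ for $1\leq i\leq s$ and $2\leq j\leq t$.

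Now given any map $f\colon\namedright{X}{}{B}$, consider the composite $\pi\circ f\colon\namedright{X}{}{B_2}$. By the inductive hypothesis applied to $B_2$, this composite is null homotopic. Therefore $f$ lifts through the homotopy fibre, producing a map $\tilde{f}\colon\namedright{X}{}{S^{n_1}}$ such that $f$ is homotopic to the composite $\nameddright{X}{\tilde{f}}{S^{n_1}}{}{B}$. Since $\pi_{m_i}(S^{n_1})=0$ for all $1\leq i\leq s$, Lemma~\ref{cellnull} implies $\tilde{f}$ is null homotopic, and hence so is $f$. This completes the induction.

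I expect no serious obstacle here: the argument is a standard fibre-lifting induction, and the only subtle point is making sure the lift exists. That requires the composite into the base of the fibration to be null homotopic (which the inductive hypothesis gives) together with standard obstruction theory for lifting through a homotopy fibration — no additional assumptions on $X$ are needed beyond it being a CW-complex, which is given.
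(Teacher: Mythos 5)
Your proof is correct, but the mechanism differs from the paper's. The paper runs a downward induction on the stages $B_t, B_{t-1},\ldots,B_1$ of the resolution, using the long exact sequence of homotopy groups of each fibration \(\nameddright{S^{n_{j}}}{}{B_{j}}{}{B_{j+1}}\) to conclude that $\pi_{m_i}(B_j)=0$ for all $i$ and $j$; it then applies Lemma~\ref{cellnull} once, directly to $f\colon X\to B$. You instead induct on the length $t$ of the resolution and work with the map $f$ itself: the composite into $B_2$ is null by induction, so $f$ lifts through the homotopy fibre $S^{n_1}$, and Lemma~\ref{cellnull} kills the lift. Both arguments are elementary and both trace the same tower; the paper's version proves the slightly stronger intermediate statement that $\pi_{m_i}(B)=0$ (reusable information about $B$ alone), while yours avoids computing any homotopy groups of $B$ at the cost of invoking the lifting property of the homotopy fibre at each stage. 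One cosmetic remark: the lift $\tilde f$ exists by the defining universal property of the homotopy fibre (a map into the total space together with a null homotopy of its composite into the base is exactly a map into the homotopy fibre), so no obstruction theory is needed; your closing sentence slightly oversells what is required.
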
 

\begin{proof} 
Proceed by downward induction on $t$. When $t=n$, 
as $B_{t}=S^{n_{t}}$, the hypotheses immediately imply 
that $\pi_{m_{i}}(B_{t})=0$ for $1\leq i\leq s$. If $j<t$, suppose 
inductively that $\pi_{m_{i}}(B_{j+1})=0$ for $1\leq i\leq s$. 
Then as $\pi_{m_{i}}(S^{n_{j}})=0$ for $1\leq i\leq s$, the long 
exact sequence of homotopy groups induced by the homotopy 
fibration 
\(\nameddright{S^{n_{j}}}{}{B_{j}}{}{B_{j+1}}\) 
implies that $\pi_{m_{i}}(B_{j})=0$ for $1\leq i\leq s$. 
As $B=B_{1}$, by induction we have $\pi_{m_{i}}(B)=0$ 
for $1\leq i\leq s$. Lemma~\ref{cellnull} then implies that 
any map 
\(\namedright{X}{f}{B}\) 
is null homotopic. 
\end{proof} 

A special case is $B(3,2p+1)$. This can be regarded as the homotopy 
fibre of the map 
\(\overline{\alpha}\colon\namedright{S^{2p+1}}{}{BS^{3}}\) 
which is adjoint to 
\(\namedright{S^{2p}}{\alpha}{S^{3}}\).  
Since $\alpha$ has order $p$, so does its adjoint, and therefore $\overline{\alpha}\circ p$ 
is null homotopic, implying that the degree $p$ map on $S^{2p+1}$ lifts 
to the homotopy fibre of $\overline{\alpha}$. That is, there is a map 
\(c\colon\namedright{S^{2p+1}}{}{B(3,2p+1)}\) 
with the property that the composite 
\(\nameddright{S^{2p+1}}{c}{B(3,2p+1)}{}{S^{2p+1}}\) 
is of degree~$p$. For a space $X$, let $X\langle 3\rangle$ 
be the three-connected cover of $X$. Notice that the map $c$ lifts to a map 
\(\bar{c}\colon\namedright{S^{2p+1}}{}{B(3,2p+1)\langle 3\rangle}\). 
The following property was proved by Toda~\cite{To1}. 

\begin{lemma} 
   \label{TodaB3} 
   The map 
   \(\namedright{S^{2p+1}}{\bar{c}}{B(3,2p+1)\langle 3\rangle}\) 
   is~$(2p^{2}-2)$-connected. In particular, $\bar{c}$ induces 
   an isomorphism on the homotopy groups $\pi_{m}$ for 
   $m\leq 2p^{2}-2$.~$\qqed$ 
\end{lemma}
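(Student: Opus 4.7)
The plan is to analyze the homotopy fibration
\[S^3\langle 3\rangle \longrightarrow B(3,2p+1)\langle 3\rangle \longrightarrow S^{2p+1}\]
obtained by passing to $3$-connected covers in the defining fibration $S^3 \to B(3,2p+1) \to S^{2p+1}$. This is legitimate because $S^{2p+1}$ is already $(2p)$-connected, so the new fibration simply replaces the fibre $S^3$ by its $3$-connected cover. By construction of $\bar{c}$ as a lift of the degree-$p$ self-map of $S^{2p+1}$, the connecting homomorphism $\partial\colon\pi_{2p+1}(S^{2p+1})\to\pi_{2p}(S^3\langle 3\rangle)=\pi_{2p}(S^3)$ sends the generator to $\alpha$. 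This handles the initial case $m=2p+1$: the long exact sequence gives $\pi_{2p+1}(B(3,2p+1)\langle 3\rangle)\cong\plocal$ injecting into $\pi_{2p+1}(S^{2p+1})=\plocal$ as multiplication by $p$, and $\bar{c}_*$ is the identity.

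The main input is Toda's $p$-primary computation of $\pi_*(S^3)$ and $\pi_*(S^{2p+1})$ in the range below $2p^2-1$ from~\cite{To1}. At an odd prime $p$ these groups are extremely sparse and concentrated in the $\alpha$-family: iterated Toda brackets $\alpha_{i+1}=\langle\alpha_i,p,\alpha_1\rangle$ starting from $\alpha_1=\alpha$ in the case of $\pi_*(S^3)$, and their $(2p-2)$-fold suspensions in $\pi_*(S^{2p+1})$.

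The heart of the argument is to show that $\bar{c}_*$ matches these families bijectively in the range $m\le 2p^2-2$ by tracking Toda brackets through the long exact sequence. Given an $\alpha$-family class $\gamma\in\pi_m(S^{2p+1})_{(p)}$ of order $p$, the composite $\bar{c}\circ\gamma$ lifts along the fibre inclusion since its projection to $S^{2p+1}$ equals $p\cdot\gamma=0$; naturality of the Toda bracket construction then identifies this lift, via the classifying relation $\partial(\iota_{2p+1})=\alpha$, with the next element $\alpha_{i+1}$ in the fibre's $\alpha$-family. A parallel analysis of the connecting homomorphism $\partial\colon\pi_m(S^{2p+1})\to\pi_{m-1}(S^3\langle 3\rangle)$ shows that $\partial$ surjects onto every $\alpha$-class in the fibre in this range, so the long exact sequence collapses to give $\pi_m(B(3,2p+1)\langle 3\rangle)_{(p)}\cong\pi_m(S^{2p+1})_{(p)}$, with $\bar{c}_*$ realizing the isomorphism.

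The main obstacle is the delicate Toda bracket bookkeeping: one must identify which $p$-torsion classes in $\pi_{m-1}(S^3)$ lie in the image of $\partial$, and verify that $\bar{c}_*$ carries generators to generators in each dimension. These explicit computations are worked out in the cited paper of Toda, where the bound $2p^2-2$ reflects the range in which the $\alpha$-family analysis gives a complete description of the relevant homotopy groups; beyond this range, classes from the $\beta$-family enter and the sparse structure breaks down.
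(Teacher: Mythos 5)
The paper does not actually prove this lemma: it is quoted as a known theorem of Toda, with only the citation to~\cite{To1} and a $\qqed$, and your argument likewise defers the decisive computation --- that every $p$-torsion class of $\pi_{m-1}(S^{3}\langle 3\rangle)$ for $m\leq 2p^{2}-2$ is hit by the connecting homomorphism, so that the long exact sequence collapses onto $\pi_{m}(S^{2p+1})$ --- to that same paper of Toda. Your surrounding outline (passing to $3$-connected covers of the fibration $S^{3}\to B(3,2p+1)\to S^{2p+1}$, observing that $\partial(\iota_{2p+1})=\alpha_{1}$ because the classifying map is $\overline{\alpha}$, and matching the $\alpha$-families) is the standard mechanism behind Toda's theorem and is consistent with it, so your treatment is correct to exactly the extent that the paper's own citation is.
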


\begin{proposition} 
   \label{nilcondition} 
   Let $p$ be an odd prime. Let $G$ be a quasi-$p$-regular $p$-compact group 
   which is a product of spheres and spaces of the form $B(2n+1,2n+2p-1)$. 
   Suppose that either: 
   \begin{itemize} 
      \item[(i)] $G$ has type $\{n_{1},\ldots,n_{\ell}\}$ where   
                     $3n_{\ell}<\min\{n_{1}p,n_{1}+p^{2}-p\}$; or 
      \item[(ii)] $G\simeq B(3,2p+1)\times Y$ where $Y$ is as in part~(i) 
                      and its type also satisfies $2<n_{1}\leq p$ and $p<n_{\ell}$.                  
   \end{itemize} 
   Then $\nil(G)\leq 2$. 
\end{proposition}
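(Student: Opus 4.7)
The plan is to reduce via Theorem~\ref{nilreduction}. Since $G$ is a product of spheres and spaces of the form $B(2n+1,2n+2p-1)$, the triple $(A,I,G)$ is retractile by Lemma~\ref{Gretractile}, so $\nil(G)\leq 2$ is equivalent to the triple Samelson product $c_{2}\circ I^{(3)}\colon A^{(3)}\to G$ being null homotopic, where $I=\bigvee_{j}\iota_{j}\circ i_{j}\colon A=\bigvee_{j}A_{j}\to\prod_{j}B_{j}=G$. Because multiplication on a product is componentwise, $c_{2}$ is componentwise on the factors $B_{j}$. Decomposing $A^{(3)}$ into the wedge of smashes $A_{i_{1}}\wedge A_{i_{2}}\wedge A_{i_{3}}$ and projecting $c_{2}\circ I^{(3)}$ onto $B_{j}$, every summand with some $i_{k}\neq j$ vanishes since $\pi_{j}\circ I|_{A_{i_{k}}}$ is null in that situation. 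Hence the proof reduces to showing that $c_{2}^{B_{j}}\circ i_{j}^{(3)}\colon A_{j}^{(3)}\to B_{j}$ is null homotopic for each $j$.

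For each factor $B_{j}$, it is spherically resolved by the spheres $S^{2r-1}$ for $r$ in its type (a single sphere if $B_{j}$ is itself a sphere; the two spheres $S^{2m+1}, S^{2m+2p-1}$ if $B_{j}=B(2m+1,2m+2p-1)$). I would apply Lemma~\ref{cellnull2}: it suffices to verify that $\pi_{d}(S^{2r-1})_{(p)}=0$ for every cell dimension $d$ of $A_{j}^{(3)}$ and every such $r$. The sphere case is immediate at odd primes because the Whitehead square of the identity on an odd-dimensional sphere is $2$-torsion and hence $p$-locally trivial, forcing the iterated commutator to vanish. For a factor $B_{j}=B(2m+1,2m+2p-1)$ with $A_{j}=A(2m+1,2m+2p-1)$, the cells of $A_{j}^{(3)}$ sit in the four explicit dimensions $6m+3,\ 6m+2p+1,\ 6m+4p-1,\ 6m+6p-3$.

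The numerical heart of the argument is the observation that the excess $e=d-(2r-1)=2(n_{i_{1}}+n_{i_{2}}+n_{i_{3}}-r)-2$ is always even, so odd-stem $\alpha$-family elements in $\pi_{*}(S^{2r-1})_{(p)}$ pose no obstruction. The first even-stem element at odd $p$ is $\beta_{1}$ in stem $2p^{2}-2p-2$; hence the bound $3n_{\ell}<n_{1}+p^{2}-p$ translates into $e<2p^{2}-2p-2$, yielding $\pi_{d}(S^{2r-1})_{(p)}=0$ whenever $e>0$. The companion bound $3n_{\ell}<n_{1}p$ ensures that one remains well inside Toda's unstable range where $\pi_{*}(S^{2r-1})_{(p)}$ coincides with its stabilization through these dimensions, excluding any spurious unstable contributions. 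The remaining degenerate case $e\leq 0$ is handled by connectivity: $d<2r-1$ forces the map into $S^{2r-1}$ to be null.

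For case~(ii), I would split $G\simeq B(3,2p+1)\times Y$ and handle the two factors independently. The factor $Y$, whose type satisfies~(i), is dealt with by the preceding analysis. For the factor $B(3,2p+1)$, I would invoke Lemma~\ref{TodaB3}: the map $\bar{c}\colon S^{2p+1}\to B(3,2p+1)\langle 3\rangle$ is $(2p^{2}-2)$-connected, and every cell dimension of $A(3,2p+1)^{(3)}$ is at most $6p+3\leq 2p^{2}-2$ for $p\geq 7$, so vanishing is reduced to checking $\pi_{d}(S^{2p+1})_{(p)}=0$ for $d\in\{9,\,2p+7,\,4p+5,\,6p+3\}$, all of which have even excess below $2p^{2}-2p-2$. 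The supplementary assumptions $2<n_{1}\leq p$ and $p<n_{\ell}$ on $Y$ ensure that the type of $Y$ is separated from the small dimensions contributed by $B(3,2p+1)$, so the case~(i) analysis of $Y$ proceeds unimpeded. The main obstacle is the precise homotopy-group bookkeeping in the third paragraph: verifying that the even excesses produced by the explicit cell dimensions of $A_{j}^{(3)}$ do fall in a vanishing zone of the unstable $p$-primary homotopy groups of odd spheres, particularly near the borderline cases where $e$ approaches $2p^{2}-2p-2$.
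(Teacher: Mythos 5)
Your overall strategy is the paper's: reduce via Lemma~\ref{Gretractile} and Theorem~\ref{nilreduction} to killing $c_{2}\circ i^{(3)}$ on $A^{(3)}$, then argue cell by cell using Lemmas~\ref{cellnull} and~\ref{cellnull2}, the stable range, the parity of the stems (all relevant excesses are even), and the fact that the first even $p$-primary stable stem is the $\beta_{1}$-stem $2p^{2}-2p-2$; in part~(ii) you invoke Lemma~\ref{TodaB3} exactly as the paper does. Your extra componentwise reduction to $c_{2}^{B_{j}}\circ i_{j}^{(3)}\colon A_{j}^{(3)}\to B_{j}$ is valid (Samelson products in a product $H$-group are computed factorwise, and the mixed wedge summands of $A^{(3)}$ die under the projections) and is a genuine improvement in part~(ii): you only need $A(3,2p+1)^{(3)}$, of dimension $6p+3$, to map trivially into $B(3,2p+1)\langle 3\rangle$, whereas the paper must push all of $A^{(3)}$, of dimension $6n_{\ell}-3$, through Toda's range and so has to verify $6n_{\ell}-3<2p^{2}-1$ separately. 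The paper instead proves the stronger statement that \emph{every} map $A^{(3)}\to G$ is null, which makes the bookkeeping coarser but uniform.

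There is, however, one step that fails as stated: your claim that the sphere factors are ``immediate'' because the Whitehead square of the identity on an odd sphere is $2$-torsion. The obstruction for a sphere factor $S^{2r-1}=\Omega B$ is the Samelson product of the identity on the $H$-space $S^{2r-1}$, whose adjoint is the Whitehead square of a generator of $\pi_{2r}(B)$ on the \emph{classifying space} $B$, not the Whitehead square $[\iota_{2r-1},\iota_{2r-1}]\in\pi_{4r-3}(S^{2r-1})$. The former lies in the odd stem $2r-1$ and is generically nonzero at odd primes; indeed $S^{3}$ at $p=3$ is not homotopy commutative, and the paper quotes McGibbon and Saumell precisely to show that most of the groups in~(\ref{exceptionallist}) and~(\ref{nonmodularlist}) satisfy $\nil(G)\geq 2$, i.e.\ that $c_{1}$ does \emph{not} vanish. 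If your claim were correct, every such group would be homotopy commutative. The gap is repairable inside your own framework: for $\nil(G)\leq 2$ you only need the triple product $c_{2}$ on $S^{2r-1}$, which lies in $\pi_{6r-3}(S^{2r-1})$, an \emph{even} stem of size $4r-2$, and this vanishes under the hypotheses by the same stability and $\beta_{1}$-stem estimate you apply to the $B(2n+1,2n+2p-1)$ factors. Separately, your remark that ``$e\leq 0$ is handled by connectivity'' is wrong at $e=0$ (where $\pi_{2r-1}(S^{2r-1})\cong\mathbb{Z}$); you should note that $e=0$ never occurs for the cell dimensions in question, since $e$ is even and $e=0$ would force $2m=p-2$ with $p$ odd.
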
 

\begin{proof} 
\noindent 
\textit{Part (i)}: 
By Lemma~\ref{Gretractile}, the hypotheses on $G$ imply that there is a 
retractile triple $(A,i,G)$. By Theorem~\ref{nilreduction}, to show that $\nil(G)\leq 2$ 
it suffices to show that the composite 
\(\llnameddright{A\wedge A\wedge A}{i\wedge i\wedge i}{G\wedge G\wedge G} 
     {c_{2}}{G}\) 
is null homotopic. We will show more, that any map 
\(f\colon\namedright{A\wedge A\wedge A}{}{G}\) 
is null homotopic. 

Since $G$ is quasi-$p$-regular of type $\{n_{1},\ldots,n_{\ell}\}$, it is 
spherically resolved by the odd dimensional spheres 
$S^{2n_{1}-1},\ldots, S^{2n_{\ell}-1}$. The type of $G$ also implies 
that $A$ has a $CW$-structure with $\ell$ cells, in dimensions 
$\{2n_{1}-1,\ldots,2n_{\ell}-1\}$. So $A\wedge A\wedge A$ has 
a $CW$-structure consisting of cells in dimensions $\{m_{1},\ldots,m_{s}\}$ 
where each $m_{i}$ is odd. Therefore, by Lemma~\ref{cellnull2}, if 
$\pi_{m}(S^{2n_{i}-1})=0$ for each $m\in\{m_{1},\ldots,m_{s}\}$ 
and each $1\leq i\leq\ell$, then $f$ is null homotopic. 

The dimension of $A\wedge A\wedge A$ is $6n_{\ell}-3$, so we 
consider $\pi_{m}(S^{2n_{i}-1})$ for $m$ an odd number with 
$m\leq 6n_{\ell}-3$. Observe that the stable range for $S^{2n_{i}-1}$ 
is $2n_{i}p-4$. That is, $\pi_{m}(S^{2n_{i}-1})\cong\pi_{m}^{S}(S^{2n_{i}-1})$ 
if $m<2n_{i}p-3$. Since $n_{1}\leq\cdots\leq n_{\ell}$, $S^{2n_{1}-1}$ has the 
stable range of least dimension. So if $6n_{\ell}-3<2n_{1}p-3$, then 
$\pi_{m}(S^{2n_{i}-1})$ is stable for every $m\in\{m_{1},\ldots,m_{s}\}$ 
and every $1\leq i\leq\ell$. This inequality is equivalent to 
$3n_{\ell}<n_{1}p$, which is one of the hypotheses. Therefore, 
we need only consider the stable homotopy groups $\pi_{m}^{S}(S^{2n_{i}-1})$. 

Since $m$ is odd, the stable homotopy group 
$\pi_{m}^{S}(S^{2n_{i}-1})$ is in an even stem. By~\cite{To2}, the 
even stable stem of least dimension is $2p(p-1)-2$ (the stem of 
the stable generator $\beta_{1})$. Thus 
$\pi_{m}^{S}(S^{2n_{i}-1})=0$ for odd $m$ whenever 
$m<(2n_{i}-1)+2p(p-1)-2$. As $n_{1}\leq\cdots\leq n_{\ell}$ and 
$m\leq 6n_{\ell}-3$, we have $\pi_{m}^{S}(S^{2n_{i}-1})=0$ 
whenever $6n_{\ell}-3<(2n_{1}-1)+2p(p-1)-2$. This inequality is 
equivalent to $3n_{\ell}<n_{1}+p^{2}-p$,  which is one of the 
hypotheses. 

Hence $\pi_{m}(S^{2n_{i}-1})=0$ for each $m\in\{m_{1},\ldots,m_{s}\}$ 
and for each $1\leq i\leq n$, and the proof of part~(i) is complete. 
\medskip 

\noindent 
\textit{Part (ii)}: Since $G\simeq B(3,2p+1)\times Y$ with $Y$ 
as in part~(i), by Lemma~\ref{Gretractile} there is a retractile triple 
$(A,i,G)$ where $A=A_{1}\vee A_{2}$ for retractile triples 
$(A_{1},i_{1},B(3,2p+1))$ and $(A_{2},i_{2},Y)$. Notice that~$A_{1}$ 
is the $(2p+1)$-skeleton of $B(3,2p+1)$. 

Consider the composite  
\(\theta\colon\llnameddright{A\wedge A\wedge A}{i\wedge i\wedge i} 
      {G\wedge G\wedge G}{c_{2}}{G}\).  
Observe that $A\wedge A\wedge A$ is $8$-connected, so~$\theta$ 
factors through the three-connected cover 
$G\langle 3\rangle$. Since $G\simeq B(3,2p+1)\times Y$ 
and $n_{1}>2$ implies that $Y$ is at least $4$-connected, we have 
$G\langle 3\rangle\simeq B(3,2p+1)\langle 3\rangle\times Y$. 
By Lemma~\ref{TodaB3}, 
$\pi_{k}(B(3,2p+1)\langle 3\rangle)\cong\pi_{k}(S^{2p+1})$ 
for $k<2p^{2}-1$. Thus, in this dimensional range, it is as if 
$G$ is spherically resolved by $S^{2n_{1}-1},\ldots,S^{2n_{\ell}-1}$ 
and $S^{2p+1}$. The hypothesis that $p< n_{\ell}$ implies  
that the dimension of $A\wedge A\wedge A$ is that of 
$A_{2}\wedge A_{2}\wedge A_{2}$, which is $6n_{\ell}-3$. The hypothesis 
that $n_{1}\leq p$ implies that the arguments for $\pi_{m}(S^{2n_{i}-1})=0$ 
in part~(1) also imply that $\pi_{m}(S^{2p+1})=0$. Thus if 
$6n_{\ell}-3<2p^{2}-1$, then the argument in part~(i) goes through 
without change to show that $\theta$ is null homotopic and 
$\nil(G)\leq 2$.  
 
It remains to show that $6n_{\ell}-3<2p^{2}-1$. By hypothesis, 
$3n_{\ell}<\min\{n_{1}p,n_{1}+p^{2}-p\}$ and $n_{1}\leq p$. If 
$\min\{n_{1}p,n_{1}+p^{2}-p\}=n_{1}p$ then $3n_{\ell}<n_{1}p$ 
and $n_{1}\leq p$ implies that $6n_{\ell}-3<2p^{2}-3<2p^{2}-1$. If 
$\min\{n_{1}p,n_{1}+p^{2}-p\}=n_{1}+p^{2}-p$ then 
$3n_{\ell}<n_{1}+p^{2}-p$ and $n_{1}\leq p$ implies that  $3n_{\ell}<p^{2}$ 
and so $6n_{\ell}-3<2p^{2}-3<2p^{2}-1$. In either case, $6n_{\ell}-3<2p^{2}-1$,  
as required.   
\end{proof} 

\begin{proof}[Proof of Theorem~\ref{qpreg}] 
It has already been mentioned that every group $G$ in~(\ref{exceptionallist}) and 
every non-homotopy commutative group $G$ in~(\ref{nonmodularlist}) has 
$\nil(G)\geq 2$. In each such case, except $E_{8}$ at $p=11$, the hypotheses 
of Proposition~\ref{nilcondition} hold, so $\nil(G)\leq 2$. Hence $\nil(G)=2$. 

The one outstanding case is $E_{8}$ at $p=11$. As in part~(ii) of 
Proposition~\ref{nilcondition}, write 
$E_{8}\simeq B(3,23)\times Y$ where $Y$ has type 
$\{8,14,18,20,24,30\}$. Now $n_{1}=8$ and $n_{\ell}=30$, but 
$3n_{\ell}$ is not less than $\min\{n_{1}p,n_{1}+p^{2}-p\}=\min\{88,118\}$. 
The one obstruction that appears in the argument proving 
Proposition~\ref{nilcondition} corresponds to $6n_{\ell}-3=177$ (the 
dimension of $A\wedge A\wedge A$) being less than $2n_{1}p-3=173$ 
(the stable range of $S^{15}$). But from the homotopy fibration 
\(\nameddright{W_{8}}{}{S^{15}}{E^{2}}{\Omega^{2} S^{17}}\) 
induced by the double suspension $E^{2}$, since $W_{8}$ is homotopy 
equivalent to the Moore space $P^{175}(11)$ in dimensions~$\leq 180$ 
and $\pi_{177}(P^{175}(11))=0$, we see that $\pi_{177}(S^{15})$ is 
stable, and so the argument in Proposition~\ref{nilcondition} goes through. 
Therefore $\nil(E_{8})=2$ in this case as well.   
\end{proof} 

\begin{remark} 
There are other quasi-$p$-regular exceptional Lie groups: $G_{2}$, $F_{4}$ 
and $E_{6}$ at $p=3$ and~$p=5$. The cases $F_{4}$ and $E_{6}$ at $3$ 
have torsion in mod-$3$ homology; therefore a characterization of 
homotopy nilpotent Lie groups by Rao~\cite{R} implies that, localized at $3$, 
we have $\nil(F_{4})=\infty$ and $\nil(E_{6})=\infty$. McGibbon~\cite{M} 
showed that $G_{2}$ at $p=5$ is homotopy commutative; therefore 
localized at $5$ we have $\nil(G_{2})=1$. The remaining cases all have torsion free 
homology and are not homotopy commutative. Moreover, they do not satisfy the 
hypotheses of Proposition~\ref{nilcondition} and fail to do so in a way that 
does not allow for a sidestepping argument as for $E_{8}$ at $p=11$ in the 
proof of Theorem~\ref{qpreg}. So the precise value of $\nil(G)$ in these 
cases remains undetermined. 
\end{remark}

\bibliographystyle{amsalpha}

\end{document}